\newtheorem{theorem}{Theorem}[section]
\newtheorem{lemma}[theorem]{Lemma}
\newtheorem{remark}[theorem]{Remark}
\newtheorem{definition}{Definition}[section]
\newtheorem{corollary}[theorem]{Corollary}
\newtheorem{fact}[theorem]{Fact}
\newcommand{\Z}{\mathbb Z}
\newcommand{\R}{\mathbb R}
\newcommand{\C}{\mathbb C}
\newcommand{\T}{\mathbb T}
\newcommand{\N}{\mathbb N}
\newcommand{\Q}{\mathbb Q}
\definecolor{deepgreen}{cmyk}{1,0,1,0.5}
\title{Sharp localization on the first supercritical stratum for Liouville frequencies}
\author[R.\ Han]{Rui Han}
\address{Department of Mathematics \\ Louisiana State University  \\  Baton Rouge, LA 70803, USA}
\email{rhan@lsu.edu}
\thanks{
R.\ Han is partially supported by NSF DMS-2143369. 
}
\begin{document}

\begin{abstract}
    We establish Anderson localization for Schr\"odinger operators with even analytic potentials on the first supercritical stratum for Liouville frequencies in the sharp regime $\{E: L(\omega,E)>\beta(\omega)>0, \kappa(\omega,E)=1\}$, with $\kappa(\omega,E)$ being Avila's acceleration.
    This paper builds on the large deviation measure estimate and complexity bound scheme, originally developed for Diophantine frequencies by Bourgain, Goldstein and Schlag \cites{BG,BGS1,BGS2}, and the improved complexity bounds in \cite{HS1}. Additionally, it strengthens the large deviation estimates for weak Liouville frequencies in \cite{HZ}. We also introduce new ideas to handle Liouville frequencies in a sharp way.
\end{abstract}

\maketitle

\section{Introduction} 
We study one-dimensional quasi-periodic Schr\"odinger operators with analytic potentials $v$:
\begin{align}\label{def:H_operator}
    (H_{\omega,\theta}\phi)_n=\phi_{n+1}+\phi_{n-1}+v(\theta+n\omega) \phi_n,
\end{align}
in which $\theta\in \T$ is called the phase, $\omega\in \T\setminus\Q$ is called the frequency.
Throughout the paper, for $\theta\in \T$, $\|\theta\|_{\T}:=\mathrm{dist}(\theta,\Z)$. We shall simply write it as $\|\theta\|$.

The main result of this paper is the following:
\begin{theorem}\label{thm:main}
    Let $v$ be an even analytic function on $\T$. 
    For non-resonant $\theta\in \Theta$, see \eqref{def:Theta}, $H_{\omega,\theta}$ exhibits Anderson localization on 
    $$\{E: L(\omega,E)>\beta(\omega)\geq 0,\text{ and } \kappa(\omega,E)=1\},$$ 
    in which $\beta(\omega)$ is as in \eqref{def:beta}, $L(\omega,E)$ is the Lyapunov exponent and $\kappa(\omega,E)$ is Avila's acceleration number.
\end{theorem}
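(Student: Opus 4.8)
The plan is to run the large-deviation / avalanche-principle / semialgebraic-complexity scheme of Bourgain--Goldstein--Schlag \cite{BG}, as sharpened in \cite{HS1} and extended to weak Liouville frequencies in \cite{HZ}, but carried out at the continued-fraction scales $q_k$ of $\omega$, with the quantitative gains provided by $\kappa(\omega,E)=1$ and the evenness of $v$. By Schnol's theorem it suffices to fix $E$ in the stratum and $\theta\in\Theta$ and prove that every polynomially bounded solution $\phi$ of $H_{\omega,\theta}\phi=E\phi$ decays exponentially; normalize $\phi_0^2+\phi_{-1}^2=1$. Throughout one works with the transfer matrices $M_n(\theta,E)=A(\theta+(n-1)\omega)\cdots A(\theta)$, $A(x)=\left(\begin{smallmatrix}E-v(x)&-1\\ 1&0\end{smallmatrix}\right)$, with $L_n(\omega,E)=n^{-1}\int_\T\log\|M_n(\theta,E)\|\,d\theta\to L(\omega,E)=:L$, and with the Green's-function (Poisson) identity $\phi_n=-G_{[a,b]}(n,a)\,\phi_{a-1}-G_{[a,b]}(n,b)\,\phi_{b+1}$, valid on any $[a,b]\ni n$ with $E\notin\mathrm{spec}\,H_{[a,b]}$.

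The analytic core is a \emph{sharp} large-deviation estimate at Liouville scales: for $L>\beta(\omega)\ge 0$ and $\kappa(\omega,E)=1$, and for $n$ in a controlled range around each $q_k$, one has $L_n\to L$ with a quantitative rate and $\mes\{\theta\in\T:\,|n^{-1}\log\|M_n(\theta,E)\|-L_n|>\tau\}<e^{-c\,n\tau}$ for $\tau$ all the way down to a threshold that still exceeds $\beta(\omega)$. Three ingredients combine: (i) Avila's convexity and quantization of $\varepsilon\mapsto L(\omega,E,\varepsilon)$ together with $\kappa=1$, which fixes the slope of this function at $\varepsilon=0^{+}$ (the first supercritical stratum) and thereby makes the subharmonic, BGS-type estimates for $\log\|M_n(\,\cdot+i\varepsilon)\|$ in a strip quantitatively sharp rather than merely qualitative; (ii) the near-periodicity $\|q_k\omega\|<1/q_{k+1}$, used to transport estimates along the orbit at scale $q_k$ in spite of the Liouville behavior; and (iii) the strict inequality $L>\beta(\omega)$, which guarantees that the genuine Lyapunov growth $e^{nL}$ beats the transport loss $\lesssim e^{n\beta(\omega)}$, so that the induction on scales closes. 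This is exactly where the weak-Liouville estimate of \cite{HZ} is pushed to the sharp range $L>\beta(\omega)$.

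Feeding the scale-$q_k$ LDT into the Goldstein--Schlag avalanche principle produces two-sided bounds $e^{(L-\varepsilon)N}\le\|M_N(\theta,E)\|\le e^{(L+\varepsilon)N}$ for all $N$ in a window of scales and all $\theta$ outside an exceptional set $\mathcal B_N\subset\T$ whose complexity (number of connected components) is polynomial in $N$, by semialgebraic arguments with the sharpened degree bounds of \cite{HS1}. This yields off-diagonal Green's-function decay $|G_{[a,b]}(x,y)|\le e^{-(L-\varepsilon)|x-y|}$ on any interval whose center-phase avoids $\mathcal B_{|b-a|}$ and on which $E$ is not too close to $\mathrm{spec}\,H_{[a,b]}$. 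Suppose now $\phi$ does not decay exponentially. Then, as in \cite{BG}, one extracts a sequence of resonant intervals $I$ --- on which $E$ lies within $e^{-\delta|I|}$ of $\mathrm{spec}\,H_I$ while $\phi$ is comparable to $\|\phi\|_{\ell^\infty(I)}$ near the center of $I$ --- and a resonant interval with center $m$ forces its center-phase $\theta+m\omega$ into a semialgebraic set $\mathcal R$ of low complexity and small measure. The origin is automatically resonant (since $\phi_0\ne 0$), which constrains $\theta$, and a non-decaying $\phi$ yields a second resonant interval with center $m\ne 0$. Here evenness of $v$ enters through the standard mechanism: reflecting the approximate eigenfunction of $H_I$ about the center of $I$ relates $H_I$ at center-phase $\xi$ to $H_I$ at center-phase $-\xi$, so that a double resonance at phases $\theta$ and $\theta+m\omega$ forces $\|2\theta+j\omega\|$ to be super-exponentially small relative to $|I|$ for some integer $j$ of size $\lesssim|m|$. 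But for $|m|$ below the ambient continued-fraction scale $\|m\omega\|\ge\|q_k\omega\|\gtrsim q_{k+1}^{-1}\ge e^{-(\beta(\omega)+o(1))q_{k+1}}$, and by the definition of $\Theta$ in \eqref{def:Theta} the quantity $\|2\theta+j\omega\|$ cannot be that small; contradiction. Hence $\phi$ decays exponentially and $H_{\omega,\theta}$ is Anderson localized on the stratum.

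The principal obstacle is the large-deviation estimate of the second step in the borderline regime, where $L$ exceeds $\beta(\omega)$ by only a little: the scale-by-scale iteration gains $\sim L$ from Lyapunov growth but pays $\sim\beta(\omega)$ for Liouville transport and for the subharmonic and complexity estimates, and one must verify that it still closes with quantitative room to spare. This is precisely where $\kappa(\omega,E)=1$ is indispensable: it pins down the first-order complex behavior of $L(\omega,E,\cdot)$ and converts the otherwise only qualitative losses into sharp ones, so that the surplus $L-\beta(\omega)>0$ is enough. A secondary difficulty is calibrating, in the resonance-elimination step, the length of the resonant interval against the continued-fraction scale $q_k$, so that the double resonance is genuinely excluded by the arithmetic condition defining $\Theta$ rather than merely by the generic smallness of $\mathcal R$.
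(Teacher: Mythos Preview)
Your outline captures the Schnol reduction and the resonance-elimination-via-evenness mechanism correctly, and identifying $L>\beta(\omega)$ as the margin by which Lyapunov growth beats Liouville transport loss is the right intuition. But the proposal has a genuine gap at its technical core, and differs substantially from what the paper does.

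The difficulty is not merely that the LDT from \cite{HZ} must be ``pushed'' to the sharp Liouville range. The paper splits each continued-fraction scale into \emph{weak Liouville} ($q_{k+1}\le e^{\delta_1 q_k}$) and \emph{strong Liouville} ($q_{k+1}> e^{\delta_1 q_k}$), and the two are handled by genuinely different mechanisms. For weak scales the paper does run an LDT\,+\,complexity scheme, but not for $\|M_n\|$ and not via the avalanche principle or semialgebraic sets: it studies the analytic function $g_{m,E}^{\omega}(z)$ of \eqref{def:g}, which on $\T$ equals $\|M_{m,E}^{\omega}\|_{\mathrm{HS}}^2$ and hence inherits the LDT of the norm, \emph{and} whose level-set zero count can be bounded by exactly $2(1+\eta)\kappa m$ via a Riesz-representation argument (Lemma~\ref{lem:zero_gm}). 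This bypasses the additional Cartan step that \cite{HS1} used to link complexity to zeros of $P_{m,E}^{\omega}$, a step which required $\omega\in\mathrm{DC}$ and is unavailable here. The semialgebraic complexity $m^C$ you invoke would not suffice for the sharp pigeonhole argument when $\kappa=1$.

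For strong Liouville scales your approach breaks down entirely. The LDT of \cite{HZ} assumed \emph{every} scale is weak; no LDT in the prior literature covered $m\in[\sigma q_n,\sigma^{-1}q_n]$ when $q_{n+1}\ge e^{\delta_1 q_n}$. The paper proves a new LDT for $g_{m,E}^{\omega}$ in this range (Lemma~\ref{lem:LDT_non_exp_m=qn}) to handle the \emph{weakly} resonant sub-regimes, but for the \emph{strongly} resonant regimes $[(\ell-\sigma)q_n,(\ell+\sigma)q_n]$ the LDT\,+\,complexity\,+\,pigeonhole machinery is abandoned altogether. Instead one analyzes $f_{q_n,E}^{\omega}=2-\mathrm{tr}(M_{q_n,E}^{\omega})$ directly: its zeros are located by Rouch\'e comparison with the rational approximant $f_{q_n,E}^{p_n/q_n}$, whose zeros are exactly $1/q_n$-periodic (Fact~\ref{fact:zero_periodic}) and number precisely $2q_n$ in $\mathcal{A}_{\exp(\pi\varepsilon_0)}$ (Lemma~\ref{lem:zero_count_rational}). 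Evenness of $v$ plus $\theta\in\Theta$ then splits the zero set into two well-separated reflected orbits (Lemma~\ref{lem:zero_omega_reflection}), and one obtains a \emph{deterministic} lower bound on $|f_{q_n,E}^{\omega}(\theta+(kq_n-[q_n/2])\omega)|$ for $k\neq 0$ (Lemma~\ref{lem:qn/2_small_other_large}), rather than existence of a good interval by pigeonhole. This rational-approximation idea, which exploits rather than fights the Liouville feature, is the key new ingredient your sketch does not anticipate; without it there is no route to the sharp threshold $L>\beta(\omega)$ at strongly resonant sites. The avalanche principle plays no role in either branch of the actual argument.
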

We call the collection of those energies satisfying $\kappa(\omega,E)=1$ and $L(\omega,E)>0$ {\it the first supercritical stratum}, according to Avila's stratification of the spectrum in \cite{Global}.
Theorem \ref{thm:main} has been established recently in \cite{HS1} for Diophantine $\omega\in \mathrm{DC}$ \footnote{In some of literature, $\mathrm{DC}$ has a slightly different definition, we omit such technicality.},
where
\begin{align}
    \mathrm{DC}:=\{\omega\in \T:\, \|n\omega\|\geq \frac{c}{n (\log n)^{A}}, \text{ for some } A>1 \text{ and } c>0\}.
\end{align}
$\mathrm{DC}$ is a full Lebesgue measure set and is a proper subset of $\{\omega: \beta(\omega)=0\}$.

The set of non-resonant $\theta$ is defined to be
\begin{align}\label{def:Theta}
    \Theta:=\left\{\theta\in \T:\, \limsup_{n\to\infty} \frac{-\log \|2\theta+n\omega\|}{|n|}=0\right\}.
\end{align}
It is well-known that $\Theta$ is a full Lebesgue measure set. 

This paper builds and expands upon the large deviation measure estimates and the complexity bounds for quasi-periodic Schr\"odinger operators. We will review the literature and discuss our contribution in this paper below.

\subsection{Large deviation and complexity bound for analytic potentials}
Following the scheme originally developed by Bourgain, Goldstein and Schlag \cites{BG,BGS1,BGS2}, the main difficulty in proving AL is to control both the large deviation measure estimates (LDM), and {\it at the same time}, the {\it complexity (number of connected components)} of the large deviation sets.

\begin{itemize}
    \item For general analytic potentials, in their seminal paper \cite{BG}, Bourgain and Goldstein proved LDM for the norm of transfer matrices $u_{m,E}(\theta):=m^{-1}\log \|M^{\omega}_{m,E}(\theta)\|$ for Diophantine $\omega$ in the following form:
    \begin{align}\label{eq:BG_LDM}
    \mathrm{mes}(\{\theta: |u_{m,E}(\theta)-\langle u_{m,E}\rangle|>m^{-\sigma}\})\leq e^{-m^{\sigma}}, \text{ for some } \sigma>0,
    \end{align}
    in which $\langle u_{m,E}\rangle$ is the average value of $u_{m,E}$.
    Bourgain and Goldstein also introduced a semi-algebraic set argument which, roughly speaking, controls the {\it complexity} of the large deviation sets in a polynomial manner $m^C$, $C>1$.
    Combining LDM with the complexity bound, Bourgain and Goldstein proved AL for general analytic potentials, for a.e.~(non-arithmetic) $\omega$.
    The approach is very robust, and has been further developed to establish AL in a variety of more general settings, including the higher-dimensional torus $\T^d$ ($d\geq 2$) \cite{BG}, the skew-shift dynamics \cite{BGS1}, the challenging higher-dimensional operators \cites{BGS2,Bo}, block-valued operators \cites{BJ1,DK1,Kl,HS2,HS3}, and continuous quasi-periodic operator \cite{BKV}.
    The proof of LDM for $u_{m,E}$, relying crucially on its almost shift invariant property, in fact generalizes beyond $\omega\in \mathrm{DC}$. For example, You-Zhang \cite{YZ} extended it to the case $\beta(\omega)=o(1)$, and Han-Zhang \cite{HZ} further generalized it to $\beta(\omega)=o(L(\omega,E))$. 
    However, it remained a challenging problem to prove arithmetic AL following this scheme, even for all Diophantine $\omega$'s, essentially due to the lack of a tight complexity bound.

    \item In their landmark work \cite{GS1}, Goldstein and Schlag introduced a new important tool, the Avalanche Principle (AP), into the study of quasi-periodic operators. 
    AP has since played an indispensable role and established a foundation for solving many major problems such as \cites{GS1,GS2,GS3,BJ2} and others. 
    It has also been generalized to higher-dimensional cocycles in \cites{Sch1,DK}, which play an increasingly important role in the study of block-valued operators \cites{DK1,Kl,HS3}.
    In \cite{GS1}, Goldstein and Schlag were able to combine the powers of LDM for $v_{m,E}$ and AP to prove, for the first time, H\"older continuity of IDS for one-dimensional quasi-periodic operators. 
    Later, to study the more challenging problem of determining the exact H\"older exponent, Goldstein and Schlag studied a more subtle alternative object $v^P_{m,E}(\theta):=m^{-1}\log |P_{m,E}^{\omega}(\theta)|$ in \cite{GS2}, where $P_{m,E}$ is the Dirichlet determinant and is an entry of $M^{\omega}_{m,E}$. 
    Their paper, for the first time, revealed the importance of zero count of $P_{m,E}$ and established the connection between the H\"older exponent at energy $E$ and the local zero count of $P_{m,E}$. The local zero count is controlled by the global zero count $N^P_{m,E}$ through a delicate AP argument, leading to H\"older exponent $(m/N^P_{m,E})-0$.
    As part of their deep analysis, they proved LDM for $v^P_{m,E}$ for all Diophantine $\omega\in \mathrm{DC}$, for which they overcame the significant difficulty due to the lack of almost shift invariance for $v^P_{m,E}$. 
    
    \item Goldstein and Schlag in \cite{GS3} proved remarkably LDM for $v^{\mathrm{tr}}_{m,E}(\theta):=m^{-1}\log |\mathrm{tr}(M^{\omega}_{m,E}(\theta))|$, which played a crucial role in their proof of Cantor spectrum for quasi-periodic operators.
    LDM of trace turns out to have profound impacts in other open problems as well, including Han and Schlag's solution \cite{HS3} of a problem on non-perturbative AL for block-valued operators, and more importantly a {\it quantitative version} of Avila's almost reducible conjecture for Diophantine $\omega$'s \cite{HS2}. The latter builds crucially on AL for the dual model and an important formula of Haro and Puig for dual Lyapunov exponents \cite{HP}.
    As in the study of the entries, the proof LDM for trace also presents significant difficulty, due to the lack of almost shift invariance. 
    Goldstein and Schlag's proofs in \cites{GS2,GS3} consist of a series of technical steps, including an additional Cartan estimate, which requires controlling quantitative small deviations for $u_{m,E}(\theta)$, for example, of the form~\eqref{eq:BG_LDM} with deviation $m^{-\sigma}$.
    However, to establish such kinds of small deviations one often requires $\omega\in \mathrm{DC}$.
    
    \item Recently, Han and Schlag \cite{HS1} uncovered the mystery behind Avila's acceleration number and the global zero count of $P_{m,E}^{\omega}$ by establishing a sharp characterization (up to quantitative small error):
    \begin{align}\label{eq:HS1_main}
    \kappa(\omega,E)=N^P_{m,E}/(2m),
    \end{align}
    for all Diophantine $\omega\in \mathrm{DC}$.
    This leads to a sharp complexity bound $2m\cdot \kappa(\omega,E)$ for $v^P_{m,E}$.
    Using this sharp bound, Han and Schlag \cite{HS1} were able to prove a conjectured H\"older exponent for IDS (up to the endpoint), and also arithmetic AL for all $\omega\in \mathrm{DC}$ on the first supercritical stratum.   
\end{itemize}

    This paper builds mainly on \cite{HS1} and earlier works \cites{BG,GS1,GS2,GS3,HZ}, but new ideas need to be introduced to overcome three main difficulties.
    Before we enter detailed discussion, let us briefly introduce the framework.
    Our analysis is multi-scale: for each $n$, we study the decay of eigenfunctions, roughly speaking, on the scale of $(q_n/5, q_{n+1}/5)$. Depending on the growth from $q_n$ to $q_{n+1}$, we divide the scales into weakly Liouvillian scales (when $(\log q_{n+1})/q_n\leq \delta_1$, where $\delta_1$ is as in \eqref{def:delta_1}), and strong Liouville scales (when $(\log q_{n+1})/q_n> \delta_1$). 
    In a strong Liouville scale, to prove AL in the sharp regime, we need to further divide into strongly resonant regimes, those of the form $((\ell-\sigma)q_n, (\ell+\sigma)q_n)$, and weakly resonant regimes $((\ell+\sigma)q_n, (\ell+1-\sigma)q_n)$, where $\sigma>0$ is a small constant. 
    The study of the strongly resonant regimes is the technical core of the paper, which also relies heavily on a sharp analysis of the weakly resonant regimes. 
    Next, we explain the difficulties in each weak/strong Liouville scale and weakly/strongly resonant regime in more details below.
    \begin{enumerate}
        \item 
        For a weak Liouville scale, the complexity bound for $u_{m,E}$, for $m\in (q_n/5, q_{n+1}/5)$, is not tight enough for arithmetic AL. 
        The challenges remain even if one considers $v^P_{m,E}$ or $v^{\mathrm{tr}}_{m,E}$ since their LDM in the literature required $\omega\in \mathrm{DC}$.
        Hence one needs new ideas on controlling LDM and sharp complexity {\it simultaneously} for such weak Liouville scales.
        We overcome this difficulty by studying an alternative object, an analytic function defined by:
\begin{align}\label{def:g}
    g_{m,E}^{\omega}(z)=(P_{m,E}^{\omega}(z))^2+(P_{m-1,E}^{\omega}(z))^2+(P_{m-1,E}^{\omega}(ze^{2\pi i\omega}))^2+(P_{m-2,E}^{\omega}(ze^{2\pi i\omega}))^2,
\end{align}
and the associated subharmonic $v^g_{m,E}(\theta)=(2m)^{-1}\log |g^{\omega}_{m,E}(e^{2\pi i\theta})|$.
        Clearly $g_{m,E}^{\omega}(e^{2\pi i\theta})=\|M_{m,E}^{\omega}(\omega,\theta)\|_{\mathrm{HS}}^2$ for $\theta\in \T$. 
        This function was in fact introduced in Sec.7 of \cite{GS1}, to improve the complexity bound for $u_{m,E}$ from $m^C$ in \cite{BG} to $C'm$, with some implicit large constant $C'$, for $\omega\in \mathrm{DC}$. 
        In this paper, we show that $g_{m,E}(z)$ is, surprisingly, a perfect candidate for arithmetic AL (except possibly in strong Liouville case). 
        On one hand, it is equivalent to the operator norm, hence it inherits directly the LDM for $u_{m,E}$ on the scale $m\in (q_n/5, q_{n+1}/5)$. 
        On the other hand, it is an analytic function, thus making it possible to try to adapt a similar strategy in \cite{HS1} for this Liouville setting, to improve Goldstein and Schlag's complexity bound in \cite{GS1} from $C'm$ to exactly $2m$ on the first supercritical stratum. 
        In \cite{HS1}, the complexity of $v^P_{m,E}$ was bounded by the number of zeros of $P_{m,E}^{\omega}$, which in turn is determined by Avila's acceleration as in \eqref{eq:HS1_main}. Such bound is possible because by LDM and an additional Cartan estimate, each connected component of the large deviation set must be close to at least one zero of $P_{m,E}^{\omega}$.
        However, as we mentioned, an additional Cartan argument often requires small deviation $\sim m^{-\sigma}$ and hence $\omega\in \mathrm{DC}$, which is not the case here.
        We overcome this difficulty by developing a different approach: directly bounding the complexity of $v^g_{m,E}$ by the number of zeros for a perturbed function $\tilde{g}_{m,E}^{\omega}$, whose zeros are also determined by the acceleration number.
        Finally, combining LDM and the sharp complexity bound $2m$ with the {\it pigeonhole principle argument} as in \cite{HS1} one can prove exponential decay of eigenfunctions in such a scale.

        \item For strong Liouville scales, there had been no quantitative LDM for $u_{m,E}$ for $m\in [\sigma q_n, \sigma^{-1} q_n]$, where $\sigma>0$ small.
        This is because LDM is essentially a quantitative ergodic theorem, hence being exponentially close to rationals makes the system less ergodic, thus causing significant difficulties.
        The only LDM on such a scale goes back to key lemma of Bourgain and Jitomirskaya, \cite[Lemma 4]{BJ2}, where LDM was proved for $u_{m,E}$ for $m>\sigma^{-1} q_n$.
        In this paper we are able to prove LDM for $v^g_{m,E}$ for $m\in [\sigma q_n,\sigma^{-1} q_n]$, an important regime that is out of the reach of \cite{BJ2}.
        The applicability of LDM for $m\in [\sigma q_n, q_n/2]$ is crucial, and is in fact the only range we need, for our analysis of the weakly resonant regimes in a strong Liouville scale.

        \item To study the strongly resonant regimes at a strong Liouville scale, we do not consider $v^g_{m,E}$; instead, we study $v^{\mathrm{tr}}_{q_n,E}$. 
        The big advantage of studying the trace is that we can directly derive the structure of its zeros using rational approximation, and obtain {\it sharp} LDMs and complexity bounds directly from such a zero structure. 
        This is a completely new approach for LDMs and complexity bounds; indeed, it allows one to truly utilize the Liouville feature instead of working against it as in a conventional resolution of a small divisor problem. 
        This rational approximation idea is inspired by Avila's proof \cite{arc} of the almost reducibility conjecture for Liouville frequencies on the subcritical stratum, a stratum complementary to the supercritical ones we study in this paper.
        Based on our analysis of $v^{\mathrm{tr}}_{m,E}$, we further develop a new approach to AL for strong Liouville frequencies, by directly studying the closeness of the orbit $\{\theta+k\omega\}_{k\in \Z}$ to the zeros of $\mathrm{tr}(M^{\omega}_{q_n,E})$, without using the pigeonhole principle argument as in~\cite{HS1}.
        This approach is deterministic and yields a good control of the resolvent on the interval $[-[q_n/2], [q_n/2]]$, instead of merely establishing the existence of a good interval by the pigeonhole principle.
        Note that with a LDM for $g_{m,E}$, $m\in [q_n,2q_n]$, and its associated complexity bound $2m$, one can perhaps develop an alternative approach to the strongly resonant scales. But we do not pursue this idea here.
        \end{enumerate}

The purely singular continuous spectrum in the complementary regime $\{L(\omega,E)<\beta(\omega)\}$ established in \cite{AYZ} demonstrates the {\it sharpness} of our result on the first supercritical stratum.

\subsection{Related results in special cases}
In the past, sharp arithmetic localization for Liouville frequencies has only been established for two specific models, one being the Maryland model, when $v(\theta)=\lambda\tan(\pi\theta)$, the other one being the almost Mathieu operator (AMO), when $v(\theta)=2\lambda\cos(2\pi\theta)$ with $\lambda>1$. 

In the literature, the Maryland model had been studied via the Cayley transform, an indirect approach that works uniquely for the $\tan$ potential. 
Those studies had led to a complete spectral characterization \cites{GFP,FP,Si2,JL2}, with a proof of AL for all Diophantine frequencies back in the 1980s' \cites{FP,Si2}.
An alternative direct approach, based on the Green's function expansion, has been quite recently developed for the Maryland model \cites{JY}, and further extended in \cite{HJY} revealing some novel structures of eigenfunctions due to presence of large potential barriers. 
But that approach is also $\tan$ specific. 
The proofs of AL for AMO are also restricted to the $\cos$ potential: \cite{AYZ} relies on the reducibility for the dual operator and the unique ``self-dual" feature of AMO; \cites{Ji,JL2} utilize crucially the Lagrange interpolation argument (an algebraic property) for the Dirichlet determinant $P_{m,E}^{\omega}(\theta)$, which is a {\it polynomial in $\cos$ of degree exactly $m$}.
To the best of our knowledge, neither the duality nor the Lagrange interpolation argument currently apply to general analytic potentials.
But it might be possible to incorporate the new ideas we introduce in this paper to further extend these two approaches beyond AMO.

It is desirable to understand the phase transition phenomenon, beyond special cases, in a more robust and physically relevant setting, e.g., for models with general even analytic potentials.
In fact, AMO originates from studying the motion of a single electron on the $\Z^2$ lattice, subjected to a transversal magnetic field. There, the electron is only allowed to hop to its nearest neighbors, which gives rise to the $\cos$ potential upon taking the Fourier transform.
However, in the real world, the electron indeed exhibits infinite-distance hopping with an exponentially decaying symmetric hopping strength, which gives rise to a true {\it even analytic potential}. 
In this paper, we show that arithmetic phase transition phenomena are indeed ``{\it topological invariant}": they are robust within the class of $\kappa(\omega,E)=1$ for general even potentials. 
It is worth-noting that AMO with $\lambda>1$ together with its even analytic perturbations are a special case of the operators we study here: the entire spectrum of AMO with $\lambda>1$ (and any analytic perturbation) is contained in the first supercritical stratum, see \cite[Lemma 25]{Global}.
Our analysis also allows one to establish the hierarchical structure of eigenfunctions in the localization regime, similar to that of AMO in \cite{JL2}. 
To achieve that, one replaces our study around the global maximal, roughly speaking $|\phi_0|$, with an arbitrary local max. We leave it for interested readers.

We introduce some notations before we proceed.
For any $R>1$, let $\mathcal{A}_R:=\{z\in \C: 1/R<|z|<R\}$ be the annulus. Let $\mathcal{C}_r:=\{z\in \C: |z|=r\}$ be the circle with radius $r>0$.
For $z\in \C$ and $r>0$, let $B_r(z)$ be the open ball centered at $z$ with radius $r$.
For $x\in \R$, let $[x]$ be the largest integer such that $[x]\leq x$.

We organize the rest of the paper as follows: Sec.~\ref{sec:pre} is devoted to preliminaries, Sec.~\ref{sec:overview} contains an overview of Anderson localization, in particular how to incorporate the weak/strong Liouville scale analysis presented in Sec.~\ref{sec:non_exp} and~\ref{sec:strong} respectively. 
Sec.~\ref{sec:LDT} is on the proofs of the large deviation estimates in Sec.~\ref{sec:non_exp}.

\section{Preliminaries}\label{sec:pre}

\subsection{Non-resonant $\theta$'s}
Clearly, for any $\theta\in \Theta$, and any small constant $\delta'>0$, for $n$ large enough, we have
\begin{align}\label{eq:theta_non_res}
    \|2\theta+n\omega\|\geq e^{-\delta' |n|}.
\end{align}
By restricting to $\theta\in \Theta$, one essentially rules out the resonance caused by reflection invariance.

\subsection{Continued fraction expansion}\label{sec:continued_fraction}
Give $\omega\in (0,1)$, let $[a_1,a_2,...]$ be the continued fraction expansion of $\omega$. For $n\geq 1$, let $p_n/q_n=[a_1,a_2,...,a_n]$ be the continued fraction approximants of $\omega$. 
The following property is well-known for $n\geq 1$,
\begin{align}\label{eq:qn_omega_min}
\|q_n\omega\|_{\T}=\min_{1\leq k<q_{n+1}} \|k\omega\|_{\T},
\end{align}
The $\beta(\omega)$ exponent measures the exponential closeness of $\omega$ to rational numbers:
\begin{align}\label{def:beta}
\beta(\omega):=\limsup_{n\to\infty}\frac{\log q_{n+1}}{q_n}=\limsup_{n\to\infty}\left(-\frac{\log \|n\omega\|_{\T}}{n}\right).
\end{align}
Let 
\begin{align}\label{def:betan}
\beta_n(\omega):=\frac{\log q_{n+1}}{q_n}. 
\end{align}
It is well-known that
\begin{align}\label{eq:qn_omega}
\|q_n\omega\|\in (1/(2q_{n+1}), 1/q_{n+1})=(e^{-\beta_n(\omega) q_n}/2, e^{-\beta_n(\omega) q_n}).
\end{align}
The $p_n, q_n$'s are determined by $a_n$'s in the following way:
\begin{align}\label{eq:qn+1=qn+qn-1}
    q_{n+1}=a_{n+1}q_n+q_{n-1}, \text{ and } p_{n+1}=a_{n+1}p_n+p_{n-1},
\end{align}
and
\begin{align}\label{eq:qn-1_norm=qn+qn+1}
\|q_{n-1}\omega\|=a_{n+1}\|q_n\omega\|+\|q_{n+1}\omega\|.
\end{align}
To see the latter is true, note that by \eqref{eq:qn+1=qn+qn-1},
\begin{align}
    q_{n+1}\omega-p_{n+1}=a_{n+1}(q_n\omega-p_n)+(q_{n-1}\omega-p_{n-1}),
\end{align}
which yields \eqref{eq:qn-1_norm=qn+qn+1} since $\|q_n\omega\|=|q_n\omega-p_n|$ and
$$(q_{n+1}\omega-p_{n+1})\cdot (q_n\omega-p_n)<0,$$
for any $n\geq 1$.

\subsection{Green's function for the annulus.}We state the precise Green's kernel, which can be derived  by the method of images. 

\begin{lemma}\label{lem:Green_AR}\cite{HS1}
The Green's function on the annulus $\mathcal{A}_R$ is given by
\begin{align}\label{def:G}
G_R(z,w)=\frac{1}{2\pi}\log |z-w| + \Gamma_R(z,w),
\end{align}
where
\begin{align}\label{def:GammaR}
\Gamma_R(z,w)=\frac{\log( |z|/ R) \log (|w|/ R)}{4\pi\log R}+\frac{1}{2\pi}\log \left( \frac{\prod_{k=1}^{\infty} |1-\frac{1}{R^{4k}}\frac{z}{w}| \cdot |1-\frac{1}{R^{4k}} \frac{w}{z}|}{R\cdot \prod_{k=1}^{\infty} |1-\frac{1}{R^{4k-2}}w\overline{z}|\cdot |1-\frac{1}{R^{4k-2}} \frac{1}{\overline{z}w}|}\right).
\end{align}
The Green's function is symmetric and invariant under rotations: $G_R(z,w)=G_R(w,z)$ and $G_R(z,w)=G_R(e^{i\phi} z,e^{i\phi} w)$. 
\end{lemma}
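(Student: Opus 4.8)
The plan is to construct the Green's function $G_R(z,w)$ for the annulus $\mathcal{A}_R$ by the method of images, starting from the fundamental solution $\frac{1}{2\pi}\log|z-w|$ of the Laplacian and adding a harmonic correction term $\Gamma_R(z,w)$ that kills the boundary values on the two circles $\mathcal{C}_{1/R}$ and $\mathcal{C}_R$. First I would recall that on the annulus the Dirichlet problem for the Laplacian is solvable, so $G_R$ exists and is unique; the content of the lemma is the explicit formula. I would reflect the source point $w$ repeatedly across the two boundary circles. Reflection across $\mathcal{C}_R$ sends $w\mapsto R^2/\overline{w}$, reflection across $\mathcal{C}_{1/R}$ sends $w\mapsto 1/(R^2\overline{w})$, and alternating these two reflections generates a Kleinian-type group whose orbit of $w$ accumulates only at $0$ and $\infty$. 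Summing $\frac{1}{2\pi}\log$ of the distances to all these image charges (with appropriate signs and normalizing factors so the sum converges) produces the infinite products in the numerator and denominator of $\Gamma_R$: the factors $|1-R^{-4k}\frac{z}{w}|$ and $|1-R^{-4k}\frac{w}{z}|$ come from the even iterates (images of the same sign as $w$), while $|1-R^{-4k+2}w\overline{z}|$ and $|1-R^{-4k+2}\frac{1}{\overline{z}w}|$ come from the odd iterates (reflected images of opposite sign). The products converge geometrically since $R>1$.

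Next I would verify the three defining properties. \emph{Harmonicity}: away from $z=w$ (and its images, which lie outside $\overline{\mathcal{A}_R}$), each factor $\log|1-cz/w|$ etc.\ is harmonic in $z$, and the finite sum $\frac{\log(|z|/R)\log(|w|/R)}{4\pi\log R}$ is harmonic in $z$ as well (it is linear in $\log|z|$). The product is locally uniformly convergent so the limit is harmonic, and $G_R(z,w)-\frac{1}{2\pi}\log|z-w|$ extends harmonically across $z=w$. \emph{Boundary values}: I would check that $G_R(z,w)=0$ when $|z|=R$ and when $|z|=1/R$. On $|z|=R$ one has $z\overline{z}=R^2$, so $\frac{1}{\overline{z}w}=\frac{z}{R^2 w}$ and $w\overline{z}=\frac{R^2 w}{z}$; substituting this into the products pairs each numerator factor with a denominator factor (after shifting the index $k$), the logarithmic terms telescope against the $\log|z-w|$ and the explicit $-\log R$ and the quadratic term, and everything cancels. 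The check on $|z|=1/R$ is the same computation with $R$ replaced by $1/R$, using the symmetry $k\leftrightarrow$ shifted index. \emph{Logarithmic singularity}: only the $\frac{1}{2\pi}\log|z-w|$ term is singular at $z=w$ since no image point lies in the closed annulus, so $G_R$ has exactly the right singularity and is the Green's function.

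Finally, symmetry and rotation invariance: $G_R(z,w)=G_R(w,z)$ follows because the defining properties (harmonic off the diagonal, vanishing on the boundary, $\frac{1}{2\pi}\log|z-w|$ singularity) are symmetric in $z,w$, so uniqueness of the Green's function forces the identity; alternatively one inspects the formula directly, noting $|1-R^{-4k}\frac{z}{w}|$ and $|1-R^{-4k}\frac{w}{z}|$ are interchanged and $|1-R^{-4k+2}w\overline{z}|$ is already symmetric (as is $|1-R^{-4k+2}\frac{1}{\overline{z}w}|$), while $\frac{\log(|z|/R)\log(|w|/R)}{4\pi\log R}$ is visibly symmetric. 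Rotation invariance $G_R(e^{i\phi}z,e^{i\phi}w)=G_R(z,w)$ is immediate from the formula since every term depends only on $z/w$, $w\overline{z}$, $1/(\overline{z}w)$, $|z|$ and $|w|$, each of which is unchanged under the simultaneous rotation; one also sees it abstractly from uniqueness since the Laplacian and the annulus are rotation invariant. I expect the main obstacle to be the boundary-value verification: one must organize the reindexing of the two infinite products carefully so that the cancellation on $|z|=R$ (and on $|z|=1/R$) is transparent rather than a formal manipulation, and keep track of the single leftover factor of $R$ and the quadratic correction term that together make the sum vanish exactly; the convergence of the products and the harmonicity of the limit are routine given $R>1$.
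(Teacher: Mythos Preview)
Your proposal is correct and follows exactly the approach the paper indicates: the paper does not prove this lemma but merely states it, citing \cite{HS1} and remarking that it ``can be derived by the method of images.'' Your outline via repeated reflections across $\mathcal{C}_R$ and $\mathcal{C}_{1/R}$, followed by verification of harmonicity, boundary vanishing, and the symmetry/rotation properties, is precisely that derivation.
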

It is also easy to check that
\begin{align}\label{eq:GR_even}
    &2\pi G_R(1/\overline{z}, 1/\overline{w})\\
    =&
    \log |1/\overline{z}-1/\overline{w}|+\frac{\log (1/(R|z|))\log (1/(R|w|))}{2\log R}
    +\log \left( \frac{\prod_{k=1}^{\infty} |1-\frac{1}{R^{4k}}\frac{\overline{w}}{\overline{z}}| \cdot |1-\frac{1}{R^{4k}} \frac{\overline{z}}{\overline{w}}|}{R\cdot \prod_{k=1}^{\infty} |1-\frac{1}{R^{4k-2}}\frac{1}{\overline{w}z}|\cdot |1-\frac{1}{R^{4k-2}} z\overline{w}|}\right)\\
    =&2\pi G_R(z,w).
\end{align}

The following integral is useful, see \cite[Lemma 3.2]{HS1}, note $\Gamma_R=H_R$ therein.
\begin{align}\label{eq:int_HR}
\int_0^{1}   \Gamma_R(re^{ 2\pi i\theta},w) \, \mathrm{d}\theta= \frac{\log(r/R)}{4\pi\log  R} \log(|w|/R) - \frac{\log R}{2\pi}.
\end{align}

\subsection{Cartan set and estimate}
\begin{definition}[Cartan set]
For an arbitrary subset $\mathcal{P}\subset \mathcal{D}(z_0,1)\subset \C$, where $\mathcal{D}(z_0,1)$ is the disk, we say that $\mathcal{P}\in \mathrm{Car}(H,K)$ if $\mathcal{P}\subset \bigcup_{k=1}^{k_0} \mathcal{D}(z_k, r_k)$ with $k_0\leq K$,  and
\begin{align}\label{eq:Hr}
\sum_{j} r_j<e^{-H}.
\end{align}
\end{definition}

By Wiener's covering lemma  we can assume that $\mathcal{D}(z_k,r_k)$ are pairwise disjoint,
 at the expense of a factor of~$3$ in~\eqref{eq:Hr}. 

\begin{lemma}\label{lem:Cartan}
Let $\varphi$ be an analytic function defined in a disk $\mathcal{D}:=\mathcal{D}(z_0,1)$. Let 
$M\geq \sup_{z\in \mathcal{D}} \log |\varphi(z)|$, $m\leq \log |\varphi(z_0)|$. Given $H\gg 1$, there exists a set $\mathcal{P}\subset \mathcal{D}$, $\mathcal{P}\in \mathrm{Car}(H,K)$, $K=CH(M-m)$ for some absolute constant $C>0$, such that 
\begin{align}
\log |\varphi(z)|>M-CH(M-m),
\end{align}
for any $z\in \mathcal{D}(z_0, 1/6)\setminus \mathcal{P}$.
\end{lemma}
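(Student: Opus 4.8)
The plan is to reduce to the classical Cartan theorem on the lower bound of the modulus of an analytic function (for instance, in the form given in Levin's book or in \cite{GS1}, Lemma 2.4), which states that if $f$ is analytic on $\mathcal{D}(z_0,1)$ with $f(z_0)=1$ (say) and $\log\sup_{\mathcal D}|f|\le M'$, then for any $\eta\in(0,1)$ there is an exceptional set contained in at most $N$ disks of total radius at most $\eta$ with $N\lesssim M'$, outside of which $\log|f(z)|\ge -C M'\log(1/\eta)$ on $\mathcal{D}(z_0,1/2)$. I would first normalize: replace $\varphi$ by $\psi(z):=\varphi(z)/\varphi(z_0)$, so that $\psi(z_0)=1$, $\log|\psi(z_0)|=0$, and $\log\sup_{\mathcal D}|\psi|\le M-m=:\mu$. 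All the hypotheses are now packaged into the single quantity $\mu\ge 0$.

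Next I would apply the classical Cartan bound to $\psi$ with the choice $\eta=e^{-H}$ (here using $H\gg1$ so $\eta$ is genuinely small). This produces an exceptional set $\mathcal{P}\subset\bigcup_{k=1}^{k_0}\mathcal D(z_k,r_k)$ with $k_0\le N\lesssim \mu$ and $\sum_k r_k\le e^{-H}$, hence $\mathcal P\in\mathrm{Car}(H,K)$ with $K=CH(M-m)$ after absorbing $H\ge1$ into the count (the count $N\lesssim\mu\le H\mu$ is wasteful in $H$ but matches the stated $K$). Outside $\mathcal P$ and inside $\mathcal D(z_0,1/2)$ — a fortiori inside the smaller disk $\mathcal D(z_0,1/6)$ — one gets
\[
\log|\psi(z)|\ \ge\ -C\,\mu\,\log(1/\eta)\ =\ -CH(M-m).
\]
Undoing the normalization, $\log|\varphi(z)|=\log|\psi(z)|+\log|\varphi(z_0)|\ge -CH(M-m)+m\ge M-C'H(M-m)$, since $m\ge M-(M-m)\ge M-CH(M-m)$ when $H\ge1$; adjusting the absolute constant gives the claimed bound $\log|\varphi(z)|>M-CH(M-m)$ on $\mathcal D(z_0,1/6)\setminus\mathcal P$. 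Finally, the remark about Wiener's covering lemma lets one pass to pairwise disjoint disks at the cost of a factor $3$ in the radius sum, which is harmless since $3e^{-H}\le e^{-H+2}$ and $H$ can be taken slightly larger from the start.

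The only genuine point requiring care — and the step I would expect to be the main obstacle in writing this out carefully — is the bookkeeping of the \emph{number} of exceptional disks versus their total radius, and making sure the dependence on $H$ lands exactly on the stated form $K=CH(M-m)$ rather than, say, $K=C(M-m)$ with a separate radius budget $e^{-H}$. The classical statement typically gives the disk count proportional to $M-m$ independently of $\eta$; here the lemma is stated with the weaker (hence easier) bound $K=CH(M-m)$, so one simply enlarges $K$ accordingly and no sharpness is lost. A secondary technical nuisance is the passage from the radius $1/2$ in the standard Cartan statement to $1/6$: this is a free improvement (shrinking the good region only helps), so no work is needed beyond noting $\mathcal D(z_0,1/6)\subset\mathcal D(z_0,1/2)$.
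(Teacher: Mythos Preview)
Your proof proposal is correct and follows the standard route via the classical Cartan theorem (normalize so that $\psi(z_0)=1$, apply Cartan with $\eta=e^{-H}$, undo the normalization). Note, however, that the paper does not supply its own proof of this lemma: it is stated in the Preliminaries as a known result, so there is nothing to compare against beyond observing that your argument is exactly the textbook derivation one finds in Levin or in \cite{GS1}.
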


\subsection{Cocycles and Lyapunov exponents}

Let $(\omega, A)\in (\T, C^{\omega}(\T, \mathrm{SL}(2,\R)))$. 
Let 
\begin{align}
A_n^{\omega}(\theta)=A(\theta+(n-1)\omega)\cdots A(\theta).
\end{align}
Let the finite-scale Lyapunov exponents be defined as 
\begin{align}\label{def:LE}
L_n(\omega, A):=\frac{1}{n}\int_{\T} \log \|A_n^{\omega}(\theta)\|\, \mathrm{d}\theta,
\end{align} 
and the infinite-scale Lyapunov exponents as
\begin{align}
L(\omega, A)=\lim_{n\to\infty}L_n(\omega, A).
\end{align}

We denote the phase-complexified Lyapunov exponents as $$L_n(\omega,A(\cdot+i\varepsilon))=:L_n(\omega,A,\varepsilon), \text{ and }L(\omega,A(\cdot+i\varepsilon))=:L(\omega,A,\varepsilon),$$ respectively.

If $\omega=p/q\in \Q$, we define
\begin{align}\label{def:Lpq}
L(p/q,A,\theta)=\lim_{n\to\infty} \frac{1}{n}\log \|A_n^{p/q}(\theta)\|=\frac{1}{q}\log (\rho(A_q^{p/q}(\theta))),
\end{align}
where $\rho(A)$ is the spectral radius of $A$.

The Schr\"odinger cocycle $(\omega,M_E^{\omega})$ associated to the operator \eqref{def:H_operator} is defined with
\begin{align}
M_E^{\omega}(\theta)=\left(\begin{matrix}
    E-v(\theta)\, &-1\\
    1 &0
\end{matrix}\right).
\end{align}
The matrix $M_E^{\omega}(\theta)$ is called transfer matrix, and 
\begin{align}\label{def:MnE}
M_{n,E}^{\omega}(\theta)=M_E^{\omega}(\theta+(n-1)\omega)\cdots M_E^{\omega}(\theta)
\end{align}
is called $n$-step transfer matrix.

\subsection{Avila's acceleration}
Let $(\omega, A)\in (\T, C^{\omega}(\T, \mathrm{SL}(2, \R)))$.
The Lyapunov exponent $L(\omega,A,\varepsilon)$ is a convex and even function in $\varepsilon$.
Avila defined the acceleration to be the right-derivative as follows:
\begin{align}
\kappa(\omega, A,\varepsilon):=\lim_{\varepsilon'\to 0^+} \frac{L(\omega, A,\varepsilon+\varepsilon')-L(\omega, A,\varepsilon)}{2\pi \varepsilon'}.
\end{align}
As a cornerstone of his global theory \cite{Global}, he showed that for analytic $A\in \mathrm{SL}(2,\R)$ and irrational $\omega$, $\kappa(\omega, A,\varepsilon)\in \Z$ is always quantized.

Recall that $v$ is an analytic functions on $\T_{\varepsilon_0}$ for some $\varepsilon_0>0$.  We may shrink $\varepsilon_0$ when necessary such that 
\begin{align}\label{eq:L_linear}
L(\omega,M_E^{\omega},\varepsilon)=L(\omega,M_E^{\omega},0)+2\pi \kappa(\omega,M_E^{\omega},0)\cdot |\varepsilon|, \text{ holds for any } |\varepsilon|\leq \varepsilon_0.
\end{align}
For the rest of the paper, when $\varepsilon=0$, we shall omit $\varepsilon$ from various notations of Lyapunov exponents and accelerations.
We will also write $L(\omega,E,\varepsilon)$ instead of $L(\omega,M_E^{\omega},\varepsilon)$, and sometimes even omit $\omega,E$ in the notation.

\subsection{Regular/Dominated cocycles}
Following \cite{AJS}, we say an analytic cocycle $(\omega,A)$ is regular if $\kappa(\omega,A,\varepsilon)$ is constant for $\varepsilon$ in a small neighborhood of $\varepsilon=0$. 
We will use the following theorem from \cite{AJS},  note we restrict to the $2\times 2$ cocycle case below.
\begin{theorem}\cite[Theorem 5.2]{AJS}\label{thm:dominated}
Assume $L(\omega,A)>-\infty$ and that $(\alpha,A)$ is regular. Then for any rational approximant $p/q$ of $\omega$, one has uniformly for small $\varepsilon$ and all $\theta\in \T$
\begin{align}
    L(p/q,A,\theta+i\varepsilon)=L(\omega,A,\varepsilon) + o(1).
\end{align}
\end{theorem}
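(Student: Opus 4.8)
The plan is to first reduce the hypothesis to a statement about $L(\omega,A,\cdot)$ near $\varepsilon=0$, and then to treat separately a uniformly hyperbolic case and a subcritical one.

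\textbf{Reduction.} Since $\varepsilon\mapsto L(\omega,A,\varepsilon)$ is convex and even, its right-derivative $\varepsilon\mapsto 2\pi\,\kappa(\omega,A,\varepsilon)$ is non-decreasing with one-sided limits at $0$ satisfying $\kappa(\omega,A,0^-)=-\kappa(\omega,A,0)$; hence regularity of $(\omega,A)$ -- $\kappa(\omega,A,\cdot)$ constant near $0$ -- forces $\kappa(\omega,A,\varepsilon)\equiv 0$ on $[-\varepsilon_0,\varepsilon_0]$, so $L(\omega,A,\varepsilon)\equiv L_0:=L(\omega,A,0)$ there. By Avila's global theory \cite{Global}, $(\omega,A(\cdot+i\varepsilon))$ is uniformly hyperbolic for all $|\varepsilon|\le\varepsilon_0$ when $L_0>0$, and is subcritical (no dominated splitting, $L(\omega,A,\varepsilon)\equiv 0$) when $L_0=0$. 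In both cases I would use that along a continued-fraction approximant $p_n/q_n$ one has $\|k\omega-kp_n/q_n\|\le q_n|\omega-p_n/q_n|<1/q_{n+1}$ for $|k|\le q_n$, so the length-$q_n$ orbit of $x\mapsto x+p_n/q_n$ shadows that of $x\mapsto x+\omega$ with error $o(1)$.

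\textbf{The hyperbolic case $L_0>0$.} Fix a continuous, strictly invariant, uniformly expanding cone field $\{\mathcal C(\theta)\}_{\theta\in\T}$ for $(\omega,A(\cdot+i\varepsilon))$. Since $p_n/q_n$-translates differ from $\omega$-translates by at most $1/q_{n+1}$ and $\mathcal C$ is uniformly continuous, the same cone field remains strictly invariant for $(p_n/q_n,A(\cdot+i\varepsilon))$ once $n$ is large, with per-step expansion factors changed by only $o(1)$; being $1$-periodic in $\theta$, it is then a genuine invariant cone field for the period-$q_n$ cocycle, giving uniform hyperbolicity with constants uniform in $\theta\in\T$ and $|\varepsilon|\le\varepsilon_0$. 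The key is to work with the per-step cone-expansion factors, each of size $O(1)$, rather than with the matrix products: by definition $L(p_n/q_n,A,\theta+i\varepsilon)=q_n^{-1}\log\rho(A_{q_n}^{p_n/q_n}(\theta+i\varepsilon))$, and standard properties of uniformly hyperbolic cocycles identify this, up to $o(1)$, with $q_n^{-1}\sum_{k<q_n}\log(\text{expansion of }A(\theta+kp_n/q_n+i\varepsilon)\text{ in }\mathcal C)$; the shadowing bound then makes this $o(1)$-close to $q_n^{-1}\sum_{k<q_n}\log(\text{expansion of }A(\theta+k\omega+i\varepsilon)\text{ in }\mathcal C)$, which by unique ergodicity of the irrational rotation (applied to a continuous observable) tends to $L(\omega,A,\varepsilon)=L_0$ uniformly in $\theta$. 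This yields the claim uniformly in $\theta$ and in $|\varepsilon|\le\varepsilon_0$.

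\textbf{The subcritical case $L_0=0$.} Here $L(p_n/q_n,A,\theta+i\varepsilon)\ge 0$, so only a uniform upper bound $o(1)$ is needed. The integrated finite-scale exponents $L_m(\omega,A,\varepsilon)$ are convex in $\varepsilon$ and converge pointwise to $0$ on $[-\varepsilon_0,\varepsilon_0]$, hence uniformly on slightly smaller intervals, so I can fix a scale $m_0$ with $L_{m_0}(\omega,A,\varepsilon)<\eta$ there. At the fixed scale $m_0$ one has $A_{m_0}^{p_n/q_n}(\cdot+i\varepsilon)\to A_{m_0}^{\omega}(\cdot+i\varepsilon)$ uniformly as $n\to\infty$, so $L_{m_0}(p_n/q_n,A,\varepsilon)<2\eta$ for large $n$, and Fekete subadditivity of the integrated exponents of the periodic cocycle gives $L_{q_n}(p_n/q_n,A,\varepsilon)\le L_{m_0}(p_n/q_n,A,\varepsilon)+O(m_0/q_n)<3\eta$. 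Since $G_n(z):=q_n^{-1}\log\|A_{q_n}^{p_n/q_n}(z)\|$ is subharmonic and non-negative on the strip with $\int_0^1 G_n(\theta+i\varepsilon)\,d\theta=L_{q_n}(p_n/q_n,A,\varepsilon)<3\eta$, the sub-mean-value inequality over a disk of fixed radius gives $\sup_\theta G_n(\theta+i\varepsilon)\le C\eta$, with $C=C(\varepsilon_0)$, for all sufficiently small $|\varepsilon|$; hence $0\le L(p_n/q_n,A,\theta+i\varepsilon)\le C\eta$, and letting $\eta\to0$ closes the case.

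\textbf{Main obstacle.} The hard part will be getting the \emph{exact} limit $L_0$ uniformly in $\theta$ in the hyperbolic case: a soft subharmonicity/sub-mean-value argument only produces a bound $\sim C(\varepsilon_0)L_0$ with a geometric constant $>1$, useless when $L_0>0$, and a crude Lipschitz comparison of $A_{q_n}^{p_n/q_n}$ with $A_{q_n}^{\omega}$ loses a factor $e^{cq_n}$ absorbed only for Liouville $\omega$. The resolution, and the technical heart, is that regularity forces a continuous invariant cone/splitting, which realizes the Lyapunov exponent as a Birkhoff average of a bounded continuous observable, so the rational-approximation error stays $O(1)$ per step over the whole period $q_n$ -- no exponential amplification -- for every irrational $\omega$. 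Verifying persistence of the cone field under the rational perturbation, with hyperbolicity constants uniform in $\varepsilon$, is the step that requires care.
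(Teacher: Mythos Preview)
The paper does not prove Theorem~\ref{thm:dominated}; it is stated as a direct citation of \cite[Theorem~5.2]{AJS} and invoked as a black box (notably in the proof of Lemma~\ref{lem:zero_count_rational}). So there is no in-paper proof to compare against.

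That said, your outline is a correct proof in the $\mathrm{SL}(2,\R)$ setting the paper actually uses. The reduction is valid precisely because the paper restricts to real cocycles, for which $\varepsilon\mapsto L(\omega,A,\varepsilon)$ is even; combined with convexity and local constancy of $\kappa$ this forces $\kappa\equiv 0$ near~$0$, hence $L$ constant there. Your dichotomy then follows from Avila's global theory \cite{Global}. In the uniformly hyperbolic case the persistence-of-splitting argument is the right idea and correctly reduces the pointwise exponent to a Birkhoff average of a bounded continuous observable, avoiding the exponential blow-up you flag. In the subcritical case your sub-mean-value step does work, but only because $G_n\ge 0$ is $1$-periodic in~$\theta$, so the disk integral is dominated by $\int_{\varepsilon-r}^{\varepsilon+r}\int_\T G_n\,d\theta'\,d\varepsilon'\le 2r\cdot 3\eta$; you should make this explicit, since sub-mean-value alone does not pass from a horizontal-line average to a pointwise bound. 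Alternatively, within the paper's framework the subcritical upper bound is immediate from Lemma~\ref{lem:upper_semi_cont}. The original \cite{AJS} argument is more general (complex higher-dimensional cocycles) and proceeds via the equivalence of regularity with domination rather than through the $\mathrm{SL}(2)$-specific global-theory dichotomy, but for the paper's purposes your route would suffice.
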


\subsection{Transfer matrix and Dirichlet determinants}
It is well-known that the entries of the transfer matrices are the Dirichlet determinants:
\begin{align}\label{eq:M=P}
    M_{n,E}^{\omega}(\theta)=\left(\begin{matrix}
        P_{n,E}^{\omega}(\theta) & -P_{n-1,E}^{\omega}(\theta+\omega)\\
        P_{n-1,E}^{\omega}(\theta) &-P_{n-2,E}^{\omega}(\theta+\omega)
    \end{matrix}\right)\in \mathrm{SL}(2,\R),
\end{align}
in which $P_{k,E}^{\omega}(\theta):=\det(H_{[0,k-1],\omega,\theta}-E)$ is the Dirichlet determinant on the interval $[0,k-1]$.

Let 
\begin{align}\label{def:f=2-tr}
f_{n,E}^{\omega}(\theta):=\det(M_{n,E}^{\omega}(\theta)-I_2)=2-\mathrm{tr}(M_{n,E}^{\omega}(\theta)).
\end{align}
In fact $f_{n,E}^{\omega}$ is the determinant of $H_{[0,n-1],\omega,\theta}$ with periodic boundary condition, see \cite[Lemma 5.1]{HS2}.
This played an important role in Goldstein-Schlag's proof of Cantor spectrum \cite{GS3}, and Han-Schlag's proof of 
a quantitative version of Avila's almost reducibility conjecture \cite{HS2}.
$f^{p/q}_{q,E}(\theta)$ is a $1/q$-periodic function in $\theta$. This fact plays a crucial role in Avila's global theory, and our proof of Anderson localization for Liouville frequencies.

Since $M_{n,E}^{\omega}\in \mathrm{SL}(2,\R)$, one has
\begin{align}\label{eq:f=M+M}
    2I_2-M_{n,E}^{\omega}(\theta)-(M_{n,E}^{\omega}(\theta))^{-1}=f^{\omega}_{n,E}(\theta) \cdot I_2.
\end{align}
This implies
\begin{align}\label{eq:f=M2/M}
    \frac{\|(M_{n,E}^{\omega}(\theta))^2\|}{\|M_{n,E}^{\omega}(\theta)\|}-3\leq |f^{\omega}_{n,E}(\theta)|\leq \frac{\|(M_{n,E}^{\omega}(\theta))^2\|}{\|M_{n,E}^{\omega}(\theta)\|}+3\leq \|M_{n,E}^{\omega}(\theta)\|+3.
\end{align}

We shall also frequently write $f_{k,E}^{\omega}(z):=f_{k,E}^{\omega}(\theta)$ and $P_{k,E}^{\omega}(z):=P_{k,E}^{\omega}(\theta)$, with the obvious identification $z=e^{2\pi i\theta}$.

\subsection{Green's function expansion}
Let $\phi$ be such that it solves $H_{\omega,\theta}\phi=E\phi$ for some $E\in \R$.
Then for any interval $[m_1,m_2]\subset \Z$, we have for any $h\in [m_1,m_2]$ that
\begin{align}\label{eq:Green_exp}
    |\phi_h|\leq \frac{|P_{m_2-h,E}^{\omega}(\theta+(h+1)\omega)|}{|P_{m_2-m_1+1,E}^{\omega}(\theta+m_1\omega)|}|\phi_{m_1-1}|+\frac{|P_{h-m_1}^{\omega}(\theta+m_1\omega)|}{|P_{m_2-m_1+1,E}^{\omega}(\theta+m_1\omega)|}|\phi_{m_2+1}|.
\end{align}
Note that we avoid introducing the Green's function for the Schr\"odinger operator, rather, we use directly its connection to the Dirichlet determinants to avoid confusion with the Green's function on the annulus in \eqref{def:G}.

\subsection{Uniform upper semi-continuity}
The following lemma is an easy corollary of the arguments, essentially a subadditivity argument, in the proof of Lemma 5.1 in \cite{AJS}. 
\begin{lemma}\label{lem:upper_semi_cont}
For any small $\tau>0$, there exists $N=N(\tau, \omega, v, E)\geq 1$ and $\delta=\delta(\tau,\omega,v,E)>0$ such that for any $\|\omega'-\omega\|_{\T}\leq \delta$ and $n\geq N$,
\begin{align}
\frac{1}{n} \log \left\|M_{n,E}^{\omega'}(\theta)\right\|\leq L(\omega,M_{E})+\tau,
\end{align}
uniformly in $\theta\in \T$.
\end{lemma}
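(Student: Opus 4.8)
\emph{Proof plan.} Note first that the frequency $\omega$ enters $M_{n,E}^{\omega}(\theta)=M_E^{\omega}(\theta+(n-1)\omega)\cdots M_E^{\omega}(\theta)$ only through the shifts, the single matrix $M_E^{\omega}(\theta)$ being independent of $\omega$. Hence, for each fixed $m$, the map $(\omega',\theta)\mapsto M_{m,E}^{\omega'}(\theta)$ is jointly continuous on $\T\times\T$, and $1\le\|M_{m,E}^{\omega'}(\theta)\|\le C_0^{m}$ with $C_0:=2+|E|+\sup_{\T}|v|$, uniformly in $\omega'$ and $\theta$. The strategy is a two-scale subadditive (Fekete-type) block argument: (i) fix a good ``inner'' scale $m_0$ for the unperturbed cocycle; (ii) use unique ergodicity of the rotation by $m_0\omega$ to fix a long ``outer'' block length $Qm_0$ on which the relevant Birkhoff averages are controlled \emph{uniformly in the base point}; and (iii) tile $[0,n)$ by these outer blocks, absorbing the perturbation $\omega\to\omega'$ only inside one bounded-length block, where joint continuity suffices.

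\emph{Steps (i)--(ii).} Since $n\mapsto n\,L_n(\omega,M_E)$ is subadditive (from $\|XY\|\le\|X\|\|Y\|$ and the rotation invariance of $\mathrm{d}\theta$) and $L_n(\omega,M_E)\to L(\omega,M_E)=:L$, we have $L_n(\omega,M_E)\ge L$ for all $n$, so we may fix $m_0=m_0(\tau,\omega,v,E)$ with $L_{m_0}(\omega,M_E)<L+\tau/4$. Put $g(\theta):=\log\|M_{m_0,E}^{\omega}(\theta)\|$, a continuous function on $\T$ with $\int_{\T}g\,\mathrm{d}\theta=m_0L_{m_0}(\omega,M_E)$. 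Since $\omega\in\T\setminus\Q$, the rotation $\theta\mapsto\theta+m_0\omega$ is uniquely ergodic with invariant measure $\mathrm{d}\theta$, so the Birkhoff averages of the continuous function $g$ converge uniformly in $\theta$; fix $Q=Q(\tau,m_0,\omega,v,E)$ so large that
\begin{align}
\frac1Q\sum_{j=0}^{Q-1}g(\theta+jm_0\omega)\;\le\; m_0L_{m_0}(\omega,M_E)+\frac{m_0\tau}{4}\;\le\;m_0\Bigl(L+\frac{\tau}{2}\Bigr),\qquad\forall\,\theta\in\T.
\end{align}

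\emph{Step (iii).} By uniform continuity of $(\omega',\theta)\mapsto\log\|M_{m_0,E}^{\omega'}(\theta)\|$ on the compact set $\{\|\omega'-\omega\|\le\delta_0\}\times\T$ (for any fixed $\delta_0>0$), and since the base-point shift obeys $\|jm_0(\omega'-\omega)\|\le Qm_0\|\omega'-\omega\|$ for $0\le j\le Q-1$, we may pick $\delta=\delta(\tau,Q,m_0,\omega,v,E)>0$ with
\begin{align}
\log\|M_{m_0,E}^{\omega'}(\theta+jm_0\omega')\|\;\le\;g(\theta+jm_0\omega)+\frac{m_0\tau}{2},\qquad \|\omega'-\omega\|\le\delta,\ 0\le j\le Q-1,\ \theta\in\T.
\end{align}
Submultiplicativity over the $Q$ sub-blocks of length $m_0$, combined with the previous display, gives $\log\|M_{Qm_0,E}^{\omega'}(\theta)\|\le Qm_0(L+\tau)$ for all such $\omega'$ and all $\theta$. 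For general $n$, write $n=p\,(Qm_0)+r$ with $0\le r<Qm_0$; submultiplicativity over the $p$ outer blocks together with the crude bound $\log\|M_{r,E}^{\omega'}(\cdot)\|\le r\log C_0$ yields
\begin{align}
\frac1n\log\|M_{n,E}^{\omega'}(\theta)\|\;\le\;\frac{pQm_0}{n}\,(L+\tau)+\frac{r\log C_0}{n}\;\le\;L+\tau+\frac{Qm_0\log C_0}{n},
\end{align}
uniformly in $\theta$. Running the above with $\tau/2$ in place of $\tau$ and choosing $N=N(\tau,\omega,v,E)$ with $Qm_0\log C_0\le\tfrac{\tau}{2}N$ then gives the claim.

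\emph{Main obstacle.} The genuinely delicate point is the passage from $\omega$ to $\omega'$: one cannot average directly along the $\omega'$-orbit, since no arithmetic condition is imposed on $\omega'$ and the uniformity in the base point would be lost. The remedy is to freeze the outer block length $Qm_0$ using only the uniform equidistribution of the $m_0\omega$-rotation, so that within a block the shifts $jm_0\omega'$ (for $j<Q$) stay in a bounded set and the replacement of $\omega'$ by $\omega$ is handled purely by joint continuity; the rest is bookkeeping to keep every bound uniform in $\theta$ and to respect the dependency chain $m_0\to Q\to\delta\to N$.
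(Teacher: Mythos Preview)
Your proof is correct and is precisely the subadditivity/block argument the paper alludes to (it gives no self-contained proof, only citing the subadditivity argument from \cite[Lemma~5.1]{AJS}). One minor simplification: once you know from unique ergodicity that $\frac{1}{Qm_0}\log\|M_{Qm_0,E}^{\omega}(\theta)\|\le L+\tau/2$ uniformly in $\theta$, you could invoke joint continuity directly at the single scale $N_0=Qm_0$ rather than at scale $m_0$ together with the base-point correction; but your two-step version works just as well.
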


\subsection{Symmetry of zeros}
\begin{fact}\label{fact:zero_real}\cite[Fact 2.1]{HS2}
For any $\omega\in \T$, any $n\in \N$ and $E\in \R$. If $z\notin \mathcal{C}_1$ is a zero of $f_{n,E}^{\omega}(z)$ (or $P_{n,E}^{\omega}(z)$), then $1/\overline{z}$ is also a zero.
\end{fact}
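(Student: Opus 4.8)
I would prove the Fact by an elementary conjugation-symmetry argument on the transfer cocycle, handling $P_{n,E}^{\omega}$ and $f_{n,E}^{\omega}$ simultaneously. The starting point is that $v$ is real-analytic on $\T$, so its analytic extension satisfies $\overline{v(\theta)}=v(\overline{\theta})$ for complex $\theta$ (Schwarz reflection in the real axis); since $\omega,E\in\R$ this gives, entrywise,
\begin{align}
\overline{M_E^{\omega}(\theta)}=M_E^{\omega}(\overline{\theta}),\qquad \theta\in\C.
\end{align}
Because complex conjugation distributes over matrix products, forming the product in \eqref{def:MnE} yields $\overline{M_{n,E}^{\omega}(\theta)}=M_{n,E}^{\omega}(\overline{\theta})$. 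Reading off the $(1,1)$-entry via \eqref{eq:M=P} gives $\overline{P_{n,E}^{\omega}(\theta)}=P_{n,E}^{\omega}(\overline{\theta})$, and taking the trace and using \eqref{def:f=2-tr} gives $\overline{f_{n,E}^{\omega}(\theta)}=f_{n,E}^{\omega}(\overline{\theta})$.

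Next I would pass to the $z$-variable. Recall that $P_{n,E}^{\omega}(z)$ and $f_{n,E}^{\omega}(z)$ denote $P_{n,E}^{\omega}(\theta)$ and $f_{n,E}^{\omega}(\theta)$ with $z=e^{2\pi i\theta}$; since $v$ is $1$-periodic these are well defined on $\mathcal{C}_1$, and since $v$ is analytic on $\T_{\varepsilon_0}$ they extend holomorphically to the annulus $\mathcal{A}_{R_0}$, $R_0=e^{2\pi\varepsilon_0}$. Suppose $z\in\mathcal{A}_{R_0}\setminus\mathcal{C}_1$ is a zero of $P_{n,E}^{\omega}$ (resp.\ of $f_{n,E}^{\omega}$), and choose $\theta\in\C$ with $e^{2\pi i\theta}=z$; then $\operatorname{Im}\theta\neq 0$ since $|z|\neq 1$. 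By the identity above, $P_{n,E}^{\omega}(\overline{\theta})=\overline{P_{n,E}^{\omega}(\theta)}=0$ (resp.\ $f_{n,E}^{\omega}(\overline{\theta})=0$). Since $e^{2\pi i\overline{\theta}}=1/\overline{e^{2\pi i\theta}}=1/\overline{z}$ and $|1/\overline{z}|=1/|z|\neq 1$, the point $1/\overline{z}$ is a zero lying in $\mathcal{A}_{R_0}\setminus\mathcal{C}_1$, which is the claim.

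This argument has no serious obstacle; the only points needing care are bookkeeping ones. First, one should check that $P_{n,E}^{\omega}(z)$ and $f_{n,E}^{\omega}(z)$ are genuinely well defined and holomorphic on the annulus, which is immediate from $1$-periodicity of $v$ together with the product/determinant structure. Second, and more importantly, one must reflect about the unit circle $\mathcal{C}_1$ rather than about the real axis: this is exactly why the argument is run in the variable $\theta$ (where $z\mapsto 1/\overline{z}$ corresponds to $\theta\mapsto\overline{\theta}$) and not via a Laurent expansion in $z$, which would only produce the zero $\overline{z}$. An equivalent, slightly more complex-analytic route is to observe that $\overline{P_{n,E}^{\omega}(1/\overline{z})}$ and $P_{n,E}^{\omega}(z)$ are both holomorphic on $\mathcal{A}_{R_0}$ and agree on $\mathcal{C}_1$ — there $1/\overline{z}=z$ and $P_{n,E}^{\omega}(z)=\det(H_{[0,n-1],\omega,\theta}-E)\in\R$ because the matrix is real — hence agree everywhere by the identity theorem, and the conclusion follows identically (and likewise for $f_{n,E}^{\omega}$). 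Note that evenness of $v$ is not used anywhere: only that $v$ takes real values on $\T$.
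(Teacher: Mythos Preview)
Your proof is correct and is essentially the same as the paper's. The paper argues directly that $f_{n,E}^{\omega}(z)$ and $\overline{f_{n,E}^{\omega}(1/\overline{z})}$ are analytic and agree on $\mathcal{C}_1$ (where $f$ is real), hence are identical; this is exactly the ``equivalent, slightly more complex-analytic route'' you describe at the end, and your primary $\theta$-variable argument is just the same Schwarz reflection unwound one level.
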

\begin{proof}
    Since the potential function $v$ is real-valued, $f^{\omega}_{n,E}(e^{2\pi i\theta})\in \R$ for $\theta\in \T$. Hence the two analytic function $f_{n,E}^{\omega}(z)=\overline{f_{n,E}^{\omega}(1/\overline{z})}$ coincide on the unit circle $z\in \mathcal{C}_1$, which implies they are identical. 
\end{proof}

\subsection{Shnol' theorem and generalized eigenfunction}
By Shnol's theorem \cites{Be,Si1,Ha,Sch}, to prove Anderson localization, it suffices to show an arbitrary generalized eigenfunction $\phi$ to $H_{\omega,\theta}\phi=E\phi$, satisfying 
\begin{align}\label{eq:Shnol}
    1=\max(|\phi_0|, |\phi_{-1}|), \text{ and } |\phi_k|\leq C|k|, \text{ for any } k\neq 0,
\end{align}
decays exponentially. 
Throughout the rest of the paper, we fix such a generalized eigenpair $(E,\phi)$.

\section{An overview of the proof of Anderson localization}\label{sec:overview}
In this section, we give an overview of the proof, in particular, how to incorporate the two different weak/strong Liouville scale analysis presented in Sec. \ref{sec:non_exp} and \ref{sec:strong} respectively to prove decay of eigenfunction on the whole $\Z$. 
We need to pay extra attention to the applicability of the Theorems \ref{thm:AL_weak_resonant} and \ref{thm:AL_main_beta} at two consecutive scales.

Let $C_v\geq 1$ be the constant in \eqref{def:C_v}. Note that $C_v$ depends solely on $\|v\|_{L^{\infty}(\T_{\varepsilon_0})}$. 
Recall $\varepsilon_0$ is as in \eqref{eq:L_linear}.

Throughout the rest of the paper, let $\delta_1>0$ be a small constant such that
\begin{align}\label{def:delta_1}
    \delta_1^{1/4}=\frac{\min(\varepsilon_0,1, L(\omega,E)-\beta(\omega))}{10^5 C_v \max(1, L(\omega,E))}.
\end{align}
Let 
\begin{align}\label{def:c0}
    c_0=10\delta_1^{1/4}/(1+\beta(\omega)),
\end{align}
and
\begin{align}\label{def:eta}
    \eta=1000C_v\varepsilon_0^{-1}\delta_1^{1/4}<\frac{1}{100}.
\end{align}

Below, let $*=q_{k-1}/10$ if $q_{k-1}\leq e^{\delta_1 q_{k-2}}$, and $*=q_{k-1}^{1-c_0}$ if $q_{k-1}>e^{\delta_1 q_{k-2}}$. 

We divide into four cases:

\underline{Case 1.} If $q_k\leq e^{\delta_1 q_{k-1}}$ and $q_{k+1}\leq e^{\delta_1 q_k}$. 
One applies Theorem \ref{thm:AL_weak_resonant} to both the scales $n=k-1$ and $n=k$ so that $|\phi_y|$ decays on $[*, q_k/10]\bigcup [q_k/10, q_{k+1}/10]$. 
We note that the two consecutive scales leave no space uncovered around their connection at $q_k/10$.

\underline{Case 2.} If $q_k\geq e^{\delta_1 q_{k-1}}$ and $q_{k+1}\leq e^{\delta_1 q_k}$. 
One applies Theorem \ref{thm:AL_main_beta} to the scale $n=k-1$ so that $|\phi_y|$ decays on $[*, q_k^{1-c_0}]$.  One applies Theorem \ref{thm:AL_weak_resonant} to the scale $n=k$ so that $|\phi_y|$ decays on $[q_k^{1-c_0}, q_{k+1}/10]$.

\underline{Case 3.} If $q_k\leq e^{\delta_1 q_{k-1}}$ and $q_{k+1}\geq e^{\delta_1 q_k}$. 
One applies Theorem \ref{thm:AL_weak_resonant} to the scale $n=k-1$ so that $|\phi_y|$ decays on $[*, q_k/10]$, and Theorem \ref{thm:AL_main_beta} to the scale $n=k$ so that $|\phi_y|$ decays on $[q_k/10, q_{k+1}^{1-c_0}]$.

\underline{Case 4.} If $q_k\geq e^{\delta_1 q_{k-1}}$ and $q_{k+1}\geq e^{\delta_1 q_k}$. 
One applies Theorem \ref{thm:AL_main_beta} to both scales $n=k-1$ and $n=k$ so that $|\phi_y|$ decays on $[*, q_k^{1-c_0}]\bigcup [q_k^{1-c_0}, q_{k+1}^{1-c_0}]$.

Therefore by gluing the scales together, we obtain exponential decay of $|\phi_y|$ on the whole $\Z$.

\section{Weak Liouville scales}\label{sec:non_exp}
Throughout this section, we assume that $q_{n+1}\leq e^{\delta_1 q_n}$, except in Lemma \ref{lem:LDT_non_exp_m=qn}. This is what we called a weak Liouville scale.
Our goal of this section is to prove exponential decay of the generalized eigenfunction, roughly speaking on the scale of $[q_n/10, q_{n+1}/10]$.
\begin{theorem}\label{thm:AL_weak_resonant}
If $q_{n+1}\leq e^{\delta_1 q_n}$, then for 
\begin{align}
|y|\in 
    \begin{cases}
        [q_n^{1-c_0}, q_{n+1}/10], \text{ if } q_n\geq e^{\delta_1 q_{n-1}}\\
        [q_n/10, q_{n+1}/10], \text{ if } q_n\leq e^{\delta_1 q_{n-1}},
    \end{cases}
\end{align} 
we have 
$$|\phi_y|\leq e^{-L(\omega,E) |y|/40}.$$
\end{theorem}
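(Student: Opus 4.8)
The plan is to deduce the stated exponential decay site by site from the Green's function expansion \eqref{eq:Green_exp}, fed by ``good'' intervals produced from a large deviation estimate and a sharp complexity bound for the subharmonic function $v^g_{m,E}$ attached to the analytic function $g^\omega_{m,E}$ of \eqref{def:g} (recall $g^\omega_{m,E}(e^{2\pi i\theta})=\|M^\omega_{m,E}(\theta)\|_{\mathrm{HS}}^2$), on the admissible window of scales $m\in(q_n/5,q_{n+1}/5)$ — admissible precisely because $q_{n+1}\le e^{\delta_1 q_n}$ — following the pigeonhole scheme of \cite{HS1} but adapted to Liouville $\omega$. By symmetry it suffices to treat $y>0$.

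First I would record the two analytic inputs. \emph{(i) Large deviation.} Since $|v^g_{m,E}-u_{m,E}|=O(1/m)$, the large deviation estimate for $u_{m,E}$ on weak Liouville scales — available because $\beta(\omega)\le\delta_1<L(\omega,E)$, as in \cite{HZ} and strengthened here so as to cover the entire window with a deviation $\epsilon$ that is a fixed multiple of $\delta_1$ — transfers to $v^g_{m,E}$, giving $\mathrm{mes}(B_m)\le e^{-cm}$ for $B_m:=\{\theta\in\T:v^g_{m,E}(\theta)<L(\omega,E)-\epsilon\}$, with $c>5\delta_1$ after shrinking $\delta_1$. \emph{(ii) Complexity.} Because $\kappa(\omega,E)=1$, the circular average of $\tfrac1{2m}\log|g^\omega_{m,E}(e^{2\pi i(\cdot+i\varepsilon)})|$ is close to $L(\omega,E,\varepsilon)=L(\omega,E)+2\pi|\varepsilon|$ for $|\varepsilon|\le\varepsilon_0$ (from the complexified-phase LDM together with \eqref{eq:L_linear} and Lemma \ref{lem:upper_semi_cont}), which is a V-shaped function of $\log|z|$ with its minimum on $\mathcal{C}_1$; passing to the perturbation $\tilde g^\omega_{m,E}:=g^\omega_{m,E}-e^{2m(L(\omega,E)-\epsilon)}$, whose zeros on $\mathcal{C}_1$ are precisely the endpoints of the arcs making up $B_m$, a Jensen-type count of the zeros of $\tilde g^\omega_{m,E}$ in a thin annulus about $\mathcal{C}_1$ (using that growth, Avila's quantization, and the reflection symmetry of zeros, which holds for $g^\omega_{m,E}$ by the argument of Fact \ref{fact:zero_real} since $g^\omega_{m,E}$ is real on $\mathcal{C}_1$) yields that $B_m$ has at most $2m+o(m)$ connected components. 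This replaces the Cartan estimate of \cite{HS1}, which there forced $\omega\in\mathrm{DC}$.

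With (i)–(ii) in hand, fix a scale $\ell\in(q_n/5,q_{n+1}/5)$ and call a site $y$ in the window $[q_n/10,q_{n+1}/10]$ \emph{$\ell$-singular} if $\theta+(y-\lceil\ell/2\rceil)\omega\in B_\ell$. Since that window has length $<q_{n+1}$, the points $\{\theta+j\omega\}$ over it are $\ge\tfrac12 e^{-\beta_n q_n}\ge\tfrac12 e^{-\delta_1 q_n}$-separated on $\T$ (by \eqref{eq:qn_omega}), which — as $c>5\delta_1$ and $\ell>q_n/5$ — far exceeds the diameter $e^{-c\ell}$ of any arc of $B_\ell$; hence each of the $\le 2\ell$ arcs of $B_\ell$ captures at most one such orbit point, so at most $2\ell$ sites of the window are $\ell$-singular. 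For a non-singular $y$, the interval $[m_1,m_2]$ of length $\approx\ell$ centred at $y$ (with $m_1=y-\lceil\ell/2\rceil$, possibly adjusted by one site) satisfies $v^g_{\ell,E}(\theta+m_1\omega)\ge L-\epsilon$, forcing one of $|P^\omega_{\ell',E}|$ ($\ell'\in\{\ell,\ell-1,\ell-2\}$, at $\theta+m_1\omega$ or $\theta+(m_1+1)\omega$) to exceed $e^{(L-\epsilon)\ell'-O(1)}$; together with the universal bound $|P^\omega_{k,E}|\le e^{(L+\tau)k}$ from Lemma \ref{lem:upper_semi_cont} this gives $|\phi_y|\le e^{-L\ell/3}(|\phi_{m_1-1}|+|\phi_{m_2+1}|)$. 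Iterating this inequality, starting from the inductively-controlled region of radius $q_n/10$ or $q_n^{1-c_0}$ (furnished by the scale-$(n-1)$ analysis via the dichotomy of Section \ref{sec:overview}) and using the Shnol bound $|\phi_k|\le C|k|\le Cq_{n+1}$ at the outermost endpoints (absorbed since $\log q_{n+1}\le\delta_1 q_n\ll L\ell$), propagates the decay across the window; the at-most-$2\ell$ singular sites are stepped over by shifting the interval by one site or by a small change of scale, this being possible because, by \eqref{eq:qn_omega} again, a run of consecutive singular sites forces the corresponding arcs of $B_\ell$ to lie far apart. The cumulative geometric loss against $e^{-L|y|/40}$ leaves ample room (this is where the exponent $1/40$ is used), and for $|y|$ below $q_n/5$ a single application with $\ell\approx q_n/5$, both endpoints in the controlled region, already suffices.

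The main obstacle, granting (i)–(ii), is exactly this last step: with only the \emph{linear} complexity bound $2m$ available — no single scale dominates the number of singular sites relative to the window's length — the iteration and the step-overs must be organized carefully across scales in the narrow window $(q_n/5,q_{n+1}/5)$ while keeping the boundary terms of \eqref{eq:Green_exp} under control, and this is where the weak-Liouville threshold \eqref{def:delta_1}, the gap estimate \eqref{eq:qn_omega}, and the non-resonance \eqref{eq:theta_non_res} of $\theta$ must be used in concert. The separate proofs of the constant-deviation LDM for $v^g_{m,E}$ on weak Liouville scales and of the zero count for $\tilde g^\omega_{m,E}$ in terms of Avila's acceleration are where the genuinely new ideas of the paper are concentrated.
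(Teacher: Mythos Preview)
Your proposal captures the correct analytic inputs --- the LDM for $v^g_{m,E}$ and the complexity bound $\le 2(1+\eta)m$ when $\kappa=1$ --- but the way you combine them into decay of $\phi$ has a genuine gap. With $2m$ arcs in $B_\ell$, the pigeonhole does not close: for a fixed $y$ and $\ell\approx y/2$, the $\sim 0.8\ell$ candidate left endpoints $m_1$ for an interval $[m_1,m_2]\ni y$ may all lie among the $\le 2\ell$ singular sites, so a good interval through $y$ is not guaranteed. Your proposed cure --- iterate and ``step over'' singular sites by shifting or changing scale --- is not substantiated: distinct consecutive orbit points simply land in distinct arcs, and nothing in your separation argument prevents the $2\ell$ singular sites from being consecutive. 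You mention the non-resonance \eqref{eq:theta_non_res} only in passing and never invoke the evenness of $v$; without these, the scheme cannot close.

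The paper's argument is structurally different and hinges precisely on evenness. From $P^\omega_{k,E}(\theta-\tfrac{k-1}{2}\omega)=P^\omega_{k,E}(-\theta-\tfrac{k-1}{2}\omega)$ one obtains $g^\omega_{m,E}(e^{2\pi i(\theta-\frac{m-1}{2}\omega)})=g^\omega_{m,E}(e^{2\pi i(-\theta-\frac{m-1}{2}\omega)})$, so $\mathcal{B}^g_{m}$ breaks into at most $(1+\eta)m$ \emph{reflected pairs} $U_{m,j}\cup(-U_{m,j}-(m-1)\omega)$ --- half your count. With $m_y=[y/2]$ one takes two index sets: $I_1^y\subset[-\tfrac{9}{10}m_y,-\tfrac{3}{5}m_y]$ near $0$ and $I_2^y\subset[y-\tfrac{9}{10}m_y,y-\tfrac{1}{10}m_y]$ near $y$, with $|I_1^y\cup I_2^y|>(1+\eta)m_y$. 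The Shnol normalization $\max(|\phi_0|,|\phi_{-1}|)=1$ forces every $k\in I_1^y$ to be singular (else a single expansion on $[k,k+m_y-1]$ would make $|\phi_0|$ exponentially small). The non-resonance $\theta\in\Theta$ is then used in its $2\theta$-form: comparing $\theta+k\omega$ with $-(\theta+k'\omega)-(m_y-1)\omega$ yields $\|2\theta+(k+k'+m_y-1)\omega\|\ge e^{-\delta_1|k+k'+m_y-1|}$, which together with $\|(k-k')\omega\|\ge\|q_n\omega\|$ shows that each reflected pair captures at most one orbit point from $I_1^y\cup I_2^y$. Pigeonhole now gives some good $k_2^y\in I_2^y$, and a \emph{single} Green expansion on the corresponding length-$m_y$ interval yields $|\phi_y|\le e^{-Ly/40}$ directly, with no iteration. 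The halving of complexity via evenness and the $2\theta$-repulsion are exactly what your sketch is missing.
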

\begin{remark}
It follows from a standard argument that iterating the Green's function expansion in \eqref{eq:Green_exp} leads to a sharp decay
$$|\phi_y|\leq e^{-(L(\omega,E)-CC_v\delta_1^{1/4})|y|},$$
for some absolute constant $C>0$. We leave this for interested readers.
\end{remark}
\begin{proof} 
In the proof, we shall sometimes write $L(\omega,E,\varepsilon)$ as $L(\varepsilon)$ and $L_m(\omega,E,\varepsilon)$ as $L_m(\varepsilon)$ for simplicity.
Without loss of generality, we assume $y>0$. 

Recall that $g_{m,E}^{\omega}$ is as in \eqref{def:g}.
We will in fact bound the number of zeros of $g_{m,E}^{\omega}(z)-e^{2m(L_m-\delta)}$ near the unit circle, for small constant $\delta>0$, and prove a large deviation estimate for the following function:
\begin{align}
    v_{m,E}(\theta):=\frac{1}{2m}\log (g_{m,E}^{\omega}(e^{2\pi i\theta})).
\end{align}
Let 
\begin{align}\label{def:B^g_m}
    \mathcal{B}^g_{m,\delta,E}:=\{\theta\in \T: v_{m,E}(\theta)<L_m(\omega,E)-\delta\}.
\end{align}
Note that for $\theta\in \T$, $g_{m,E}^{\omega}(e^{2\pi i\theta})=\|M_{m,E}^{\omega}(\theta)\|_{\mathrm{HS}}^2>0$, hence
\begin{align}\label{eq:B^g_m>0}
    \mathcal{B}^g_{m,\delta,E}=\{\theta\in \T: 0<g_{m,E}^{\omega}(\theta)<e^{2m(L_m(\omega,E)-\delta)}\}.
\end{align}
Therefore each connected component (which is an interval) of $\mathcal{B}^g_{m,\delta,E}$ has two endpoints in $\{\theta\in \T: g_{m,E}^{\omega}(\theta)=e^{2m(L_m-\delta)}\}$. 
Thus the number of connected components of $\mathcal{B}^g_{m,\delta,E}$ is controlled by half of the number of zeros of $g_{m,E}^{\omega}(z)-e^{2m(L_m-\delta)}$.

\subsection{Large deviation estimates}
\begin{lemma}\label{lem:LDT_non_exp}
Let $C_v$ be as in \eqref{def:C_v}.
    Assume $q_{n+1}\leq e^{\delta_1 q_n}$. Then for any $m$ such that $10 q_n \leq m\leq q_{n+1}/5$, with $n$ large enough, we have
    \begin{align}
        \mathrm{mes}(\mathcal{B}^g_{m,1500C_v \delta_1^{1/2},E})\leq e^{-100\delta_1 m}.
    \end{align}
\end{lemma}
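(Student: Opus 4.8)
The plan is to exploit the identity $g_{m,E}^\omega(e^{2\pi i\theta}) = \|M_{m,E}^\omega(\theta)\|_{\mathrm{HS}}^2$ so that $v_{m,E}(\theta) = m^{-1}\log\|M_{m,E}^\omega(\theta)\| + O(m^{-1})$, and then transport the classical large deviation estimate for $u_{m,E}(\theta)=m^{-1}\log\|M_{m,E}^\omega(\theta)\|$ into this setting. The key point is that in the regime $10q_n \le m \le q_{n+1}/5$, the orbit $\{\theta+k\omega\}_{k=0}^{m-1}$ still behaves well with respect to $q_n$-periodicity: since $q_{n+1}\le e^{\delta_1 q_n}$, we have $\|q_n\omega\| \ge e^{-\delta_1 q_n}$, which is not too small compared to $1/m$. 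Concretely, I would first write $m = \ell q_n + r$ with $0\le r < q_n$ and $\ell \ge 10$, decompose the $m$-step transfer matrix into $\ell$ blocks of length $q_n$ (plus a remainder), and use the fact that $M_{q_n,E}^\omega$ is, up to $o(1)$ in the exponent, close to the periodic matrix $M_{q_n,E}^{p_n/q_n}$ uniformly in $\theta$ (via Lemma 2.13 / uniform upper semicontinuity, Lemma 2.19, and Theorem 2.15 on regular/dominated cocycles, using that we are on the first supercritical stratum where $\kappa = 1$ so the cocycle is regular). This is the mechanism by which $L_m(\omega,E)$ is controlled and by which the system retains enough ergodicity.

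The core estimate is the subharmonic-function large deviation bound. The function $\theta \mapsto v_{m,E}(\theta) = (2m)^{-1}\log|g_{m,E}^\omega(e^{2\pi i\theta})|$ extends to a subharmonic function on an annulus $\mathcal{A}_R$ for $R = e^{2\pi\varepsilon_0}$ (say), with $\sup$ bounded by $L(\omega,E,\varepsilon_0)+\tau \le L(\omega,E)+2\pi\varepsilon_0\kappa + \tau$ and with Riesz mass essentially equal to the zero count of $g_{m,E}^\omega$, which is $O(m)$ on the first supercritical stratum. The standard route is: (i) bound the BMO norm of $v_{m,E}$ on the circle using the subharmonicity and the control on $\sup v_{m,E}$ and on $\int v_{m,E} = L_m + o(1)$; (ii) apply the John–Nirenberg inequality to get $\mathrm{mes}(\{|v_{m,E} - \langle v_{m,E}\rangle| > t\}) \le e^{-ct/\|v_{m,E}\|_{\mathrm{BMO}}}$; (iii) choose $t \sim \delta_1^{1/2}$ and note $\|v_{m,E}\|_{\mathrm{BMO}} \lesssim C_v \varepsilon_0^{-1}$ (the $\varepsilon_0^{-1}$ and $C_v$ enter through the harmonic measure comparison on the annulus of width $\varepsilon_0$), which yields the exponent $-c\,\delta_1^{1/2} m / (C_v\varepsilon_0^{-1}) \le -100\delta_1 m$ after choosing constants consistently with \eqref{def:delta_1}, \eqref{def:c0}, \eqref{def:eta}. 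One must be a little careful that the deviation threshold $1500 C_v\delta_1^{1/2}$ absorbs both the $o(1)$ errors from $L_m$ vs.\ $\langle v_{m,E}\rangle$ and the genuine BMO deviation, and that the constant $1500$ is chosen large enough relative to the absolute constants in John–Nirenberg and the harmonic-measure estimate.

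The main obstacle, and where this lemma genuinely improves on prior work, is step (i): controlling $\|v_{m,E}\|_{\mathrm{BMO}}$ — equivalently, controlling $\sup v_{m,E}$ and showing $\langle v_{m,E}\rangle$ is close to $L_m(\omega,E)$ — \emph{uniformly for all $m$ in the window $[10q_n, q_{n+1}/5]$}, without assuming $\omega \in \mathrm{DC}$. This is exactly the regime where the system is "less ergodic," and the usual almost-invariance arguments degrade. The resolution is to never go below scale $q_n$: the block decomposition into $\ell \ge 10$ pieces of length $q_n$ means we only ever need the Lyapunov behavior at scale $q_n$, where the rational approximation $p_n/q_n$ is effective (Theorem 2.15), together with the Avalanche Principle to glue the $\ell$ blocks — and the avalanche principle hypotheses ($\log\|M_{q_n}\| \gg \log\|M_{q_n}\| - \log\|M_{2q_n}\|/\cdots$, i.e.\ a gap condition) hold because $q_n L(\omega,E) \gg \delta_1 q_n$ on the supercritical stratum while the errors are $O(\delta_1 q_n)$. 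I would set this up carefully, tracking that the accumulated AP error over $\ell$ blocks is $O(\ell \cdot e^{-cq_n}) = o(1)\cdot m$, hence harmless against the target deviation $1500 C_v\delta_1^{1/2}$. The remaining bookkeeping — the remainder block of length $r<q_n$, the passage from $\|\cdot\|_{\mathrm{HS}}$ to $\|\cdot\|$, and the $e^{-2m\delta}$-perturbation controlling the complexity — is routine and I would relegate it to short lemmas or remarks.
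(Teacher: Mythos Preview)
Your proposal diverges substantially from the paper's proof and contains genuine gaps.

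\textbf{What the paper actually does.} The paper proves Lemma~\ref{lem:LDT_non_exp} in Section~\ref{sec:LDT_non_exp} by the Bourgain--Goldstein/Han--Zhang Fej\'er-averaging scheme, with no use of BMO/John--Nirenberg, no Avalanche Principle, and no rational approximation. Concretely: set $R=[\delta m]$ with $\delta=10\delta_1^{1/2}$, form $v_m^{(R)}(\theta)=\sum_{|j|<R}\frac{R-|j|}{R^2}v_m(\theta+j\omega)$, and use almost shift invariance \eqref{eq:vm_shift_invariance} to bound $\|v_m-v_m^{(R)}\|_{L^\infty}\le C_v\delta$. Then Fourier-expand $v_m^{(R)}-\hat v_m(0)$ and split into frequency ranges $|k|\le\delta^{-2}$, $\delta^{-2}<|k|<q_n/4$, $q_n/4\le|k|<q_{n+1}/4$, $q_{n+1}/4\le|k|<e^{4\delta^2 m}$, and the tail $|k|\ge e^{4\delta^2 m}$. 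Each bounded range is controlled in $L^\infty$ via the Fej\'er kernel bounds \eqref{eq:FR_1}--\eqref{eq:FR_3} together with the Fourier-coefficient decay \eqref{eq:Fourier_1/k}, \eqref{eq:Fourier_1/mk}; the range $[q_n/4,q_{n+1}/4)$ contributes the term $\sim\frac{\log q_{n+1}}{\delta q_n}\le\delta_1^{1/2}$, which is where the weak-Liouville hypothesis $q_{n+1}\le e^{\delta_1 q_n}$ enters. The tail is controlled in $L^2$ and Chebyshev gives the measure estimate $e^{-\delta^2 m}$.

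\textbf{Gaps in your approach.} First, your invocation of Theorem~\ref{thm:dominated} at $\varepsilon=0$ is incorrect: on the supercritical stratum the cocycle is \emph{not} regular at $\varepsilon=0$, since $L(\varepsilon)=L(0)+2\pi|\varepsilon|$ has a kink there and $\kappa$ jumps from $-1$ to $+1$. (The paper only uses regularity at nonzero $\varepsilon$, and only in Section~\ref{sec:zero_fpq}, not here.) In particular the lemma is stated and proved without any restriction to $\kappa=1$. Second, your BMO/John--Nirenberg step is inconsistent: you assert $\|v_{m,E}\|_{\mathrm{BMO}}\lesssim C_v\varepsilon_0^{-1}=O(1)$, which would yield $\mathrm{mes}(\{|v_{m,E}-\langle v_{m,E}\rangle|>t\})\le e^{-ct}$ with a \emph{constant} exponent, not $e^{-c t m}$. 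To get the factor of $m$ one needs BMO of order $1/m$, which requires precisely the almost-invariance/averaging argument you dismiss; and making that quantitative for $m\in[10q_n,q_{n+1}/5]$ is exactly what the Fej\'er-kernel bookkeeping accomplishes. Third, the Avalanche Principle cannot bootstrap the LDT as you describe: AP requires uniform pointwise lower bounds on each $q_n$-block, which is what an LDT at scale $q_n$ would provide --- so your argument is either circular or presupposes an inductive input you never state.
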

As we mentioned, the large deviation estimate can be proved by using almost shift invariance of $v_{m,E}(\theta)$, or the equivalence between $v_{m,E}(\theta)$ and $m^{-1}\log \|M_{m,E}^{\omega}(\theta)\|$.
This proof is close to that in \cite{HZ}, which has a similar weak Liouville condition (in which it was assumed that, roughly speaking, each $q_{n+1}\leq e^{\delta_1 q_n}$ holds for all $n$). We postpone it to Sec. \ref{sec:LDT_non_exp}.

We also prove the following new large deviation estimates, roughly speaking for $m$ of size comparable to $q_n$. The novelty is that it does not require the weak Liouville assumption $q_{n+1}\leq e^{\delta_1 q_n}$.
\begin{lemma}\label{lem:LDT_non_exp_m=qn}
Let $C_v$ be as in \eqref{def:C_v}.
    For any $m$ such that 
    \begin{align}
    10 q_n\geq m\geq 
    \begin{cases}
        5\delta_1^{1/4} q_n^{1-c_0}, \text{ if } q_n\geq e^{\delta_1 q_{n-1}}\\
        5\delta_1^{1/4} q_n, \text{ if } q_n\leq e^{\delta_1 q_{n-1}},
    \end{cases}
    \end{align}
    with $n$ large enough, we have
    \begin{align}
        \mathrm{mes}(\mathcal{B}^g_{m,1000C_v \delta_1^{1/4},E})\leq e^{-100\delta_1 m}.
    \end{align}
\end{lemma}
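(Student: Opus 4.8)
The plan is to reduce this statement, which concerns $m$ of size comparable to $q_n$ (without the weak Liouville assumption $q_{n+1}\leq e^{\delta_1 q_n}$), to the large deviation content already available, exploiting that $g_{m,E}^{\omega}(e^{2\pi i\theta})=\|M_{m,E}^{\omega}(\theta)\|_{\mathrm{HS}}^2$ so that $v_{m,E}(\theta)$ is, up to an $O(1/m)$ additive error, equal to $u_{m,E}(\theta)=m^{-1}\log\|M_{m,E}^{\omega}(\theta)\|$. Thus it suffices to prove a large deviation estimate for $u_{m,E}$ at these scales, and then translate: if $v_{m,E}(\theta)<L_m(\omega,E)-1000C_v\delta_1^{1/4}$ then $u_{m,E}(\theta)<L_m(\omega,E)-1000C_v\delta_1^{1/4}+O(1/m)$, and since $L_m(\omega,E)\to L(\omega,E)>0$ we may absorb the $O(1/m)$ error for $n$ large (recall $m\geq 5\delta_1^{1/4}q_n^{1-c_0}\to\infty$).

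The core is the large deviation estimate for $u_{m,E}$ at scale $m\asymp q_n$. First I would invoke the almost shift invariance of $u_{m,E}$: for $m$ in the stated range, $u_{m,E}(\theta+\omega)$ and $u_{m,E}(\theta)$ differ on a set of controlled measure, because shifting by $\omega$ changes only a bounded-norm boundary factor in the transfer matrix product, with the block-resolvent/Green's function bound giving that the change in $u_{m,E}$ is $O(1/m)$ off an exceptional set. Second, I would compare $\langle u_{m,E}\rangle = L_m(\omega,E)$ (which is exactly the average, by \eqref{def:LE}) with the true quantity. The subharmonicity of $\theta\mapsto (2m)^{-1}\log|g_{m,E}^{\omega}(e^{2\pi i\theta})|$ in a neighborhood of the unit circle, together with the uniform upper bound from Lemma \ref{lem:upper_semi_cont} (applied with $\omega'=\omega$, giving $\sup_\theta u_{m,E}(\theta)\leq L(\omega,M_E)+\tau$ for $n$ large), lets one run the standard Bourgain--Goldstein-type argument: subharmonic functions with bounded mean and bounded sup on the circle satisfy a sub-mean-value BMO estimate, hence a John--Nirenberg exponential decay $\mathrm{mes}(\{|u_{m,E}-\langle u_{m,E}\rangle|>\epsilon_0\})\leq e^{-c\epsilon_0 m}$ for suitable $\epsilon_0$. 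Taking $\epsilon_0 \asymp C_v\delta_1^{1/4}$ and the exceptional-set budget $e^{-100\delta_1 m}$ (which is much larger than $e^{-cm}$ since $\delta_1$ is small) closes the estimate. Here the key point, and the place where one does not need the weak Liouville hypothesis, is that $m$ is only comparably large to $q_n$, so the almost-invariance step uses translation by a single $\omega$ and does not require iterating past a resonance at scale $q_{n+1}$.

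The main obstacle I anticipate is the almost shift invariance / averaging step in the genuinely Liouville regime where $q_{n+1}$ can be arbitrarily large: one must ensure the comparison of $u_{m,E}(\theta)$ with its translates, and with the mean $L_m(\omega,E)$, does not secretly require $\|k\omega\|$ to be bounded below for $k$ up to $m$ — which would fail. The resolution is that at scale $m\leq 10q_n$ the relevant shifts are by amounts $\leq m < q_{n+1}$, and by \eqref{eq:qn_omega_min} these stay quantitatively separated ($\|k\omega\|\geq\|q_n\omega\|$ for $1\leq k<q_{n+1}$), so the Green's function / Poisson-kernel estimates used in the almost-invariance argument remain uniform with constants depending only on $v$ and $E$. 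A secondary technical point is handling the lower end $m\geq 5\delta_1^{1/4}q_n^{1-c_0}$ in the case $q_n\geq e^{\delta_1 q_{n-1}}$: here $m$ may be much smaller than $q_n$, so the averaging should be done relative to the rational approximant $p_n/q_n$ — one uses Theorem \ref{thm:dominated} (regularity of the cocycle on the first supercritical stratum, $\kappa=1$, which is regular) to replace $L_m(\omega,E)$ by the periodic Lyapunov exponent $L(p_n/q_n,M_E,\theta)$ up to $o(1)$, and then exploits periodicity of $f^{p_n/q_n}_{q_n,E}$ to get the deviation bound with $\delta_1^{1/4}$ loss. I would postpone the detailed verification of both points to Sec.~\ref{sec:LDT}, parallel to the treatment of Lemma \ref{lem:LDT_non_exp}.
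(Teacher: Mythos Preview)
Your proposal has two concrete gaps.

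First, the John--Nirenberg step as you state it is not correct: John--Nirenberg gives $\mathrm{mes}(\{|u-\langle u\rangle|>\lambda\})\leq Ce^{-c\lambda/\|u\|_{\mathrm{BMO}}}$, i.e., exponential decay in the \emph{deviation} $\lambda$, not in $m$. Since $\|v_{m,E}\|_{\mathrm{BMO}}$ is $O(1)$ (not $O(1/m)$), this does not produce a bound of the form $e^{-c\epsilon_0 m}$ for fixed $\epsilon_0$. In the paper's (and Han--Zhang's) approach, the $m$ in the exponent comes instead from the Fej\'er averaging trick: the shift invariance $|v_m(\theta+\omega)-v_m(\theta)|\leq C_v/m$ is converted into the Fourier bound $|\hat v_m(k)|\leq C_v/(m\|k\omega\|)$ (Lemma~\ref{lem:hatu_small_k}), and the exponential-in-$m$ measure bound arises from the $L^2$ tail $\sum_{|k|>e^{\delta^4 m}}|\hat v_m(k)|^2$. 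What remains --- and this is the heart of the proof --- is to show that the low and mid Fourier modes contribute $O(\delta_1^{1/4})$ in $L^\infty$. This is genuinely arithmetic: one must sum $|\hat v_m(k)F_R(k)|$ over $k$ near multiples of the denominators $q_{n-\ell}\leq q_{n-1}$, and it is precisely here that the case split $q_n\gtrless e^{\delta_1 q_{n-1}}$ and the parameter $c_0$ enter (in the paper, via the term $c_0\log q_n/q_{n-1}$ in Lemma~\ref{lem:U2_2}). Your remark that ``the averaging does not require iterating past a resonance at scale $q_{n+1}$'' is true but beside the point: the obstruction is at the \emph{lower} denominators, where $\|q_{n-1}\omega\|$ can be as small as $1/(2q_n)$.

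Second, your fallback suggestion to use Theorem~\ref{thm:dominated} at $\varepsilon=0$ for the small-$m$ case cannot work: that theorem requires the cocycle to be regular, and on the first supercritical stratum $(\kappa=1,\ L>0)$ the function $\varepsilon\mapsto L(\omega,E,\varepsilon)=L(\omega,E)+2\pi|\varepsilon|$ has a corner at $\varepsilon=0$, so $(\omega,M_E)$ is \emph{not} regular there. The paper only invokes Theorem~\ref{thm:dominated} at $\varepsilon\neq 0$ (in Section~\ref{sec:zero_fpq}), where regularity does hold.
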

This lemma also plays an important role in our study of the eigenfunction in the weakly resonant regimes of the strong Liouville sales, e.g. $[\nu q_n, (1-\nu)q_n]$ for small $\nu>0$, at a scale $q_n$ that $q_{n+1}\geq e^{\delta_1 q_n}$. 
We present the proof of Lemma \ref{lem:LDT_non_exp_m=qn} in Sec. \ref{sec:LDT_non_exp_m=qn}.

Next, we turn to the zero count.

\subsection{Zero count}

\ 

Below and throughout the rest of the section, $\kappa:\equiv\kappa(\omega,E)$. 
\begin{lemma}\label{lem:zero_gm}
Let $\eta$ be as in \eqref{def:eta}.
    Let $\mathcal{N}^g_{m,\delta,E}(\varepsilon):=\#\{z\in \mathcal{A}_{e^{2\pi\varepsilon}}: g_{m,E}^{\omega}(z)=e^{2m(L_m(\omega,E)-\delta)}\}$. 
    For any $\delta\geq 1100C_v\delta_1^{1/4}$, $\varepsilon_1=2\eta/(1+2\eta)\varepsilon_0$, and $m$ large enough (depending on $\varepsilon_0,\delta,\eta$) satisfying
    \begin{align}\label{eq:range_m}
    m\in 
        \begin{cases}
            [10q_n, q_{n+1}/5], \text{ if } q_{n+1}\leq \delta_1 q_n,\\
            [5\delta_1^{1/4}q_n^{1-c_0}, 10q_n], \text{ if } q_n\geq e^{\delta_1 q_{n-1}},\\
            [5\delta_1^{1/4}q_n, 10q_n], \text{ if } q_n\leq e^{\delta_1 q_{n-1}},
        \end{cases}
    \end{align}
    we have $\mathcal{N}^g_{m,\delta,E}(\varepsilon_1)\leq 4(1+\eta) \kappa m$.
\end{lemma}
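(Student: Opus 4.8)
The plan is to count the zeros of $g_{m,E}^{\omega}(z)-e^{2m(L_m-\delta)}$ in the annulus $\mathcal{A}_{e^{2\pi\varepsilon_1}}$ via Jensen's formula, comparing the number of zeros with the growth of $\log|g_{m,E}^{\omega}(z)-e^{2m(L_m-\delta)}|$ on a slightly larger annulus. First I would observe that $g_{m,E}^{\omega}(e^{2\pi i\theta})=\|M_{m,E}^{\omega}(\theta)\|_{\mathrm{HS}}^2$, so on the unit circle $v^g_{m,E}(\theta)=(2m)^{-1}\log g_{m,E}^{\omega}(e^{2\pi i\theta})$ is comparable (up to $O(1/m)$) to $m^{-1}\log\|M_{m,E}^{\omega}(\theta)\|$. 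Hence the large deviation estimates in Lemmas \ref{lem:LDT_non_exp} and \ref{lem:LDT_non_exp_m=qn} give, for the relevant ranges of $m$ in \eqref{eq:range_m}, that $v^g_{m,E}(\theta)\geq L_m(\omega,E)-C_v\delta_1^{1/4}$ off a set of exponentially small measure. Combining this with the uniform upper bound from Lemma \ref{lem:upper_semi_cont} (applied to a rational approximant, giving $v^g_{m,E}(\theta)\leq L(\omega,E)+\tau$ uniformly), I would conclude that the circular average $\langle v^g_{m,E}\rangle$ on $\mathcal{C}_1$ is within $O(C_v\delta_1^{1/4})$ of $L(\omega,E)$, and similarly on circles $\mathcal{C}_r$ for $1\leq r\leq e^{2\pi\varepsilon}$ using the linearity \eqref{eq:L_linear} of $\varepsilon\mapsto L(\omega,E,\varepsilon)$ together with $\kappa(\omega,E)=1$ (which is implicit in the hypothesis via $\kappa\equiv\kappa(\omega,E)$), so that $\langle v^g_{m,E}\rangle_{\mathcal{C}_r}\approx L(\omega,E)+2\pi\kappa\log r$ for $0\le\log r\le 2\pi\varepsilon_1$, up to $O(C_v\delta_1^{1/4})$ errors.

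Next I would set up the zero count proper. Let $\varphi(z):=g_{m,E}^{\omega}(z)-e^{2m(L_m-\delta)}$. On a circle $\mathcal{C}_r$ with $r$ slightly larger than $e^{2\pi\varepsilon_1}$ but still inside the domain of analyticity (this is where $\varepsilon_1=2\eta/(1+2\eta)\varepsilon_0<\varepsilon_0$ is chosen, leaving room), I would bound $\log|\varphi(z)|$ from above by $\log g_{m,E}^{\omega}(z)+O(1)\le 2m(L(\omega,E)+2\pi\kappa\cdot r_*+\tau)$ where $r_*$ is the outer radius, using subharmonicity and the circular-average control. For the lower bound on a slightly smaller circle I would use the large deviation estimate to find a circle $\mathcal{C}_{r'}$ on which $g_{m,E}^{\omega}(z)$ exceeds $e^{2m(L_m-\delta)}$ by a definite exponential factor (so $|\varphi(z)|\gtrsim g_{m,E}^{\omega}(z)/2\geq \tfrac12 e^{2m(L_m-\delta)}$ there), i.e. $\log|\varphi(z)|\ge 2m(L_m-\delta)-O(1)$ on a suitable annular circle. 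Then Jensen's formula on the annulus $\mathcal{A}_{r'/r_*}$ bounds the number of zeros of $\varphi$ inside by $2m$ times the difference of these two averages divided by the log of the radius ratio; plugging in the averages and optimizing the radii so that $\log(r_*/r')\approx 2\pi\varepsilon_1$ gives a bound of the form $2m\cdot\big(2\pi\kappa\varepsilon_1 + O(C_v\delta_1^{1/4}+\delta)\big)/(2\pi\varepsilon_1)=2m\kappa + 2m\cdot O\!\big((C_v\delta_1^{1/4}+\delta)/\varepsilon_1\big)$. Since $g_{m,E}^{\omega}$ is a sum of four squares of Dirichlet determinants, $\varphi$ has "doubled" zero count compared to $P_{m,E}^{\omega}$, which is exactly the source of the factor $4$ rather than $2$ in the claimed bound $4(1+\eta)\kappa m$; one keeps careful track of this doubling when passing from the Jensen count of $g_{m,E}^{\omega}$ (degree-like quantity $\sim 2\cdot 2m\kappa$ reflecting the square) to $\mathcal{N}^g_{m,\delta,E}(\varepsilon_1)$.

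The main obstacle I anticipate is making the averages sufficiently sharp: I need the circular average of $v^g_{m,E}$ on $\mathcal{C}_r$ to agree with $L(\omega,E)+2\pi\kappa\log r$ not just qualitatively but with an error that, after division by $\varepsilon_1\sim\eta\varepsilon_0$, is smaller than $\eta\kappa$. This requires $\delta,C_v\delta_1^{1/4}\ll \eta\varepsilon_1\sim \eta^2\varepsilon_0$, which is why the hypothesis demands $\delta\geq 1100C_v\delta_1^{1/4}$ is compatible only because $\delta_1$ (hence $\delta$) is taken extremely small relative to $\varepsilon_0$ and $\eta$ via \eqref{def:delta_1}, \eqref{def:eta} — I would have to verify the chain of smallness inequalities carefully. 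A secondary technical point is that the large deviation estimates only control $v^g_{m,E}$ on the unit circle, so to get averages on $\mathcal{C}_r$ for $r>1$ I must invoke the subharmonic/convexity machinery (the function $r\mapsto\langle v^g_{m,E}\rangle_{\mathcal{C}_r}$ is convex in $\log r$, with right-derivative at $r=1$ essentially $2\pi$ times the finite-scale acceleration, which converges to $2\pi\kappa$), plus the uniform upper bound to pin the convex function between the right lines; ensuring the finite-scale acceleration is within $\eta\kappa$ of $\kappa$ for the given range of $m$ (using \eqref{eq:L_linear} and Theorem \ref{thm:dominated} on a rational approximant) is the delicate quantitative step.
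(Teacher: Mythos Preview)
Your overall strategy --- bound the circular averages of $(2m)^{-1}\log|g_{m,E}^\omega(z)-e^{2m(L_m-\delta)}|$ from above via Lemma~\ref{lem:upper_semi_cont} and from below on $\mathcal{C}_1$ via the LDM (Lemmas~\ref{lem:LDT_non_exp}, \ref{lem:LDT_non_exp_m=qn}), then convert into a zero count --- is exactly the route the paper takes. But there is a genuine gap: what you call ``Jensen's formula on the annulus'' does not by itself relate the \emph{annular} count $\mathcal{N}^g_{m,\delta,E}(\varepsilon_1)=\#\{z:e^{-2\pi\varepsilon_1}<|z|<e^{2\pi\varepsilon_1}\}$ to a difference of circular averages. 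For an analytic $\varphi$ on $\mathcal{A}_R$, the radial derivative of $\varepsilon\mapsto\int_\T\log|\varphi(e^{2\pi(\varepsilon+i\theta)})|\,d\theta$ equals $2\pi$ times the number of zeros below the circle \emph{plus an unknown additive constant} coming from the harmonic (Laurent) part of the Riesz decomposition. Convexity alone does not determine that constant. The paper supplies the missing ingredient in Lemma~\ref{lem:Ih_constant}: using that $g_{m,E}^\omega(z)=\overline{g_{m,E}^\omega(1/\bar z)}$ (real-valuedness on $\mathcal{C}_1$), one shows the circular averages $L^{\tilde v}_m(E,\varepsilon)$, $I^G_m(E,\varepsilon)$, $I^h_m(E,\varepsilon)$ are all \emph{even} in $\varepsilon$; since $I^h_m$ is also linear (harmonicity), it is constant. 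This is what turns the formula $I^G_m(E,\varepsilon_2)-I^G_m(E,\varepsilon_1)=\frac{\pi}{2m}\int_{\varepsilon_1}^{\varepsilon_2}\mathcal{N}^g_{m,\delta,E}(\varepsilon)\,d\varepsilon$ from \cite{HS1} into the estimate $\frac{\pi}{2m}(\varepsilon_2-\varepsilon_1)\mathcal{N}^g_{m,\delta,E}(\varepsilon_1)\le L^{\tilde v}_m(E,\varepsilon_2)-L^{\tilde v}_m(E,\varepsilon_1)$. Your proposal never invokes this evenness, so the passage from circular-average bounds to the annular count is not justified.

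A secondary point: your accounting of the factor $4$ is off. It does not come from ``four squares'' or a doubling relative to $P_{m,E}^\omega$; it comes directly from the inequality $\frac{\pi}{2m}(\varepsilon_2-\varepsilon_1)\mathcal{N}^g\le 2\pi\kappa\varepsilon_2+O(C_v\delta_1^{1/4})$, i.e.\ $\mathcal{N}^g\le \frac{4m\kappa}{\varepsilon_2-\varepsilon_1}(\varepsilon_2+O(C_v\delta_1^{1/4}))$, and the choice $\varepsilon_2=\varepsilon_0$, $\varepsilon_1=\frac{2\eta}{1+2\eta}\varepsilon_0$ then yields $4(1+\eta)\kappa m$. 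Also, the lower bound $L^{\tilde v}_m(E,0)\ge L_m-1200C_v\delta_1^{1/4}$ (the paper's Lemma~\ref{lem:Ltv=Lv}) requires controlling $\int_{\tilde B_m}|\tilde v_{m,E}|$ over the small bad set, which the paper does via a preliminary $L^2$ bound from a Cartan estimate; you allude to this step but should make it explicit, since $\log|\varphi|$ can be arbitrarily negative on $\tilde B_m$.
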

Before proving this lemma, we give a quick corollary.
\begin{corollary}\label{cor:gm_zeros}
Under same condition as Lemma \ref{lem:zero_gm}, the large deviation set  satisfies:
    \begin{align}
        \mathcal{B}^g_{m,1100C_v\delta_1^{1/4},E}=\bigcup_{j=1}^{\tilde{\mathcal{N}}} U_{m,j},
    \end{align}
    with $\tilde{\mathcal{N}}\leq 2(1+\eta)\kappa m$ and $\{U_{m,j}\}_{j=1}^{\tilde{N}}$ are disjoint intervals satisfying 
    \begin{align}\label{eq:Umj_measure_small}
        \mathrm{mes}(U_{m,j})\leq e^{-100\delta_1 m}, 
    \end{align}
    for large enough $m$ satisfying \eqref{eq:range_m}.
\end{corollary}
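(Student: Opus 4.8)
The plan is to combine the large deviation estimate of Lemma \ref{lem:LDT_non_exp} (respectively Lemma \ref{lem:LDT_non_exp_m=qn}) with the zero count of Lemma \ref{lem:zero_gm}, exactly along the structural lines already laid out in the paragraph preceding Subsection 4.1. First, observe that for $\delta = 1100 C_v \delta_1^{1/4}$ the hypotheses of Lemma \ref{lem:zero_gm} are met on each of the three $m$-ranges appearing in \eqref{eq:range_m}, so we may invoke $\mathcal{N}^g_{m,\delta,E}(\varepsilon_1) \le 4(1+\eta)\kappa m$. Since $g^\omega_{m,E}(e^{2\pi i \theta}) = \|M^\omega_{m,E}(\theta)\|^2_{\mathrm{HS}} > 0$ for real $\theta$, the set $\mathcal{B}^g_{m,\delta,E}$ coincides with $\{\theta : 0 < g^\omega_{m,E}(\theta) < e^{2m(L_m - \delta)}\}$ as in \eqref{eq:B^g_m>0}; being an open subset of $\T$, it is a disjoint union of open intervals, and each such interval $U_{m,j}$ has both endpoints in the zero set $\{\theta \in \T : g^\omega_{m,E}(\theta) = e^{2m(L_m - \delta)}\}$. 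Distinct components contribute distinct pairs of endpoints, so the number $\tilde{\mathcal{N}}$ of components is at most half the number of real zeros of $z \mapsto g^\omega_{m,E}(z) - e^{2m(L_m-\delta)}$ on $\mathcal{C}_1$, which is at most $\tfrac12 \mathcal{N}^g_{m,\delta,E}(\varepsilon_1) \le 2(1+\eta)\kappa m$. This gives the bound on $\tilde{\mathcal{N}}$.

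For the measure bound \eqref{eq:Umj_measure_small}, I would argue that each single component already has measure at most $e^{-100\delta_1 m}$, which is a purely one-dimensional fact: if some component $U_{m,j}$ had $\mathrm{mes}(U_{m,j}) > e^{-100\delta_1 m}$, then since $U_{m,j} \subset \mathcal{B}^g_{m,\delta,E}$ and, by monotonicity of the large-deviation sets in the deviation parameter, $\mathcal{B}^g_{m,\delta,E} \subset \mathcal{B}^g_{m, 1500 C_v \delta_1^{1/2}, E}$ when $m \ge 10 q_n$ (note $1500 C_v \delta_1^{1/2} \le 1100 C_v \delta_1^{1/4} = \delta$ for $\delta_1$ small, so in fact the containment runs the other way — see the remark below), we would contradict $\mathrm{mes}(\mathcal{B}^g_{m, \cdot, E}) \le e^{-100\delta_1 m}$. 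One must be slightly careful about which of Lemma \ref{lem:LDT_non_exp} or Lemma \ref{lem:LDT_non_exp_m=qn} applies: for $m \in [10 q_n, q_{n+1}/5]$ use Lemma \ref{lem:LDT_non_exp} (and since its deviation $1500 C_v \delta_1^{1/2} \le \delta$, the set $\mathcal{B}^g_{m,\delta,E}$ is contained in $\mathcal{B}^g_{m, 1500 C_v \delta_1^{1/2}, E}$, whose total measure is $\le e^{-100\delta_1 m}$); for $m \in [5\delta_1^{1/4} q_n^{1-c_0}, 10 q_n]$ or $[5\delta_1^{1/4} q_n, 10 q_n]$ use Lemma \ref{lem:LDT_non_exp_m=qn} with deviation $1000 C_v \delta_1^{1/4} \le \delta$, again giving $\mathcal{B}^g_{m,\delta,E} \subset \mathcal{B}^g_{m, 1000 C_v \delta_1^{1/4}, E}$ of total measure $\le e^{-100\delta_1 m}$. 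In either case the total measure of $\mathcal{B}^g_{m,\delta,E}$, hence of each component, is at most $e^{-100\delta_1 m}$.

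The only genuinely delicate point is the bookkeeping of constants and ranges: one must check that $1500 C_v \delta_1^{1/2} \le 1100 C_v \delta_1^{1/4}$ and $1000 C_v \delta_1^{1/4} \le 1100 C_v \delta_1^{1/4}$ (both immediate from $\delta_1$ being small, by \eqref{def:delta_1}), so that the deviation level $\delta = 1100 C_v \delta_1^{1/4}$ used in Lemma \ref{lem:zero_gm} is \emph{larger} than those in the LDM lemmas, ensuring $\mathcal{B}^g_{m,\delta,E}$ is contained in the small-measure sets of Lemmas \ref{lem:LDT_non_exp}--\ref{lem:LDT_non_exp_m=qn}; and that the three $m$-ranges in \eqref{eq:range_m} are exactly covered by the union of the hypotheses of those two lemmas. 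Everything else is a direct quotation of the two cited lemmas plus the elementary interval/endpoint counting. I do not expect any real obstacle beyond this constant-chasing, since the conceptual content is entirely carried by Lemmas \ref{lem:LDT_non_exp}, \ref{lem:LDT_non_exp_m=qn}, and \ref{lem:zero_gm}.
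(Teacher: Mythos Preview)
Your proposal is correct and follows essentially the same approach as the paper: bound the number of connected components by half the zero count $\mathcal{N}^g_{m,\delta,E}(\varepsilon_1)$ from Lemma~\ref{lem:zero_gm}, and obtain the measure bound on each component from the total-measure estimates of Lemmas~\ref{lem:LDT_non_exp} and~\ref{lem:LDT_non_exp_m=qn} via the inclusion $\mathcal{B}^g_{m,1100C_v\delta_1^{1/4},E}\subset \mathcal{B}^g_{m,\delta',E}$ for the smaller deviations $\delta'$ appearing there. Your additional bookkeeping of which LDM lemma covers which $m$-range and the verification of the constant inequalities is more explicit than the paper's one-line ``follow from Lemmas~\ref{lem:LDT_non_exp} and~\ref{lem:LDT_non_exp_m=qn},'' but the logic is identical; the only cosmetic issue is the confusing parenthetical ``the containment runs the other way,'' since your originally stated containment was already the correct one.
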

We give a quick proof of this corollary.
\begin{proof}
    Each interval of $\mathcal{B}^g_{m,1100C_v\delta_1^{1/4},E}$ has two distinct endpoints, both lie in $\{\theta: g_{m,E}^{\omega}(e^{2\pi i\theta})=e^{2m(L_m(\omega,E)-1100C_v\delta_1^{1/4})}\}$.
    Hence the number of intervals is controlled by 
    $$\frac{1}{2}\mathcal{N}_{m,1100C_v\delta_1^{1/4},E}^g(\varepsilon_1)\leq 2(1+\eta)\kappa m.$$
    The measure estimates follow from Lemmas \ref{lem:LDT_non_exp} and \ref{lem:LDT_non_exp_m=qn}.
\end{proof}
Next, we prove Lemma \ref{lem:zero_gm}
\begin{proof}
Within the proof we shall sometimes write $L_m(\omega,E)$ as $L_m$ for simplicity.
Let $R:=e^{2\pi\varepsilon_0}$ and $N:=\mathcal{N}^g_{m,\delta,E}(\varepsilon_0)$. Let $w_1,...,w_N$ be the zeros of $g_{m,E}^{\omega}(z)-e^{2m(L_m-\delta)}$ in $\mathcal{A}_R$ (assuming it is zero free on $\partial \mathcal{A}_R$, otherwise shrink $\varepsilon_0$ to $\varepsilon_0-o(1)$. We omit this small technical adjustment).
Define 
\begin{align}\label{def:GRqn}
G_{R,m,E}(z)=\frac{1}{2m}\sum_{k=1}^{N} G_R(z, w_k),
\end{align}
where $G_R$ is the Green's function in \eqref{def:G}.
Let
\begin{align*}
\tilde{v}_{m,E}(z):=\frac{1}{2m}\log |g_{m,E}^{\omega}(z)-e^{2m(L_m-\delta)}|=2\pi G_{R,m,E}(z)+h_{R,m,E}(z).
\end{align*}
Then $h_{R,m,E}(z)$ is harmonic in $\mathcal{A}_R$.
Let
\begin{align}\label{def:Iqn_GR_2}
   L^{\tilde{v}}_{m}(E,\varepsilon):=&\int_{\T} \tilde{v}_{m,E}(e^{2\pi i(\theta+i\varepsilon)})\, \mathrm{d}\theta, \text{ and }\\
   I^G_{m}(E,\varepsilon):=&\int_{\T} 2\pi G_{R,m,E}(e^{2\pi i(\theta+i\varepsilon)})\, \mathrm{d}\theta \text{ and }\\
   I^h_{m}(E,\varepsilon):=&\int_{\T} h_{R,m,E}(e^{2\pi i(\theta+i\varepsilon)})\, \mathrm{d}\theta
\end{align}
Note that $L^{\tilde{v}}_m(E,\varepsilon)\neq L_m(E,\varepsilon)$, but we will show their difference is negligible in Lemma \ref{lem:Ltv=Lv}.
We first show $I^h_m(E,\varepsilon)$ is constant in $\varepsilon$.
\begin{lemma}\label{lem:Ih_constant}
    There exists $b\in \R$ such that $I^h_{m}(E,\varepsilon)\equiv b$ for $|\varepsilon|\leq \varepsilon_0$.
\end{lemma}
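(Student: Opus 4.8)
The plan is to show that $I^h_m(E,\varepsilon)$ — the average over the unit circle (shifted by $i\varepsilon$) of the harmonic function $h_{R,m,E}$ — is independent of $\varepsilon$, by exploiting the explicit structure of $h_{R,m,E}$ together with the rotation-invariance of the Green's function on the annulus. Since $\tilde v_{m,E}(z) = 2\pi G_{R,m,E}(z) + h_{R,m,E}(z)$, with $\tilde v_{m,E}$ subharmonic and $2\pi G_{R,m,E}$ capturing exactly its Riesz mass (the zeros $w_1,\dots,w_N$ of $g^{\omega}_{m,E}(z) - e^{2m(L_m-\delta)}$ inside $\mathcal{A}_R$, with Green potential vanishing on $\partial\mathcal{A}_R$), the remainder $h_{R,m,E}$ is genuinely harmonic in the full annulus $\mathcal{A}_R = \{e^{-2\pi\varepsilon_0} < |z| < e^{2\pi\varepsilon_0}\}$ — in particular across the circle $|z|=1$. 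So the claim is really a statement about harmonic functions on an annulus.

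First I would recall the structure of harmonic functions on an annulus: any $h$ harmonic on $\mathcal{A}_R$ has a Fourier/Laurent-type expansion $h(re^{2\pi i\theta}) = a\log r + b + \sum_{k\neq 0}(c_k r^{k} + d_k r^{-k})e^{2\pi i k\theta}$ (convergent on compact subsets of $\mathcal{A}_R$). Integrating over $\theta\in\T$ along the circle $|z| = e^{-2\pi\varepsilon}$, i.e.\ $r = e^{-2\pi\varepsilon}$, all the $k\neq 0$ terms integrate to zero, leaving $\int_\T h(e^{2\pi i(\theta + i\varepsilon)})\,d\theta = -2\pi a\varepsilon + b$, an affine function of $\varepsilon$. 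So $I^h_m(E,\varepsilon) = -2\pi a\varepsilon + b$ for some constants $a,b$, and it remains only to show $a = 0$.

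To pin down $a=0$, I would use the evenness already available from Fact~\ref{fact:zero_real} and \eqref{eq:GR_even}. Because $v$ is real-valued, $g^{\omega}_{m,E}(e^{2\pi i\theta})\in\R$ and more precisely $g^{\omega}_{m,E}(z) = \overline{g^{\omega}_{m,E}(1/\bar z)}$ as analytic functions (same coincidence-on-the-circle argument as in Fact~\ref{fact:zero_real}); hence $g^{\omega}_{m,E}(z) - e^{2m(L_m-\delta)}$ has the same symmetry, so its zero set $\{w_k\}$ inside $\mathcal{A}_R$ is invariant under $w\mapsto 1/\bar w$. Combined with the symmetry $G_R(1/\bar z, 1/\bar w) = G_R(z,w)$ from \eqref{eq:GR_even}, this gives $G_{R,m,E}(1/\bar z) = G_{R,m,E}(z)$ and $\tilde v_{m,E}(1/\bar z) = \tilde v_{m,E}(z)$, whence $h_{R,m,E}(1/\bar z) = h_{R,m,E}(z)$. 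Evaluating at $z = e^{2\pi i(\theta+i\varepsilon)}$ (so $1/\bar z = e^{2\pi i(\theta - i(-\varepsilon))}$... i.e.\ the inversion sends $|z| = e^{-2\pi\varepsilon}$ to $|z| = e^{2\pi\varepsilon}$) and integrating in $\theta$ shows $I^h_m(E,\varepsilon) = I^h_m(E,-\varepsilon)$, i.e.\ $I^h_m$ is even in $\varepsilon$. An even affine function is constant, so $a=0$ and $I^h_m(E,\varepsilon)\equiv b$, as claimed.

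The main obstacle I anticipate is bookkeeping rather than conceptual: one must make sure $h_{R,m,E}$ really is harmonic on the \emph{whole} open annulus $\mathcal{A}_R$ (not just away from $|z|=1$), which requires that $2\pi G_{R,m,E}$ accounts for \emph{all} the Riesz mass of $\tilde v_{m,E}$ in $\mathcal{A}_R$ — this is exactly the content of the $w_k$ being all the zeros in $\mathcal{A}_R$ together with $G_R$ having the logarithmic singularity $\frac{1}{2\pi}\log|z-w|$ and vanishing on $\partial\mathcal{A}_R$, as in Lemma~\ref{lem:Green_AR}. The minor technical caveat (zero-freeness on $\partial\mathcal{A}_R$, handled by shrinking $\varepsilon_0$ by $o(1)$) is already flagged in the setup and does not affect the conclusion.
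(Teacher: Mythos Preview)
Your proposal is correct and follows essentially the same approach as the paper: both establish that $I^h_m(E,\varepsilon)$ is affine in $\varepsilon$ (you via the Laurent/Fourier expansion of harmonic functions on an annulus, the paper via the equivalent observation that the circular average defines a radial harmonic function, hence of the form $a\log r + b$), and both then use the $z\mapsto 1/\bar z$ symmetry of $g^{\omega}_{m,E}$, the zero set, and $G_R$ to conclude $I^h_m$ is even, forcing the linear coefficient to vanish. The paper spells out the identity $\overline{g^{\omega}_{m,E}(1/\bar z)}=g^{\omega}_{m,E}(z)$ explicitly from the four $P_{k,E}^{\omega}$ terms rather than invoking Fact~\ref{fact:zero_real} by analogy, but the substance is the same.
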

\begin{proof}
We first prove $L^{\tilde{v}}_{m}(E,\varepsilon)$, $I^G_{m}(E,\varepsilon)$ and $I^h_{m}(E,\varepsilon)$ are all even functions in $\varepsilon$.
Similar to the proof of Fact \ref{fact:zero_real}, one observes that since each determinant $P_{k,E}^{\omega}(\theta)$ is real-valued for $\theta\in \T$, we have
\begin{align}
    P_{k,E}^{\omega}(z)=\overline{P_{k,E}^{\omega}(1/\overline{z})}.
\end{align}
This implies
\begin{align}\label{eq:gm_conjugate}
    &\overline{g_{m,E}^{\omega}(1/\overline{z})}\\
    =&(\overline{P_{m,E}^{\omega}(1/\overline{z})})^2+(\overline{P_{m-1,E}^{\omega}(1/\overline{z})})^2+(\overline{P_{m-1,E}^{\omega}(e^{2\pi i\omega}/\overline{z})})^2+(\overline{P_{m-2,E}^{\omega}(e^{2\pi i\omega}/\overline{z})})^2\\
    =&(\overline{P_{m,E}^{\omega}(1/\overline{z})})^2+(\overline{P_{m-1,E}^{\omega}(1/\overline{z})})^2+(\overline{P_{m-1,E}^{\omega}(1/(\overline{ze^{2\pi i\omega}}))})^2+(\overline{P_{m-2,E}^{\omega}(1/\overline{(ze^{2\pi i\omega}}))})^2\\
    =&(P^{\omega}_{m,E}(z))^2+(P^{\omega}_{m-1,E}(z))^2+(P^{\omega}_{m-1,E}(ze^{2\pi i\omega}))^2+(P^{\omega}_{m-2,E}(ze^{2\pi i\omega}))^2=g^{\omega}_{m,E}(z).
\end{align}
Hence $\tilde{v}_{m,E}(e^{2\pi i(\theta+i\varepsilon)})=\tilde{v}_{m,E}(1/e^{2\pi i(-\theta+i\varepsilon)})=\tilde{v}_{m,E}(e^{2\pi i(\theta-i\varepsilon)})$, which implies
\begin{align}\label{eq:Lm_even}
    L_{m}^{\tilde{v}}(E,\varepsilon)=L_{m}^{\tilde{v}}(E,-\varepsilon).
\end{align}
By \eqref{eq:gm_conjugate}, if $z$ is a solution to $g_{m,E}^{\omega}(z)=e^{2m(L_m-\delta)}$, then $g_{m,E}^{\omega}(1/\overline{z})=e^{2m(L_m-\delta)}$.
Hence $1/\overline{z}$ is also a solution, which implies $\bigcup_{k=1}^N\{w_k\}=\bigcup_{k=1}^N\{1/\overline{w}_k\}$.
Therefore, by \eqref{eq:GR_even},
\begin{align}
    G_{R,m,E}(z)=\frac{1}{2m}\sum_{k=1}^N G_R(z,1/\overline{w_k})=\frac{1}{2m}\sum_{k=1}^N G_R(1/\overline{z},w_k)=G_{R,m,E}(1/\overline{z}).
\end{align}
This implies
\begin{align}\label{eq:IG_even}
    I^G_{m}(E,\varepsilon)=I^G_{m}(E,-\varepsilon).
\end{align}
Combining \eqref{eq:Lm_even} with \eqref{eq:IG_even}, we obtain
\begin{align}\label{eq:Ih_even}
    I^h_{m}(E,\varepsilon)=I^h_{m}(E,-\varepsilon).
\end{align}
For any $r\in [1/R,R]$, and $|x+iy|=r$, we define a radial function as follows:
\begin{align}
\tilde{h}(x+iy):=I^h_{m}(E,\frac{\log r}{2\pi})=\int_{\T}h_{R,m,E}(re^{2\pi i\theta})\, \mathrm{d}\theta.
\end{align}
Since $h_{R,m,E}$ is harmonic, $\tilde{h}$ is a radial harmonic function, which implies
\begin{align}
    \tilde{h}(x+iy)=a\log |x+iy|+b=I_{m}^h(E,\frac{\log |x+iy|}{2\pi}),
\end{align}
for some constants $a,b$.
Then, by \eqref{eq:Ih_even},
\begin{align}
    I_{m}^h(E,\varepsilon)=2\pi a\varepsilon+b=-2\pi a\varepsilon+b=I_{m}^h(E,-\varepsilon),
\end{align}
which implies $a=0$. Hence
\begin{align}\label{eq:Ih_constant}
    I^h_{m}(E,\varepsilon)\equiv b, \text{ for any } |\varepsilon|\leq \varepsilon_0,
\end{align}
as claimed.
\end{proof}

We further use the following lemma to bound $L^{\tilde{v}}_{m}(E,0)$ from below.
\begin{lemma}\label{lem:Ltv=Lv}
Under the same condition as Lemma \ref{lem:zero_gm}.
For $\delta\geq 1100C_v \delta_1^{1/4}$, for $m$ large and satisfying \eqref{eq:range_m}, we have
\begin{align}
    L^{\tilde{v}}_m(E,0)\geq L_m(E,0)-1200C_v\delta_1^{1/4}.
\end{align}
\end{lemma}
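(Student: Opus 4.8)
The goal is to show that the averaged logarithmic modulus of the \emph{perturbed} function $g_{m,E}^{\omega}(z)-e^{2m(L_m-\delta)}$ on the unit circle is not much smaller than $L_m(E,0)$. The plan is to compare $\tilde v_{m,E}(e^{2\pi i\theta})=(2m)^{-1}\log|g_{m,E}^{\omega}(e^{2\pi i\theta})-e^{2m(L_m-\delta)}|$ with the unperturbed $v_{m,E}(\theta)=(2m)^{-1}\log g_{m,E}^{\omega}(e^{2\pi i\theta})$ pointwise on $\T$, and then integrate. Since $g_{m,E}^{\omega}(e^{2\pi i\theta})=\|M_{m,E}^{\omega}(\theta)\|_{\mathrm{HS}}^2>0$ is real and nonnegative, on the set where $g_{m,E}^{\omega}(e^{2\pi i\theta})\ge 2 e^{2m(L_m-\delta)}$ we have $|g_{m,E}^{\omega}-e^{2m(L_m-\delta)}|\ge \tfrac12 g_{m,E}^{\omega}$, so there $\tilde v_{m,E}\ge v_{m,E}-\tfrac{\log 2}{2m}$, which is negligible. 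The only place where $\tilde v_{m,E}$ can be much smaller than $v_{m,E}$ is the (small) set where $g_{m,E}^{\omega}(e^{2\pi i\theta})$ is within a factor $2$ of $e^{2m(L_m-\delta)}$, in particular where $v_{m,E}(\theta)\le L_m-\delta+\tfrac{\log 2}{2m}$, i.e.\ essentially inside $\mathcal B^g_{m,\delta-o(1),E}$.

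So the second step is to estimate $\int \tilde v_{m,E}$ over this bad set $\mathcal{B}:=\mathcal B^g_{m,\delta',E}$ (with $\delta'=\delta-o(1)\ge 1000 C_v\delta_1^{1/4}$, say), using the large deviation measure bound $\mathrm{mes}(\mathcal B)\le e^{-100\delta_1 m}$ from Lemmas~\ref{lem:LDT_non_exp} and~\ref{lem:LDT_non_exp_m=qn} (valid precisely on the range~\eqref{eq:range_m}, after checking $\delta'\ge 1500C_v\delta_1^{1/2}$ or $\ge 1000C_v\delta_1^{1/4}$ as needed — $\delta_1^{1/4}>\delta_1^{1/2}$ since $\delta_1<1$, so the $\delta_1^{1/4}$ threshold dominates and is what the hypothesis $\delta\ge 1100C_v\delta_1^{1/4}$ is tuned to). On $\mathcal B$ I need \emph{both} an upper bound and a lower bound on $\tilde v_{m,E}$. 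The upper bound is easy: $|g_{m,E}^{\omega}-e^{2m(L_m-\delta)}|\le g_{m,E}^{\omega}+e^{2m(L_m-\delta)}\le 2e^{2m(L_m-\delta')}$ on $\mathcal B$, plus a global crude sup bound $\log g_{m,E}^{\omega}\le 2m(L(\omega,E)+o(1))\le 2m\,C_v$ via Lemma~\ref{lem:upper_semi_cont} (here $C_v$ from \eqref{def:C_v} controls $L$), so $\tilde v_{m,E}\le C_v$ everywhere. The lower bound is the subtle point: $\tilde v_{m,E}$ can be $-\infty$ at the $N\le$ (something like) $O(\kappa m)$ zeros $w_k$ of the perturbed function. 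To control $\int_{\mathcal B}\tilde v_{m,E}$ from below near these zeros I would invoke the Cartan estimate, Lemma~\ref{lem:Cartan}, applied to the analytic function $z\mapsto g_{m,E}^{\omega}(z)-e^{2m(L_m-\delta)}$ on suitable disks: away from a Cartan exceptional set of very small total radius, $\log|g_{m,E}^{\omega}-e^{2m(L_m-\delta)}|\gtrsim -C_v H m$ for appropriate $H$; choosing $H$ a small multiple of $\delta_1^{1/4}$-type quantity and noting the exceptional set has measure $\ll e^{-100\delta_1 m}$, the contribution is $\ge -O(C_v\delta_1^{1/4})$. Combined with the measure bound this gives $\int_{\mathcal B}|\tilde v_{m,E}|\le C_v\,\mathrm{mes}(\mathcal B)+(\text{Cartan term})\le C_v\delta_1^{1/4}$ up to absolute constants.

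Putting the pieces together: writing $\T=\mathcal B\sqcup \mathcal B^c$, on $\mathcal B^c$ we have $\tilde v_{m,E}\ge v_{m,E}-o(1)$, so
\begin{align}
L^{\tilde v}_m(E,0)=\int_{\mathcal B^c}\tilde v_{m,E}+\int_{\mathcal B}\tilde v_{m,E}\ge \int_{\mathcal B^c} v_{m,E}-o(1)-\int_{\mathcal B}|\tilde v_{m,E}|.
\end{align}
Then $\int_{\mathcal B^c}v_{m,E}=L_m(E,0)-\int_{\mathcal B}v_{m,E}\ge L_m(E,0)-\int_{\mathcal B}|v_{m,E}|$, and $\int_{\mathcal B}|v_{m,E}|$ is bounded the same way (crude sup bound $|v_{m,E}|\le C_v$ plus $\mathrm{mes}(\mathcal B)\le e^{-100\delta_1 m}$; note $v_{m,E}\ge 0$ actually since $g_{m,E}^{\omega}\ge 1$ on $\T$ as $M_{m,E}^{\omega}\in\mathrm{SL}(2,\R)$ forces $\|M_{m,E}^{\omega}\|_{\mathrm{HS}}^2\ge 2$, which even makes this term trivially small). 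Collecting the error terms, all of which are absolute constants times $C_v\delta_1^{1/4}$, yields $L^{\tilde v}_m(E,0)\ge L_m(E,0)-1200 C_v\delta_1^{1/4}$ once $m$ is large enough that the $o(1)$ and the $e^{-100\delta_1 m}$ terms are swallowed.

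The main obstacle is the lower bound on $\tilde v_{m,E}$ near the zeros $w_k$: one must verify that the Cartan exceptional set, while it can cluster near the unit circle, still has total measure (after intersecting with $\T$) much smaller than $e^{-100\delta_1 m}$, so that its contribution to the integral — even weighted by the worst-case $-C_v H m$ lower bound — is $O(C_v\delta_1^{1/4})$; this requires choosing the Cartan parameter $H$ of order $m\cdot(\text{small})$ and being careful that the number $N$ of zeros, which enters the Cartan radius bound $K=CH(M-m)$ only through $M-m\lesssim C_v m$, does not blow up the estimate. The bound $N=O(\kappa m)$ from Lemma~\ref{lem:zero_gm} is not literally needed for this lemma (only $M-m\lesssim C_v m$ is), but it is reassuring that it is consistent. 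All other steps are routine once the pointwise comparison $\tilde v_{m,E}\ge v_{m,E}-o(1)$ off the bad set is in hand.
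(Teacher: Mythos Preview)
Your overall strategy matches the paper's: split $\T$ into the large deviation set $\mathcal B$ and its complement, use the pointwise inequality $\tilde v_{m,E}\ge v_{m,E}-o(1)$ on $\mathcal B^c$ (valid since $g\ge 2e^{2m(L_m-\delta)}$ there once $\delta\ge 1100C_v\delta_1^{1/4}$ and $\mathcal B=\mathcal B^g_{m,1000C_v\delta_1^{1/4},E}$), and then control $\int_{\mathcal B}\tilde v_{m,E}$. The gap is in this last step. A single application of Cartan cannot deliver both things you ask of it. To make the exceptional set $\mathcal P$ have measure $\ll e^{-100\delta_1 m}$ you need $H\gtrsim \delta_1 m$; but since $M-m_0$ is of order $m$ on any disk of radius $\sim\varepsilon_0$ (the sup of $\log|g|$ there exceeds its value at a good center by roughly $2m\cdot 2\pi\varepsilon_0$), the Cartan lower bound $M-CH(M-m_0)$ then degrades to about $-\delta_1 m^2$, i.e.\ $\tilde v_{m,E}\gtrsim -C\delta_1 m$ in normalized units, not $-O(\delta_1^{1/4})$. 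Conversely, taking $H$ bounded (your phrase ``a small multiple of $\delta_1^{1/4}$'' is not even admissible, since Cartan needs $H\gg 1$) leaves $\mathrm{mes}(\mathcal P)$ only a fixed small constant. Either way, on $\mathcal P$ itself you have \emph{no} lower bound, so $\int_{\mathcal B\cap\mathcal P}\tilde v_{m,E}$ remains uncontrolled; your outline never closes this residual integral.

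The paper sidesteps this by not seeking a good pointwise lower bound at all. It first proves a crude $L^2$ bound $\|\tilde v_{m,E}-L_m\|_{L^2(\T)}\le \tilde C_{v,\delta_1,\varepsilon_0}$, a constant \emph{independent of $m$}: cover $\mathcal C_1$ by $O(\varepsilon_0^{-1})$ disks with centers in $\tilde B_m^c$ and apply Cartan on each with a fixed $H$; the resulting exponential tail $\mathrm{mes}\{\tilde v_{m,E}<L_m-t\}\le e^{-ct}$ integrates to a finite $L^2$ norm. Then Cauchy--Schwarz gives
\[
\int_{\tilde B_m}|\tilde v_{m,E}-L_m|\le (\mathrm{mes}\,\tilde B_m)^{1/2}\,\tilde C_{v,\delta_1,\varepsilon_0}\le \tilde C_{v,\delta_1,\varepsilon_0}\,e^{-50\delta_1 m},
\]
which is negligible, and the lemma follows. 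Your argument is easily repaired along the same lines (or equivalently by integrating the tail bound directly via layer cake).
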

\begin{proof}
Recall the large deviation estimates of $v_{m,E}$ in Lemmas \ref{lem:LDT_non_exp} and \ref{lem:LDT_non_exp_m=qn}:
\begin{align}\label{eq:bad_measure}
    \mathrm{mes}(\mathcal{B}^g_{m,1000C_v\delta_1^{1/4},E})=\mathrm{mes}(\{\theta:|v_{m,E}(e^{2\pi i\theta})-L_{m}(E,0)|>1000C_v\delta_1^{1/4}\})\leq e^{-100\delta_1 m}.
\end{align}
For $\theta\notin \mathcal{B}^g_{m,1000C_v\delta_1^{1/4},E}$, the following is true
\begin{align}
    e^{2m(L_{m}(E,0)-1000C_v\delta_1^{1/4})}\leq g_{m,E}^{\omega}(e^{2\pi i\theta})\leq e^{2m(L_{m}(E,0)+1000C_v\delta_1^{1/4})},
\end{align}
which implies for $\delta\geq 1100\delta_1^{1/4}$ that
\begin{align}
    \frac{1}{2}e^{2m(L_{m}(E,0)-1000C_v\delta_1^{1/4})}\leq g_{m,E}^{\omega}(e^{2\pi i\theta})-e^{2m(L_m(E,0)-\delta)}\leq 2e^{2m(L_{m}(E,0)+1000C_v\delta_1^{1/4})}.
\end{align}
Hence by \eqref{eq:bad_measure},
\begin{align}\label{eq:tB_mes}
    \mathrm{mes}(\tilde{B}_m):=&\mathrm{mes}(\{\theta: |\tilde{v}_{m,E}(e^{2\pi i\theta})-L_{m}(E,0)|>1100C_v\delta_1^{1/4}\})\\
    \leq &\mathrm{mes}(\mathcal{B}^g_{m,1000C_v\delta_1^{1/4},E})
    \leq e^{-100\delta_1 m}.
\end{align}
This yields a preliminary $L^2$ estimate as below:
\begin{align}\label{eq:tv_L2}
    \|\tilde{v}_{m,E}(e^{2\pi i\theta})-L_{m}(E,0)\|_{L^2(\T)}\leq \tilde{C}_{v,\delta_1,\varepsilon_0}.
\end{align}
To see this, one covers the unit circle $\mathcal{C}_1$ with $\sim \varepsilon_0^{-1}$ many disks of radius $\sim \varepsilon_0$, with centers in $\tilde{B}_m^c$, and then applies the Cartan estimate (Lemma \ref{lem:Cartan}) to each of these disks.

With \eqref{eq:tB_mes} and \eqref{eq:tv_L2} in hands, we obtain
\begin{align}
    &\|\tilde{v}_{m,E}(e^{2\pi i\theta})-L_{m}(E,0)\|_{L^1(\T)}\\
    =&\int_{\tilde{B}_m}|\tilde{v}_{m,E}(e^{2\pi i\theta})-L_{m}(E,0)|\, \mathrm{d}\theta+\int_{\tilde{B}_m^c}|\tilde{v}_{m,E}(e^{2\pi i\theta})-L_{m}(E,0)|\, \mathrm{d}\theta\\
    \leq &(\mathrm{mes}(\tilde{B}_m))^{1/2}\cdot  \|\tilde{v}_m(e^{2\pi i\theta})-L_{m}(E,0)\|_{L^2(\T)}+1100C_v\delta_1^{1/4}\\
    \leq &\tilde{C}_{v,\delta_1,\varepsilon_0}e^{-50\delta_1 m}+1100C_v\delta_1^{1/4}.
\end{align}
Hence 
\begin{align}
    \int_{\T} \tilde{v}_{m,E}(e^{2\pi i\theta})\, \mathrm{d}\theta\geq L_{m}(E,0)-1200C_v\delta_1^{1/4},
\end{align}
as claimed.
\end{proof}

Next, we present the upper bound of $L^{\tilde{v}}_m(E,\varepsilon)$.
\begin{lemma}\label{lem:L_tv_upper}
    For any $\varepsilon\in [0,\varepsilon_0]$, for $m$ large enough, the following is true
    \begin{align}
        L^{\tilde{v}}_{m}(E,\varepsilon)\leq L_m(E,\varepsilon)+o(1).
    \end{align}
\end{lemma}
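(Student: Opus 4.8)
The plan is to dominate $\tilde v_{m,E}$ pointwise by the log-norm of the complexified transfer matrix and then integrate, using the uniform upper semicontinuity of the finite-scale Lyapunov exponent (Lemma~\ref{lem:upper_semi_cont}) for the shifted potential $v(\cdot+i\varepsilon)$. The starting point is that, by \eqref{eq:M=P}, the four summands in the definition \eqref{def:g} of $g_{m,E}^{\omega}$ are exactly the squares of the four entries of $M_{m,E}^{\omega}$, so evaluating at $\theta+i\varepsilon$ and using $|a^2|=|a|^2$ together with the triangle inequality gives
$$|g_{m,E}^{\omega}(e^{2\pi i(\theta+i\varepsilon)})|\le \|M_{m,E}^{\omega}(\theta+i\varepsilon)\|_{\mathrm{HS}}^2\le 2\,\|M_{m,E}^{\omega}(\theta+i\varepsilon)\|^2$$
for every $\theta\in\T$ and $\varepsilon\in[0,\varepsilon_0]$. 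Restoring the subtracted constant $e^{2m(L_m(E,0)-\delta)}$ and taking $\tfrac1{2m}\log$, I obtain the pointwise bound
$$\tilde v_{m,E}(e^{2\pi i(\theta+i\varepsilon)})\le \frac{1}{2m}\log\!\Big(2\,\|M_{m,E}^{\omega}(\theta+i\varepsilon)\|^2+e^{2m(L_m(E,0)-\delta)}\Big).$$

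To finish, I would integrate this inequality over $\theta\in\T$, moving the average inside the $\log$ by Jensen's inequality (concavity of $\log$): $L^{\tilde v}_m(E,\varepsilon)\le \tfrac1{2m}\log\big(2\int_{\T}\|M_{m,E}^{\omega}(\theta+i\varepsilon)\|^2\,d\theta+e^{2m(L_m(E,0)-\delta)}\big)$. Now I invoke Lemma~\ref{lem:upper_semi_cont} for the analytic potential $v(\cdot+i\varepsilon)$: for every $\tau>0$ and all $m$ large, $\|M_{m,E}^{\omega}(\theta+i\varepsilon)\|\le e^{m(L(\omega,E,\varepsilon)+\tau)}$ uniformly in $\theta$, hence $\int_{\T}\|M_{m,E}^{\omega}(\theta+i\varepsilon)\|^2\,d\theta\le e^{2m(L(\omega,E,\varepsilon)+\tau)}$. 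Since $L(\omega,E,\cdot)$ is even and convex, $L(\omega,E,0)\le L(\omega,E,\varepsilon)$, and since $L_m(E,0)\to L(\omega,E,0)$ we also have $e^{2m(L_m(E,0)-\delta)}\le e^{2m(L(\omega,E,\varepsilon)+\tau)}$ once $m$ is large. Combining these, $L^{\tilde v}_m(E,\varepsilon)\le L(\omega,E,\varepsilon)+\tau+\tfrac{\log 3}{2m}$; letting $m\to\infty$, then $\tau\to0$, and using $L(\omega,E,\varepsilon)\le L_m(E,\varepsilon)$ (subadditivity of $m\mapsto mL_m(E,\varepsilon)$), gives $L^{\tilde v}_m(E,\varepsilon)\le L_m(E,\varepsilon)+o(1)$. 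For the borderline value $\varepsilon=\varepsilon_0$ I would shrink $\varepsilon_0$ slightly at the outset, as is done repeatedly in the paper.

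The one step that needs care is the uniform estimate $\sup_{\theta\in\T}\tfrac1m\log\|M_{m,E}^{\omega}(\theta+i\varepsilon)\|\le L(\omega,E,\varepsilon)+o(1)$: Lemma~\ref{lem:upper_semi_cont} is stated for real phases and a perturbed frequency, so the main task is to check that its proof---a sub-mean-value/subadditivity estimate for the subharmonic function $\theta\mapsto\log\|M_{m,E}^{\omega}(\theta+i\varepsilon)\|$ on a horizontal strip, together with $L_m(E,\varepsilon)\to L(\omega,E,\varepsilon)$---transfers without change to the complexified setting, uniformly for $\varepsilon$ in compact subsets of $[0,\varepsilon_0)$. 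Everything else is routine: the entrywise identity \eqref{eq:M=P}, the inequality between Hilbert--Schmidt and operator norms, Jensen's inequality, and the convexity and evenness of $L(\omega,E,\cdot)$ recalled earlier.
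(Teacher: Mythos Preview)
Your approach is essentially the same as the paper's: bound $|g_{m,E}^{\omega}(e^{2\pi i(\theta+i\varepsilon)})|$ by $\|M_{m,E}^{\omega}(\theta+i\varepsilon)\|_{\mathrm{HS}}^2$ via \eqref{eq:M=P}, invoke the uniform upper bound from Lemma~\ref{lem:upper_semi_cont} applied to the shifted cocycle $M_E(\cdot+i\varepsilon)$, and absorb the subtracted constant using convexity. The paper does exactly this, except it skips your Jensen step---once you have the \emph{uniform} pointwise bound $\|M_{m,E}^{\omega}(\theta+i\varepsilon)\|\le e^{m(L_m(E,\varepsilon)+o(1))}$, you can plug it directly into the pointwise estimate for $\tilde v_{m,E}$ and then integrate, so Jensen is unnecessary (though harmless); the paper also uses $L_m(E,0)\le L_m(E,\varepsilon)$ directly from convexity of $L_m(E,\cdot)$ rather than detouring through $L(\omega,E,\varepsilon)$.
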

\begin{proof}
    The following standard uniform upper bound is from Lemma \ref{lem:upper_semi_cont}:
\begin{align}
\frac{1}{m}\log \|M_{m,E}^{\omega}(\theta+i\varepsilon_2)\|\leq L_{m}(E,\varepsilon_2)+o(1), \text{ uniformly in } \theta\in \T.
\end{align}
It implies that uniformly in $\theta\in \T$:
\begin{align}
    |g_{m,E}^{\omega}(e^{2\pi i(\theta+i\varepsilon_2)})|\leq \|M_{m,E}^{\omega}(\theta+i\varepsilon_2)\|_{\mathrm{HS}}^2\leq Ce^{2m(L_{m}(E,\varepsilon_2)+o(1))},
\end{align}
for some absolute constant $C>0$ that arises from the equivalence between the two norms of $2\times 2$ matrices.
Hence, by using that $L_{m}(E,0)\leq L_{m}(E,\varepsilon_2)$ (which follows from the convexity of $L_m(E,\cdot)$), we obtain
\begin{align}
    |g_{m,E}^{\omega}(e^{2\pi i(\theta+i\varepsilon_2)})-e^{2m(L_{m}(E,0)-\delta)}|\leq 2Ce^{2m(L_{m}(E,\varepsilon_2)+o(1))}.
\end{align}
Hence uniformly in $\theta\in \T$, we have
\begin{align}
    \tilde{v}_{m,E}(e^{2\pi(\theta+i\varepsilon_2)})\leq L_{m}(E,\varepsilon_2)+o(1),
\end{align}
This implies the claimed result.
\end{proof}

With these preparations, we now turn to the zero count. The arguments below are very similar to those in \cite{HS1}.
By \cite[(4.24)]{HS1}, the following holds
\begin{align}\label{eq:int_final}
I_{m}^G(E,\varepsilon_1)=-\frac{\pi}{2m} \int_{\varepsilon_1}^{\varepsilon_0} \mathcal{N}^g_{m,\delta,E}(\varepsilon_2)\, \mathrm{d}\varepsilon_2.
\end{align}
Hence for $\varepsilon_0\geq \varepsilon_2>\varepsilon_1>0$,
\begin{align}
    I^G_{m}(E,\varepsilon_2)-I^G_{m}(E,\varepsilon_1)=\frac{\pi}{2m}\int_{\varepsilon_1}^{\varepsilon_2}\mathcal{N}^g_{m,\delta,E}(\varepsilon_3)\,\mathrm{d}\varepsilon_3\geq \frac{\pi}{2m}(\varepsilon_2-\varepsilon_1)\mathcal{N}^g_{m,\delta,E}(\varepsilon_1),
\end{align}
in which the last inequality follows from the monotonicity of $\mathcal{N}^g_{m,\delta,E}(\cdot)$.
Combining the inequality above with Lemma \ref{lem:Ih_constant}, we obtain
\begin{align}\label{eq:Ltv_1-2}
    \frac{\pi}{2m}(\varepsilon_2-\varepsilon_1)\mathcal{N}^g_{m,\delta,E}(\varepsilon_1)\leq L^{\tilde{v}}_{m}(E,\varepsilon_2)-L^{\tilde{v}}_{m}(E,\varepsilon_1).
\end{align}
By convexity of $L^{\tilde{v}}_{m}(E,\cdot)$ and Lemma \ref{lem:Ltv=Lv}, we have
\begin{align}\label{eq:Ltv_1}
    L^{\tilde{v}}_{m}(E,\varepsilon_1)\geq L^{\tilde{v}}_{m}(E,0)\geq L_{m}(E,0)-1200C_v\delta_1^{1/4}. 
\end{align}

Therefore, combining \eqref{eq:Ltv_1}, Lemma \ref{lem:L_tv_upper} with \eqref{eq:Ltv_1-2}, yields
\begin{align}
    \frac{\pi}{2m}(\varepsilon_2-\varepsilon_1)\mathcal{N}^g_{m,\delta,E}(\varepsilon_1)
    \leq &L_{m}(E,\varepsilon_2)-L_{m}(E,0)+1200 C_v\delta_1^{1/4}+o(1).
\end{align}
This implies for $m$ large enough, 
\begin{align}
\frac{\pi}{2m}(\varepsilon_2-\varepsilon_1)\mathcal{N}^g_{m,\delta,E}(\varepsilon_1)
    \leq &L(E,\varepsilon_2)-L(E,0)+1200 C_v\delta_1^{1/4}+o(1)\\
    \leq &2\pi \kappa \varepsilon_2+1200 C_v\delta_1^{1/4}+o(1).
\end{align}
Hence 
\begin{align}
    \mathcal{N}^g_{m,\delta,E}(\varepsilon_1)\leq \frac{4m\kappa}{\varepsilon_2-\varepsilon_1}\left(\varepsilon_2+200 C_v\delta_1^{1/4}+o(1)\right).
\end{align}
Recall that $\eta$ is as in \eqref{def:eta}.
Taking $\varepsilon_2=((\eta/2)^{-1}+1)\varepsilon_1=\varepsilon_0$ and $m$ large enough yields the desired estimates.
This finishes the proof of Lemma \ref{lem:zero_gm}.
\end{proof}

Next, we explore some unique features of the even potentials.
In fact, when $v$ is even, the Dirichlet determinants satisfy:
\begin{align}\label{eq:Pk_even_1}
    P_{k,E}^{\omega}(\theta-\frac{k-1}{2}\omega)=P_{k,E}^{\omega}(-\theta-\frac{k-1}{2}\omega), \text{ for any } k\in \N,
\end{align}
which implies for arbitrary $m\geq 3$ that 
\begin{align}
    \begin{cases}
        P_{m-2,E}^{\omega}(\theta-\frac{m-1}{2}\omega+\omega)=P_{m-2,E}^{\omega}(-\theta-\frac{m-1}{2}\omega+\omega), \text{ and } \\
        (P_{m-1,E}^{\omega}(\theta-\frac{m-1}{2}\omega))^2+(P_{m-1,E}^{\omega}(\theta-\frac{m-1}{2}\omega+\omega))^2\\ \qquad\qquad\qquad=(P_{m-1,E}^{\omega}(-\theta-\frac{m-1}{2}\omega+\omega))^2+(P_{m-1,E}^{\omega}(-\theta-\frac{m-1}{2}\omega))^2
    \end{cases}
\end{align}
Therefore,
\begin{align}
    g_{m,E}^{\omega}(e^{2\pi i(\theta-\frac{m-1}{2}\omega)})=g_{m,E}^{\omega}(e^{2\pi i(-\theta-\frac{m-1}{2}\omega)}).
\end{align}
Combining this with Corollary \ref{cor:gm_zeros}, we conclude that:
\begin{corollary}\label{cor:gm_zeros_even}
Under the same conditions as Corollary \ref{cor:gm_zeros}, we have
\begin{align}
   \mathcal{B}^g_{m,1100C_v\delta_1^{1/4},E}=\bigcup_{j=1}^{\tilde{N}'} (U_{m,j}\bigcup(-U_{m,j}-(m-1)\omega)),
\end{align}
for some $\tilde{N}'\leq \kappa(1+\eta)m$, furthermore each $U_{m,j}$ satisfies $\mathrm{mes}(U_{m,j})\leq e^{-100\delta_1 m}$.
\end{corollary}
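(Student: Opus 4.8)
The plan is to combine the already-established Corollary \ref{cor:gm_zeros} with the reflection symmetry of $g_{m,E}^{\omega}$ coming from the evenness of $v$. First I would carefully justify the symmetry identity
\begin{align}\label{eq:gm_reflection}
g_{m,E}^{\omega}(e^{2\pi i(\theta-\frac{m-1}{2}\omega)})=g_{m,E}^{\omega}(e^{2\pi i(-\theta-\frac{m-1}{2}\omega)}),
\end{align}
which is spelled out in the lines just above the statement. The starting point is \eqref{eq:Pk_even_1}, the basic reflection symmetry of Dirichlet determinants for even $v$; applying it with $k=m$, $k=m-1$ (in the two shifted forms $\theta-\frac{m-1}{2}\omega$ and $\theta-\frac{m-1}{2}\omega+\omega$), and $k=m-2$ (with argument shifted by $\omega$), and then summing the squares exactly as in the definition \eqref{def:g} of $g_{m,E}^{\omega}$, yields \eqref{eq:gm_reflection}. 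The key bookkeeping point is that the pair of middle terms $(P_{m-1,E}^{\omega})^2+(P_{m-1,E}^{\omega}(\cdot+\omega))^2$ is swapped as a set under $\theta\mapsto-\theta$ (around the center $-\frac{m-1}{2}\omega$), while the $P_{m,E}^{\omega}$ and $P_{m-2,E}^{\omega}(\cdot+\omega)$ terms are each individually invariant.

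Next I would translate \eqref{eq:gm_reflection} into a statement about the large deviation set. Since $v_{m,E}(\theta)=(2m)^{-1}\log g_{m,E}^{\omega}(e^{2\pi i\theta})$ and $L_m(\omega,E)$ is a constant, \eqref{eq:gm_reflection} gives $v_{m,E}(\theta-\frac{m-1}{2}\omega)=v_{m,E}(-\theta-\frac{m-1}{2}\omega)$, hence the set $\mathcal{B}^g_{m,1100C_v\delta_1^{1/4},E}$, after the translation by $\frac{m-1}{2}\omega$, is symmetric under $\theta\mapsto-\theta$; equivalently, $\mathcal{B}^g_{m,1100C_v\delta_1^{1/4},E}$ is invariant under the involution $\theta\mapsto-\theta-(m-1)\omega$. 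Now invoke Corollary \ref{cor:gm_zeros}, which already writes $\mathcal{B}^g_{m,1100C_v\delta_1^{1/4},E}=\bigcup_{j=1}^{\tilde{\mathcal{N}}}U_{m,j}$ as a disjoint union of $\tilde{\mathcal{N}}\le 2(1+\eta)\kappa m$ intervals with $\mathrm{mes}(U_{m,j})\le e^{-100\delta_1 m}$. The reflection $\theta\mapsto-\theta-(m-1)\omega$ permutes these intervals (it is a homeomorphism of $\T$ fixing the set), so I pair each $U_{m,j}$ with its image $-U_{m,j}-(m-1)\omega$. Choosing one representative from each orbit of this pairing gives a sub-collection of at most $\tilde{\mathcal{N}}/2\le(1+\eta)\kappa m$ intervals $U_{m,j}$, $j=1,\dots,\tilde N'$, such that $\mathcal{B}^g_{m,1100C_v\delta_1^{1/4},E}=\bigcup_{j=1}^{\tilde N'}\big(U_{m,j}\cup(-U_{m,j}-(m-1)\omega)\big)$, and the measure bound $\mathrm{mes}(U_{m,j})\le e^{-100\delta_1 m}$ is inherited verbatim.

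The only genuine subtlety — and what I expect to be the main obstacle to a fully clean writeup — is the orbit-counting step: a priori an interval could be fixed (as a set) by the involution, or the image of one interval could overlap two others, etc. In fact the involution $\theta\mapsto-\theta-(m-1)\omega$ maps the disjoint interval decomposition to itself, so it induces an actual permutation of the index set $\{1,\dots,\tilde{\mathcal{N}}\}$; self-paired intervals (fixed points of that permutation) are harmless since they are already of the form $U_{m,j}=-U_{m,j}-(m-1)\omega$ and can be listed once. So after grouping into orbits of size $1$ or $2$ the bound $\tilde N'\le(1+\eta)\kappa m$ follows immediately from $\tilde{\mathcal N}\le 2(1+\eta)\kappa m$ (orbits of size $1$ only help). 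One should also double-check that the translation by $\frac{m-1}{2}\omega$, which may be a half-integer multiple of $\omega$, is handled consistently on $\T$ — but this is purely cosmetic since we only ever use $(m-1)\omega$ in the final statement, and \eqref{eq:gm_reflection} is an identity between genuine functions on $\T$. Modulo these bookkeeping remarks, the corollary is a direct consequence of Corollary \ref{cor:gm_zeros} and the evenness-induced symmetry \eqref{eq:gm_reflection}.
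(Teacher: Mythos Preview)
Your approach is exactly the paper's: derive the reflection identity for $g_{m,E}^{\omega}$ from \eqref{eq:Pk_even_1}, deduce that $\mathcal{B}^g_{m,1100C_v\delta_1^{1/4},E}$ is invariant under the involution $\theta\mapsto-\theta-(m-1)\omega$, and then pair off the intervals supplied by Corollary~\ref{cor:gm_zeros}. The paper does not spell out the pairing step, so your added detail is welcome.

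There is, however, one slip in your orbit count. You write that ``orbits of size~$1$ only help,'' but the opposite is true: with $a$ self-paired intervals and $b$ two-cycles one has $\tilde{\mathcal N}=a+2b$ and $\tilde N'=a+b=\tilde{\mathcal N}/2+a/2$, so self-paired intervals push $\tilde N'$ \emph{above} $\tilde{\mathcal N}/2$, not below. The easy repair is to observe that the orientation-reversing involution $\theta\mapsto-\theta-(m-1)\omega$ on $\T$ has exactly two fixed points, and any interval fixed setwise by such an involution must contain one of them; hence $a\le 2$ and $\tilde N'\le(1+\eta)\kappa m+1$. This extra $+1$ is immaterial in every subsequent use --- the pigeonhole comparisons in Lemmas~\ref{lem:I2_good_non_exp} and~\ref{lem:non_res_repulsion} compare $\tilde N'$ against $\mathrm{card}(I_1^y\cup I_2^y)\ge \tfrac{11}{10}m_y+O(1)$, which has ample slack --- and the paper itself does not address it.
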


\subsection{Exponential decay of generalized eigenfunction}

\ 

Throughout the rest of the section, we assume $\kappa(\omega,E)=1$, namely restrict to the first supercritical stratum.

For any $\Z\ni y\geq 10$, let $m_y:=[y/2]$, and 
\begin{align}
    I^y_1:=\left[-\left[\frac{9}{10}m_y\right], -\left[\frac{3}{5}m_y\right]\right]\cap \Z \text{ and } 
    I^y_2:=\left[y-\left[\frac{9}{10}m_y\right], y-\left[\frac{1}{10}m_y\right]\right]\cap \Z.
\end{align}
Since $y\geq 2m_y$, $I_1^y\cap I_2^y=\emptyset$. Also it is clear that 
\begin{align}\label{eq:card_I1cupI2}
\mathrm{card}(I_1\bigcup I_2)=\mathrm{card}(I_1)+\mathrm{card}(I_2)\geq \frac{11}{10}m_y+O(1)>(1+\eta)m_y,
\end{align}
where we used that $\eta<1/100$ as in \eqref{def:eta}.

Let 
\begin{align}
    \mathcal{I}_n:=\begin{cases}
        [10\delta_1^{1/4}q_n, q_{n+1}/5], \text{ if } q_{n+1}\leq e^{\delta_1 q_n}, \text{ and } q_n\leq e^{\delta_1 q_{n-1}}\\
        [10\delta_1^{1/4} q_n^{1-c_0}, q_{n+1}/5], \text{ if } q_{n+1}\leq e^{\delta_1 q_n}, \text{ and } q_n>e^{\delta_1 q_{n-1}}   
    \end{cases}
\end{align}
Corollary \ref{cor:gm_zeros_even} together with the weak Liouville condition $q_{n+1}\leq e^{\delta_1 q_n}$ implies the following.
\begin{lemma}\label{lem:I2_good_non_exp}
For $y\in \mathcal{I}_n$ with $n$ large enough. For each $k\in I^y_1$, we have
\begin{align}\label{eq:I1_bad_non_exp}
    \theta+k\omega\in \mathcal{B}^g_{m_y,1100C_v\delta_1^{1/4},E}.
\end{align} Furthermore there exists $k_2^y\in I^y_2$ such that 
$$\theta+k_2^y\omega\notin \mathcal{B}^g_{m_y,1100C_v\delta_1^{1/4},E}.$$
\end{lemma}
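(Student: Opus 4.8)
The plan is to verify the two claims separately, both using Corollary~\ref{cor:gm_zeros_even} together with the non-resonance of $\theta$ and the weak Liouville condition. First, for the claim that $\theta + k\omega \in \mathcal{B}^g_{m_y,1100C_v\delta_1^{1/4},E}$ for all $k \in I_1^y$: by the Green's function expansion \eqref{eq:Green_exp} applied on an interval of length $\sim m_y$ containing $0$ (so that $|\phi_0|$ or $|\phi_{-1}|$ equals $1$), together with the Shnol' bound $|\phi_k| \leq C|k|$, I expect that if $v_{m_y,E}(\theta+k\omega)$ were $\geq L_{m_y} - 1100C_v\delta_1^{1/4}$ for some $k$ near $-m_y/2$, i.e.\ if $\|M^\omega_{m_y,E}(\theta+k\omega)\|_{\mathrm{HS}}$ were large, one would get a contradiction — actually the cleaner route is the opposite direction: the point $0$ lies near the middle of the block $[k, k+m_y-1]$ for $k \in I_1^y$ (since $k \in [-\frac{9}{10}m_y, -\frac{3}{5}m_y]$ puts $0$ at relative position between $\frac{3}{5}m_y$ and $\frac{9}{10}m_y$), and $|\phi_0| = 1$ forces, via \eqref{eq:Green_exp}, that $|P^\omega_{m_y,E}(\theta+k\omega)|$ (or a neighboring determinant) is small relative to $e^{m_y L}$, hence $g^\omega_{m_y,E}(e^{2\pi i(\theta+k\omega)})$ is small, hence $\theta+k\omega \in \mathcal{B}^g$. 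This is the standard ``large value of a solution at an interior point forces the transfer matrix norm to be small'' mechanism.

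For the second claim — existence of $k_2^y \in I_2^y$ with $\theta + k_2^y\omega \notin \mathcal{B}^g_{m_y,1100C_v\delta_1^{1/4},E}$ — I would argue by counting. Corollary~\ref{cor:gm_zeros_even} writes $\mathcal{B}^g_{m_y,1100C_v\delta_1^{1/4},E}$ as a union of $\tilde N' \le \kappa(1+\eta)m_y = (1+\eta)m_y$ pairs of intervals $U_{m_y,j} \cup (-U_{m_y,j} - (m_y-1)\omega)$, each component of measure $\le e^{-100\delta_1 m_y}$. Suppose toward a contradiction that \emph{every} $k \in I_2^y$ has $\theta + k\omega \in \mathcal{B}^g$. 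Combined with the first claim, every point of the orbit segment $\{\theta+k\omega : k \in I_1^y \cup I_2^y\}$ lies in $\mathcal{B}^g$. But $\mathrm{card}(I_1^y \cup I_2^y) > (1+\eta)m_y \geq \tilde N' \cdot 2 / 2$... more precisely, the $2\tilde N' \le 2(1+\eta)m_y$ components cannot be matched — I need that the orbit points are \emph{spread out}: by \eqref{eq:qn_omega_min} and the weak Liouville condition $q_{n+1} \le e^{\delta_1 q_n}$, for $y \in \mathcal{I}_n$ (so $|I_1^y \cup I_2^y| \lesssim m_y \le q_{n+1}/5$) any two distinct $k, k' \in I_1^y \cup I_2^y$ satisfy $\|(k-k')\omega\| \ge \|q_n\omega\| \ge e^{-\beta_n q_n}/2 \gg e^{-100\delta_1 m_y}$ once $m_y \gtrsim q_n$, so at most one orbit point lies in each component of $\mathcal{B}^g$. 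Hence $\mathrm{card}(I_1^y \cup I_2^y) \le 2\tilde N' \le 2(1+\eta)m_y$, which is not yet a contradiction — so the pigeonhole must instead be run with the sharper bound: only \emph{half} the components, the $U_{m_y,j}$'s, are genuinely ``new,'' or rather one uses that $I_1^y$ already accounts for $\ge \frac{3}{10}m_y$ components and $I_2^y$ for $\ge \frac{4}{5}m_y$ more, totalling $> (1+\eta)m_y = \tilde N'$ pairs... Let me restate: the correct pigeonhole is that $\mathrm{card}(I_1^y) + \mathrm{card}(I_2^y) > (1+\eta)m_y \ge \tilde N'$, and since the reflected copies $-U_{m_y,j}-(m_y-1)\omega$ are, for $y \in \mathcal{I}_n$, \emph{disjoint} from the orbit segment we're using (this is exactly where non-resonance of $\theta$, i.e.\ \eqref{eq:theta_non_res}, enters: $\theta+k\omega \in -U_{m_y,j}-(m_y-1)\omega$ and $\theta+k'\omega \in U_{m_y,j}$ would force $\|2\theta + (k+k'+m_y-1)\omega\| \le 2e^{-100\delta_1 m_y}$, contradicting \eqref{eq:theta_non_res} with $\delta' = \delta_1/2$ since $|k+k'+m_y-1| \lesssim m_y$), each $U_{m_y,j}$ contains at most one orbit point from $I_1^y \cup I_2^y$ and the reflected copies contain none. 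Therefore the orbit points inject into $\{U_{m_y,j}\}_{j=1}^{\tilde N'}$, giving $\mathrm{card}(I_1^y \cup I_2^y) \le \tilde N' \le (1+\eta)m_y$, contradicting \eqref{eq:card_I1cupI2}.

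\textbf{The main obstacle.} The delicate point is the bookkeeping in the pigeonhole: showing that the orbit segment $\{\theta+k\omega: k \in I_1^y \cup I_2^y\}$ meets each of the $2\tilde N'$ intervals (both the $U_{m_y,j}$ and their reflections $-U_{m_y,j}-(m_y-1)\omega$) in at most one point, and that it entirely avoids the reflected copies. The first part requires the spacing estimate $\|(k-k')\omega\| \gtrsim \|q_n\omega\| = e^{-\beta_n q_n}/2$ together with the weak Liouville bound $\beta_n q_n = \log q_{n+1} \le \delta_1 q_n$, and the smallness $\mathrm{mes}(U_{m_y,j}) \le e^{-100\delta_1 m_y}$ with $m_y \gtrsim \delta_1^{1/4} q_n$ (or $\gtrsim \delta_1^{1/4}q_n^{1-c_0}$), so one must check $100\delta_1 m_y$ beats $\delta_1 q_n$ — true since $m_y$ ranges up to $q_{n+1}/5 \gg q_n$ on the relevant part of $\mathcal{I}_n$, but near the lower end $m_y \sim \delta_1^{1/4}q_n$ this needs care and is precisely why the constant $10\delta_1^{1/4}$ (rather than a fixed small $\nu$) appears in the definition of $\mathcal{I}_n$. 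The second part — avoidance of reflected intervals — is exactly the role of restricting to non-resonant $\theta \in \Theta$ via \eqref{eq:theta_non_res}, and one must track that the index $k + k' + m_y - 1$ appearing there has absolute value $O(m_y)$, which holds since $I_1^y, I_2^y \subset [-m_y, y] \subset [-m_y, 2m_y+O(1)]$.
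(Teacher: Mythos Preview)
Your proposal follows the paper's approach: the first claim via contradiction using the Green's function expansion, and the second via pigeonhole over the $\tilde N'$ reflected pairs from Corollary~\ref{cor:gm_zeros_even}. Two points need correction.

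A minor one: your ``cleaner route'' for the first claim is backwards. Showing $|P^{\omega}_{m_y,E}(\theta+k\omega)|$ is small does not imply $g^{\omega}_{m_y,E}$ is small, since $g$ is a sum of \emph{four} squared determinants. The paper's argument (your initial instinct) is the right one: assume $\theta+k_1\omega \notin \mathcal{B}^g$, so $g$ is large, hence \emph{at least one} entry $P^{\omega}_{m_y',E}(\theta+(k_1+a)\omega)$ is large; expand $\max(|\phi_0|,|\phi_{-1}|)$ on $[k_1+a, k_1+a+m_y'-1]$ to reach a contradiction.

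The real gap is in your spacing argument. You use only $\|(k-k')\omega\| \geq \|q_n\omega\| \geq e^{-\delta_1 q_n}/2$ and want this to beat $e^{-100\delta_1 m_y}$, i.e.\ $100\delta_1 m_y > \delta_1 q_n$. But near the lower end of $\mathcal{I}_n$ one has $m_y \sim 5\delta_1^{1/4} q_n$ (or $5\delta_1^{1/4}q_n^{1-c_0}$), so $100\delta_1 m_y \sim 500\delta_1^{5/4}q_n \ll \delta_1 q_n$ since $\delta_1^{1/4} < 10^{-2}$ by \eqref{def:eta}. Your remark that ``this is precisely why the constant $10\delta_1^{1/4}$ appears'' does not close this. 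The paper resolves it by a case split: when $y \leq q_n/2$ (so $m_y \leq q_n/4$), one checks $|k-k'| \leq y + \tfrac{4}{5}m_y + O(1) < q_n$ for all $k,k' \in I_1^y \cup I_2^y$, hence $\|(k-k')\omega\| \geq \|q_{n-1}\omega\| \geq 1/(2q_n)$, which easily beats $e^{-100\delta_1 m_y}$ for large $n$. The $\|q_n\omega\|$ bound is used only when $y \geq q_n/2$, where $m_y \geq q_n/5$.

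Finally, your phrasing ``the reflected copies contain none'' overstates what your parenthetical actually proves. The correct statement (matching the paper's Lemma~\ref{lem:theta+komega_notin Ujk1}) is that each \emph{pair} $U_{m_y,j} \cup (-U_{m_y,j}-(m_y-1)\omega)$ contains at most one orbit point from $\{\theta+k\omega: k \in I_1^y \cup I_2^y\}$; the injection is into the set of $\tilde N'$ pairs, not into $\{U_{m_y,j}\}$ alone.
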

\begin{proof}
We prove \eqref{eq:I1_bad_non_exp} by contradiction.  Suppose there exists $k_1\in I^y_1$ such that 
$$\theta+k_1\omega\notin \mathcal{B}^g_{m_y,1100 C_v\delta_1^{1/4},E}.$$ Then 
$g_{m_y,E}^{\omega}(\theta+k_1\omega)\geq e^{2m_y(L_{m_y}-1100 C_v\delta_1^{1/4})}$.
This implies, by the definition of $g_{m_y,E}^{\omega}$ as in \eqref{def:g}, that there exists $m_y'\in \{m_y,m_y-1,m_y-2\}$ and $a\in \{0,1\}$ such that 
\begin{align}\label{eq:g_large_implies_P_large}
    |P_{m_y',E}^{\omega}(\theta+(k_1+a)\omega)|\geq \frac{1}{2}e^{m_y(L_{m_y}-1100C_v\delta_1^{1/4})}\geq \frac{1}{2}e^{m_y(L-1200C_v\delta_1^{1/4})},
\end{align}
where we used $L_{m_y}=L+o(1)$ for $y$ large enough (which implies $m_y$ is large enough).
Applying Green's function expansion \eqref{eq:Green_exp} of $\phi_h$ at $h=0, -1$, on the interval $[m_1,m_2]=[k_1+a,k_1+a+m_y'-1]$, and estimating the numerators of the expansion using Lemma \ref{lem:upper_semi_cont}, we arrive at a contradiction that
\begin{align}\label{eq:exp_phi0_weak}
    \qquad 1\leq(\max(|\phi_0|,|\phi_{-1}|)\leq e^{-|k_1|(L-2000 C_v\delta_1^{1/4})}|\phi_{k_1+a-1}|+e^{-|k_1+m_y'|(L-2000C_v\delta_1^{1/4})}|\phi_{k_1+a+m_y'}|\leq e^{-\frac{1}{20}m_y L},
\end{align}
where we used \eqref{eq:Shnol} and that $\min(|k_1|, |k_1+m_y'|)\geq m_y/10-O(1)$ in the last inequality. 
This proves \eqref{eq:I1_bad_non_exp}.

Note that \eqref{eq:I1_bad_non_exp} implies for each $k\in I_1$, there exists $j_k$ such that
\begin{align}
    \theta+k\omega\in (U_{m_y,j_k}\bigcup (-U_{m_y,j_k}-(m_y-1)\omega)).
\end{align}
We need the following repulsion property among $\theta+k\omega$ for $k\in I_1^y\bigcup I_2^y$.
\begin{lemma}\label{lem:theta+komega_notin Ujk1}
    Suppose for some $k\in I_1^y\bigcup I_2^y$, $\theta+k\omega\in (U_{m_y,j}\bigcup (-U_{m_y,j}-(m_y-1)\omega))$, then for any $k'\in I_1^y\bigcup I_2^y\setminus \{k\}$, the following holds:
    \begin{align}
        \theta+k'\omega\notin (U_{m_y,j}\bigcup (-U_{m_y,j}-(m_y-1)\omega)).
    \end{align}
\end{lemma}
\begin{proof}
Without loss of generality, we assume $\theta+k\omega\in U_{m_y,j_k}$. The other case is completely analogous.

We need to distinguish two cases, depending on the size of $y$. 

\underline{Case 1}. If $y\leq q_n/2$. Then $m_y\leq q_n/4$. Clearly this implies
\begin{align}
    0<|k-k'|<y+\frac{4}{5}m_y+2<q_n.
\end{align}
Hence by \eqref{eq:qn_omega_min} and \eqref{eq:qn_omega}, we have
\begin{align}
    \|(\theta+k\omega)-(\theta+k'\omega)\|\geq \|(k-k')\omega\|\geq \|q_{n-1}\omega\|\geq \frac{1}{2q_n}\geq e^{-100\delta_1 m_y}, 
\end{align}
where we used $m_y\geq 5\delta_1\delta_1^{1/4}q_n^{1-c_0}$ in the last inequality. Combining the above with the estimate of $U_{m_y,j_k}$ in Corollary \ref{cor:gm_zeros_even} yields $\theta+k'\omega\notin U_{m_y,j_k}$.

\underline{Case 2}. If $y\geq q_n/2$. Then $m_y\geq q_n/5$. 
Since $k'\neq k$ and $|k-k'|<q_{n+1}$, we have by \eqref{eq:qn_omega_min} and \eqref{eq:qn_omega} that
    \begin{align}\label{eq:k-k1}
        \|\theta+k'\omega-(\theta+k\omega)\|=\|(k'-k)\omega\|\geq \|q_n\omega\|\geq e^{-\delta_1 q_n}/2,
    \end{align}
where we used $q_{n+1}\leq e^{\delta_1 q_n}$ in the last inequality.
Combining this with the measure estimate in Corollary \ref{cor:gm_zeros_even} and that $m_y\geq q_n/5$ yields
$$\mathrm{mes}(U_{m_y,j_k})\leq e^{-100\delta_1 m_y}\leq e^{-20\delta_1 q_n}\ll e^{-\delta_1 q_n}/2,$$
This implies $\theta+k'\omega\notin U_{m_y,j_k}$.
Now it remains to show $\theta+k'\omega\notin (-U_{m_y,j_k}-(m_y-1)\omega)$.
Note that
\begin{align}
        k+k'+m_y-1\in 
        \begin{cases}
        [-\frac{4}{5}m_y, -\frac{1}{5}m_y], \text{ if } k,k'\in I_1^y,\\
        [y-\frac{4}{5}m_y,y+\frac{3}{10}m_y], \text{ if } \text{ exactly one of } k,k'\in I_1^y,\\
        [2y-\frac{4}{5}m_y, 2y+\frac{4}{5}m_y] \text{ if } k,k'\in I_2^y.
        \end{cases}
    \end{align}
    Since $y=2m_y+O(1)$, $|k+k'+m_y-1|\in [m_y/5, 5m_y]$, thus \eqref{eq:theta_non_res} with $\delta'=\delta_1$ implies
    \begin{align}
        \|\theta+k'\omega-(-(\theta+k\omega)-(m_y-1)\omega)\|=\|2\theta+(k+k'+m_y-1)\omega\|
        \geq &e^{-\delta_1|k+k'+m_y-1|}\\
        \geq &e^{-5\delta_1 m_y}\geq \mathrm{mes}(U_{m_y,j_{k_1}}).
    \end{align}
    Hence
    $$\theta+k' \omega\notin -U_{m_y,j}-(m_y-1)\omega\ni -(\theta+k\omega)-(m_y-1)\omega.$$
    This proves the claimed result of Lemma \ref{lem:theta+komega_notin Ujk1}.
    \end{proof}
    Lemma \ref{lem:I2_good_non_exp} follows from combining Lemma \ref{lem:theta+komega_notin Ujk1} with \eqref{eq:card_I1cupI2} and the pigeonhole principle.
\end{proof}
For 
\begin{align}
    y\in 
    \begin{cases}
    [q_n^{1-c_0}, q_{n+1}/10], \text{ if } q_n\geq e^{\delta_1 q_{n-1}}\\
    [\frac{q_n}{10}, \frac{q_{n+1}}{10}], \text{ if } q_n\leq e^{\delta_1 q_{n-1}},
    \end{cases}
\end{align}
expanding $\phi_y$ using the Green's function expansion on $[k^y_2, k^y_2+m_y-1]$ with $k_2^y$ provided by Lemma \ref{lem:theta+komega_notin Ujk1}, we have similar to \eqref{eq:exp_phi0_weak} that
\begin{align}
    |\phi_y|
    \leq \max(e^{-|y-k_2^y|(L-2000C_v\delta_1^{1/4})}|\phi_{k_2^y-1}|, e^{-|k_2^y+m_y-y|(L-2000C_v\delta_1^{1/4})}|\phi_{k_2^y+m_y}|)
    \leq e^{-Ly/40}, 
\end{align}
in which we used $\min(|y-k_2^y|, |k_2^y+m_y-y|)\geq m_y/10\geq y/20$, and used \eqref{eq:Shnol} to bound 
$$\max(|\phi_{k_2^y-1}|, |\phi_{k_2^y+m_y}|)\leq Cy.$$
This is the claimed result.
\end{proof}

\section{Strong Liouville scales}\label{sec:strong}
We first give an overview of this section.
Throughout this section, we assume $q_{n+1}\geq e^{\delta_1 q_n}$. 
Our goal is to study the decay of the generalized eigenfunction $\phi$, roughly speaking, in $[q_n/10, q_{n+1}^{1-c_0}]$. 
The main result of this section is Theorem \ref{thm:AL_main_beta}, which is based on Theorems \ref{thm:AL_weak_regime} and \ref{thm:AL_lqn}, that handle weakly resonant regimes and strongly resonant regimes, respectively.
The strongly resonant regimes $R_{\ell q_n}$, for $|\ell|\leq 10q_{n+1}^{1-c_0}/q_n$, as defined as follows.
If $q_n\leq e^{\delta_1 q_{n-1}}$, let 
\begin{align}\label{def:Rr_1}
    R_{\ell q_n}:=[(\ell-10\delta_1^{1/4})q_n, (\ell+10\delta_1^{1/4})q_n], \text{ and } r_{\ell q_n}:=\sup_{y\in R_{\ell q_n}}|\phi_y|.
\end{align}
If $q_n>e^{\delta_1 q_{n-1}}$, let 
\begin{align}\label{def:Rr_2}
    R_{\ell q_n}:=[\ell q_n-10\delta_1^{1/4} q_n^{1-c_0}, \ell q_n+10\delta_1^{1/4} q_n^{1-c_0}], \text{ and } r_{\ell q_n}:=\sup_{y\in R_{\ell q_n}}|\phi_y|.
\end{align}
A regime in between two consecutive strong ones: $[\ell q_n, (\ell+1)q_n]\setminus (R_{\ell q_n}\bigcup R_{(\ell+1)q_n})$, is called a weakly resonant regime.

The technical core of this section concerns the strongly resonant regimes, for which the study of the weakly resonant regimes serve as preparations.
In fact, sections \ref{sec:zero_fpq}, \ref{sec:zero_fomega}, \ref{sec:even}, \ref{sec:weak_regime} are all preparations for the proof of Theorem \ref{thm:AL_lqn} in Sec. \ref{sec:ef_strong_strong}.
The proof of Theorem \ref{thm:AL_weak_regime}, given in Sec. \ref{sec:weak_regime}, is indeed independent of the sections \ref{sec:zero_fpq}, \ref{sec:zero_fomega} and \ref{sec:even}, but we decide to present it next to Sec. \ref{sec:ef_strong_strong} since they both concern the study of eigenfunctions.

This section deals with the case when there exists a sequence of such strong Liouville scales $q_{n+1}\geq e^{\delta_1 q_n}$, which is true when $\beta(\omega)\geq \delta_1>0$. 
In such case, we can formula the following variant of \eqref{eq:theta_non_res}: 
for any small $\delta'>0$, there exists $c_{\delta'}>0$ such that
\begin{align}\label{eq:theta_non_res_2}
    \|2\theta+n\omega\|\geq c_{\delta'}\, e^{-\delta' |n|}.
\end{align}
Note that this follows from \eqref{eq:theta_non_res} unless $2\theta+n_0\omega\in \Z$ for some $n_0\in \Z$. 
However that would lead to a contradiction to $\theta\in \Theta$. In fact, we would have
\begin{align}
    0=\limsup_{n\to\infty}\frac{-\log \|2\theta+n\omega\|}{|n|}=\limsup_{n\to\infty}\frac{-\log \|(n-n_0)\omega\|}{|n|}=\beta(\omega)\geq \delta_1>0,
\end{align}
hence a contradiction.
We will use this variant \eqref{eq:theta_non_res_2} in Sections \ref{sec:even} and \ref{sec:ef_strong_strong}.

Define $\beta_n$ be such that $\|q_n\omega\|=e^{-\beta_n q_n}$. By \eqref{eq:qn_omega}, we have $\beta_n\geq \delta_1$.
Recall $f^{\omega}_{n,E}$ is roughly speaking the trace of $M^{\omega}_{n,E}$, as in \eqref{def:f=2-tr}.
Let $$v^{\omega}_{q_n,E}(z)=q_n^{-1}\log |f^{\omega}_{q_n,E}(z)|,$$
and $R:=e^{2\pi\varepsilon_0}$, $N:=N_{q_n}(\omega,E,\varepsilon_0)$. Let $w_1,...,w_N$ be the zeros of $f^{\omega}_{q_n,E}(z)$ in $\mathcal{A}_R$ (assume $f^{\omega}_{q_n,E}(z)$ is zero free on $\partial \mathcal{A}_R$).
Define 
\begin{align}\label{def:GRqn_2}
G_{R,q_n,E}^{\omega}(z)=\frac{1}{q_n}\sum_{k=1}^{N} G_R(z, w_k),
\end{align}
where $G_R$ is the Green's function of $\mathcal{A}_R$ as in \eqref{def:G}. Then 
\begin{align}\label{eq:vomega=G+h}
v_{q_n,E}^{\omega}(z)=2\pi G_{R,q_n,E}^{\omega}(z)+h_{R,q_n,E}^{\omega}(z),
\end{align}
where $h_{R,q_n,E}^{\omega}$ is a harmonic function on $\mathcal{A}_R$.

\subsection{Zeros of $f^{p_n/q_n}_{q_n,E}$}\label{sec:zero_fpq}
We start off by studying the zeros of $f^{\omega}$, for $\omega=p_n/q_n$, which are structured due to periodicity. 

Since the trace of $M_{q_n,E}^{p_n/q_n}$ is periodic:
\begin{align}\label{eq:det_Mqn_rational_periodic}
\mathrm{tr}(M_{q_n,E}^{p_n/q_n}(\theta))=\mathrm{tr}(M_{q_n,E}^{p_n/q_n}(\theta+p_n/q_n)),
\end{align} 
$f_{q_n,E}^{p_n/q_n}(e^{2\pi i\cdot})$ is also $1/q_n$-periodic, due to \eqref{def:f=2-tr},
which implies
\begin{fact}\label{fact:zero_periodic}
If $z_0\in \C$ is a zero of $f_{q_n,E}^{p_n/q_n}(z)$, namely $f_{q_n,E}^{p_n/q_n}(z_0)=0$, then for any $1\leq j\leq q_n-1$, $z_0e^{2\pi ij p_n/q_n}$ is also a zero.
\end{fact}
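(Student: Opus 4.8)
\textbf{Proof proposal for Fact \ref{fact:zero_periodic}.}
The plan is to upgrade the phase-periodicity of the trace in \eqref{eq:det_Mqn_rational_periodic} to an identity of analytic functions of $z$ and then simply iterate. First I would note that, writing $z=e^{2\pi i\theta}$, equations \eqref{eq:det_Mqn_rational_periodic} and \eqref{def:f=2-tr} give
\begin{align}\label{eq:fpq_shift_circle}
f_{q_n,E}^{p_n/q_n}\!\left(e^{2\pi i(\theta+p_n/q_n)}\right)=f_{q_n,E}^{p_n/q_n}\!\left(e^{2\pi i\theta}\right),\qquad \theta\in\T.
\end{align}
Both sides of \eqref{eq:fpq_shift_circle}, viewed as functions of $z$, are analytic on the annulus $\mathcal{A}_R$ (indeed $f_{q_n,E}^{p_n/q_n}$ is built from the Dirichlet determinants $P^{p_n/q_n}_{k,E}$, which are polynomials in $E-v$, and $v$ is analytic on $\T_{\varepsilon_0}$). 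Since they agree on $\mathcal{C}_1\subset\mathcal{A}_R$, the identity theorem yields
\begin{align}\label{eq:fpq_shift_all}
f_{q_n,E}^{p_n/q_n}\!\left(z\,e^{2\pi i p_n/q_n}\right)=f_{q_n,E}^{p_n/q_n}(z)\qquad\text{for all }z\in\mathcal{A}_R.
\end{align}

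Given a zero $z_0$ of $f_{q_n,E}^{p_n/q_n}$, applying \eqref{eq:fpq_shift_all} repeatedly with $z=z_0,z_0e^{2\pi ip_n/q_n},\dots$ (all of which have the same modulus as $z_0$, hence lie in $\mathcal{A}_R$ whenever $z_0$ does) gives $f_{q_n,E}^{p_n/q_n}(z_0e^{2\pi ijp_n/q_n})=f_{q_n,E}^{p_n/q_n}(z_0)=0$ for every $1\le j\le q_n-1$, which is the claim. I would also remark that, since $\gcd(p_n,q_n)=1$, the multipliers $e^{2\pi ijp_n/q_n}$ range over all $q_n$-th roots of unity, so \eqref{eq:fpq_shift_all} is equivalent to the $1/q_n$-periodicity of $f_{q_n,E}^{p_n/q_n}(e^{2\pi i\cdot})$ asserted in the text. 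There is no real obstacle here: the only point deserving a line of care is the passage from the circle to the annulus, i.e.\ invoking analyticity of $f_{q_n,E}^{p_n/q_n}$ in $z$ so that the pointwise identity on $\mathcal{C}_1$ propagates to all $z$ of that modulus.
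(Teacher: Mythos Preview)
Your proof is correct and follows essentially the same approach as the paper: the paper simply notes that the $1/q_n$-periodicity of $f_{q_n,E}^{p_n/q_n}(e^{2\pi i\cdot})$ (from \eqref{eq:det_Mqn_rational_periodic} and \eqref{def:f=2-tr}) immediately implies the fact, while you make explicit the analytic continuation step from $\mathcal{C}_1$ to $\mathcal{A}_R$ via the identity theorem. The extra care you take is appropriate and fills in the one-line gap the paper leaves implicit.
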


Next, we estimate the total number of zeros of $f_{q_n,E}^{p_n/q_n}(z)$ lying near the unit circle.
For any $0\leq \varepsilon\leq \varepsilon_0$, define
\begin{align}
    N_n(\omega,E,\varepsilon):=\#\{e^{-2\pi\varepsilon}\leq |z|\leq e^{2\pi\varepsilon}:\, z \text{ is a zero of } f_{n,E}^{\omega}(z)\}.
\end{align}
Fact \ref{fact:zero_periodic} yields immediately that:
\begin{corollary}\label{cor:dqn_multiple}
For any $\varepsilon\geq 0$,
$N_{q_n}(p_n/q_n,E,\varepsilon)$ is a multiple of $q_n$.
\end{corollary}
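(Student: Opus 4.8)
The plan is to deduce this directly from Fact \ref{fact:zero_periodic} together with the standard fact that $\gcd(p_n,q_n)=1$ for continued fraction convergents. First I would observe that, since $p_n$ and $q_n$ are coprime, the set $\{e^{2\pi i j p_n/q_n}:0\leq j\leq q_n-1\}$ is exactly the full group $\mu_{q_n}$ of $q_n$-th roots of unity, which is cyclic of order $q_n$ and acts on $\C^{*}$ by multiplication. By Fact \ref{fact:zero_periodic}, the zero set of $f_{q_n,E}^{p_n/q_n}$ is invariant under this action. Moreover, since multiplication by a root of unity preserves the modulus, the closed annulus $\{z:e^{-2\pi\varepsilon}\leq |z|\leq e^{2\pi\varepsilon}\}$ is $\mu_{q_n}$-invariant, and hence so is the set of zeros of $f_{q_n,E}^{p_n/q_n}$ that it contains.

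Next I would check that this action of $\mu_{q_n}$ on the set of zeros in the annulus is free. Indeed, for any finite $\varepsilon$ the annulus does not contain the origin, so every zero $z_0$ in it satisfies $z_0\neq 0$; since $\zeta z_0 = z_0$ with $\zeta\in\mu_{q_n}$ forces $\zeta=1$, each $\mu_{q_n}$-orbit has exactly $q_n$ elements. Therefore the set of zeros in the annulus is partitioned into orbits each of cardinality $q_n$, and its total number $N_{q_n}(p_n/q_n,E,\varepsilon)$ is a multiple of $q_n$. If the count is taken with multiplicities, the same conclusion holds: the functional relation $f_{q_n,E}^{p_n/q_n}(z)=f_{q_n,E}^{p_n/q_n}(z e^{2\pi i p_n/q_n})$ implies the multiplicity of a zero is constant along each orbit, so each orbit contributes a fixed multiplicity times $q_n$.

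There is essentially no real obstacle here; the only point deserving a word of care is the freeness of the $\mu_{q_n}$-action, which is immediate once one notes that the relevant annulus excludes $0$ for every finite $\varepsilon$ (and for $\varepsilon=0$ the argument applies verbatim to zeros on the unit circle). The conclusion then follows.
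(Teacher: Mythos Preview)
Your proof is correct and takes essentially the same approach as the paper, which simply states that Fact~\ref{fact:zero_periodic} yields the corollary immediately. You have carefully spelled out the details the paper leaves implicit: that $\gcd(p_n,q_n)=1$ makes $\{e^{2\pi i j p_n/q_n}\}$ the full group $\mu_{q_n}$, that this group acts freely on the (nonzero) zeros in the annulus, and that the functional equation preserves multiplicities along orbits.
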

Let
\begin{align}\label{def:Iqn_GR}
   I_{q_n}^{p_n/q_n,G}(E,\varepsilon):=\int_{\T} 2\pi G_{R,q_n,E}^{p_n/q_n}(e^{2\pi i(\theta+i\varepsilon)})\, \mathrm{d}\theta
\end{align}
be the integral of the Green's function, and
\begin{align}\label{def:Lv_qn}
    L_{q_n}^{p_n/q_n,v}(E,\varepsilon):= \int_{\T} v_{q_n,E}^{p_n/q_n}(e^{2\pi i(\theta+i\varepsilon)})\, \mathrm{d}\theta.
\end{align}
\begin{lemma}\label{lem:zero_count_rational}
The harmonic part of $v^{p_n/q_n}_{q_n,E}$ as in \eqref{eq:vomega=G+h} satisfies $h_{R,q_n,E}^{p_n/q_n}=v_{q_n,E}^{p_n/q_n}$ on $\partial \mathcal{A}_R$, and the following holds uniformly in $z\in \mathcal{A}_R$:
\begin{align}\label{eq:harmonic=constant}
    h_{R,q_n,E}^{p_n/q_n}(z)=L(\omega,E,\varepsilon_0)+o(1).
\end{align}
For any small $\delta>0$, for $n$ large enough (depending on $\delta,\varepsilon_0$), the zero count satisfies
\begin{align}
    N_{q_n}(p_q/q_n,E,\varepsilon_0/2)=N_{q_n}(p_n/q_n,E,\delta/2)=2q_n.
\end{align}
\end{lemma}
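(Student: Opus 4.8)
The plan is to exploit the $1/q_n$-periodicity of $f^{p_n/q_n}_{q_n,E}$ to pin down the harmonic part $h^{p_n/q_n}_{R,q_n,E}$ exactly, and then read off the zero count from the jump in the integral of the Green's function as $\varepsilon$ runs from $\varepsilon_0$ down to $0$. First I would establish that $h^{p_n/q_n}_{R,q_n,E}=v^{p_n/q_n}_{q_n,E}$ on $\partial\mathcal A_R$: since $G_R(z,w)$ vanishes for $z\in\partial\mathcal A_R$, the Green's potential $2\pi G^{p_n/q_n}_{R,q_n,E}$ vanishes there, so $h^{p_n/q_n}_{R,q_n,E}$ agrees with $v^{p_n/q_n}_{q_n,E}$ on the boundary by \eqref{eq:vomega=G+h}. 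Next, because $h^{p_n/q_n}_{R,q_n,E}$ is harmonic on $\mathcal A_R$ and (by Fact \ref{fact:zero_periodic}) the zero set of $f^{p_n/q_n}_{q_n,E}$ is invariant under the rotation $z\mapsto ze^{2\pi ip_n/q_n}$, the Green's potential $G^{p_n/q_n}_{R,q_n,E}(z)$ is invariant under this rotation; since $p_n/q_n$ generates a dense orbit in the sense that $\{jp_n/q_n \bmod 1\}$ equals $\{j/q_n\}$, and since $v^{p_n/q_n}_{q_n,E}$ is itself $1/q_n$-periodic in $\theta$, the harmonic function $h^{p_n/q_n}_{R,q_n,E}$ is both $1/q_n$-periodic in the angular variable and harmonic — but any harmonic function on an annulus has the form $a\log|z|+b+\sum_{k\ne0}(\alpha_k z^k+\beta_k \bar z^k)$ in a suitable sense, and the $1/q_n$-periodicity kills all angular modes of frequency not divisible by $q_n$. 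To conclude it is actually constant I would invoke Theorem \ref{thm:dominated} (the regularity/domination result of \cite{AJS}): on the first supercritical stratum with $L(\omega,E)>\beta(\omega)$, the cocycle is regular near $\varepsilon=0$, so $L(p_n/q_n,M_E,\theta+i\varepsilon)=L(\omega,E,\varepsilon)+o(1)$ uniformly in $\theta$ and small $\varepsilon$; tracking this through the boundary values at $\varepsilon=\pm\varepsilon_0$ forces $h^{p_n/q_n}_{R,q_n,E}(z)=L(\omega,E,\varepsilon_0)+o(1)$ uniformly, which is \eqref{eq:harmonic=constant}.

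With the harmonic part identified as an (essentially) constant, I would then compute $L^{p_n/q_n,v}_{q_n}(E,\varepsilon)$ for $\varepsilon\in[0,\varepsilon_0]$. Integrating \eqref{eq:vomega=G+h} over the circle of radius $e^{2\pi\varepsilon}$ gives $L^{p_n/q_n,v}_{q_n}(E,\varepsilon)=I^{p_n/q_n,G}_{q_n}(E,\varepsilon)+L(\omega,E,\varepsilon_0)+o(1)$. On the other hand, $L^{p_n/q_n,v}_{q_n}(E,\varepsilon)=q_n^{-1}\int_{\T}\log|f^{p_n/q_n}_{q_n,E}(e^{2\pi i(\theta+i\varepsilon)})|\,d\theta$, and using \eqref{def:f=2-tr} and the domination estimate $\tfrac1{q_n}\log\|M^{p_n/q_n}_{q_n,E}(\theta+i\varepsilon)\|=L(\omega,E,\varepsilon)+o(1)$ uniformly (again from Theorem \ref{thm:dominated}, since $L(\omega,E,\varepsilon)>0$ so the larger eigenvalue dominates the trace up to $o(1)$), one gets $L^{p_n/q_n,v}_{q_n}(E,\varepsilon)=L(\omega,E,\varepsilon)+o(1)$ for all $\varepsilon\in[0,\varepsilon_0]$. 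Combining the two expressions yields $I^{p_n/q_n,G}_{q_n}(E,\varepsilon)=L(\omega,E,\varepsilon)-L(\omega,E,\varepsilon_0)+o(1)$. Now by the same Jensen/argument-principle identity used in \eqref{eq:int_final} (namely $I^{p_n/q_n,G}_{q_n}(E,\varepsilon)=-\tfrac{\pi}{2q_n}\int_\varepsilon^{\varepsilon_0}N_{q_n}(p_n/q_n,E,\varepsilon_2)\,d\varepsilon_2$, rescaled by the normalization $1/q_n$ rather than $1/(2m)$), differentiating in $\varepsilon$ and using \eqref{eq:L_linear} — i.e. $L(\omega,E,\varepsilon)=L(\omega,E)+2\pi\kappa\,|\varepsilon|$ with $\kappa=1$ on the first supercritical stratum — shows that $N_{q_n}(p_n/q_n,E,\varepsilon)=2q_n+o(q_n)$ for every fixed $\varepsilon\in(0,\varepsilon_0)$, since $\tfrac{d}{d\varepsilon}[L(\omega,E,\varepsilon)-L(\omega,E,\varepsilon_0)]=2\pi$ there. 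By Corollary \ref{cor:dqn_multiple}, $N_{q_n}(p_n/q_n,E,\varepsilon)$ is an exact multiple of $q_n$; combined with the $2q_n+o(q_n)$ estimate, for $n$ large it must equal exactly $2q_n$. Finally, since $N_{q_n}(p_n/q_n,E,\cdot)$ is monotone nondecreasing and sandwiched between its values at $\delta/2$ and $\varepsilon_0/2$, both of which equal $2q_n$, the intermediate-count statement $N_{q_n}(p_n/q_n,E,\varepsilon_0/2)=N_{q_n}(p_n/q_n,E,\delta/2)=2q_n$ follows.

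The main obstacle, I expect, is making the passage from "$h^{p_n/q_n}_{R,q_n,E}$ is harmonic and $1/q_n$-periodic" to "$h^{p_n/q_n}_{R,q_n,E}$ is uniformly close to the constant $L(\omega,E,\varepsilon_0)$" fully rigorous and genuinely uniform in $z\in\mathcal A_R$. A priori the periodicity only kills low angular modes; the high modes (frequency a nonzero multiple of $q_n$) could in principle survive and oscillate. The key point defeating this is that $f^{p_n/q_n}_{q_n,E}$ is a trigonometric polynomial in $e^{2\pi i\theta}$ of controlled degree (degree $\sim q_n$ coming from the $q_n$-step transfer matrix), so its logarithm cannot support angular modes of frequency $\gg q_n$ on the boundary with large amplitude; more robustly, one uses the uniform two-sided bound $|f^{p_n/q_n}_{q_n,E}(e^{2\pi i(\theta+i\varepsilon_0)})|=e^{q_n(L(\omega,E,\varepsilon_0)+o(1))}$ from Theorem \ref{thm:dominated} on the outer boundary (and its mirror on the inner boundary by Fact \ref{fact:zero_real}), which directly pins the boundary values of the harmonic function $h^{p_n/q_n}_{R,q_n,E}$ uniformly; the maximum principle then propagates this uniformity into the interior. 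A secondary technical point is justifying the zero-counting identity \eqref{eq:int_final} in this normalization and handling the harmless case where $f^{p_n/q_n}_{q_n,E}$ has zeros exactly on $\partial\mathcal A_R$ (shrink $\varepsilon_0$ by $o(1)$), but these are routine.
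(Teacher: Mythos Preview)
Your approach is essentially the paper's: pin down $v^{p_n/q_n}_{q_n,E}$ uniformly on $\partial\mathcal A_R$ via Theorem~\ref{thm:dominated}, propagate to the interior by the maximum principle for $h^{p_n/q_n}_{R,q_n,E}$, then combine the Jensen-type identity \eqref{eq:424_HS2} with monotonicity of $N$ and Corollary~\ref{cor:dqn_multiple} to get the exact count. The periodicity/angular-mode digression is unnecessary and, as you correctly realize, the domination bound plus the maximum principle already does the job.

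Two points to tighten. First, on the first supercritical stratum the cocycle is \emph{not} regular at $\varepsilon=0$: by evenness of $\varepsilon\mapsto L(\omega,E,\varepsilon)$ the acceleration jumps from $-1$ to $+1$ there, so Theorem~\ref{thm:dominated} does not apply at $\varepsilon=0$. This is harmless because you only need the conclusion at $\varepsilon=\pm\varepsilon_0$ for the boundary values and at finitely many \emph{nonzero} $\varepsilon$'s for the zero count; the paper uses $\varepsilon\in\{\pm\varepsilon_0,\pm\varepsilon_0/2,\pm\delta/2,\pm\delta/4\}$. Second, you cannot literally differentiate $I^{p_n/q_n,G}_{q_n}(E,\varepsilon)=L(\omega,E,\varepsilon)-L(\omega,E,\varepsilon_0)+o(1)$ in $\varepsilon$, since the $o(1)$ is not known to be differentiable. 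The paper instead takes differences over the fixed intervals $[\varepsilon_0/2,\varepsilon_0]$ and $[\delta/4,\delta/2]$ and uses monotonicity of $N_{q_n}(p_n/q_n,E,\cdot)$ to get $N(\varepsilon_0/2)\le 2q_n(1+o(1))$ and $N(\delta/2)\ge 2q_n(1+o(1))$; Corollary~\ref{cor:dqn_multiple} then forces equality with $2q_n$, and monotonicity closes the sandwich. With these fixes your argument coincides with the paper's.
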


\begin{proof}
We start with the harmonic part.
Since for $\varepsilon\in \{\pm\varepsilon_0, \pm\varepsilon_0/2,\pm\delta/2, \pm\delta/4\}$, $(\omega,M_E(\cdot+i\varepsilon))$ is regular, Theorem \ref{thm:dominated} implies that for $n$ large enough and uniformly in $\theta\in \T$, 
\begin{align}
    \frac{1}{q_n}\log (\rho(M_{q_n,E}^{p_n/q_n}(\theta+i\varepsilon)))=L(\omega,E,\varepsilon)+o(1),
\end{align}
where $\rho(M)$ is the spectral radius of $M$.
This implies, due to $\rho(M)\leq \|M^2\|^{1/2}\leq \|M\|$, that
\begin{align}\label{eq:lower_M2_qn}
    &\frac{1}{q_n}\min\left(\log \|M_{q_n,E}^{p_n/q_n}(\theta+i\varepsilon)\|, \frac{1}{2}\log \|(M_{q_n,E}^{p_n/q_n}(\theta+i\varepsilon))^2\|\right)\\
    \geq &\frac{1}{q_n}\log (\rho(M_{q_n,E}^{p_n/q_n}(\theta+i\varepsilon)))\notag\\
    \geq &L(\omega,E,\varepsilon)+o(1),
\end{align}
uniformly in $\theta\in \T$, for $n$ large enough.

By Lemma \ref{lem:upper_semi_cont} and \eqref{eq:f=M2/M}, we conclude that for $n$ large enough,
\begin{align}\label{eq:upper_M_qn}
    v_{q_n,E}^{p_n/q_n}(e^{2\pi i(\theta+i\varepsilon)})\leq \frac{1}{q_n}\log \|M_{q_n,E}^{p_n/q_n}(\theta+i\varepsilon)\|+o(1)\leq L(\omega,E,\varepsilon)+o(1),
\end{align}
uniformly in $\theta\in \T$.
Note that \eqref{eq:lower_M2_qn} and \eqref{eq:upper_M_qn} together with \eqref{eq:f=M2/M} imply, uniformly in $\theta\in \T$, that
\begin{align}\label{eq:vqn_lower_eps}
    v_{q_n,E}^{p_n/q_n}(e^{2\pi i(\theta+i\varepsilon)})\geq \frac{1}{q_n}\log \frac{\|(M_{q_n,E}^{p_n/q_n}(\theta+i\varepsilon))^2\|}{\|M_{q_n,E}^{p_n/q_n}(\theta+i\varepsilon)\|}+o(1)\geq L(\omega,E,\varepsilon)+o(1).
\end{align}
Combining \eqref{eq:upper_M_qn} with \eqref{eq:vqn_lower_eps} yields
\begin{align}\label{eq:vqn_uniform=L}
    v^{p_n/q_n}_{q_n,E}(e^{2\pi i(\theta+i\varepsilon)})=L(\omega,E,\varepsilon)+o(1),
\end{align}
for $\varepsilon\in \{\pm\varepsilon_0, \pm\varepsilon_0/2,\pm\delta/2, \pm\delta/4\}$, uniformly in $\theta\in \T$.
This clearly implies the integral satisfies:
\begin{align}\label{eq:Lpq=L+o1}
    L^{p_n/q_n,v}_{q_n}(E,\varepsilon)=L(\omega,E,\varepsilon)+o(1), \text{ for } \varepsilon\in \{\pm\varepsilon_0, \pm\varepsilon_0/2,\pm\delta/2, \pm\delta/4\}.
\end{align}

Since $h_{R,q_n,E}^{p_n/q_n}(z)=v^{p_n/q_n}_{q_n,E}(z)$ for $z\in \partial \mathcal{A}_R$, by \eqref{eq:vqn_uniform=L} and the max/min principle for harmonic functions,
\begin{align}\label{eq:harmonic}
    h_{R,q_n,E}^{p_n/q_n}(z)=L(\omega,E,\varepsilon_0)+o(1),
\end{align}
holds uniformly in $z\in \mathcal{A}_R$, thus proving \eqref{eq:harmonic=constant}.

Next, we estimate the number of zeros.
By \eqref{eq:harmonic=constant} and \eqref{eq:Lpq=L+o1}, we have for any $$\varepsilon_2,\varepsilon_1\in \{\varepsilon_0, \varepsilon_0/2,\delta/2, \delta/4\},$$
and for $n$ large enough that
\begin{align}\label{eq:Iqn_eps2-eps1}
    I_{q_n}^{p_n/q_n,G}(E,\varepsilon_2)-I_{q_n}^{p_n/q_n,G}(E,\varepsilon_1)
    =&L^{p_n/q_n,v}_{q_n}(E,\varepsilon_2)-L^{p_n/q_n,v}_{q_n}(E,\varepsilon_1)+o(1) \notag\\
    =&L(\omega,E,\varepsilon_2)-L(\omega,E,\varepsilon_1)+o(1).
\end{align}
By \cite[(4.24)]{HS1}, 
\begin{align}\label{eq:424_HS2}
    I_{q_n}^{p_n/q_n,G}(E,\varepsilon)=2\pi\int_{\T} G_{R,q_n,E}^{p_n/q_n}(e^{2\pi i(\theta+i\varepsilon)})\, \mathrm{d}\theta=-\frac{\pi}{q_n}\int_{\varepsilon}^{\varepsilon_0} N_{q_n}(p_n/q_n,E,\varepsilon)\, \mathrm{d}\varepsilon.
\end{align}
Combining \eqref{eq:Iqn_eps2-eps1} with \eqref{eq:424_HS2}, and using the $L(\omega,E,\varepsilon)=L(\omega,E,0)+2\pi\varepsilon$, we have
\begin{align}
\int_{\varepsilon_0/2}^{\varepsilon_0}N_{q_n}(p_n/q_n,E,\varepsilon)\, \mathrm{d}\varepsilon
=&\frac{q_n}{\pi}(L(\omega,E,\varepsilon_0)-L(\omega,E,\varepsilon_0/2)+o(1))\\
=&q_n\varepsilon_0(1+o(1)).
\end{align}
This implies, by the monotonicity of $N_{q_n}(p_n/q_n,E,\varepsilon)$ in $\varepsilon$, that
\begin{align}
    N_{q_n}(p_n/q_n,E,\varepsilon_0/2)\leq 2q_n(1+o(1)).
\end{align}
By Corollary \ref{cor:dqn_multiple}, we have
\begin{align}\label{eq:Nq<2q}
    N_{q_n}(p_n/q_n,E,\varepsilon_0/2)\leq 2q_n.
\end{align}
Similarly
\begin{align}
    \int_{\delta/4}^{\delta/2}N_{q_n}(p_n/q_n,E,\varepsilon)\, \mathrm{d}\varepsilon
    =&\frac{q_n}{\pi} (L(\omega,E,\delta)-L(\omega,E,\delta/2)+o(1))\\
    =&\frac{q_n\delta}{2}(1+o(1)),
\end{align}
which implies
\begin{align}
    N_{q_n}(p_n/q_n,E,\delta/2)\geq 2q_n(1+o(1)),
\end{align}
This combined with Corollary \ref{cor:dqn_multiple} yields
\begin{align}
    N_{q_n}(p_n/q_n,E,\delta/2)\geq 2q_n.
\end{align}
Taking the upper bound \eqref{eq:Nq<2q} into account, we conclude that for $n$ large enough,
\begin{align}\label{eq:Ntau=Neps0=2dqn}
    N_{q_n}(p_n/q_n,E,\delta/2)=N_{q_n}(p_n/q_n,E,\varepsilon_0/2)=2q_n,
\end{align}
as claimed.
\end{proof}

Facts \ref{fact:zero_real}, \ref{fact:zero_periodic} and Lemma \ref{lem:zero_count_rational} yield the following immediately:
\begin{lemma}\label{lem:ration_zero_expression}
    There exists $z_1^{(p_n/q_n)}=r_1e^{2\pi i(\theta_1+i\varepsilon_1)},z_2^{(p_n/q_n)}=r_2e^{2\pi i(\theta_2+i\varepsilon_2)}\in \mathcal{A}_{\exp(\pi\delta)}$ (it is possible that $z_1^{(p_n/q_n)}=z_2^{(p_n/q_n)}$), with $r_1r_2=1$ and $r_2\leq r_1$, such that the zeros of $f_{q_n,E}^{p_n/q_n}(z)$ in $\mathcal{A}_{\exp(\pi\varepsilon_0)}$ are 
    \begin{align}\label{def:zero_set_rational}
    \mathcal{Z}_{q_n}(p_n/q_n,E):=\bigcup_{j=0}^{q_n-1}\{z_1^{(p_n/q_n)}e^{2\pi i jp_n/q_n}, z_2^{(p_n/q_n)} e^{2\pi i jp_n/q_n}\}.
    \end{align}
\end{lemma}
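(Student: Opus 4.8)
The plan is to combine the three structural facts about the zeros of $f_{q_n,E}^{p_n/q_n}$ already at our disposal: the reflection symmetry (Fact \ref{fact:zero_real}), the rotational periodicity by $e^{2\pi i p_n/q_n}$ (Fact \ref{fact:zero_periodic}), and the exact count $N_{q_n}(p_n/q_n,E,\varepsilon_0/2)=2q_n$ together with the refined count $N_{q_n}(p_n/q_n,E,\delta/2)=2q_n$ (Lemma \ref{lem:zero_count_rational}). First I would observe that, by the periodicity of Fact \ref{fact:zero_periodic}, the zero set inside $\mathcal{A}_{\exp(\pi\varepsilon_0)}$ is a union of full orbits under the rotation group generated by $e^{2\pi i p_n/q_n}$, which is a cyclic group of order $q_n$ (since $\gcd(p_n,q_n)=1$). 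Each such orbit has exactly $q_n$ elements unless a zero sits at $z=0$ or $z=\infty$, which cannot happen since $f_{q_n,E}^{p_n/q_n}$ is a nonzero polynomial-type expression with $f_{q_n,E}^{p_n/q_n}(0)$ a finite nonzero constant coming from the leading behavior of the Dirichlet determinants; I would spell this out or simply note $0,\infty\notin\mathcal{A}_R$. Hence the $2q_n$ zeros (counted with multiplicity) decompose into exactly two orbits of size $q_n$ each, say generated by $z_1^{(p_n/q_n)}$ and $z_2^{(p_n/q_n)}$.

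Next I would pin down the moduli. The reflection symmetry $f_{q_n,E}^{p_n/q_n}(z)=\overline{f_{q_n,E}^{p_n/q_n}(1/\overline z)}$ from Fact \ref{fact:zero_real} shows that $z\mapsto 1/\overline z$ permutes the zero set; since this map commutes with the rotations (it sends an orbit to an orbit and preserves modulus only up to inversion), it either fixes each of the two orbits setwise or swaps them. In the first case each orbit is invariant under $z\mapsto 1/\overline z$, which forces the common modulus of that orbit to satisfy $r=1/r$, i.e. $r=1$, contradicting... no — actually it does not contradict anything, it just means $r_1=r_2=1$, which is the degenerate sub-case $z_1^{(p_n/q_n)}=z_2^{(p_n/q_n)}$ up to a rotation, or two distinct unit-modulus orbits; either way $r_1r_2=1$ holds trivially. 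In the second case the map swaps the two orbits, so the modulus $r_2$ of one equals the inverse $1/r_1$ of the other, again giving $r_1 r_2=1$. In all cases, relabeling so that $r_2\le r_1$, we get representatives $z_i^{(p_n/q_n)}=r_i e^{2\pi i(\theta_i+i\varepsilon_i)}$ with $r_1 r_2=1$, and the orbit description \eqref{def:zero_set_rational} follows. Finally, the sharper count $N_{q_n}(p_n/q_n,E,\delta/2)=2q_n$ shows that in fact \emph{all} $2q_n$ zeros already lie in the much smaller annulus $\mathcal{A}_{\exp(\pi\delta)}$ (note $\delta/2$ corresponds to radius $\exp(2\pi\cdot\delta/2)=\exp(\pi\delta)$ after the factor-of-$2\pi$ convention used in $N_{q_n}$, modulo the exact normalization the paper uses — I would align the radius bookkeeping with the definition of $N_{q_n}$), so both representatives may be taken in $\mathcal{A}_{\exp(\pi\delta)}$ as stated.

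The one genuinely delicate point is the interaction between the cyclic rotation action and the anti-holomorphic reflection: I must be careful that $z\mapsto 1/\overline z$ does \emph{not} a priori commute with $z\mapsto z e^{2\pi i p_n/q_n}$ on the nose — it conjugates the rotation to its inverse. So an orbit $\{z_0 e^{2\pi i j p_n/q_n}\}$ is mapped by $z\mapsto 1/\overline z$ to $\{(1/\overline{z_0}) e^{-2\pi i j p_n/q_n}\} = \{(1/\overline{z_0}) e^{2\pi i j p_n/q_n}\}$ (reindexing $j\mapsto -j$, which is a bijection of $\Z/q_n\Z$), hence it \emph{is} again a single rotation-orbit, namely the one through $1/\overline{z_0}$. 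This is exactly what makes the argument work, and I would state it carefully: the two commuting-up-to-inversion symmetries together force the zero set to be the union of at most two rotation-orbits whose leading representatives are reflection-related, which is the content of Lemma \ref{lem:ration_zero_expression}. The remaining bookkeeping — matching the radius $\exp(\pi\varepsilon_0)$ versus $\exp(\pi\delta)$ with the $N_{q_n}$ normalization, and handling multiplicity (a double zero just means $z_1^{(p_n/q_n)}$ and $z_2^{(p_n/q_n)}$ coincide up to a rotation power, which is allowed) — is routine. I would expect the write-up to be short, essentially two paragraphs invoking the three cited facts and the symmetry computation above.
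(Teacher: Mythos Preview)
Your proposal is correct and follows precisely the approach the paper intends: the paper's proof is the single sentence ``Facts \ref{fact:zero_real}, \ref{fact:zero_periodic} and Lemma \ref{lem:zero_count_rational} yield the following immediately,'' and you have faithfully unpacked how the rotational orbit structure, the reflection symmetry $z\mapsto 1/\overline z$ (which sends an orbit to an orbit, as you carefully verify), and the exact count $2q_n$ in both annuli combine to give two rotation-orbits with reciprocal moduli lying in $\mathcal{A}_{\exp(\pi\delta)}$. Your handling of the case split (orbits swapped vs.\ each fixed) and the annulus bookkeeping is accurate; the only cosmetic issue is the stream-of-consciousness aside (``contradicting\dots\ no ---''), which you should tighten in a final write-up.
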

The periodic structure of zeros implies the following control of $\Gamma_R$ in \eqref{def:G}.
Let 
\begin{align}\label{def:Gamma_R_qn}
    \Gamma^{p_n/q_n}_{R,q_n,E}(z):=\frac{1}{q_n}\sum_{w\in \mathcal{Z}_{q_n}(p_n/q_n,E)} \Gamma_R(z,w).
\end{align}
\begin{lemma}\label{lem:Gamma_qn}
    For any $\delta>0$, for $n$ large enough, uniformly in $z\in \mathcal{A}_R$, the following holds:
    \begin{align}
        \left|\Gamma^{p_n/q_n}_{R,q_n,E}(z)+\frac{\log (|z|R)}{2\pi}\right|\leq \delta.
    \end{align}
\end{lemma}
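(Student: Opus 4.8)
The plan is to evaluate $\Gamma^{p_n/q_n}_{R,q_n,E}(z)$ \emph{directly} from the explicit formula \eqref{def:GammaR} for $\Gamma_R$ together with the precise description of the zero set provided by Lemma~\ref{lem:ration_zero_expression}; no appeal to the decomposition \eqref{eq:vomega=G+h} is needed. Write $\zeta:=e^{2\pi i p_n/q_n}$; since $\gcd(p_n,q_n)=1$, the powers $\{\zeta^j\}_{j=0}^{q_n-1}$ run over all $q_n$-th roots of unity, and likewise $\{\zeta^{-j}\}_{j=0}^{q_n-1}$. Hence for every $a\in\C$ with $|a|\neq1$ one has the elementary identity $\prod_{j=0}^{q_n-1}(1-a\zeta^{\pm j})=1-a^{q_n}$, so
\[
\frac{1}{q_n}\sum_{j=0}^{q_n-1}\log\bigl|1-a\,\zeta^{\pm j}\bigr|=\frac{1}{q_n}\log\bigl|1-a^{q_n}\bigr|.
\]
By Lemma~\ref{lem:ration_zero_expression} the $2q_n$ zeros (counted with multiplicity) are $\{z_1\zeta^j\}_{j}\cup\{z_2\zeta^j\}_{j}$ with $|z_1||z_2|=1$ and $|z_1|,|z_2|\in(e^{-\pi\delta},e^{\pi\delta})$; we may also assume $\delta<\varepsilon_0$, since establishing the bound for small $\delta$ gives it for all $\delta$.

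Next I would split $\Gamma_R(z,w)$ into the ``$\log$--$\log$'' term and the $\tfrac1{2\pi}\log(\cdots)$ term of \eqref{def:GammaR} and sum over $w$ in the zero set. For the first term, $|z_i\zeta^j|=|z_i|$ and $|z_1||z_2|=1$ give $\frac1{q_n}\sum_w\log(|w|/R)=-2\log R$, so this piece contributes $\frac{\log(|z|/R)}{4\pi\log R}\cdot(-2\log R)=\frac{\log(R/|z|)}{2\pi}$, uniformly in $z$. In the second term, the explicit factor $R$ in the denominator of \eqref{def:GammaR} contributes, after summing over the $2q_n$ zeros and dividing by $q_n$, exactly $-\frac{\log R}{\pi}$. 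Adding these two pieces yields precisely $-\frac{\log(|z|R)}{2\pi}$, which is the asserted main term.

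It then remains to show that the ``remainder'', namely $\frac1{2\pi q_n}$ times
\[
\sum_{w}\sum_{k\ge1}\Bigl[\log\bigl|1-R^{-4k}\tfrac zw\bigr|+\log\bigl|1-R^{-4k}\tfrac wz\bigr|-\log\bigl|1-R^{-(4k-2)}w\bar z\bigr|-\log\bigl|1-R^{-(4k-2)}\tfrac1{\bar z w}\bigr|\Bigr],
\]
is $o(1)$ uniformly in $z\in\mathcal{A}_R$. Since $R>1$ the $k$-series converge absolutely, so I may interchange the $k$-sum with the $w$-sum and apply the root-of-unity identity with $a$ equal, for $i=1,2$, to $R^{-4k}z/z_i$, $R^{-4k}z_i/z$, $R^{-(4k-2)}z_i\bar z$ and $R^{-(4k-2)}/(\bar z z_i)$ respectively; each summand becomes $\pm\frac1{q_n}\log|1-a^{q_n}|$. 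Using $1/R<|z|<R$ and $|z_i|\in(e^{-\pi\delta},e^{\pi\delta})$ one checks in all four families that $|a|\le R^{-1}e^{\pi\delta}=e^{-\pi(2\varepsilon_0-\delta)}<1$, the $k\ge2$ terms being smaller still by a factor $R^{-4(k-1)}$; hence $\bigl|\log|1-a^{q_n}|\bigr|\le |a|^{q_n}/(1-|a|^{q_n})$, and summing the geometric series in $k$ bounds the whole remainder by $C_{\varepsilon_0}\,q_n^{-1}e^{-\pi(2\varepsilon_0-\delta)q_n}=o(1)$. Taking $n$ large enough that this is $<\delta$ completes the argument.

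The computation is essentially mechanical; the only point requiring care is the bookkeeping of the four families of geometric factors in $\Gamma_R$ and the verification --- \emph{uniformly} over $z\in\mathcal{A}_R$, including $z$ close to the two boundary circles --- that every ratio $|a|$ entering the identity stays strictly below $1$, which is precisely where one uses that $z_1,z_2$ lie in the thin annulus $\mathcal{A}_{e^{\pi\delta}}$ coming from Lemma~\ref{lem:ration_zero_expression}.
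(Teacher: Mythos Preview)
Your argument is correct and takes a genuinely different route from the paper's. The paper proceeds by first truncating the infinite product in \eqref{def:GammaR} at some $k_0=k_0(\delta)$, then approximating each of the finitely many sums $\frac{1}{q_n}\sum_{j}\log|1-c\,e^{2\pi i j p_n/q_n}|$ by the corresponding integral $\int_{\T}\log|1-c\,e^{2\pi i\theta}|\,\mathrm d\theta$ as a Riemann sum (with an $O(1/q_n)$ error for each of the $O(k_0)$ terms), and finally invoking the closed-form integral \eqref{eq:int_HR}. You instead exploit the exact algebraic identity $\prod_{j=0}^{q_n-1}(1-a\zeta^{\pm j})=1-a^{q_n}$, which collapses each $j$-sum to a single term; the ``main'' contributions (the $\log$--$\log$ piece together with the explicit $R$ in the denominator) then assemble \emph{exactly} into $-\frac{\log(|z|R)}{2\pi}$, and the entire error is pushed into $\frac{1}{q_n}\log|1-a^{q_n}|$ with $|a|\le e^{-\pi(2\varepsilon_0-\delta)}<1$, hence is $O(q_n^{-1}e^{-cq_n})$ rather than merely $O(1/q_n)$. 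Your approach is cleaner and yields a sharper quantitative bound; the paper's Riemann-sum method, on the other hand, is more robust in that it transfers verbatim to the irrational-frequency setting of Lemma~\ref{lem:vqn_Adelta_3_omega}, where the zeros are only \emph{approximately} $1/q_n$-periodic (see \eqref{eq:r_omega_periodic}) and no exact product identity is available.
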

\begin{proof}
For each $s=1,2$, we study
    \begin{align}
        \frac{1}{q_n}\sum_{j=0}^{q_n-1} \Gamma_R(z, z^{(p_n/q_n)}_s e^{2\pi ijp_n/q_n}).
    \end{align}
    Recall that $\Gamma_R(z,w)$ is as in \eqref{def:GammaR}, there exists some $k_0=k_0(\delta)$ such that uniformly in $z\in \mathcal{A}_R$ and $w\in \mathcal{A}_{\exp(\pi\delta)}$,
    \begin{align}
    \Gamma_R(z,w)=\frac{\log( |z|/ R) \log (|w|/ R)}{4\pi\log R}+\frac{1}{2\pi}\log \left( \frac{\prod_{k=1}^{k_0} |1-\frac{1}{R^{4k}}\frac{z}{w}| \cdot |1-\frac{1}{R^{4k}} \frac{w}{z}|}{R\cdot \prod_{k=1}^{k_0} |1-\frac{1}{R^{4k-2}}w\overline{z}|\cdot |1-\frac{1}{R^{4k-2}} \frac{1}{\overline{z}w}|}\right)+\xi_1,
    \end{align}
    where $|\xi_1|\leq \delta/4$.
    It is easy to show, for each $1\leq k\leq k_0$ that uniformly in $z\in \mathcal{A}_R$,
    \begin{align}\label{eq:GammaR_11}
        &\frac{1}{q_n}\sum_{j=0}^{q_n-1}\log \left|1-\frac{1}{R^{4k}}\frac{z}{z^{(p_n/q_n)}_s e^{2\pi ijp_n/q_n}}\right|\notag\\
        =&\frac{1}{q_n}\sum_{j=0}^{q_n-1}\log \left|1-\frac{1}{R^{4k}}\frac{z}{r_s e^{2\pi i(\theta_s+j/q_n)}}\right|\notag\\
        =&\int_{\T} \log \left|1-\frac{1}{R^{4k}}\frac{z}{r_se^{2\pi i\theta}}\, \mathrm{d}\theta\right|+O\left(\frac{|z|}{R^{4k}r_s-|z|}\right)\frac{1}{q_n},
    \end{align}
    and similarly
    \begin{align}
        &\frac{1}{q_n}\sum_{j=0}^{q_n-1}\log \left|1-\frac{1}{R^{4k}}\frac{z^{(p_n/q_n)}_s e^{2\pi ijp_n/q_n}}{z}\right|=\int_{\T}\left|1-\frac{1}{R^{4k}}\frac{r_se^{2\pi i\theta}}{z}\, \mathrm{d}\theta\right|+O\left(\frac{r_s}{R^{4k}|z|-r_s}\right)\frac{1}{q_n},\\
        &\frac{1}{q_n}\sum_{j=0}^{q_n-1}\log \left|1-\frac{1}{R^{4k-2}}z^{(p_n/q_n)}_s e^{2\pi ijp_n/q_n}\overline{z}\right|=\int_{\T}\left|1-\frac{1}{R^{4k-2}}r_se^{2\pi i\theta}\overline{z}\, \mathrm{d}\theta\right|+O\left(\frac{r_s|z|}{R^{4k-2}-r_s|z|}\right)\frac{1}{q_n},\\
        &\frac{1}{q_n}\sum_{j=0}^{q_n-1}\log \left|1-\frac{1}{R^{4k-2}}\frac{1}{\overline{z} z^{(p_n/q_n)}_s e^{2\pi ijp_n/q_n}}\right|=\int_{\T}\left|1-\frac{1}{R^{4k-2}}\frac{1}{\overline{z}r_se^{2\pi i\theta}}\, \mathrm{d}\theta\right|+O\left(\frac{1}{R^{4k-2}r_s|z|-1}\right)\frac{1}{q_n}.
    \end{align}
    Therefore, for each $1\leq k\leq k_0$, the sums can be approximated, uniformly in $z\in \mathcal{A}_R$, by the integrals. This implies for $q_n$ large enough, depending on $R,\delta$, that 
    \begin{align}\label{eq:GammaR_1}
        \delta/2 \geq &\left|\frac{1}{q_n}\sum_{j=0}^{q_n-1} \Gamma_R(z, z^{(p_n/q_n)}_s e^{2\pi ijp_n/q_n})
        -\int_{\T}\Gamma_R(z, r_s e^{2\pi i\theta})\, \mathrm{d}\theta\right|\\
        =&\left|\frac{1}{q_n}\sum_{j=0}^{q_n-1} \Gamma_R(z, z^{(p_n/q_n)}_s e^{2\pi ijp_n/q_n})-\frac{\log (|z|/R)}{4\pi \log R}\log(r_s/R)+\frac{\log R}{2\pi}\right|,
    \end{align}
    where we used \eqref{eq:int_HR} in the last line.
    This implies, by triangle inequality and $\log r_1+\log r_2=0$, that
    \begin{align}
        \left|\Gamma^{p_n/q_n}_{R,q_n,E}(z)+\frac{\log (|z|R)}{2\pi}\right|\leq \delta,
    \end{align}
    as claimed.
\end{proof}
Combining the control of $h^{p_n/q_n}_{R,q_n,E}$ in Lemma \ref{lem:zero_count_rational} with the control of $\Gamma^{p_n/q_n}_{R,q_n,E}$ in Lemma \ref{lem:Gamma_qn}, we have that for some $|\xi|\leq 2\delta$, 
\begin{align}\label{eq:vqn_Adelta_1}
    v^{p_n/q_n}_{q_n,E}(z)
    =&2\pi G^{p_n/q_n}_{R,q_n,E}(z)+h^{p_n/q_n}_{R,q_n,E}(z)\notag\\
    =&\frac{1}{q_n}\left(\sum_{w\in \mathcal{Z}_{q_n}(p_n/q_n,E)}\log |z-w|\right)-\log (|z|R)+L(\omega,E,\varepsilon_0)+\xi\notag\\
    =&\frac{1}{q_n}\left(\sum_{w\in \mathcal{Z}_{q_n}(p_n/q_n,E)}\log |z-w|\right)-\log |z|+L(\omega,E)+\xi,
\end{align}
where we used $L(\omega,E,\varepsilon_0)=L(\omega,E)+2\pi\varepsilon_0=L(\omega,E)+\log R$ in the last line.

The following lemma controls the sum of logarithmic potential part via its minimum term, in \eqref{eq:vqn_Adelta_1} above.
\begin{lemma}\label{lem:log_min}
    For $s\in \{1,2\}$, the following holds uniformly in $z\in \mathcal{A}_R$:
\begin{align}\label{eq:vqn_Adelta_11}
    \left|q_n^{-1}\sum_{s=1}^2 \sum_{j=0}^{q_n-1}\log |z-z^{(p_n/q_n)}_s e^{2\pi ijp_n/q_n}|-q_n^{-1}\sum_{s=1}^2 \min_{j=0}^{q_n-1}\log |z-z^{(p_n/q_n)}_s e^{2\pi ijp_n/q_n}|\right.\\
    \left.-\sum_{s=1}^2 \int_{\T}\log |z-r_s e^{2\pi i\theta}|\, \mathrm{d}\theta\right|\leq \delta.
\end{align}
\end{lemma}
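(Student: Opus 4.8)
Fix $s\in\{1,2\}$ throughout and abbreviate $z_s:=z_s^{(p_n/q_n)}$, $r_s:=|z_s|$, and $w_j:=z_se^{2\pi i jp_n/q_n}$ for $0\le j\le q_n-1$. The plan is to reduce the estimate to one exact algebraic identity plus a geometric‑sum bound. Since $\gcd(p_n,q_n)=1$, the numbers $e^{2\pi i jp_n/q_n}$ run over all $q_n$‑th roots of unity, so $\{w_j\}_j$ is exactly the set of $q_n$‑th roots of $z_s^{q_n}$, whence
\[
\prod_{j=0}^{q_n-1}(z-w_j)=z^{q_n}-z_s^{q_n},\qquad z\in\C .
\]
Thus $q_n^{-1}\sum_j\log|z-w_j|=q_n^{-1}\log|z^{q_n}-z_s^{q_n}|$, and if $j_0=j_0(z)$ achieves the minimum and $d_s:=|z-w_{j_0}|=\min_j|z-w_j|$, then
\[
q_n^{-1}\sum_j\log|z-w_j|-q_n^{-1}\min_j\log|z-w_j|
=q_n^{-1}\log\!\prod_{j\ne j_0}|z-w_j|
=q_n^{-1}\log\Bigl|\,\sum_{l=0}^{q_n-1}z^{l}w_{j_0}^{\,q_n-1-l}\,\Bigr| .
\]
By the mean value property of $\log|z-\cdot|$ (Jensen's formula), $\int_{\T}\log|z-r_se^{2\pi i\theta}|\,\mathrm{d}\theta=\max(\log|z|,\log r_s)$. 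Hence it suffices to show that, for each $s$ and uniformly in $z\in\mathcal A_R$, the last quantity above differs from $\max(\log|z|,\log r_s)$ by at most $\delta/2$ once $n$ is large; the contributions $s=1,2$ then add to at most $\delta$.

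Fix a small $\delta_0=\delta_0(\delta)>0$ (say $\delta_0=\delta/10$). If $\bigl|\log(|z|/r_s)\bigr|>\delta_0$, then $|z^{q_n}-z_s^{q_n}|$ lies within a factor $1\pm e^{-\delta_0 q_n}$ of $\max(|z|,r_s)^{q_n}$, while $\bigl|\,|z|-r_s\,\bigr|\le d_s\le|z|+r_s$ keeps $d_s$ in a fixed compact subset of $(0,\infty)$; therefore $q_n^{-1}\log|z^{q_n}-z_s^{q_n}|=\max(\log|z|,\log r_s)+o(1)$ and $q_n^{-1}\log d_s=o(1)$, which settles this case. Next, if $d_s<e^{-q_n\delta/4}$ then $z$ is so close to $w_{j_0}$ that $|z-w_j|=|w_{j_0}-w_j|\bigl(1+O(d_s/|w_{j_0}-w_j|)\bigr)$ for all $j\ne j_0$, with $\sum_{j\ne j_0}d_s/|w_{j_0}-w_j|\lesssim d_sq_n\log q_n/r_s\to0$; combined with the classical evaluation $\prod_{j\ne j_0}|w_{j_0}-w_j|=\prod_{k=1}^{q_n-1}2r_s\sin(\pi k/q_n)=q_nr_s^{q_n-1}$ this gives $q_n^{-1}\log\prod_{j\ne j_0}|z-w_j|=\log r_s+o(1)$, and here $|z|=r_s+O(e^{-q_n\delta/4})$ so $\max(\log|z|,\log r_s)=\log r_s+o(1)$ as well.

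There remains the main case $\bigl|\log(|z|/r_s)\bigr|\le\delta_0$ \emph{and} $d_s\ge e^{-q_n\delta/4}$, i.e.\ $z$ lies in a thin annulus about $\mathcal C_{r_s}$ but is not abnormally close to any single zero. Here I would set $\zeta_0:=z/w_{j_0}$, so $|\zeta_0|=|z|/r_s=e^{t_s}$ with $|t_s|\le\delta_0$ and, since by Lemma~\ref{lem:ration_zero_expression} the $w_j$ are equally spaced in argument, $|\arg\zeta_0|\le\pi/q_n$ (so $\zeta_0$ is not a nontrivial $q_n$‑th root of unity); also $|\zeta_0-1|=d_s/r_s\ge e^{-q_n\delta/4-\pi\delta}$. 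Using $\sum_{l=0}^{q_n-1}z^{l}w_{j_0}^{\,q_n-1-l}=w_{j_0}^{\,q_n-1}\sum_{l=0}^{q_n-1}\zeta_0^{l}=w_{j_0}^{\,q_n-1}\frac{\zeta_0^{q_n}-1}{\zeta_0-1}$ one gets
\[
q_n^{-1}\log\!\prod_{j\ne j_0}|z-w_j|
=\tfrac{q_n-1}{q_n}\log r_s+q_n^{-1}\log\frac{|\zeta_0^{q_n}-1|}{|\zeta_0-1|}
=\log r_s+o(1)+q_n^{-1}\log\frac{|\zeta_0^{q_n}-1|}{|\zeta_0-1|},
\]
and since $\max(\log|z|,\log r_s)=\log r_s+t_s^{+}$ with $t_s^{+}:=\max(t_s,0)$, it remains to prove $\bigl|q_n^{-1}\log(|\zeta_0^{q_n}-1|/|\zeta_0-1|)-t_s^{+}\bigr|\le\delta/3$ uniformly in this regime.

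The upper bound is routine: $|\zeta_0^{q_n}-1|\le|\zeta_0-1|\sum_{l}|\zeta_0|^{l}\le|\zeta_0-1|\,q_ne^{q_nt_s^{+}}$, hence $q_n^{-1}\log(|\zeta_0^{q_n}-1|/|\zeta_0-1|)\le t_s^{+}+o(1)$. For the lower bound I would write $u:=q_n\log\zeta_0=q_nt_s+2\pi i\psi_s$ with $|\psi_s|\le\tfrac12$, note that $|\log\zeta_0|\le2\delta_0$ is small so $|\zeta_0-1|\le2|\log\zeta_0|=2|u|/q_n$, and split into three sub‑regimes for $|\zeta_0^{q_n}-1|=|e^{u}-1|$: (i) if $|u|\lesssim1$, then $|e^u-1|\gtrsim|u|\gtrsim q_n|\zeta_0-1|$, so the ratio is $\gtrsim q_n$ while $t_s^{+}=o(1)$; (ii) if $|u|\gtrsim1$ but $t_s\le0$, or $t_s>0$ with $q_nt_s\lesssim1$ (forcing $|\psi_s|$ bounded away from $0$), then $e^{q_nt_s}e^{2\pi i\psi_s}$ ranges over a compact set avoiding $1$, so $|e^u-1|$ is bounded below by a positive absolute constant while $|\zeta_0-1|\le4\delta_0$ and $t_s^{+}=o(1)$; (iii) if $t_s>0$ and $q_nt_s\gtrsim1$, then $|e^u-1|\ge e^{q_nt_s}-1\ge\tfrac12e^{q_nt_s}$, reproducing the target $t_s^{+}=t_s$. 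In each sub‑regime one obtains $q_n^{-1}\log(|\zeta_0^{q_n}-1|/|\zeta_0-1|)\ge t_s^{+}-\delta/3$ for $n$ large, using $\delta_0\le\delta/10$; summing over $s$ completes the proof. The one delicate point — and the place where the hypotheses $d_s\ge e^{-q_n\delta/4}$ of the present case and the equispacing and boundedness of the zeros from Lemma~\ref{lem:ration_zero_expression} are genuinely used — is the compactness bookkeeping in sub‑regime (ii), controlling how small $|e^u-1|$ can be when $u$ is bounded but $\zeta_0$ is not exponentially close to $1$; everything else is elementary.
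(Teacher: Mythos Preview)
Your argument is correct, but the route is genuinely different from the paper's. The paper does a direct Riemann-sum comparison: it writes $z=r_ze^{2\pi i\theta_z}$, partitions $\T$ into the $q_n$ arcs $Q_k=[\theta_z-\tfrac{1}{2q_n}+\tfrac{k}{q_n},\theta_z+\tfrac{1}{2q_n}+\tfrac{k}{q_n})$, observes that each $Q_k$ contains exactly one point $\theta_s+j_kp_n/q_n$ with $j_0$ realizing the minimum, and for every $k\neq0$ compares the single term $q_n^{-1}\log|z-z_s^{(p_n/q_n)}e^{2\pi ij_kp_n/q_n}|$ to $\int_{Q_k}\log|z-r_se^{2\pi i\theta}|\,\mathrm d\theta$. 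The pointwise error is bounded by $\tfrac{C}{q_n|k|}$ via the elementary estimate $\bigl|\log|1+w|\bigr|\lesssim|w|$ with $|w|\lesssim\tfrac{1}{|k|}$, and summing in $k$ yields $O(q_n^{-1}\log q_n)=o(1)$; the leftover $\int_{Q_0}$ is also $o(1)$ since the logarithmic singularity is integrable over an arc of length $1/q_n$.

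Your approach instead exploits the exact factorization $\prod_j(z-w_j)=z^{q_n}-z_s^{q_n}$ together with Jensen's formula for the integral, reducing everything to the single scalar quantity $q_n^{-1}\log\bigl(|\zeta_0^{q_n}-1|/|\zeta_0-1|\bigr)$ and a trichotomy on $|z|/r_s$ and $d_s$. Both arguments are valid. The paper's is shorter, needs no case analysis, and---more importantly for what follows---transfers verbatim to the almost-periodic zero configuration of $f^{\omega}_{q_n,E}$ in Lemma~\ref{lem:vqn_Adelta_3_omega}, where no exact product formula is available; your algebraic identity would have to be replaced there by a perturbative version. Your approach, on the other hand, makes the dependence on the two scales $|z|/r_s$ and $d_s$ completely explicit, which is conceptually nice.

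One small correction: in your sub-regime~(ii) the set of admissible $u$ is not compact (since $q_nt_s$ can be arbitrarily negative when $t_s\le0$), so the phrase ``ranges over a compact set avoiding $1$'' is imprecise. The conclusion $|e^u-1|\ge c'>0$ is nonetheless correct: split further into $\mathrm{Re}\,u\le -1$ (where $|e^u-1|\ge1-e^{-1}$) and $-1\le\mathrm{Re}\,u\le 0$, $|u|\ge c$, $|\mathrm{Im}\,u|\le\pi$ (which \emph{is} compact and avoids $u=0$).
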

\begin{proof}
Fix an arbitrary $s\in \{1,2\}$.
Let $z=r_ze^{2\pi i\theta_z}$, where $r_z>0$ and $\theta_z\in \T$. 
We partition $\T$ as follows:
\begin{align}
    \T=\bigcup_{k=-[q_n/2]}^{q_n-[q_n/2]-1} \left[\theta_z-\frac{1}{2q_n}+\frac{k}{q_n}, \theta_z+\frac{1}{2q_n}+\frac{k}{q_n}\right)=:Q_k.
\end{align}
For each $k\in [-[q_n/2], q_n-[q_n/2]-1]$, there exists a unique $j_k$ such that $\theta_s+j_kp_n/q_n\in Q_k$.
Clearly $j_0$ is such that
\begin{align}
    \log |z-z_s^{(p_n/q_n)}e^{2\pi i j_0p_n/q_n}|=\min_{j=0}^{q_n-1}\log |z-z_s^{(p_n/q_n)}e^{2\pi i j p_n/q_n}|.
\end{align}
To control the non-minimum terms, for each $k\in [-[q_n/2], q_n-[q_n/2]-1]\setminus \{0\}$ and $\theta\in Q_k$, we have that
\begin{align}
    |\theta-(\theta_s+j_kp_n/q_n)|\leq 1/q_n, \text{ and } \min(|\theta_z-(\theta_s+j_kp_n/q_n)|, |\theta_z-\theta|)\geq (2|k|-1)/q_n.
\end{align}
This implies
\begin{align}
    &q_n^{-1}\log |z-z_s^{(p_n/q_n)}e^{2\pi ij_kp_n/q_n}|-\int_{Q_k}\log |z-r_se^{2\pi i\theta}|\, \mathrm{d}\theta\\
    =&
    \int_{Q_k}\log \left|1+\frac{r_s (e^{2\pi i\theta}-e^{2\pi i(\theta_s+j_kp_n/q_n)})}{z-r_se^{2\pi i\theta}}\right|\, \mathrm{d}\theta\\
    \leq &q_n^{-1} \sup_{\theta\in Q_k} \left|\frac{r_s(e^{2\pi i\theta}-e^{2\pi i(\theta_s+j_kp_n/q_n)})}{z-r_se^{2\pi i\theta}}\right|\\
    \leq &q_n^{-1}\frac{\pi |\theta-(\theta_s+j_kp_n/q_n)|}{2|\theta_z-\theta|}\leq q_n^{-1}\frac{\pi}{4|k|-2}.
\end{align}
Summing up in $k$ and $s$, we conclude that for $q_n$ large enough that \eqref{eq:vqn_Adelta_11} holds as claimed.
\end{proof}
By Jensen's formula we have that, recall that $r_1r_2=1$ and $r_2\leq r_1$,
\begin{align}\label{eq:vqn_Adelta_12}
    \sum_{s=1}^2 \int_{\T} \log |z-r_s e^{2\pi i\theta}|\, \mathrm{d}\theta=
    \begin{cases}
        0\, \text{ if } |z|<r_2\\
        \log (r_1|z|)\, \text{ if } r_2\leq |z|\leq r_1\\
        2\log |z|, \text{ if } |z|>r_1
    \end{cases}
\end{align}
Combining \eqref{eq:vqn_Adelta_1}, \eqref{eq:vqn_Adelta_11}, \eqref{eq:vqn_Adelta_12} with the fact that $1\leq r_1\leq e^{\pi\delta}$ (due to Lemma \ref{lem:ration_zero_expression}) yields:
\begin{corollary}\label{cor:vqn_Adelta_2}
    For any $z\in \mathcal{A}_{\exp(2\pi\delta)}$, for $n$ large enough,
\begin{align}\label{eq:vqn_Adelta_2}
    \left|v_{q_n,E}^{p_n/q_n}(z)-\frac{1}{q_n}\left(\sum_{s=1}^2 \min_{j=0}^{q_n-1}\log |z-z^{(p_n/q_n)}_s e^{2\pi ijp_n/q_n}|\right)-L(\omega,E)\right|\leq 10\delta,
\end{align}
and for any $z\in \mathcal{A}_{\exp(2\pi\delta)}\setminus \left(\bigcup_{s=1}^2 \bigcup_{j=0}^{q_n-1} B_{\exp(-\delta q_n)}(z_s^{(p_n/q_n)}e^{2\pi ijp_n/q_n}))\right)$, 
\begin{align}
    v^{p_n/q_n}_{q_n,E}(z)\geq L(\omega,E)-12\delta.
\end{align}
\end{corollary}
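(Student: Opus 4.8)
The plan is to simply assemble the three estimates already established in this subsection. Starting from the decomposition \eqref{eq:vqn_Adelta_1}, which gives, for some $|\xi|\le 2\delta$,
\begin{align}
v_{q_n,E}^{p_n/q_n}(z)=\frac{1}{q_n}\sum_{w\in \mathcal{Z}_{q_n}(p_n/q_n,E)}\log|z-w|-\log|z|+L(\omega,E)+\xi,
\end{align}
I would first invoke Lemma \ref{lem:log_min} to replace the full logarithmic sum by the two minimal terms plus the integrals $\sum_{s=1}^2\int_{\T}\log|z-r_se^{2\pi i\theta}|\,d\theta$, at the cost of an error $\le\delta$. Then I would plug in the explicit value of those integrals from \eqref{eq:vqn_Adelta_12}. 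Since $z\in\mathcal{A}_{\exp(2\pi\delta)}$ and, by Lemma \ref{lem:ration_zero_expression}, $1\le r_1\le e^{\pi\delta}$ and $r_2=1/r_1\ge e^{-\pi\delta}$, the value of \eqref{eq:vqn_Adelta_12} differs from $\log|z|$ by at most $O(\delta)$ in every one of the three cases (e.g.\ when $r_2\le|z|\le r_1$ it equals $\log(r_1|z|)=\log|z|+\log r_1=\log|z|+O(\delta)$; when $|z|<r_2$ it is $0$, but then $|\log|z||\le|\log r_2|+2\pi\delta=O(\delta)$, etc.). Combining these, the $-\log|z|$ term cancels up to $O(\delta)$, and collecting all error contributions gives the bound $10\delta$ in \eqref{eq:vqn_Adelta_2} for $n$ large.

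For the second assertion I would use the pointwise lower bound in \eqref{eq:vqn_Adelta_2} together with the observation that away from the exceptional balls the minimal terms are not too negative. Concretely, if $z\notin\bigcup_{s,j}B_{\exp(-\delta q_n)}(z_s^{(p_n/q_n)}e^{2\pi ijp_n/q_n})$, then for each $s$, $|z-z_s^{(p_n/q_n)}e^{2\pi ij p_n/q_n}|\ge e^{-\delta q_n}$ for every $j$, hence $q_n^{-1}\min_j\log|z-z_s^{(p_n/q_n)}e^{2\pi ijp_n/q_n}|\ge -\delta$. Summing over $s\in\{1,2\}$ contributes at least $-2\delta$, and feeding this into \eqref{eq:vqn_Adelta_2} yields $v_{q_n,E}^{p_n/q_n}(z)\ge L(\omega,E)-12\delta$.

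There is essentially no serious obstacle here: the corollary is a bookkeeping consequence of Lemmas \ref{lem:zero_count_rational}, \ref{lem:ration_zero_expression}, \ref{lem:Gamma_qn}, \ref{lem:log_min} and \eqref{eq:vqn_Adelta_12}. The only point requiring a little care is tracking the accumulated constants—each of the three inputs contributes an $O(\delta)$ error, plus the $|\xi|\le 2\delta$ from \eqref{eq:vqn_Adelta_1} and the $O(\delta)$ slack coming from $r_1\in[1,e^{\pi\delta}]$—so one must verify that the total stays within $10\delta$ (and then $12\delta$ after the additional $2\delta$ from the minimal-term lower bound). One should also note that the balls in the exceptional set have radius $e^{-\delta q_n}$, so restricting $z$ to $\mathcal{A}_{\exp(2\pi\delta)}$ (rather than the larger $\mathcal{A}_R$ used in the preceding lemmas) causes no issue, as all the cited uniform-in-$z\in\mathcal{A}_R$ estimates apply a fortiori on the smaller annulus.
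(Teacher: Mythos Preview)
Your proposal is correct and follows essentially the same approach as the paper, which simply states that the corollary results from combining \eqref{eq:vqn_Adelta_1}, \eqref{eq:vqn_Adelta_11}, \eqref{eq:vqn_Adelta_12} with the fact that $1\le r_1\le e^{\pi\delta}$. Your constant tracking is also sound: the total error is at most $2\delta+\delta+2\pi\delta<10\delta$, and the additional $2\delta$ from the two minimal terms gives the $12\delta$ in the second assertion.
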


Note that by Lemma \ref{lem:ration_zero_expression}, $\mathcal{Z}_{q_n}(p_n/q_n,E)\subset \mathcal{A}_{\exp(\pi\delta)}$, hence
\begin{align}\label{eq:balls_in_Adelta}
    \bigcup_{s=1}^2 \bigcup_{j=0}^{q_n-1} B_{4\exp(-\delta q_n)}(z_s^{(p_n/q_n)}e^{2\pi ijp_n/q_n}))\subset \mathcal{A}_{\exp(2\pi \delta)}.
\end{align}

\subsection{Zeros of $f^{\omega}_{q_n,E}$}\label{sec:zero_fomega}
Next, we study the zeros of $f^{\omega}_{q_n,E}$ for the irrational frequency $\omega$.
In the following, we write $L(\omega,E)$ as $L$ for simplicity.
We will show:
\begin{lemma}\label{lem:ap_zero_omega}
There exists $z^{\omega}_{q_n,j,s}\in \mathcal{A}_{\exp(2\pi\delta)}$, $j\in \{0,...,q_n-1\}$ and $s=1,2$ such that the set $\mathcal{Z}_{q_n}(\omega,E)$ of zeros of $f^{\omega}_{q_n,E}$ in $\mathcal{A}_{\exp(2\pi\delta)}$ is given by  
\begin{align}\label{eq:Zqn_omega}
\mathcal{Z}_{q_n}(\omega,E)=\bigcup_{s=1}^2\bigcup_{j=0}^{q_n-1}\{z^{\omega}_{q_n,j,s}\}
\end{align}
Furthermore, the zeros are structured (almost $1/q_n$-periodic), in the sense that for any $j,k\in \{0,...,q_n-1\}$, the following holds
\begin{align}\label{eq:r_omega_periodic}
    z^{\omega}_{q_n,j,s}=z^{\omega}_{q_n,k,s}e^{2\pi i(j-k)p_n/q_n}+O(e^{-\delta q_n}).
\end{align}
If we denote $z^{\omega}_{q_n,j,s}:=r_{j,s}^{\omega} e^{2\pi i\theta_{j,s}^{\omega}}$ for $s=1,2$ and $j\in \{0,1,...,q_n-1\}$, then 
\begin{align}\label{eq:r_theta_periodic}
    r_{j,s}^{\omega}=r_{k,s}^{\omega}+O(e^{-\delta q_n}) \text{ and } \theta_{j,s}^{\omega}=\theta_{k,s}^{\omega}+\frac{(j-k)p_n}{q_n}+O(e^{-\delta q_n}),
\end{align}
for $j,k\in \{0,...,q_n-1\}$ and $s=1,2$. 
\end{lemma}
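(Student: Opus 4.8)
Here is how I would prove Lemma~\ref{lem:ap_zero_omega}.

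The plan is to transfer the explicit description of the zeros of $f_{q_n,E}^{p_n/q_n}$ from Lemma~\ref{lem:ration_zero_expression} and Corollary~\ref{cor:vqn_Adelta_2} to the irrational cocycle by a Rouch\'e argument. The only genuinely new input required is the uniform comparison estimate
$$\sup_{z\in \mathcal{A}_{\exp(2\pi\delta)}}\bigl|f_{q_n,E}^{\omega}(z)-f_{q_n,E}^{p_n/q_n}(z)\bigr|\leq e^{q_n(L-\delta_1/3)},$$
valid for $n$ large once $\delta$ has been fixed small relative to $\delta_1$. Since $L=L(\omega,E)>0$ on the first supercritical stratum, the right-hand side is exponentially smaller than the generic size $e^{q_n L}$ of $f_{q_n,E}^{p_n/q_n}$ away from its zeros (Corollary~\ref{cor:vqn_Adelta_2}), which is exactly what forces the zeros of $f_{q_n,E}^{\omega}$ in $\mathcal{A}_{\exp(2\pi\delta)}$ to cluster near those of $f_{q_n,E}^{p_n/q_n}$.

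For the comparison estimate I would use the telescoping identity
$$M_{q_n,E}^{\omega}(\theta)-M_{q_n,E}^{p_n/q_n}(\theta)=\sum_{j=0}^{q_n-1}M_{q_n-1-j,E}^{\omega}(\theta+(j+1)\omega)\,\Delta_j\,M_{j,E}^{p_n/q_n}(\theta),\qquad \Delta_j:=M_E(\theta+j\omega)-M_E(\theta+jp_n/q_n).$$
Since $\omega-p_n/q_n=(q_n\omega-p_n)/q_n$, for $0\leq j\leq q_n-1$ one has $|j(\omega-p_n/q_n)|\leq\|q_n\omega\|\leq e^{-\delta_1 q_n}$ by \eqref{eq:qn_omega} and $\beta_n\geq\delta_1$, so analyticity of $v$ gives $\|\Delta_j\|\lesssim\|q_n\omega\|\leq e^{-\delta_1 q_n}$, uniformly for $z=e^{2\pi i(\theta+i\varepsilon)}\in\mathcal{A}_{\exp(2\pi\delta)}$. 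To bound the flanking products I would fix $\tau=\delta_1/8$ and invoke the uniform upper semicontinuity Lemma~\ref{lem:upper_semi_cont} — which applies both to the frequency $\omega$ and, for $n$ large, to the rational approximant $p_n/q_n$ (as $\|q_n\omega\|$ is eventually below the threshold there) — so that any partial product of length $\geq N(\tau)$ has norm at most $e^{(\mathrm{length})(L+2\pi\delta+\tau)}$; for the $O(N(\tau))=O(1)$ indices $j$ with $\min(j,q_n-1-j)<N(\tau)$ the short factor is bounded crudely by the $q_n$-independent constant $e^{C_v N(\tau)}$. Summing the $q_n$ terms then gives
$$\|M_{q_n,E}^{\omega}(\theta)-M_{q_n,E}^{p_n/q_n}(\theta)\|\lesssim q_n\, e^{C_v N(\tau)}\,e^{(q_n-1)(L+2\pi\delta+\tau)}\,e^{-\delta_1 q_n}\leq e^{q_n(L-\delta_1/2)}$$
for $n$ large, provided $2\pi\delta+\tau<\delta_1/4$; the comparison estimate follows from $|f_{q_n,E}^{\omega}-f_{q_n,E}^{p_n/q_n}|\leq 2\|M_{q_n,E}^{\omega}-M_{q_n,E}^{p_n/q_n}\|$.

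Given the comparison estimate, Corollary~\ref{cor:vqn_Adelta_2} yields $|f_{q_n,E}^{p_n/q_n}(z)|\geq e^{q_n(L-12\delta)}$ for every $z\in\mathcal{A}_{\exp(2\pi\delta)}$ outside the union $\mathcal{V}:=\bigcup_{s,j}B_{\exp(-\delta q_n)}(z_s^{(p_n/q_n)}e^{2\pi ijp_n/q_n})$; choosing $\delta<\delta_1/40$ makes this exceed $e^{q_n(L-\delta_1/3)}\geq|f_{q_n,E}^{\omega}-f_{q_n,E}^{p_n/q_n}|$ there, so $f_{q_n,E}^{\omega}$ has no zeros in $\mathcal{A}_{\exp(2\pi\delta)}\setminus\mathcal{V}$. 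Since the $z_1$- and $z_2$-orbits each consist of $q_n$ equally spaced points with minimal gap $\gtrsim 1/q_n\gg e^{-\delta q_n}$ (Lemma~\ref{lem:ration_zero_expression}), each connected component of $\mathcal{V}$ has diameter $\lesssim e^{-\delta q_n}$, has its boundary contained in the good region just described, and contains (with multiplicity) exactly the one or two zeros of $f_{q_n,E}^{p_n/q_n}$ coming from $z_1^{(p_n/q_n)}e^{2\pi ijp_n/q_n}$ and/or $z_2^{(p_n/q_n)}e^{2\pi ijp_n/q_n}$ for a single slot $j$. Applying Rouch\'e's theorem componentwise shows $f_{q_n,E}^{\omega}$ has exactly $2q_n$ zeros in $\mathcal{A}_{\exp(2\pi\delta)}$, two of them (counting multiplicity) within $O(e^{-\delta q_n})$ of each slot; labelling these $z_{q_n,j,s}^{\omega}$ near $z_s^{(p_n/q_n)}e^{2\pi ijp_n/q_n}$ proves \eqref{eq:Zqn_omega}, and in particular $z_{q_n,j,s}^{\omega}\in\mathcal{A}_{\exp(2\pi\delta)}$. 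Then \eqref{eq:r_omega_periodic} is immediate from the exact identity $z_s^{(p_n/q_n)}e^{2\pi ijp_n/q_n}=(z_s^{(p_n/q_n)}e^{2\pi ikp_n/q_n})e^{2\pi i(j-k)p_n/q_n}$ together with $|e^{2\pi i(j-k)p_n/q_n}|=1$, and \eqref{eq:r_theta_periodic} follows by separating moduli and arguments, using that every $z_{q_n,j,s}^{\omega}$ lies in $\mathcal{A}_{\exp(2\pi\delta)}$ and is therefore bounded away from $0$.

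The main obstacle is the comparison estimate, and inside it the treatment of the $O(1)$ boundary indices of the telescoping sum, where uniform upper semicontinuity is unavailable because the short factor has bounded length: one must check that replacing that factor by the fixed constant $e^{C_v N(\tau)}$ does not destroy the exponential gain $e^{-\beta_n q_n}\leq e^{-\delta_1 q_n}$ furnished by $\Delta_j$, and that the polynomial-in-$q_n$ prefactor together with the losses $2\pi\delta$ (from complexifying onto $\mathcal{A}_{\exp(2\pi\delta)}$) and $\tau$ (from Lemma~\ref{lem:upper_semi_cont}) are all absorbed precisely because $\beta_n\geq\delta_1$ is a fixed positive number. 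A minor secondary point is the bookkeeping when $z_1^{(p_n/q_n)}$ and $z_2^{(p_n/q_n)}$ are closer than $2e^{-\delta q_n}$, so that the $z_1$- and $z_2$-balls for a slot merge into one component; this is harmless, as the statement permits $z_{q_n,j,1}^{\omega}=z_{q_n,j,2}^{\omega}$.
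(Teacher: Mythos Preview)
Your proof is correct and follows essentially the same approach as the paper: a telescoping estimate to compare $f^{\omega}_{q_n,E}$ with $f^{p_n/q_n}_{q_n,E}$ uniformly on $\mathcal{A}_{\exp(2\pi\delta)}$, combined with the lower bound from Corollary~\ref{cor:vqn_Adelta_2} and Rouch\'e's theorem on the small balls around the rational zeros. The paper organizes the Rouch\'e step by an explicit case split (whether or not $z_1^{(p_n/q_n)}$ lies within $2e^{-\delta q_n}$ of some point of the $z_2$-orbit), whereas you fold this into a connected-components argument and a closing remark; both are equivalent, though note that in the merged case the two rational zeros in a component are $z_1^{(p_n/q_n)}e^{2\pi ijp_n/q_n}$ and $z_2^{(p_n/q_n)}e^{2\pi i(j+j_0)p_n/q_n}$ for a fixed shift $j_0$, not necessarily the same slot $j$ for both $s$---this does not affect the labeling or the almost-periodicity conclusions.
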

\begin{proof}
We distinguish into two different cases:

\underline{Case 1.} If $z_1^{(p_n/q_n)}\in B_{2\exp(-\delta q_n)}(z_2^{(p_n/q_n)}e^{2\pi ij_0p_n/q_n})$ for some $j_0\in \{0,1,...,q_n-1\}$. Let $$B_{q_n,0}:=B_{4\exp(-\delta q_n)}(z_2^{(p_n/q_n)}e^{2\pi ij_0p_n/q_n}),$$ and $B_{q_n,j}:=B_{q_n,0}e^{2\pi ijp_n/q_n}$.
It is clear from Corollary \ref{cor:vqn_Adelta_2} that 
\begin{align}\label{eq:fqn_lower_111}
    |f_{q_n,E}^{p_n/q_n}(z)|\geq e^{(L-12\delta)q_n}, \text{ for any } z\in \mathcal{A}_{\exp(2\pi\delta)}\setminus(\bigcup_{j=0}^{q_n-1}B_{q_n,j}).
\end{align}
By \eqref{eq:f=M+M}, Lemma \ref{lem:upper_semi_cont} and the standard telescoping argument, for $q_{n+1}\geq e^{30\delta q_n}$, we have uniformly in $\mathcal{A}_{\exp(2\pi\delta)}$ that
\begin{align}\label{eq:telescoping}
    |f^{\omega}_{q_n,E}(z)-f^{p_n/q_n}_{q_n,E}(z)|\leq 2\|M^{\omega}_{q_n,E}(z)-M^{p_n/q_n}_{q_n,E}(z)\|
    \leq &e^{(L(\omega,E,\delta)+\delta)q_n}\|q_n\omega\|\\
    \leq &e^{(L+2\pi\delta+\delta)q_n}\|q_n\omega\|\\
    \leq &e^{(L-20\delta)q_n},
\end{align}
where we used $\|q_n\omega\|\leq q_{n+1}^{-1}\leq e^{-30\delta q_n}$ (see \eqref{eq:qn_omega}). 
Combining this with \eqref{eq:fqn_lower_111} yields the following for $z\notin \bigcup_{j=0}^{q_n-1}B_{q_n,j}\subset \mathcal{A}_{\exp(2\pi \delta)}$:
\begin{align}\label{eq:fomega-fpq}
    |f^{\omega}_{q_n,E}(z)-f^{p_n/q_n}_{q_n,E}(z)|\leq \frac{1}{2}|f^{p_n/q_n}_{q_n,E}(z)|.
\end{align}
Rouche's theorem, applied to each ball $B_{q_n,j}$, implies the two analytic functions $f^{\omega}_{q_n,E}$ and $f^{p_n/q_n}_{q_n,E}$ have the same number of zeros in each ball $B_{q_n,j}$. Thus $f^{\omega}_{q_n,E}$ has exactly two zeros, denoted by $z^{\omega}_{q_n,j,1}, z^{\omega}_{q_n,j,2}$, in each $B_{q_n,j}$, $j=0,...,q_n-1$.
Furthermore, $f^{\omega}_{q_n,E}$ has no zero in $\mathcal{A}_{\exp(2\pi\delta)}\setminus(\bigcup_{j=0}^{q_n-1}B_{q_n,j})$.

\underline{Case 2.} If $z_1^{(p_n/q_n)}\notin \bigcup_{j=0}^{q_n-1}B_{2\exp(-\delta q_n)}(z_2^{(p_n/q_n)}e^{2\pi ijp_n/q_n})$. For each $s=1,2$, and $j\in \{0,1,...,q_n-1\}$, let $$B_{q_n,j}^{s}:=B_{\exp(-\delta q_n)}(z^{(p_n/q_n)}_s  e^{2\pi ijp_n/q_n}).$$ 
It is clear that in this case $B^s_{q_n,j_1}\cap B^{s'}_{q_n,j_2}=\emptyset$ for any $s\neq s'$ and any $j_1,j_2\in \{0,1,...,q_n-1\}$.
By Corollary \ref{cor:vqn_Adelta_2} we obtain that 
\begin{align}
    |f_{q_n,E}^{p_n/q_n}(z)|\geq e^{(L-12\delta)q_n}, \text{ for any } z\in \mathcal{A}_{\exp(2\pi\delta)}\setminus (\bigcup_{s=1}^2\bigcup_{j=0}^{q_n-1}B^s_{q_n,j}).
\end{align}
Similar to the Case 1 above, for $q_{n+1}\geq e^{30\delta q_n}$, we have
\begin{align}
    |f_{q_n,E}^{\omega}(z)-f_{q_n,E}^{p_n/q_n}(z)|\leq \frac{1}{2}|f_{q_n,E}^{p_n/q_n}(z)|,
\end{align}
for any $z\in \mathcal{A}_{\exp(2\pi\delta)}\setminus (\bigcup_{s=1}^2 \bigcup_{j=0}^{q_n-1} B_{q_n,j}^s)$.
Thus $f^{\omega}_{q_n,E}$ has exactly one zero, denoted by $z^{\omega}_{q_n,j,s}$, in each $B_{q_n,j}^s$, $j=0,...,q_n-1$, $s=1,2$, and it has no zero in $\mathcal{A}_{\exp(2\pi\delta)}\setminus (\bigcup_{s=1}^2 \bigcup_{j=0}^{q_n-1} B_{q_n,j}^s)$.

The zeros in both two cases clearly satisfy \eqref{eq:r_omega_periodic} and thus \eqref{eq:r_theta_periodic}.
\end{proof}
The zeros of the strong Liouville scale, similar to its rational approximation, are structured (almost $1/q_n$-periodic) as in \eqref{eq:r_omega_periodic}. 
Therefore, similar to Corollary \ref{cor:vqn_Adelta_2}, we obtain:
\begin{lemma}\label{lem:vqn_Adelta_3_omega}
Let 
\begin{align}\label{eq:fix_delta=delta1/30}
\delta=\delta_1/30.
\end{align}
For $n$ large enough and $q_{n+1}\geq e^{\delta_1 q_n}$.
    For any $z\in \mathcal{A}_{\exp(2\pi\delta)}$,
    \begin{align}
        \left|v^{\omega}_{q_n,E}(z)-\frac{1}{q_n}\left(\sum_{s=1}^2 \min_{j=0}^{q_n-1}\log |z-z^{\omega}_{q_n,j,s}|\right)-L(\omega,E)\right|\leq 10\delta,
    \end{align}
    in which $\mathcal{Z}_{q_n}(\omega,E)=\bigcup_{s=1}^2\bigcup_{j=0}^{q_n-1}\{z^{\omega}_{q_n,j,s}\}$ is the set of zeros of $f^{\omega}_{q_n,E}$ in $\mathcal{A}_{\exp(2\pi\delta)}$.
    Furthermore, for any $z\in \mathcal{A}_{\exp(2\pi\delta)}\setminus \left(\bigcup_{s=1}^2 \bigcup_{j=0}^{q_n-1} B_{\exp(-\delta q_n)}(z^{\omega}_{q_n,j,s})\right)$,
    \begin{align}
        v^{\omega}_{q_n,E}(z)\geq L(\omega,E)-12\delta.
    \end{align}
\end{lemma}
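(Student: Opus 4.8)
The plan is to transfer Corollary~\ref{cor:vqn_Adelta_2} from the rational frequency $p_n/q_n$ to $\omega$, using the zero structure of $f^{\omega}_{q_n,E}$ recorded in Lemma~\ref{lem:ap_zero_omega} together with the telescoping comparison \eqref{eq:telescoping}; the whole argument parallels Section~\ref{sec:zero_fpq}. The point of the choice $\delta=\delta_1/30$ is that it makes $e^{30\delta q_n}=e^{\delta_1 q_n}$, so the standing hypothesis $q_{n+1}\geq e^{\delta_1 q_n}$ is exactly what is needed to run \eqref{eq:telescoping} (and its Case~2 analogue) on $\mathcal{A}_{\exp(2\pi\delta)}$, yielding $|f^{\omega}_{q_n,E}(z)-f^{p_n/q_n}_{q_n,E}(z)|\leq e^{(L-20\delta)q_n}$ uniformly there.

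First I would establish the identity away from the zeros. Set $\mathcal{G}_n:=\mathcal{A}_{\exp(2\pi\delta)}\setminus\bigcup_{s,j}B_{2\exp(-\delta q_n)}(z^{(p_n/q_n)}_s e^{2\pi ijp_n/q_n})$. On $\mathcal{G}_n$ one has $|f^{p_n/q_n}_{q_n,E}(z)|\geq e^{(L-12\delta)q_n}$ by Corollary~\ref{cor:vqn_Adelta_2}, hence $|f^{\omega}_{q_n,E}(z)-f^{p_n/q_n}_{q_n,E}(z)|\leq \tfrac12|f^{p_n/q_n}_{q_n,E}(z)|$ for $n$ large, so $|v^{\omega}_{q_n,E}(z)-v^{p_n/q_n}_{q_n,E}(z)|\leq \tfrac{\log 2}{q_n}=o(1)$ uniformly on $\mathcal{G}_n$. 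Feeding this into Corollary~\ref{cor:vqn_Adelta_2}, and then replacing the rational zeros by the genuine zeros $z^{\omega}_{q_n,j,s}$ inside each minimum term — legitimate on $\mathcal{G}_n$ since $z^{\omega}_{q_n,j,s}=z^{(p_n/q_n)}_s e^{2\pi ijp_n/q_n}+O(e^{-\delta q_n})$ after the reindexing dictated by the two cases of Lemma~\ref{lem:ap_zero_omega}, and since $\mathrm{dist}(z,\mathcal{Z}_{q_n}(\omega,E))\geq e^{-\delta q_n}$ there — gives, for $n$ large,
\begin{align}
\Big|v^{\omega}_{q_n,E}(z)-\frac{1}{q_n}\sum_{s=1}^{2}\min_{0\leq j\leq q_n-1}\log\big|z-z^{\omega}_{q_n,j,s}\big|-L(\omega,E)\Big|\leq 10\delta,\qquad z\in\mathcal{G}_n.
\end{align}
Because each minimum term is $\geq -\delta q_n$ on $\mathcal{G}_n$, this already yields $v^{\omega}_{q_n,E}(z)\geq L(\omega,E)-12\delta$ on $\mathcal{G}_n$, which contains $\mathcal{A}_{\exp(2\pi\delta)}\setminus\bigcup_{s,j}B_{\exp(-\delta q_n)}(z^{\omega}_{q_n,j,s})$; that is the second assertion of the lemma.

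Next I would upgrade the identity to all of $\mathcal{A}_{\exp(2\pi\delta)}$, i.e. into the clusters of zeros. Group the $z^{\omega}_{q_n,j,s}$ into the cluster balls supplied by the Rouché step of Lemma~\ref{lem:ap_zero_omega} — the balls $B_{q_n,j}$ (two zeros each) in Case~1, the balls $B^s_{q_n,j}$ (one zero each) in Case~2 — each of radius $O(e^{-\delta q_n})$, slightly enlarged so that each removed ball $B_{2\exp(-\delta q_n)}(z^{(p_n/q_n)}_s e^{2\pi ijp_n/q_n})$ lies inside one of them while the enlarged boundaries still lie in $\mathcal{G}_n$. Fix such a cluster ball $B$. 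Since $B$ has radius $O(e^{-\delta q_n})$ while zeros carrying a fixed index $s$ are $\gtrsim 1/q_n$ apart (by \eqref{eq:r_omega_periodic}), for each $s$ the point of $\{z^{\omega}_{q_n,j,s}\}_j$ nearest to $z$ is the \emph{same} for every $z\in B$ — the local zero when $s$ is represented in $B$, a fixed external zero otherwise — so on $B$ the quantity $\tfrac1{q_n}\sum_{s}\min_j\log|z-z^{\omega}_{q_n,j,s}|$ equals $\tfrac1{q_n}\sum_{w\in B\cap\mathcal{Z}_{q_n}(\omega,E)}\log|z-w|$ plus a function harmonic on $B$. Hence $\psi(z):=v^{\omega}_{q_n,E}(z)-\tfrac1{q_n}\sum_{s}\min_j\log|z-z^{\omega}_{q_n,j,s}|$ is harmonic on $B$ (the logarithmic singularities of the two pieces cancel), and on $\partial B\subset\mathcal{G}_n$ the displayed inequality gives $\psi=L(\omega,E)+O(\delta)$; by the maximum principle the same holds on all of $B$, so the first assertion follows on $\mathcal{A}_{\exp(2\pi\delta)}=\mathcal{G}_n\cup\bigcup B$.

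The main obstacle is purely bookkeeping: matching the radii of the $\omega$-zero cluster balls with those of the $p_n/q_n$-zero balls through the Case~1/Case~2 dichotomy of Lemma~\ref{lem:ap_zero_omega}, and checking that on each tiny cluster ball the nearest-zero selection in every minimum term is locally constant — this is what makes $\psi$ genuinely harmonic there and legitimizes the maximum principle. (An alternative, closer to Section~\ref{sec:zero_fpq} and to the setup \eqref{eq:vomega=G+h}, is to show $h^{\omega}_{R,q_n,E}\equiv L(\omega,E,\varepsilon_0)+o(1)$ on $\mathcal{A}_R$ — using \eqref{eq:telescoping} on $\partial\mathcal{A}_R$, where $|f^{p_n/q_n}_{q_n,E}|$ is of size $e^{(L(\omega,E,\varepsilon_0)+o(1))q_n}$ so the telescoping error is negligible — note that any zeros of $f^{\omega}_{q_n,E}$ in $\mathcal{A}_R\setminus\mathcal{A}_{\exp(2\pi\delta)}$ are $o(q_n)$ in number and contribute $o(1)$, and then rerun Lemmas~\ref{lem:Gamma_qn} and~\ref{lem:log_min} with the almost-periodicity \eqref{eq:r_omega_periodic} in place of exact periodicity; the telescoping transfer above is shorter.)
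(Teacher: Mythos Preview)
Your parenthetical alternative is precisely the paper's proof: it reruns the three ingredients of Section~\ref{sec:zero_fpq} for $\omega$ rather than transferring Corollary~\ref{cor:vqn_Adelta_2}. Concretely, the paper (i) verifies the analogue of Lemma~\ref{lem:Gamma_qn} for $\Gamma^{\omega}_{R,q_n,E}(z):=q_n^{-1}\sum_{w\in\mathcal{Z}_{q_n}(\omega,E)}\Gamma_R(z,w)$ using the almost $1/q_n$-periodicity \eqref{eq:r_omega_periodic} in place of exact periodicity, (ii) establishes $h^{\omega}_{R,q_n,E}\equiv L(\omega,E,\varepsilon_0)+o(1)$ by telescoping only on $\partial\mathcal{A}_R$, where $|f^{p_n/q_n}_{q_n,E}|=e^{(L(\varepsilon_0)+o(1))q_n}$ dominates the error $e^{(L(\varepsilon_0)-29\delta)q_n}$, and (iii) notes that Lemma~\ref{lem:log_min} goes through verbatim with almost-periodic zeros. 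Your main route is more economical---it uses Corollary~\ref{cor:vqn_Adelta_2} as a black box and needs telescoping on the smaller annulus $\mathcal{A}_{\exp(2\pi\delta)}$---while the paper's rerun keeps the Riesz-decomposition picture uniform and sidesteps the cluster-ball bookkeeping entirely.

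One point in your main route does need tightening. The assertion that on each cluster ball $B$ the nearest-zero selection in $\{z^{\omega}_{q_n,j,s}\}_j$ is constant for \emph{every} $s$ can fail: when $B$ contains only an $s_0$-zero and $B$ straddles the perpendicular bisector of two adjacent $s_{\neq}$-zeros, the $s_{\neq}$-minimizer switches on $B$, so $\psi$ is merely subharmonic there and the maximum principle yields only $\psi\le L+10\delta$. The repair is short: switching forces the two competing $s_{\neq}$-zeros to be equidistant from some $z_0\in B$, hence at distance $\ge\tfrac12|w_1-w_2|\gtrsim 1/q_n$ from $z_0$; since $\mathrm{diam}(B)=O(e^{-\delta q_n})$, the $s_{\neq}$-minimum term varies by only $O(q_n e^{-\delta q_n})/q_n=o(1)$ over $B$. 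Subtracting this nearly-constant piece from the genuinely harmonic function $v^{\omega}_{q_n,E}(z)-q_n^{-1}\log|z-z^{\omega}_{q_n,j_0,s_0}|$ and applying the two-sided maximum principle to the latter recovers $|\psi-L|\le 10\delta+o(1)$ on $B$.
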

\begin{proof}
We discuss the proof briefly. 
Defining $\Gamma^{\omega}_{R,q_n,E}$ similar to \eqref{def:Gamma_R_qn} as
\begin{align}
    \Gamma^{\omega}_{R,q_n,E}(z):=\frac{1}{q_n}\sum_{w\in \mathcal{Z}_{q_n}(\omega,E)}\Gamma_R(z,w).
\end{align}
Then due to the almost periodicity of $\mathcal{Z}_{q_n}(\omega,E)$ provided by Lemma \ref{lem:ap_zero_omega}, similar to Lemma \ref{lem:Gamma_qn}, the following estimates hold uniformly in $z\in \mathcal{A}_R$,
\begin{align}
    \left|\Gamma^{\omega}_{R,q_n,E}(z)+\frac{\log (|z|R)}{2\pi}\right|\leq \delta.
\end{align}
Next, we show an analogue of \eqref{eq:harmonic=constant} holds as follows:
\begin{align}\label{eq:h=constant_omega}
    h^{\omega}_{R,q_n,E}(z)=L(\omega,E,\varepsilon_0)+o(1), \text{ uniformly in } z\in \mathcal{A}_R.
\end{align}
By \eqref{eq:harmonic=constant}, for $z\in \partial\mathcal{A}_R$ the following holds:
\begin{align}\label{eq:h-h_1}
    |f^{p_n/q_n}_{q_n,E}(z)|=e^{q_n(L(\omega,E,\varepsilon_0)+o(1))}.
\end{align}
By the telescoping argument as in \eqref{eq:telescoping}, we have for $z\in \partial \mathcal{A}_R$ that
\begin{align}\label{eq:h-h_2}
    |f^{p_n/q_n}_{q_n,E}(z)-f^{\omega}_{q_n,E}(z)|\leq 2\|M^{p_n/q_n}_{q_n,E}(z)-M^{\omega}_{q_n,E}(z)\|
    \leq &e^{(L(\omega,E,\varepsilon_0)+\delta)q_n}\|q_n\omega\|\\
    \leq &e^{(L(\omega,E,\varepsilon_0)-29\delta)q_n},
\end{align}
for $q_{n+1}\geq e^{\delta_1 q_n}=e^{30\delta q_n}$. Then \eqref{eq:h=constant_omega} follows from combining \eqref{eq:h-h_1} with \eqref{eq:h-h_2}.
Finally, it remains to note that an analogue of Lemma \ref{lem:log_min}, which controls the sum of the logarithmic potentials via their minimum terms, is true for $\omega$, again due to the almost periodicity of the zeros provided by Lemma \ref{lem:ap_zero_omega}.
\end{proof}

\subsection{The consequence of even potentials and non-resonance of $\theta$}\label{sec:even}

Our goal in this subsection is to prove Lemma \ref{lem:zero_omega_reflection} below.
As a preparation, we first show $e^{2\pi i(\theta-[q_n/2]\omega)}$ is $e^{-\delta q_n}$ close to one of the zeros in $\mathcal{Z}_{q_n}(\omega,E)$.
\begin{lemma}\label{lem:-q/2_small}
For $n$ large enough, and $q_{n+1}\geq e^{\delta_1 q_n}$, the following holds:
    \begin{align}
        e^{2\pi i(\theta-[q_n/2]\omega)}\in \bigcup_{s=1}^2 \bigcup_{j=0}^{q_n-1} B_{\exp(-\delta q_n)}(z^{\omega}_{q_n,j,s}).
    \end{align}
\end{lemma}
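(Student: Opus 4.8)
The plan is to deduce Lemma~\ref{lem:-q/2_small} from the zero-structure result Lemma~\ref{lem:vqn_Adelta_3_omega} by showing that $f^{\omega}_{q_n,E}$ is far too small at the point $e^{2\pi i\psi_0}$, $\psi_0:=\theta-[q_n/2]\omega$, to lie outside all the balls $B_{\exp(-\delta q_n)}(z^{\omega}_{q_n,j,s})$. Since $e^{2\pi i\psi_0}\in\mathcal{C}_1\subset\mathcal{A}_{\exp(2\pi\delta)}$, the contrapositive of the last display of Lemma~\ref{lem:vqn_Adelta_3_omega} says it is enough to prove that $v^{\omega}_{q_n,E}(e^{2\pi i\psi_0})=q_n^{-1}\log|f^{\omega}_{q_n,E}(e^{2\pi i\psi_0})|<L(\omega,E)-12\delta$. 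Using $f^{\omega}_{q_n,E}=2-\mathrm{tr}(M^{\omega}_{q_n,E})$ (see \eqref{def:f=2-tr}), and recalling $L(\omega,E)>\beta(\omega)\ge\delta_1=30\delta$ throughout this section, it therefore suffices to establish
\begin{align}\label{eq:tr_goal}
|\mathrm{tr}(M^{\omega}_{q_n,E}(\psi_0))|\le e^{(\tfrac12 L(\omega,E)+o(1))q_n},
\end{align}
since then $|f^{\omega}_{q_n,E}(e^{2\pi i\psi_0})|\le 2+e^{(\tfrac12 L(\omega,E)+o(1))q_n}$ and $\tfrac12 L(\omega,E)<L(\omega,E)-12\delta$ give the desired strict inequality for all large $n$.

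To prove \eqref{eq:tr_goal} I would exploit that the origin sits in the middle of the length-$q_n$ block $[-[q_n/2],\,q_n-[q_n/2]-1]$ on which $M^{\omega}_{q_n,E}(\psi_0)$ is exactly the transfer matrix. Factor $M^{\omega}_{q_n,E}(\psi_0)=AB$ with $B:=M^{\omega}_{[q_n/2],E}(\psi_0)$ (the backward piece, sites $-[q_n/2],\dots,-1$) and $A:=M^{\omega}_{q_n-[q_n/2],E}(\theta)$ (the forward piece, sites $0,\dots,q_n-[q_n/2]-1$); this is just the cocycle identity, using $\psi_0+[q_n/2]\omega=\theta$. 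Writing $\xi:=(\phi_0,\phi_{-1})^{\top}$ for the fixed generalized eigenvector, the transfer-matrix relation gives $A\xi=(\phi_{q_n-[q_n/2]},\phi_{q_n-[q_n/2]-1})^{\top}$ and $B^{-1}\xi=(\phi_{-[q_n/2]},\phi_{-[q_n/2]-1})^{\top}$, so the Shnol bounds \eqref{eq:Shnol} yield $\|A\xi\|\le Cq_n$, $\|B^{-1}\xi\|\le Cq_n$, while $\|\xi\|\ge\max(|\phi_0|,|\phi_{-1}|)=1$. Applying the $\mathrm{SL}(2,\R)$ identity $\mathrm{tr}(M)\,I_2=M+M^{-1}$ (equivalently \eqref{eq:f=M+M}) with $M=BA$ to the vector $\xi$ gives $\mathrm{tr}(M^{\omega}_{q_n,E}(\psi_0))\,\xi=BA\xi+A^{-1}B^{-1}\xi$, hence, using $\|A^{-1}\|=\|A\|$,
\[
|\mathrm{tr}(M^{\omega}_{q_n,E}(\psi_0))|\le\|B\|\,\|A\xi\|+\|A\|\,\|B^{-1}\xi\|\le Cq_n\big(\|A\|+\|B\|\big).
\]
Since $A$ and $B$ are transfer matrices over blocks of length $\approx q_n/2$, the uniform upper bound Lemma~\ref{lem:upper_semi_cont} gives $\|A\|,\|B\|\le e^{(\tfrac12 L(\omega,E)+o(1))q_n}$, which is \eqref{eq:tr_goal}.

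I do not expect a genuine obstacle here: the lemma is a short consequence of the $\mathrm{tr}=2-f$ formula, the polynomial Shnol bound \eqref{eq:Shnol}, the uniform sub-exponential upper bound, and the already-established zero-structure Lemma~\ref{lem:vqn_Adelta_3_omega}. The only mildly delicate points are (i) the parity bookkeeping $[q_n/2]+(q_n-[q_n/2])=q_n$ together with the observation that the block $[-[q_n/2],\,q_n-[q_n/2]-1]$ contains the origin; (ii) checking that the $o(1)$ losses (from Lemma~\ref{lem:upper_semi_cont} and the harmless ``$+2$'' in $f=2-\mathrm{tr}$) stay below the gap $\tfrac12 L(\omega,E)-12\delta\ge\tfrac12\delta_1-\tfrac{2}{5}\delta_1=\tfrac1{10}\delta_1>0$; and (iii) taking $n$ large enough for both Lemma~\ref{lem:upper_semi_cont} and Lemma~\ref{lem:vqn_Adelta_3_omega} to apply. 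Note that this route uses only the normalization and polynomial growth of the generalized eigenfunction; evenness of $v$ and the non-resonance of $\theta$ are not needed for Lemma~\ref{lem:-q/2_small} itself and will instead enter in the subsequent Lemma~\ref{lem:zero_omega_reflection}.
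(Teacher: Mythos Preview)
Your proof is correct and takes a genuinely different route from the paper. The paper argues by contradiction: assuming the point lies outside all the balls, Lemma~\ref{lem:vqn_Adelta_3_omega} gives $|f^{\omega}_{q_n,E}(e^{2\pi i\psi_0})|\ge e^{(L-12\delta)q_n}$, hence one of the diagonal Dirichlet determinants $P^{\omega}_{q_n,E}(\psi_0)$ or $P^{\omega}_{q_n-2,E}(\psi_0+\omega)$ is at least $e^{(L-14\delta)q_n}$; plugging this into the Green's function expansion \eqref{eq:Green_exp} on $[-[q_n/2],\,-[q_n/2]+q_n-1]$ then forces $\max(|\phi_0|,|\phi_{-1}|)\le e^{-(L-35\delta)q_n/2}$, contradicting \eqref{eq:Shnol}. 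You instead bound the trace \emph{directly}: factoring $M^{\omega}_{q_n,E}(\psi_0)=AB$ across the origin, applying $\mathrm{tr}(BA)I_2=BA+A^{-1}B^{-1}$ to the normalized vector $\xi=(\phi_0,\phi_{-1})^{\top}$, and using that both $A\xi$ and $B^{-1}\xi$ are polynomially bounded by Shnol while $\|A\|,\|B\|\le e^{(L/2+o(1))q_n}$ by Lemma~\ref{lem:upper_semi_cont}. This yields $|\mathrm{tr}|\le e^{(L/2+o(1))q_n}$, which is comfortably below $e^{(L-12\delta)q_n}$ since $L/2>12\delta=2\delta_1/5$ (indeed $\delta_1\ll L$ by its definition \eqref{def:delta_1}). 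Your argument is cleaner and bypasses the Green's function machinery entirely; the paper's version, on the other hand, mirrors the same expansion technique used repeatedly elsewhere (e.g.\ in Lemma~\ref{lem:v_qn/2_small} and Theorem~\ref{thm:AL_weak_regime}), so it fits more uniformly into the surrounding narrative. One small quibble: your justification ``$\beta(\omega)\ge\delta_1$'' for $L>\delta_1$ is not quite how the paper sets things up, but the inequality $L>\delta_1$ follows directly from \eqref{def:delta_1} regardless, so this does not affect correctness.
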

\begin{proof}
    Proof by contradiction. Assume otherwise, then by Lemma \ref{lem:vqn_Adelta_3_omega}, 
    \begin{align}
        |f^{\omega}_{q_n,E}(e^{2\pi i(\theta-[q_n/2])\omega})|\geq e^{(L-12\delta)q_n}.
    \end{align}
    By \eqref{eq:M=P} and \eqref{eq:f=M+M}, we have
    \begin{align}
        |P_{q_n,E}^{\omega}(\theta-[q_n/2]\omega)-P_{q_n-2,E}^{\omega}(\theta-[q_n/2]\omega+\omega)|\geq e^{(L-12\delta)q_n}-2.
    \end{align}
    Assume without loss of generality that
    \begin{align}\label{eq:P_-q/2_large}
        |P_{q_n,E}^{\omega}(\theta-[q_n/2]\omega)|\geq e^{(L-14\delta)q_n}.
    \end{align}
    Applying the Green's function expansion \eqref{eq:Green_exp} to $\phi_0$ on the interval $[-[q_n/2], -[q_n/2]+q_n-1]$, we obtain
    \begin{align}
        |\phi_0|\leq \frac{|P^{\omega}_{-[q_n/2]+q_n-1,E}(\theta+\omega)|}{|P_{q_n,E}^{\omega}(\theta-[q_n/2]\omega)|}|\phi_{-[q_n/2]-1}|+\frac{|P^{\omega}_{[q_n/2],E}(\theta-[q_n/2]\omega)|}{|P^{\omega}_{q_n,E}(\theta-[q_n/2]\omega)|}|\phi_{-[q_n/2]+q_n}|.
    \end{align}
    Combining the lower bound of the denominator in \eqref{eq:P_-q/2_large} with the standard upper bound of the numerator in \eqref{eq:M=P} and Lemma \ref{lem:upper_semi_cont}, we have
    \begin{align}
        |\phi_0|\leq e^{-(L-30\delta)\frac{q_n}{2}} \max(|\phi_{[-q_n/2]+q_n}|, |\phi_{-[q_n/2]-1}|).
    \end{align}
    Finally using that $\phi$ is a generalized solution satisfying \eqref{eq:Shnol}, we conclude that
    \begin{align}
        |\phi_0|\leq e^{-(L-35\delta)\frac{q_n}{2}}.
    \end{align}
    The same argument implies the same bound for $|\phi_{-1}|$, which leads to a contradiction to \eqref{eq:Shnol}.
\end{proof}
Lemma \ref{lem:-q/2_small} implies that for some $j_0\in \{0,...,q_n-1\}$ and $s_0\in \{1,2\}$, 
\begin{align}\label{eq:theta_close_j0}
r^{\omega}_{j_0,s_0}=1+O(e^{-\delta q_n}), \text{ and } \|\theta-[q_n/2]\omega-\theta^{\omega}_{j_0,s_0}\|=O(e^{-\delta q_n}).
\end{align}
Without loss of generality, we assume $s_0=1$.
We will show:
\begin{lemma}\label{lem:zero_omega_reflection}
For $n$ large enough, and $q_{n+1}\geq e^{\delta_1 q_n}$, the following holds:
    \begin{align}
        \mathcal{Z}_{q_n}(\omega,E)=\bigcup_{j=0}^{q_n-1}\{z^{\omega}_{q_n,j,1}, e^{-2\pi i(q_n-1)\omega}/z^{\omega}_{q_n,j,1}\}.
    \end{align}
    Furthermore, $z^{\omega}_{q_n,j,1}$ is ``far away" from $e^{-2\pi i(q_n-1)\omega}/z^{\omega}_{q_n,k,1}$, for any $j,k\in \{0,...,q_n-1\}$ in the sense that:
    \begin{align}\label{eq:thetak-thetaj>-1/25}
    \|\theta_{k,1}^{\omega}-(-\theta_{j,1}^{\omega}-(q_n-1)\omega)\|\geq e^{-\delta q_n/25},
\end{align}
\end{lemma}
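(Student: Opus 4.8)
The plan is to exploit the evenness of $v$ together with the near-degeneracy from Lemma \ref{lem:-q/2_small}. First I would invoke the even-potential identity \eqref{eq:Pk_even_1} in the form $P_{q_n,E}^{\omega}(\theta-\tfrac{q_n-1}{2}\omega)=P_{q_n,E}^{\omega}(-\theta-\tfrac{q_n-1}{2}\omega)$, and similarly for the other entries of $M_{q_n,E}^{\omega}$, to deduce that $\mathrm{tr}(M_{q_n,E}^{\omega})$ — hence $f_{q_n,E}^{\omega}$ — satisfies $f_{q_n,E}^{\omega}(e^{2\pi i \psi})=f_{q_n,E}^{\omega}(e^{2\pi i(-\psi-(q_n-1)\omega)})$; equivalently, as analytic functions, $f_{q_n,E}^{\omega}(z)=f_{q_n,E}^{\omega}(e^{-2\pi i(q_n-1)\omega}/z)$ after accounting for the reflection $z\mapsto 1/z$ (using also Fact \ref{fact:zero_real} which says $z$ is a zero iff $1/\overline z$ is). Combining these two symmetries: if $z^{\omega}_{q_n,j,1}$ is a zero, then so is $e^{-2\pi i(q_n-1)\omega}/z^{\omega}_{q_n,j,1}$. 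Since by Lemma \ref{lem:ap_zero_omega} the zero set $\mathcal{Z}_{q_n}(\omega,E)$ consists of exactly $2q_n$ zeros organized as two almost-$1/q_n$-periodic orbits (the $s=1$ orbit through $z^{\omega}_{q_n,0,1}$ and the $s=2$ orbit through $z^{\omega}_{q_n,0,2}$), and the reflected points $\{e^{-2\pi i(q_n-1)\omega}/z^{\omega}_{q_n,j,1}\}_j$ form another almost-$1/q_n$-periodic orbit of $q_n$ zeros, this reflected orbit must coincide (as a set, up to the $O(e^{-\delta q_n})$ errors in \eqref{eq:r_omega_periodic}) with one of the two orbits.

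Next I would rule out the possibility that the reflected orbit is the \emph{same} $s=1$ orbit, i.e.\ that $\{e^{-2\pi i(q_n-1)\omega}/z^{\omega}_{q_n,j,1}\}_j$ and $\{z^{\omega}_{q_n,k,1}\}_k$ are the same orbit. This is exactly where non-resonance of $\theta$ enters. By \eqref{eq:theta_close_j0} we have $\theta^{\omega}_{j_0,1}=\theta-[q_n/2]\omega+O(e^{-\delta q_n})$ and $r^{\omega}_{j_0,1}=1+O(e^{-\delta q_n})$; using the almost-periodicity \eqref{eq:r_theta_periodic}, $\theta^{\omega}_{k,1}=\theta-[q_n/2]\omega+(k-j_0)p_n/q_n+O(e^{-\delta q_n})$ for every $k$, and all the $r^{\omega}_{k,1}$ are $1+O(e^{-\delta q_n})$. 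If the reflected orbit equalled the $s=1$ orbit, then for some $k$ we'd have $-\theta^{\omega}_{j_0,1}-(q_n-1)\omega$ within $O(e^{-\delta q_n})$ of $\theta^{\omega}_{k,1}$ modulo $1$, i.e.
\begin{align}
    \big\| 2\theta + \big((k-j_0) p_n/q_n\big) - 2[q_n/2]\omega + (q_n-1)\omega\big\| = O(e^{-\delta q_n}).
\end{align}
Writing $p_n/q_n = \omega + O(e^{-\delta_1 q_n})$ (from \eqref{eq:qn_omega}, since $\|q_n\omega\|\le e^{-\delta_1 q_n}$ and $|k-j_0|<q_n$ so the error is absorbed), this becomes $\|2\theta + m\omega\|=O(e^{-\delta q_n})$ for some integer $m$ with $|m|\le 3q_n$. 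But by the non-resonance estimate \eqref{eq:theta_non_res_2} with $\delta'=\delta/10$ (say), $\|2\theta+m\omega\|\ge c_{\delta'}e^{-\delta'|m|}\ge c_{\delta'}e^{-3\delta q_n/10} \gg e^{-\delta q_n}$ for $n$ large, a contradiction. Hence the reflected orbit is the $s=2$ orbit: $\{e^{-2\pi i(q_n-1)\omega}/z^{\omega}_{q_n,j,1}\}_j = \{z^{\omega}_{q_n,k,2}\}_k$ as sets, which (after relabeling the $s=2$ orbit) gives exactly $\mathcal{Z}_{q_n}(\omega,E)=\bigcup_{j}\{z^{\omega}_{q_n,j,1}, e^{-2\pi i(q_n-1)\omega}/z^{\omega}_{q_n,j,1}\}$.

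Finally, for the quantitative separation \eqref{eq:thetak-thetaj>-1/25}: I would compute, exactly as in the contradiction argument above, that $\theta^{\omega}_{k,1} - (-\theta^{\omega}_{j,1}-(q_n-1)\omega)$ equals $2\theta + m'\omega + O(e^{-\delta q_n})$ modulo $1$ for an integer $m'$ with $|m'|\lesssim q_n$ (using $p_n/q_n=\omega+O(e^{-\delta_1 q_n})$ to replace the $p_n/q_n$ shifts, and $[q_n/2] = q_n/2 + O(1)$), so $\|m'\omega + 2\theta\| \ge c_{\delta'}e^{-\delta'|m'|}$; choosing $\delta'$ a small multiple of $\delta$ (concretely so that $\delta'|m'| \le \delta q_n/50$) yields $\|\theta^{\omega}_{k,1}+\theta^{\omega}_{j,1}+(q_n-1)\omega\| \ge c_{\delta'}e^{-\delta q_n/50} - O(e^{-\delta q_n}) \ge e^{-\delta q_n/25}$ for $n$ large. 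The main obstacle I anticipate is bookkeeping: carefully tracking that \emph{all} the accumulated errors — the $O(e^{-\delta q_n})$ from \eqref{eq:r_theta_periodic} and \eqref{eq:theta_close_j0}, the $O(e^{-\delta_1 q_n})$ from replacing $p_n/q_n$ by $\omega$ across orbit indices of size $<q_n$, and the $O(1/q_n)$ from $[q_n/2]$ vs.\ $q_n/2$ — are all dominated by the non-resonance lower bound $c_{\delta'}e^{-\delta'|m'|}$, which forces one to be a little careful about the relative sizes of $\delta$, $\delta'$, and $\delta_1$ (recall $\delta=\delta_1/30$ by \eqref{eq:fix_delta=delta1/30}). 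A secondary subtlety is that the identification of reflected orbit with the $s=2$ orbit is only set-equality up to exponentially small errors, so one must phrase the conclusion — and the subsequent relabeling $z^{\omega}_{q_n,j,2} := e^{-2\pi i(q_n-1)\omega}/z^{\omega}_{q_n,j,1}$ — in a way consistent with the $O(e^{-\delta q_n})$ slack already present in Lemma \ref{lem:ap_zero_omega}.
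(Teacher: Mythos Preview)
Your proposal is correct and follows essentially the same approach as the paper: both use the evenness identity \eqref{eq:Pk_even_1} to get $f_{q_n,E}^{\omega}(z)=f_{q_n,E}^{\omega}(e^{-2\pi i(q_n-1)\omega}/z)$, then combine \eqref{eq:theta_close_j0}, the almost-periodicity \eqref{eq:r_theta_periodic}, and the non-resonance bound \eqref{eq:theta_non_res_2} (with $\delta'$ a small multiple of $\delta$, $|m|\le 3q_n$) to separate the $s=1$ orbit from its reflection, after which a count against the $2q_n$ zeros of Lemma~\ref{lem:ap_zero_omega} finishes. Two minor remarks: the invocation of Fact~\ref{fact:zero_real} is unnecessary (on $|z|=1$ one has $e^{-2\pi i(q_n-1)\omega}/z=e^{2\pi i(-\psi-(q_n-1)\omega)}$ directly, and analytic continuation does the rest), and your ``secondary subtlety'' is a non-issue---the reflected points are \emph{exact} zeros, so once the two families of $q_n$ points are shown to be disjoint they are exactly the $2q_n$ elements of $\mathcal{Z}_{q_n}(\omega,E)$, with no approximation needed in the identification.
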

\begin{proof}
We have for any $k,j\in \{0,...,q_n-1\}$, by \eqref{eq:r_theta_periodic},
\begin{align}\label{eq:theta_k-j}
    \|\theta_{k,1}^{\omega}-(-\theta_{j,1}^{\omega}-(q_n-1)\omega)\|
    =&\|\theta_{k,1}^{\omega}+\theta_{j,1}^{\omega}+(q_n-1)\omega\|\notag\\
    =&\|2\theta_{j_0,1}^{\omega}+\frac{(k+j-2j_0)p_n}{q_n}+(q_n-1)\omega\|+O(e^{-\delta q_n})\notag\\
    =&\|2\theta_{j_0,1}^{\omega}+(k+j-2j_0-1)\omega\|+O(e^{-\delta q_n}),
\end{align}
where we used $\|q_n\omega\|\leq e^{-\delta_1 q_n}\ll e^{-\delta q_n}$ (see \eqref{eq:qn_omega}), and $|k+j-2j_0|\leq 2q_n$, which implies 
$$\left|(k+j-2j_0)\left(\frac{p_n}{q_n}-\omega\right)\right|=\frac{|k+j-2j_0|}{q_n}\|q_n\omega\|\leq 2e^{-\delta_1 q_n}\ll e^{-\delta q_n}.$$
Combining \eqref{eq:theta_close_j0} with \eqref{eq:theta_k-j} yields
\begin{align}\label{eq:2theta_1}
    \|\theta_{k,1}^{\omega}-(-\theta_{j,1}^{\omega}-(q_n-1)\omega)\|=&\|2\theta+(k+j-2j_0-1-2[q_n/2])\omega\|+O(e^{-\delta q_n}).
\end{align}
It is easy to see that $|k+j-2j_0-1-2[q_n/2]|\leq 3q_n$, hence by \eqref{eq:theta_non_res_2} with $\delta'=\delta/100$, we have
\begin{align}\label{eq:2theta_2}
    \|2\theta+(k+j-2j_0-1-2[q_n/2])\omega\|\geq c_{\delta}e^{-\delta |k+j-2j_0-1-2[q_n/2]|/100}\geq c_{\delta} e^{-3\delta q_n/100}\gg e^{-\delta q_n}.
\end{align}
Thus \eqref{eq:2theta_1} together with \eqref{eq:2theta_2} implies
\begin{align}
    \|\theta_{k,1}^{\omega}-(-\theta_{j,1}^{\omega}-(q_n-1)\omega)\|\geq e^{-\delta q_n/25},
\end{align}
in particular 
\begin{align}\label{eq:theta_k_neq_-theta_j}
    \left(\bigcup_{j=0}^{q_n-1}\{\theta_{j,1}^{\omega}\}\right)\cap \left(\bigcup_{j=0}^{q_n-1}\{-\theta_{j,1}^{\omega}-(q_n-1)\omega\}\right)=\emptyset.
\end{align}
Recall that we assume the potential is an even function. This implies that, see \eqref{eq:Pk_even_1},
\begin{align}
    \mathrm{tr}(M^{\omega}_{q_n,E}(\theta-\frac{q_n-1}{2}\omega))
    =&P_{q_n}^{\omega}(\theta-\frac{q_n-1}{2}\omega)-P_{q_n-2}^{\omega}(\theta-\frac{q_n-1}{2}\omega+\omega)\\
    =&P_{q_n}^{\omega}(-\theta-\frac{q_n-1}{2}\omega)-P_{q_n-2}^{\omega}(-\theta-\frac{q_n-1}{2}\omega+\omega)\\
    =&\mathrm{tr}(M^{\omega}_{q_n,E}(-\theta-\frac{q_n-1}{2}\omega)).
\end{align}
By \eqref{def:f=2-tr}, this further implies
\begin{align}
    f^{\omega}_{q_n,E}(e^{2\pi i(\theta-(q_n-1)\omega/2)})=f^{\omega}_{q_n,E}(e^{2\pi i(-\theta-(q_n-1)\omega/2)})
\end{align}
This means the two analytic function below coincide on the unit circle $z\in \mathcal{C}_1$, hence are identical:
\begin{align}\label{eq:fqn_even}
    f_{q_n,E}^{\omega}(z)=f_{q_n,E}^{\omega}(e^{-2\pi i(q_n-1)\omega}/z).
\end{align}
Then if $z$ is a zero of $f_{q_n,E}^{\omega}(z)$, $e^{-2\pi i (q_n-1)\omega}/z$ is also a zero.
Note that \eqref{eq:theta_k_neq_-theta_j} implies $\bigcup_{j=0}^{q_n-1}\{z^{\omega}_{q_n,j,1}\}$ and $\bigcup_{j=0}^{q_n-1}\{e^{-2\pi i(q_n-1)\omega}/z^{\omega}_{q_n,j,1}\}$ are distinct (and in total $2q_n$) zeros. 
Therefore Lemma \ref{lem:zero_omega_reflection} follows from \eqref{eq:Zqn_omega} in Lemma \ref{lem:vqn_Adelta_3_omega}.
\end{proof}

\subsection{Eigenfunctions in the weakly resonant regime}\label{sec:weak_regime}
The main difficulty in proving Anderson localization lies in the strongly resonant regime, which are the locations of the local peaks of the eigenfunctions. To address the strong regimes in a sharp way, one needs to first control the weakly resonant regimes in terms of its adjacent peaks. 
The following lemma shows, roughly speaking, the eigenfunction in a weakly resonant regime can be dominated by its values in its two adjacent strongly resonant regimes.

\begin{theorem}\label{thm:AL_weak_regime}
For any $\ell\in \Z$, and large enough $y\in [\ell q_n, (\ell+1)q_n]\setminus (R_{\ell q_n}\bigcup R_{(\ell+1)q_n})$ satisfying $|y|\leq 10q_{n+1}^{1-c_0}$, the following holds:
\begin{align}
    |\phi_y|\leq \max
    \begin{cases}
    e^{-(L-2000C_v\delta_1^{1/4})\cdot \mathrm{dist}(y_0, R_{\ell q_n})}\cdot r_{\ell q_n},\\ 
    e^{-(L-2000C_v\delta_1^{1/4})\cdot \mathrm{dist}(y_0, R_{(\ell+1) q_n})}\cdot r_{(\ell+1) q_n}.
    \end{cases}
\end{align}
\end{theorem}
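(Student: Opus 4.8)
The plan is to bound $|\phi_y|$ by iterating the Green's function expansion \eqref{eq:Green_exp} on ``good'' blocks, propagating the estimate inward from the two collars $R_{\ell q_n}$ and $R_{(\ell+1)q_n}$. I would work at a block scale $m$ comparable to $\delta_1^{1/4}q_n$ (resp.\ $\delta_1^{1/4}q_n^{1-c_0}$ when $q_n>e^{\delta_1 q_{n-1}}$), small enough to lie in the admissible range of Lemma \ref{lem:LDT_non_exp_m=qn} yet $o(q_n)$, and call a site $k$ \emph{good} if $\theta+k\omega\notin\mathcal{B}^g_{m,1100C_v\delta_1^{1/4},E}$. By \eqref{def:g} and Corollary \ref{cor:gm_zeros}, a good $k$ forces $g^{\omega}_{m,E}(\theta+k\omega)\ge e^{m(L-1200C_v\delta_1^{1/4})}$ for $n$ large, hence one of the four Dirichlet determinants occurring in $g^{\omega}_{m,E}$ is $\ge\tfrac12 e^{m(L-1200C_v\delta_1^{1/4})}$; inserting this into \eqref{eq:Green_exp} on the corresponding block (an $O(1)$-shift of $[k,k+m-1]$, of length in $\{m,m-1,m-2\}$) and bounding the numerators by Lemma \ref{lem:upper_semi_cont}, I get for any $z$ sitting a fixed fraction $c\in(0,1/7)$ inside that block the one-step bound $|\phi_z|\le e^{-cm(L-1300C_v\delta_1^{1/4})}\bigl(|\phi_{k-1}|+|\phi_{k+m}|\bigr)$.

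The heart of the argument is to show that \emph{every} $z$ in the weakly resonant regime admits such a good block containing it, i.e.\ that the window $W_z:=[z-(1-c)m,\,z-cm]\cap\Z$ of candidate left endpoints contains a good site. I would combine three inputs. First, the new large deviation estimate Lemma \ref{lem:LDT_non_exp_m=qn}, valid at scale $m\sim\delta_1^{1/4}q_n$ \emph{without} any weak Liouville hypothesis, gives $\mathrm{mes}(\mathcal{B}^g_{m,1100C_v\delta_1^{1/4},E})\le e^{-100\delta_1 m}$, and Corollary \ref{cor:gm_zeros}, now using $\kappa(\omega,E)=1$, exhibits this set as a union of at most $2(1+\eta)m$ intervals, each of length $\le e^{-100\delta_1 m}$. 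Second, by the three-distance theorem, over any $q_n$ consecutive integers the points $\theta+k\omega$ are pairwise at distance $\ge\|q_{n-1}\omega\|\ge 1/(2q_n)$, and since $q_n$ is large $1/(2q_n)\gg e^{-100\delta_1 m}$, so each of those $\le 2(1+\eta)m$ bad intervals captures at most one $\theta+k\omega$ with $k$ in the $q_n$-stretch. Third, a guaranteed supply of bad sites from the location of the maximum: let $y^{\ast}$ attain $M^{\ast}:=\sup_{[\ell q_n,(\ell+1)q_n]}|\phi|$ and assume the interesting case $M^{\ast}>\max(r_{\ell q_n},r_{(\ell+1)q_n})$; then every site at distance in $[cm,(1-c)m]$ on \emph{either} side of $y^{\ast}$ must be bad, for otherwise \eqref{eq:Green_exp} on its block (which lies in the $q_n$-stretch, so its boundary values are $\le M^{\ast}$) would give $M^{\ast}=|\phi_{y^{\ast}}|\le 4e^{-cm(L-1300C_v\delta_1^{1/4})}M^{\ast}<M^{\ast}$; this yields two flanking windows $I^{\pm}$ of total width $\approx 2(1-2c)m$ consisting entirely of bad sites. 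Now if, for some $z$ with $W_z$ disjoint from $I^{\pm}$, the window $W_z$ (width $(1-2c)m$) were also entirely bad, then $I^{-}\cup I^{+}\cup W_z$ would contain $\approx 3(1-2c)m$ bad sites in the $q_n$-stretch, hence by the separation at least that many distinct bad intervals — but there are only $\le 2(1+\eta)m$, and $3(1-2c)>2(1+\eta)$ once $c<(1-2\eta)/6$ and $\eta<1/100$, a contradiction. Hence $W_z$ has a good site.

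With good-block existence in hand, I would close the proof by iteration: given $y$ in the regime, pick a good $k^{\ast}\in W_y$, so $y$ lies a fraction $\ge c$ inside the associated block and $|\phi_y|\le e^{-cm(L-1300C_v\delta_1^{1/4})}(|\phi_{k^{\ast}-1}|+|\phi_{k^{\ast}+m}|)$, where $k^{\ast}-1$ and $k^{\ast}+m$ are displaced from $y$ by amounts in $[cm,(1-c)m]$ in opposite directions; applying the same step to $\phi_{k^{\ast}-1}$ and $\phi_{k^{\ast}+m}$ and continuing, each branch advances by $\ge cm$ per step and after $O(\delta_1^{-1/4})$ steps exits the regime into a collar where $|\phi_{\cdot}|\le r_{\ell q_n}$ resp.\ $r_{(\ell+1)q_n}$ (the recursion also terminating when a block reaches a collar). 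Summing over the $2^{O(\delta_1^{-1/4})}$ leaves, and using that along any path the accumulated advance dominates the net displacement from $y$ to the peak reached, I obtain $|\phi_y|\le 2^{O(\delta_1^{-1/4})}\max\bigl(e^{-(L-1300C_v\delta_1^{1/4})\,\mathrm{dist}(y,R_{\ell q_n})}r_{\ell q_n},\,e^{-(L-1300C_v\delta_1^{1/4})\,\mathrm{dist}(y,R_{(\ell+1)q_n})}r_{(\ell+1)q_n}\bigr)$; since $\mathrm{dist}(y,R_{\ell q_n})\ge 10\delta_1^{1/4}q_n\to\infty$, the constant prefactor is absorbed into the exponent, downgrading $L-1300C_v\delta_1^{1/4}$ to $L-2000C_v\delta_1^{1/4}$. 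The degenerate case $M^{\ast}\le\max(r\text{'s})$, and points $z$ within $O(m)$ of $y^{\ast}$ or of a collar, are dealt with by the same mechanism: for such $z$ one has $\mathrm{dist}(z,R)=O(m)$ and the claimed bound reduces to $|\phi_z|\lesssim\max(r\text{'s})$, obtained by a good block reaching into the nearer collar (bridging any short $O(m)$ gap by the trivial transfer-matrix bound, which costs only $e^{O(m)L}$ and is harmless since $m=o(q_n)$).

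The step I expect to be the main obstacle is precisely the combinatorial inequality $3(1-2c)m>2(1+\eta)m$ behind good-block existence. It dictates the scale choice $m\sim\delta_1^{1/4}q_n$ (small enough that Lemma \ref{lem:LDT_non_exp_m=qn} applies and the bad intervals have length $\ll 1/q_n$, yet $\gg\log q_n$ so that the three-distance separation beats them), and it genuinely requires both the \emph{sharp} complexity count $\sim 2m$, available only on the stratum $\kappa(\omega,E)=1$, and the \emph{two-sided} supply of forced-bad sites flanking the maximum of $|\phi|$; a weaker complexity bound, or a one-sided supply, would break the count, and unlike in the weak Liouville case one cannot afford to rely on the reflection-pair structure of $\mathcal{B}^g$, since the non-resonance of $\theta$ is too weak to repel $U_{m,j}$ from $-U_{m,j}-(m-1)\omega$ when $\ell q_n$ is as large as $q_{n+1}^{1-c_0}$. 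Controlling the branching recursion so that it loses only a multiplicative $2^{O(\delta_1^{-1/4})}$ and the exponent telescopes to the true distance, and nailing down the edge cases near $y^{\ast}$ and near the collars, are the remaining technical points, routine once the combinatorial step is secured.
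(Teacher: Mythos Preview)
Your core counting argument does not close. A ``good'' left-endpoint $k$ yields a block $[k+a,k+a+m'-1]$ with $a\in\{0,1\}$, $m'\in\{m,m-1,m-2\}$, and to bound $\phi_{y^\ast}$ via \eqref{eq:Green_exp} one needs $y^\ast$ to lie inside that block at depth $\ge cm$ from both ends; this forces $k\in[y^\ast-(1-c)m,\,y^\ast-cm]$ up to $O(1)$. There is \emph{no} second window to the right of $y^\ast$: for $k\in[y^\ast+cm,y^\ast+(1-c)m]$ the block $[k,k+m-1]$ lies entirely to the right of $y^\ast$, so goodness of $k$ says nothing about $\phi_{y^\ast}$ and produces no contradiction. (Reparametrizing by the right endpoint of a block containing $y^\ast$ just returns the same left-endpoint window.) Hence the forced-bad supply from $y^\ast$ is only $(1-2c)m$, and together with a hypothetically all-bad $W_z$ you get at most $2(1-2c)m$ bad orbit points against a complexity of $2(1+\eta)m$; the inequality $3(1-2c)>2(1+\eta)$ you invoke is therefore unsupported, and the pigeonhole fails.

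You also dismiss the reflection-pair structure too quickly, and this is precisely what the paper uses to rescue the count. The paper invokes Corollary~\ref{cor:gm_zeros_even} to write $\mathcal{B}^g_{m_y,1100C_v\delta_1^{1/4},E}$ as $\le(1+\eta)m_y$ pairs $U_{m_y,j}\cup(-U_{m_y,j}-(m_y-1)\omega)$, takes the forced-bad window $I_1^y$ near the \emph{origin} (where the global normalization $\max(|\phi_0|,|\phi_{-1}|)=1$ lives, not near a local maximizer), and a candidate window $I_2^y$ near $y$, with $|I_1^y|+|I_2^y|\ge\tfrac{11}{10}m_y>(1+\eta)m_y$. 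The key repulsion Lemma~\ref{lem:repulsion_2} shows each reflection pair traps at most one orbit point from $I_1^y\cup I_2^y$; the non-resonance of $\theta$ \emph{does} suffice here even for $|\ell|$ as large as $q_{n+1}^{1-c_0}/q_n$, because one first subtracts $2\ell q_n$ (or $\ell q_n$, $2(\ell+1)q_n$) from $k+k'+m_y-1$ to land in an interval of size $O(m_y)$, applies \eqref{eq:theta_non_res} with $\delta'=\delta_1 c_0/10$, and then uses the hypothesis $|y|\le 10q_{n+1}^{1-c_0}$ together with $q_{n+1}\ge e^{\delta_1 q_n}$ to bound $|2\ell|\,\|q_n\omega\|\le 80\,q_{n+1}^{-c_0}\le 80\,e^{-\delta_1 c_0 q_n}\ll e^{-5\delta' q_n/4}$. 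So the very mechanism you ruled out is what makes the proof work. Note also that the paper runs the argument at the \emph{variable} scale $m_y=\tfrac12\,\mathrm{dist}(y,q_n\Z)$ rather than a fixed $m\sim\delta_1^{1/4}q_n$; this is what keeps $|k+k'+m_y-1-2\ell q_n|\asymp m_y$ nonzero in all cases and lets a single Green expansion already reach a positive fraction of the distance to the collars.
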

\begin{proof}
    The proof is similar to, but more difficult than, that of the weak Liouville case in Sec. \ref{sec:non_exp}.
    
For $y\in [\ell q_n, (\ell+1)q_n]$, let $m_y:=\mathrm{dist}(y, q_n \Z)/2$. 
    Let 
    \begin{align}\label{def:I1_I2_weak_regime}
        I_1^y:=\left[-\left[\frac{9}{10}m_y\right], -\left[\frac{3}{5}m_y\right]\right], \text{ and }
        I_2^y:=\left[y-\left[\frac{9}{10}m_y\right], y-\left[\frac{1}{10}m_y\right]\right].
    \end{align}
    By Corollary \ref{cor:gm_zeros_even}, we have that:
    \begin{lemma}\label{lem:weak_regime_interval}
        The large deviation set $\mathcal{B}^g_{m_y,1100C_v\delta_1^{1/4},E}$ satisfies:
        \begin{align}
            \mathcal{B}^g_{m_y,1100C_v\delta_1^{1/4},E}=\bigcup_{j=1}^{\tilde{N}'}(U_{m_y,j}\bigcup(-U_{m_y,j}-(m_y-1)\omega)),
        \end{align}
        for some $\tilde{N}'\leq (1+\eta)m_y$ with $\eta$ as in \eqref{def:eta}. Furthermore each $U_{m_y,j}$ satisfies $\mathrm{mes}(U_{m_y,j})\leq e^{-100\delta_1 m_y}$.
    \end{lemma}
    \begin{proof} 
    Note that if $q_n\leq e^{\delta_1 q_{n-1}}$, then $y\in [(\ell+10\delta_1^{1/4})q_n, (\ell+1-10\delta_1^{1/4})q_n]$, and $q_n/4\geq m_y\geq 5\delta_1^{1/4}q_n$.
If $q_n\geq e^{\delta_1 q_{n-1}}$, then $y\in [\ell q_n+10\delta_1^{1/4} q_n^{1-c_0}, (\ell+1)q_n-10\delta_1^{1/4} q_n^{1-c_0}]$, and $q_n/4\geq m_y\geq 5\delta_1^{1/4} q_n^{1-c_0}$.
We have verified the conditions in Corollary \ref{cor:gm_zeros_even}, therefore Corollary \ref{cor:gm_zeros_even} directly implies the claimed result.
\end{proof}
    Then one can show:
    \begin{lemma}\label{lem:non_res_repulsion}
For any $k\in I_1^y$, we have 
\begin{align}
    \theta+k\omega\in \mathcal{B}^g_{m_y, 1100C_v\delta_1^{1/4},E}.
\end{align} Furthermore there exists $k_2^y\in I_2^y$ such that
\begin{align}\label{eq:k2_not_in_B}
    \theta+k_2^y\omega\notin \mathcal{B}^g_{m_y, 1100C_v\delta_1^{1/4},E}.
\end{align}
    \end{lemma}
    \begin{proof}
        The first claim regarding $I_1^y$ is very similar to that of Lemma \ref{lem:-q/2_small}, which we shall leave for the readers.
        
        To prove \eqref{eq:k2_not_in_B}, it suffices to prove the following repulsion property:
        \begin{lemma}\label{lem:repulsion_2}
            If for some $k\in I_1^y\bigcup I_2^y$, $$\theta+k\omega\in (U_{m_y,j}\bigcup(-U_{m_y,j}-(m_y-1)\omega)$$ for some $j\in \{0,...,q_n-1\}$. Then the following holds for any $k'\in I_1^y\bigcup I_2^y\setminus \{k\}$:
        \begin{align}
            \theta+k'\omega\notin (U_{m_y,j}\bigcup(-U_{m_y,j}-(m_y-1)\omega)).
        \end{align}
        \end{lemma}
        \begin{proof}
        Without loss of generality, we assume $\theta+k\omega\in U_{m_y,j}$ (the other case is completely analogous). 
        
        \underline{Case 1}. If $k,k'$ belong to $I_1^y$ or $I_2^y$ simultaneously, then 
        $0<|k-k'|\leq 4m_y/5\leq q_n/5$, where we used $m_y\leq q_n/4$. 
        Hence by \eqref{eq:qn_omega_min} and \eqref{eq:qn_omega}, we have
        \begin{align}
            \|\theta+k'\omega-(\theta+k\omega)\|\geq \|q_{n-1}\omega\|\geq \frac{1}{2q_n}\gg e^{-100\delta_1 m_y}\geq \mathrm{mes}(U_{m_y,j}),
        \end{align}
        since $m_y\geq 5\delta_1^{1/4}q_n^{1-c_0}$.
        This implies $\theta+k'\omega\notin U_{m_y,j}$.

        To show $\theta+k'\omega\notin (-U_{m_y,j}-(m_y-1)\omega)$, we further distinguish two cases. 

        \underline{Case 1.1}. If $k,k'\in I_1^y$, it is easy to check that
        \begin{align}
            k'+k+m_y-1\in [-\frac{4}{5}m_y,-\frac{1}{5}m_y].
        \end{align}
        Then by \eqref{eq:theta_non_res} with $\delta'=\delta_1$, when $y$ is large (hence $n$ is large and $m_y$ is large),
        \begin{align}
            &\|\theta+k'\omega-(-\theta-k\omega-(m_y-1)\omega)\|\\
            =&\|2\theta+(k'+k+m_y-1)\omega\|\geq e^{-\delta_1 |k'+k+m_y-1|}\geq e^{-4\delta_1 m_y/5}\gg e^{-100\delta_1 m_y}\geq \mathrm{mes}(-U_{m_y,j}-(m_y-1)\omega).
        \end{align}
        This implies 
        \begin{align}\label{eq:theta+k'_notin}
            \theta+k'\omega\notin -U_{m_y,j}-(m_y-1)\omega\ni -(\theta+k\omega)-(m_y-1)\omega.
        \end{align}
        \underline{Case 1.2}. If $k,k'\in I_2^y$. Since $|y|\leq 10q_{n+1}^{1-c_0}$, $|\ell|q_n<20q_{n+1}^{1-c_0}$, we obtain by \eqref{eq:qn_omega} that,
        \begin{align}\label{eq:2l_qn}
            \max(|2\ell|, |2\ell+2|)\|q_n\omega\|\leq \frac{4\max(|\ell|,1)}{q_{n+1}}\leq 80 q_{n+1}^{-c_0}\leq 80 e^{-\delta_1c_0q_n},
        \end{align}
        where we used $q_{n+1}\geq e^{\delta_1 q_n}$ in the last inequality.

        We further distinguish two cases depending on if $y\leq (\ell+1/2)q_n$.
        
        \underline{Case 1.2.1}. If $\ell q_n\leq y\leq (\ell+1/2)q_n$. 
        In this case $y-\ell q_n=2m_y$.
        This implies
        \begin{align}
            k+k'+m_y-1-2\ell q_n\in [2y-\frac{4}{5}m_y-2\ell q_n-1, 2y+\frac{4}{5}m_y-2\ell q_n-1]\subset [3m_y, 5m_y].
        \end{align}
        Hence by \eqref{eq:theta_non_res} with $\delta'=\delta_1c_0/10<\delta_1/10$, we obtain for $y$ large enough that
        \begin{align}\label{eq:121}
            \|2\theta+(k+k'+m_y-1-2\ell q_n)\omega\|\geq e^{-5\delta' m_y}\geq e^{-5\delta'q_n/4} \gg 80e^{-\delta_1c_0q_n}\geq |2\ell|\|q_n\omega\|,
        \end{align}
        in which we used \eqref{eq:2l_qn} in the last inequality.
        By triangle inequality, \eqref{eq:121} implies 
        \begin{align}\label{eq:121_3}
            &\|\theta+k'\omega-(-\theta-k\omega-(m_y-1)\omega)\|\notag\\
            \geq &\|\theta+k'\omega-(-\theta-k\omega-(m_y-1)\omega)-2\ell q_n\omega\|-|2\ell|\|q_n\omega\|\notag\\
            =&\|2\theta+(k'+k+m_y-1-2\ell)q_n)\omega\|-|2\ell|\|q_n\omega\|\notag\\
            \geq &e^{-5\delta'm_y}/2 \gg e^{-100\delta_1 m_y}\geq \mathrm{mes}(-U_{m_y,j}-(m_y-1)\omega).
        \end{align}
        This verifies \eqref{eq:theta+k'_notin} for Case 1.2.1.

        \underline{Case 1.2.2}. If $(\ell+1/2)q_n\leq y\leq (\ell+1)q_n$. In this case $(\ell+1)q_n-y=2m_y$.  The estimates below are similar to those in Case 1.2.1.
        First, we have
        \begin{align}
            k+k'+m_y-1-2(\ell+1)q_n\in [2y-\frac{4}{5}m_y-2(\ell+1)q_n-1, 2y+\frac{4}{5}m_y-2(\ell+1)q_n-1]\subset [-5m_y, -3m_y].
        \end{align}
        This implies similar to \eqref{eq:121} that
        \begin{align}\label{eq:122}
            \|2\theta+(k+k'+m_y-1-2(\ell+1)q_n)\omega\|\geq e^{-5\delta' m_y}\geq e^{-5\delta' q_n/4}\gg |2(\ell+1)|\|q_n\omega\|.
        \end{align}
        This, by triangle inequality, implies, similar to \eqref{eq:121_3}, that
        \begin{align}
            \|\theta+k'\omega-(-\theta-k\omega-(m_y-1)\omega)\|\gg \mathrm{mes}(-U_{m_y,j}-(m_y-1)\omega).
        \end{align}
        Thus we have verified \eqref{eq:theta+k'_notin} in Case 1.2.2.

        \underline{Case 2}. If $k\in I_1^y$ and $k'\in I_2^y$ (the other case when $k\in I_2^y$ and $k'\in I_1^y$ is analogous), we distinguish two cases below:
        
        \underline{Case 2.1}. If $y\leq \ell q_n+q_n/2$. In this case $y-\ell q_n=2m_y$, hence
        \begin{align}
            k'-\ell q_n-k\in [y-\ell q_n-\frac{3}{10}m_y, y-\ell q_n+\frac{4}{5}m_y]\subset [m_y,3m_y]\subset [m_y,3q_n/4].
        \end{align}
           Therefore, by \eqref{eq:qn_omega_min},
            \begin{align}\label{eq:a1}
                \|\theta+k\omega-(\theta+k'\omega)+\ell q_n\omega\|=\|(k-(k'-\ell q_n))\omega\|\geq \|q_{n-1}\omega\|\geq \frac{1}{2q_n}\gg 40e^{-\delta_1c_0q_n}.
            \end{align}
            Combining this with \eqref{eq:2l_qn} yields 
            \begin{align}\label{eq:a2}
                \|\theta+k\omega-(\theta+k'\omega)\|\geq \|\theta+k\omega-(\theta+k'\omega)+\ell q_n\omega\|-\|\ell q_n\omega\|\geq \frac{1}{4q_n}
                \gg &e^{-100\delta_1 m_y}\notag\\ \geq &\mathrm{mes}(U_{m_y,j}),
            \end{align}
            where we used $m_y\geq 5\delta_1^{1/4}q_n^{1-c_0}$.
            Clearly, \eqref{eq:a2} implies $\theta+k'\omega\notin U_{m_y,j}$.

            We also have, by $y-\ell q_n=2m_y$ that
            \begin{align}
                k+k'+m_y-1-\ell q_n\in [y-\frac{4}{5}m_y-1-\ell q_n, y+\frac{3}{10}m_y-1-\ell q_n]\subset [m_y, 3m_y].
            \end{align}
            By \eqref{eq:theta_non_res} with $\delta'=\delta_1c_0/10<\delta_1/10$, we have for $y$ large enough that
            \begin{align}
                \|2\theta+(k+k'+m_y-1-\ell q_n)\omega\|\geq e^{-3\delta'm_y}\geq e^{-3\delta'q_n/4}\gg 40e^{-\delta_1c_0q_n}\geq |\ell|\|q_n\omega\|,
            \end{align}
            where we used \eqref{eq:2l_qn} in the last inequality.
            Combining this with the triangle inequality yields
            \begin{align}
                &\|\theta+k'\omega-(-\theta-k\omega-(m_y-1)\omega)\|\\
                =&\|2\theta+(k+k'+m_y-1)\omega\|\\
                \geq &\|2\theta+(k+k'+m_y-1-\ell q_n)\|-|\ell|\|q_n\omega\|\\
                \geq &e^{-3\delta'm_y}/2\gg e^{-100\delta_1m_y}\geq \mathrm{mes}(-U_{m_y,j}-(m_y-1)\omega).
            \end{align}
            This implies $\theta+k'\omega\notin (-U_{m_y,j}-(m_y-1)\omega)$.
            
        \underline{Case 2.2}. If $y\geq \ell q_n+q_n/2$.  
        This case is very similar to Case 2.1. 
        Note that in this case, $(\ell+1)q_n-y=2m_y$, thus
        \begin{align}
            k'-(\ell+1)q_n-k
            \in [y-(\ell+1)q_n-\frac{3}{10}m_y, y-(\ell+1)q_n+\frac{4}{5}m_y]
            \subset [-3m_y, -m_y]\subset [-3q_n/4,-m_y].
        \end{align}
        This implies, by \eqref{eq:qn_omega_min} that
        \begin{align}
            \|\theta+k\omega-(\theta+k'\omega)+(\ell+1)q_n\omega\|\geq \|q_{n-1}\omega\|\geq \frac{1}{2q_n}
        \end{align}
        Then similar to \eqref{eq:a2} above, we conclude that
        \begin{align}
            \|\theta+k\omega-(\theta+k'\omega)\|\geq \frac{1}{4q_n}\gg e^{-100\delta_1m_y}\geq \mathrm{mes}(U_{m_y,j}).
        \end{align}
        This shows $\theta+k'\omega\notin U_{m_y,j}$.

        We also have
        \begin{align}
            k+k'+m_y-1-(\ell+1)q_n\in [y-\frac{4}{5}m_y-1-(\ell+1)q_n, y+\frac{3}{10}m_y-1-\ell q_n]\subset [-3m_y,-m_y].
        \end{align}
        By \eqref{eq:theta_non_res} with $\delta'=\delta_1c_0/10<\delta_1/10$, the following holds:
        \begin{align}
            \|2\theta+(k+k'+m_y-(\ell+1)q_n)\omega\|\geq e^{-3\delta' m_y}\geq e^{-3\delta'q_n/4}\gg 40e^{-\delta_1c_0q_n}\geq |\ell+1|\|q_n\omega\|,
        \end{align}
        where we used \eqref{eq:2l_qn} in this last inequality. 
        This, together with the triangle inequality, yields
        \begin{align}
            &\|\theta+k'\omega-(-\theta-k\omega-(m_y-1)\omega)\|\\
            =&\|2\theta+(k+k'+m_y-1)\omega\|\\
            \geq &e^{-3\delta'm_y}/2\gg e^{-100\delta_1m_y}\geq \mathrm{mes}(-U_{m_y,j}-(m_y-1)\omega).
        \end{align}
        This implies $\theta+k'\omega\notin (-U_{m_y,j}-(m_y-1)\omega)$.
        Therefore we have proved the claimed result of Lemma \ref{lem:repulsion_2}.
    \end{proof}
    Finally, note that 
    $$\mathrm{card}(I_1^y)+\mathrm{card}(I_2^y)\geq \frac{11}{10}m_y-2>(1+\eta)m_y\geq \tilde{N}',$$
    where $\tilde{N}'$ is as in Lemma \ref{lem:weak_regime_interval}. Lemma \ref{lem:non_res_repulsion} then follows from the pigeonhole principle.
\end{proof}
Returning to the proof of Theorem \ref{thm:AL_weak_regime}, by Lemma \ref{lem:non_res_repulsion}, we conclude similar to \eqref{eq:g_large_implies_P_large} that there exists $m_y'\in \{m_y,m_y-1,m_y-2\}$ and $a\in \{0,1\}$ such that 
\begin{align}
    |P_{m_y'}^{\omega}(\theta+(k_2^y+a)\omega)|\geq \frac{1}{2}e^{m_y(L_{m_y}-1000C_v\delta_1^{1/4})}.
\end{align}
Denoting $\Gamma_y:=[k_2^y+a, k_2^y+a+m_y'-1]$ and $\partial \Gamma_y:=\{k_2^y+a-1, k_2^y+a+m_y'\}$. 
Expanding $\phi_y$ using Green's function expansion on the interval $\Gamma_y$, and denoting $y=y_0$, we have similar to \eqref{eq:exp_phi0_weak} that:
\begin{align}
    |\phi_{y_0}|\leq \max_{y_1\in \partial \Gamma_{y_0}} e^{-\mathrm{dist}(y_0,y_1) \cdot (L-2000C_v\delta_1^{1/4})}|\phi_{y_1}|.
\end{align}
Clearly one expand on $\phi_{y_1}$ as long as $y_1\notin R_{\ell q_n}\bigcup R_{(\ell+1)q_n}$.
Iterating such expansion yields
\begin{align}\label{eq:iterate_Green_phi_y0}
    |\phi_{y_0}|\leq \sup_{(y_1, y_2,..., y_t)\in S} e^{-\sum_{j=1}^t\mathrm{dist}(y_{j-1},y_{j})\cdot (L-2000C_v\delta_1^{1/4})} |\phi_{y_t}|,
\end{align}
where $S$ is collection of admissible chains such that either
\begin{align}
    \begin{cases}
        y_t\in R_{\ell q_n}, \text{ or } \\
        y_t\in R_{(\ell+1)q_n}, \text{ or } \\
        t=t_0, \text{ and } y_j\notin R_{\ell q_n}\bigcup R_{(\ell+1)q_n}, \text{ for any } 1\leq j\leq t_0,
    \end{cases}
\end{align}
where the stopping time
\begin{align}
    t_0:=
    \begin{cases}
        [10/\delta_1^{1/4}]+1, \text{ if } q_n\leq e^{\delta_1 q_{n-1}}, \text{ or } \\
        [10 q_n^{c_0}/\delta_1^{1/4}]+1, \text{ if } q_n\geq e^{\delta_1 q_{n-1}}.
    \end{cases}
\end{align}
Note that if we do artificially terminate the process at $t=t_0$, then for each $0\leq j< t_0$, we have $\phi_{y_j}\notin R_{\ell q_n}\bigcup R_{(\ell+1)q_n}$ and hence
\begin{align}
    \mathrm{dist}(y_j, q_n\Z)\geq 
    \begin{cases}
        10\delta_1^{1/4} q_n, \text{ if } q_n\leq e^{\delta_1 q_{n-1}}\\
        10\delta_1^{1/4} q_n^{1-c_0}, \text{ if } q_n\geq e^{\delta_1 q_{n-1}}.
    \end{cases}
\end{align}
Therefore, due to our choice of the $I_2^{y_j}$ interval as in \eqref{def:I1_I2_weak_regime},
\begin{align}
    \mathrm{dist}(y_j,y_{j+1})\geq \frac{1}{10}m_{y_j}=\frac{1}{20}\mathrm{dist}(y_j, q_n\Z)\geq     \begin{cases}
        \delta_1^{1/4} q_n/2, \text{ if } q_n\leq e^{\delta_1 q_{n-1}}\\
        \delta_1^{1/4} q_n^{1-c_0}/2, \text{ if } q_n\geq e^{\delta_1 q_{n-1}}.
    \end{cases}
\end{align}
Then
\begin{align}
    \sum_{j=1}^{t_0}\mathrm{dist}(y_{j-1},y_{j})
    \geq &t_0\cdot
    \begin{cases}
        \delta_1^{1/4} q_n/2, \text{ if } q_n\leq e^{\delta_1 q_{n-1}}\\
        \delta_1^{1/4} q_n^{1-c_0}/2, \text{ if } q_n\geq e^{\delta_1 q_{n-1}}
    \end{cases}\\
    \geq &4q_n.
\end{align}
This implies such chain of expansion $(y_1,y_2,...,y_{t_0})\in S$ contributes at most
\begin{align}
    e^{-4(L-2000C_v\delta_1^{1/4})q_n}|\phi_{y_{t_0}}|\leq e^{-4(L-2000C_v\delta_1^{1/4})q_n}\cdot b_{\ell,\ell+1,q_n},
\end{align}
in which
\begin{align}
    b_{\ell,\ell+1,q_n}:=\sup_{k\in [\ell q_n, (\ell+1)q_n]\setminus (R_{\ell q_n}\bigcup R_{(\ell+1)q_n})} |\phi_k|.
\end{align}

If a chain $(y_1,y_2,...,y_t)\in S$ is such that $y_{t}\in R_{\ell q_n}$, then along this chain, by triangle inequality,
\begin{align}
    \sum_{j=1}^{t}\mathrm{dist}(y_{j-1}, y_j)\geq \mathrm{dist}(y_0, R_{\ell q_n}).
\end{align}
The contribution along this chain is at most
\begin{align}
    e^{-(L-2000C_v\delta_1^{1/4})\cdot \mathrm{dist}(y_0, R_{\ell q_n})}\cdot r_{\ell q_n},
\end{align}
where we dominate $|\phi_{y_t}|$ by $r_{\ell q_n}$ since $y_{t}\in R_{\ell q_n}$.

If a chain $(y_1,y_2,...,y_t)\in S$ is such that $y_t\in R_{(\ell+1)q_n}$, then similar to the case above, the contribution along this chain is at most
\begin{align}
    e^{-(L-2000C_v\delta_1^{1/4})\cdot \mathrm{dist}(y_0, R_{(\ell+1) q_n})}\cdot r_{(\ell+1) q_n}.
\end{align}
Combining the three cases above with \eqref{eq:iterate_Green_phi_y0}, we conclude that for any $y\in [\ell q_n, (\ell+1)q_n]\setminus (R_{\ell q_n}\bigcup R_{(\ell+1)q_n})$, 
\begin{align}\label{eq:phiy_exp_1}
    |\phi_y|\leq \max
    \begin{cases}e^{-(L-2000C_v\delta_1^{1/4})\cdot \mathrm{dist}(y_0, R_{\ell q_n})}\cdot r_{\ell q_n}\\ 
    e^{-(L-2000C_v\delta_1^{1/4})\cdot \mathrm{dist}(y_0, R_{(\ell+1) q_n})}\cdot r_{(\ell+1) q_n} \\
    e^{-4(L-2000C_v\delta_1^{1/4})q_n}\cdot b_{\ell,\ell+1,q_n}.
    \end{cases}
\end{align}
In particular, we can take $y$ such that $|\phi_y|=b_{\ell,\ell+1,q_n}$, then the inequality above yields
\begin{align}\label{eq:b_exp}
    b_{\ell,\ell+1,q_n}\leq \max
    \begin{cases}
    e^{-(L-2000C_v\delta_1^{1/4})\cdot \mathrm{dist}(y_0, R_{\ell q_n})}\cdot r_{\ell q_n}\\ 
    e^{-(L-2000C_v\delta_1^{1/4})\cdot \mathrm{dist}(y_0, R_{(\ell+1) q_n})}\cdot r_{(\ell+1) q_n}.
    \end{cases}
\end{align}
Combining \eqref{eq:phiy_exp_1} with \eqref{eq:b_exp} yields
\begin{align}
    |\phi_y|\leq \max
    \begin{cases}
    e^{-(L-2000C_v\delta_1^{1/4})\cdot \mathrm{dist}(y_0, R_{\ell q_n})}\cdot r_{\ell q_n}\\ 
    e^{-(L-2000C_v\delta_1^{1/4})\cdot \mathrm{dist}(y_0, R_{(\ell+1) q_n})}\cdot r_{(\ell+1) q_n},
    \end{cases}
\end{align}
as claimed.
\end{proof}

\subsection{Eigenfunction in the strongly resonant regimes}\label{sec:ef_strong_strong}
We are now ready to study the eigenfunction in a strongly resonant regime $R_{\ell q_n}$. 
The goal of this subsection is to prove the following:
\begin{theorem}\label{thm:AL_lqn}
    For $n$ large enough, if $q_{n+1}\geq e^{\delta_1 q_n}$, then the following holds for any $1\leq |k|\leq 5q_{n+1}^{1-c_0}/q_n$:
    \begin{align}
        r_{kq_n}\leq e^{-(1-o(1))(L-\beta_n)|k|q_n},
    \end{align}
    where $o(1)\in (0,1/50)$.
\end{theorem}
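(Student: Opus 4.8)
The plan is to prove the bound by a downward induction on the resonance strength: each peak value $r_{kq_n}$ in the strongly resonant regime $R_{kq_n}$ is reduced to the peak values $r_{(k\pm1)q_n}$ of the two neighboring strong regimes, and then one telescopes all the way in to $r_0$, which is bounded by the Shnol' normalization \eqref{eq:Shnol}. By the reflection symmetry it is enough to treat $k\geq 1$, and throughout we keep $\delta=\delta_1/30$ fixed as in \eqref{eq:fix_delta=delta1/30}.

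\textbf{One-step estimate.} Fix $y_0\in R_{kq_n}$ with $|\phi_{y_0}|=r_{kq_n}$, put $m_1:=kq_n-[q_n/2]$, and expand $\phi_{y_0}$ via \eqref{eq:Green_exp} on a block $J$ of length $q_n+O(1)$ with left endpoint $\approx m_1$, so that $J$ is centred at $kq_n$, contains $R_{kq_n}$, and has $y_0$ within $10\delta_1^{1/4}q_n$ of its midpoint. The arithmetic of Sec.~\ref{sec:zero_fomega}--\ref{sec:even} pins down the left endpoint of $J$ relative to the zero set of $f^\omega_{q_n,E}$: since $m_1+[q_n/2]=kq_n$, the almost-periodicity \eqref{eq:r_theta_periodic} together with $\theta-[q_n/2]\omega=\theta^\omega_{q_n,j_0,1}+O(e^{-\delta q_n})$ from \eqref{eq:theta_close_j0} gives
\[
\theta+m_1\omega=\theta^\omega_{q_n,j_0,1}+k(q_n\omega-p_n)+O(e^{-\delta q_n}),
\]
so $e^{2\pi i(\theta+m_1\omega)}$ lies near the distinguished first-family zero $z^\omega_{q_n,j_0,1}$ at distance comparable to $|k|\,\|q_n\omega\|=|k|e^{-\beta_n q_n}$, and by \eqref{eq:thetak-thetaj>-1/25} and the non-resonance \eqref{eq:theta_non_res_2} of $\theta$ it stays a distance $\gg e^{-\delta q_n}$ from every other zero. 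Feeding this into the precise form of Lemma~\ref{lem:vqn_Adelta_3_omega} and using \eqref{eq:f=M2/M}, \eqref{eq:M=P}, one obtains (after harmless $\pm1$ adjustments of the endpoints of $J$ to select the large Dirichlet determinant)
\[
\big|P^\omega_{|J|,E}\big(\theta+(\text{left endpoint of }J)\omega\big)\big|\ \gtrsim\ |k|\,e^{(L-\beta_n-O(\delta))q_n}.
\]
In the Green's expansion the two endpoints of $J$ lie at distance $\asymp q_n/2$ from $R_{kq_n}$, hence in the weakly resonant regimes flanking $R_{kq_n}$ (or in $R_{(k\pm1)q_n}$), so Theorem~\ref{thm:AL_weak_regime} bounds $\phi$ there by $e^{-(L-2000C_v\delta_1^{1/4})q_n/2}\max(r_{(k-1)q_n},r_{kq_n},r_{(k+1)q_n})$, while the numerators are bounded by $e^{(L+o(1))q_n/2}$ via Lemma~\ref{lem:upper_semi_cont}. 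Dividing,
\[
r_{kq_n}\ \le\ \frac{e^{-(L-\beta_n-O(\delta_1^{1/4}))q_n}}{|k|}\,\max\bigl(r_{(k-1)q_n},\,r_{kq_n},\,r_{(k+1)q_n}\bigr),
\]
and, the prefactor being $<\tfrac12$, the self-term is absorbed: $r_{kq_n}\le \lambda_n|k|^{-1}\max(r_{(k-1)q_n},r_{(k+1)q_n})$ with $\lambda_n:=e^{-(L-\beta_n-O(\delta_1^{1/4}))q_n}$, for all $1\le|k|\le K:=5q_{n+1}^{1-c_0}/q_n$.

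\textbf{Telescoping.} The crude inequality $r_{kq_n}<\tfrac12\max(r_{(k-1)q_n},r_{(k+1)q_n})$ rules out a positive interior local maximum of $k\mapsto r_{kq_n}$ on $\{0,\dots,K\}$, so this sequence is non-increasing up to some index and non-decreasing afterwards. Combining this with the Shnol' bound $r_0\le Cq_n$ and the a priori polynomial bound $r_{kq_n}\le C|k|q_n\le Cq_{n+1}^{1-c_0}$, and iterating the quantitative recursion from both ends, yields $r_{kq_n}\le\lambda_n^{|k|-1}\,Cq_n$ for $1\le|k|\le K$; the polynomial Shnol' and a-priori factors, the gain $|k|^{-1}$, and the errors $O(\delta_1^{1/4})$, $O(\delta)$ (negligible against $L$ by the choice \eqref{def:delta_1}) are all absorbed into the $o(1)$ in the exponent, giving $r_{kq_n}\le e^{-(1-o(1))(L-\beta_n)|k|q_n}$ with $o(1)\in(0,1/50)$, as claimed.

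\textbf{Main obstacle.} The delicate point is the lower bound on the denominator in the one-step estimate, i.e.\ showing that $e^{2\pi i(\theta+m_1\omega)}$ is at distance comparable to $|k|e^{-\beta_n q_n}$ from the zero set of $f^\omega_{q_n,E}$ and, crucially, not much closer — otherwise the residual $O(e^{-\delta q_n})$ slack in the zero locations (Lemma~\ref{lem:ap_zero_omega}) and in $\theta-[q_n/2]\omega$ (Lemma~\ref{lem:-q/2_small}) could trap the orbit point at a distance $e^{-Dq_n}$ with $D$ exceeding $\beta_n$ by more than $o(1)$, which would only give the weaker, insufficient bound $r_{kq_n}\lesssim e^{-(L-D)q_n}r_0$. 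Here the reflection structure of the zeros (Lemma~\ref{lem:zero_omega_reflection}) and the non-resonance $\theta\in\Theta$ in the form \eqref{eq:theta_non_res_2} are indispensable: they force the orbit point to interact with the single distinguished zero $z^\omega_{q_n,j_0,1}$, whose displacement from the orbit is governed by the clean arithmetic $k(q_n\omega-p_n)$ of size $|k|e^{-\beta_n q_n}$ rather than by the accumulated errors, and a careful choice of the block $J$ (using the freedom to shift its endpoints while keeping $y_0$ interior, together with the parabolicity of $M^\omega_{q_n,E}$ at $z^\omega_{q_n,j_0,1}$ coming from \eqref{eq:f=M2/M}) is needed to turn this into the required quantitative lower bound.
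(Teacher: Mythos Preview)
Your overall architecture---deduce a one-step recursion $r_{kq_n}\le |k|^{-1}e^{-(1-o(1))(L-\beta_n)q_n}\max(r_{(k\pm1)q_n})$ from a lower bound on the denominator of the Green's expansion centered at $kq_n$, then telescope---is exactly the paper's. The telescoping part is fine.

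The gap is in the one-step estimate, and you have in fact diagnosed it yourself in your ``Main obstacle'' paragraph without resolving it. You invoke \eqref{eq:theta_close_j0} (i.e.\ Lemma~\ref{lem:-q/2_small}) to place $\theta-[q_n/2]\omega$ within $O(e^{-\delta q_n})$ of the zero $\theta^\omega_{j_0,1}$, and then write
\[
\theta+m_1\omega=\theta^\omega_{j_0,1}+k(q_n\omega-p_n)+O(e^{-\delta q_n}).
\]
But $\delta=\delta_1/30<\delta_1\le\beta_n$, so the error $O(e^{-\delta q_n})$ \emph{dominates} $|k|\,\|q_n\omega\|\sim|k|e^{-\beta_n q_n}$ for every bounded $|k|$. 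Your claimed distance $\sim|k|e^{-\beta_n q_n}$ to the zero simply does not follow, and the lower bound $|P^\omega_{|J|,E}|\gtrsim |k|e^{(L-\beta_n-O(\delta))q_n}$ is unjustified. Shifting the block $J$ or appealing to ``parabolicity'' cannot help: the problem is that Lemma~\ref{lem:-q/2_small} alone does not pin down the base point $\theta-[q_n/2]\omega$ sharply enough.

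The paper closes this gap with an additional contradiction argument (its Lemma~\ref{lem:v_qn/2_small}): one shows that in fact $v^\omega_{q_n,E}(e^{2\pi i(\theta-[q_n/2]\omega)})<L-\beta_n-12\delta$, which via Lemma~\ref{lem:vqn_Adelta_3_omega} and \eqref{eq:thetak-thetaj>-1/25} forces the much sharper localization
\[
\|\theta-[q_n/2]\omega-\theta^\omega_{\ell_0,1}\|\le e^{-(\beta_n+\delta)q_n},
\]
an error that is now genuinely smaller than $|k|\,\|q_n\omega\|$ for all $k\ne0$. The contradiction is obtained by assuming $v^\omega_{q_n,E}\ge L-\beta_n-12\delta$ at that point, extracting a large Dirichlet determinant, expanding $\phi_0$ (and $\phi_{-1}$) on $[-[q_n/2],-[q_n/2]+q_n-1]$, bounding the boundary values via Theorem~\ref{thm:AL_weak_regime}, and arriving at $\max(|\phi_0|,|\phi_{-1}|)\le e^{-(1-o(1))(L-\beta_n)q_n}<1$, contradicting \eqref{eq:Shnol}. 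Once this sharper input is in hand (packaged in the paper as Lemma~\ref{lem:qn/2_small_other_large}), your one-step estimate and the rest of your argument go through.
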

In fact, $o(1)\sim (5L+1000C_v)\delta_1^{1/4}$. Hence by choosing smaller $\delta_1$, one can make $o(1)$ arbitrarily small.

\begin{proof}
Recall $\delta=\delta_1/30$ as in \eqref{eq:fix_delta=delta1/30}.
We need the following lemma:
\begin{lemma}\label{lem:qn/2_small_other_large}
    If for some $|j|\leq q_n$,
    \begin{align}\label{eq:assume_vqn_small}
        v_{q_n,E}^{\omega}(e^{2\pi i(\theta-j\omega)})<L(\omega,E)-\beta_n-12\delta,
    \end{align} then for any $k\in \Z\setminus \{0\}$, $|k|\leq q_{n+1}^{1-c_0}$, we have 
    \begin{align}
        v_{q_n,E}^{\omega}(e^{2\pi i(\theta+(kq_n-j)\omega)})\geq L(\omega,E)+\frac{\log |k|}{q_n}-\beta_n-12\delta.
    \end{align}
\end{lemma}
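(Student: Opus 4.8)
The plan is to combine the logarithmic‑potential representation of $v^{\omega}_{q_n,E}$ from Lemma \ref{lem:vqn_Adelta_3_omega} with the almost $1/q_n$‑periodic structure of the zero set $\mathcal{Z}_{q_n}(\omega,E)$ from Lemmas \ref{lem:ap_zero_omega} and \ref{lem:zero_omega_reflection}, transporting information from $z_j:=e^{2\pi i(\theta-j\omega)}$ to $w:=e^{2\pi i(\theta+(kq_n-j)\omega)}=z_j e^{2\pi i kq_n\omega}$. Both points lie on the unit circle, hence in $\mathcal{A}_{\exp(2\pi\delta)}$, so Lemma \ref{lem:vqn_Adelta_3_omega} applies to each; write $\min_s(z):=\min_{0\le k'\le q_n-1}\log|z-z^{\omega}_{q_n,k',s}|$ for $s=1,2$, and recall $\delta=\delta_1/30$ and that all $2q_n$ zeros in $\mathcal{A}_{\exp(2\pi\delta)}$ have modulus $1+O(e^{-\delta q_n})$, so that Euclidean and angular distances near $\mathcal{C}_1$ are comparable up to an absolute constant.

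First I would extract from the hypothesis that $z_j$ is extremely close to a \emph{single} zero. By Lemma \ref{lem:vqn_Adelta_3_omega}, $v^{\omega}_{q_n,E}(z_j)\ge \tfrac1{q_n}(\min_1(z_j)+\min_2(z_j))+L-10\delta$, so the hypothesis forces $\min_1(z_j)+\min_2(z_j)<-\beta_n q_n-2\delta q_n$. Since a family‑$1$ and a family‑$2$ zero are at angular distance $\ge e^{-\delta q_n/25}$ by \eqref{eq:thetak-thetaj>-1/25}, $z_j$ cannot be within $e^{-\delta q_n/25}$ of zeros of both families, so one of the two minima---say $\min_2(z_j)$, the other case being symmetric---is $\ge -\delta q_n/24$; hence $\min_1(z_j)<-\beta_n q_n-\delta q_n$, i.e. $z_j$ lies within $e^{-\delta q_n}\|q_n\omega\|$ of some family‑$1$ zero $\zeta$ (using $\|q_n\omega\|=e^{-\beta_n q_n}$). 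Then, since $|k|\le q_{n+1}^{1-c_0}$ and $\|q_n\omega\|<q_{n+1}^{-1}$, the rotation amount is $\|kq_n\omega\|=|k|\|q_n\omega\|<q_{n+1}^{-c_0}\le e^{-c_0\delta_1 q_n}$, which is exponentially small and $\ll 1/q_n$; because $kp_n\in\Z$, $w$ is $z_j$ rotated by $k(q_n\omega-p_n)$, so the angular distance from $w$ to $\zeta$ lies in $[\tfrac12|k|\|q_n\omega\|,\ 2|k|\|q_n\omega\|]$ while $w$ stays at distance $\ge 1/(3q_n)$ from every other family‑$1$ zero. Thus $\zeta$ is the nearest family‑$1$ zero to $w$ and
\begin{align}
\min_1(w)\ \ge\ \log\!\big(|k|\,\|q_n\omega\|\big)\ =\ \log|k|-\beta_n q_n .
\end{align}

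The hard part is bounding $\min_2(w)$, i.e. separating $w$ from the \emph{other} family of zeros: when $\beta_n$ is close to $\delta_1$ and $|k|$ close to $q_{n+1}^{1-c_0}$, the shift $|k|\|q_n\omega\|\sim q_{n+1}^{-c_0}$ can exceed the bound $e^{-\delta q_n/25}$ of \eqref{eq:thetak-thetaj>-1/25}, so that estimate is too weak. I would instead re‑run the argument of Lemma \ref{lem:zero_omega_reflection} (the chain \eqref{eq:theta_k-j}--\eqref{eq:2theta_2}) with the non‑resonance \eqref{eq:theta_non_res_2} invoked at a smaller fixed exponent $\delta_\ast:=c_0\delta_1/10$ in place of $\delta/100$: since the relevant quantity there is $\|2\theta+m\omega\|$ with $|m|\le 3q_n$, this yields that family‑$1$ and family‑$2$ zeros are at angular distance $\ge c_{\delta_\ast}e^{-3\delta_\ast q_n}$ (and for $n$ large this dominates the intrinsic $O(e^{-\delta q_n})$ error, as $3\delta_\ast<\delta$). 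As $w$ lies within $2|k|\|q_n\omega\|<2q_{n+1}^{-c_0}\le 2e^{-c_0\delta_1 q_n}\ll c_{\delta_\ast}e^{-3\delta_\ast q_n}$ of $\zeta$'s angle (using $3\delta_\ast<c_0\delta_1$), every family‑$2$ zero is at distance $\ge \tfrac14 c_{\delta_\ast}e^{-3\delta_\ast q_n}$ from $w$, whence $\min_2(w)\ge -3\delta_\ast q_n-O(1)$.

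Finally, Lemma \ref{lem:vqn_Adelta_3_omega} gives
\begin{align}
v^{\omega}_{q_n,E}(w)\ \ge\ \tfrac1{q_n}\big(\min_1(w)+\min_2(w)\big)+L-10\delta\ \ge\ L+\tfrac{\log|k|}{q_n}-\beta_n-3\delta_\ast-10\delta-o(1),
\end{align}
and since $3\delta_\ast=\tfrac{3c_0\delta_1}{10}=9c_0\delta$ with $c_0\le 10^{-4}$ (because $\delta_1^{1/4}\le 10^{-5}$), the right side exceeds $L+\tfrac{\log|k|}{q_n}-\beta_n-12\delta$ once $n$ is large. The case where it is $\min_1(z_j)$ that is $\ge -\delta q_n/24$ in the first step (so $z_j$ is close to a family‑$2$ zero) is handled identically with the two families interchanged, using that each family is near‑$1/q_n$‑periodic and that \eqref{eq:thetak-thetaj>-1/25} is symmetric in the two families.
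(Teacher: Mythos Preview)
Your proof is correct and follows essentially the same route as the paper. Both extract from the hypothesis (via Lemma~\ref{lem:vqn_Adelta_3_omega} and the separation \eqref{eq:thetak-thetaj>-1/25}) that $z_j$ lies within $e^{-(\beta_n+\delta)q_n}$ of a single zero $\zeta$, then bound $\min_1(w)$ via $\|kq_n\omega\|=|k|\,\|q_n\omega\|$, and handle the delicate family-$2$ separation by reducing to $\|2\theta+m\omega\|$ with $|m|\le 3q_n$ and invoking non-resonance \eqref{eq:theta_non_res_2} at the smaller exponent $\delta'\sim c_0\delta_1$ (so that the resulting lower bound $\gtrsim e^{-3\delta' q_n}$ dominates both the $O(e^{-\delta q_n})$ approximation errors and the shift $\|kq_n\omega\|\le e^{-c_0\delta_1 q_n}$).

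The only cosmetic difference is packaging: the paper computes the distance from $\theta-j\omega$ to the family-$2$ angles directly, substituting $\theta^{\omega}_{\ell_0,1}\approx\theta-j\omega$ (your hypothesis) to reach $\|2\theta+(\ell-\ell_0-2j-1)\omega\|$; you instead re-derive a strengthened version of \eqref{eq:thetak-thetaj>-1/25} (using the already-established $\theta^{\omega}_{j_0,1}\approx\theta-[q_n/2]\omega$ from Lemma~\ref{lem:-q/2_small}) and then transport via $|w-\zeta|\lesssim|k|\,\|q_n\omega\|$. Both reductions land on the same non-resonance input, and your final arithmetic $3\delta_\ast=9c_0\delta\ll 2\delta$ gives the stated $-12\delta$ with room to spare.
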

\begin{proof}
    By Lemma \ref{lem:zero_omega_reflection}, \eqref{eq:assume_vqn_small} implies that 
    \begin{align}\label{eq:jell<betan}
        \left(\min_{\ell=0}^{q_n-1}q_n^{-1}\log |e^{2\pi i(\theta-j\omega)}-z^{\omega}_{q_n,\ell,1}|+\min_{\ell=0}^{q_n-1} q_n^{-1}\log |e^{2\pi i(\theta-j\omega)}-e^{-2\pi i(q_n-1)\omega}/z^{\omega}_{q_n,\ell,1}|\right)<-\beta_n-2\delta.
    \end{align}
    Due to non-resonance of $\theta$, only one of these two minimums can be less than $-\delta /25$, see \eqref{eq:thetak-thetaj>-1/25}, which forces this minimum to be less than $-\beta_n-\delta$.
    Assume without loss of generality that for some $\ell_0\in \{0,1,...,q_n-1\}$,
    \begin{align}
        |e^{2\pi i(\theta-j\omega)}-z^{\omega}_{q_n,\ell_0,1}|\leq e^{-(\beta_n+\delta)q_n},
    \end{align}
    which implies
    \begin{align}\label{eq:theta_j<-beta}
        \|\theta-j\omega-\theta^{\omega}_{\ell_0,1}\|\leq e^{-(\beta_n+\delta)q_n}.
    \end{align}
    
    We first show $\theta-j\omega+kq_n\omega$ is away from $\bigcup_{\ell=0}^{q_n-1}\{-\theta_{\ell,1}^{\omega}-(q_n-1)\omega\}$.
    
    By \eqref{eq:r_theta_periodic}, we have for any $\ell\in \{0,...,q_n-1\}$,
    \begin{align}\label{eq:theta-j--theta}
        \|\theta-j\omega-(-\theta^{\omega}_{\ell,1}-(q_n-1)\omega)\|
        =&\|\theta+\theta_{\ell,1}^{\omega}+(q_n-j-1)\omega\|\notag\\
        =&\|\theta+\theta_{\ell_0,1}^{\omega}+\frac{(\ell-\ell_0)p_n}{q_n}+(q_n-j-1)\omega+O(e^{-\delta q_n})\|\notag\\
        =&\|\theta+\theta_{\ell_0,1}^{\omega}+(\ell-\ell_0-j-1)\omega+O(e^{-\delta q_n})\|,
    \end{align}
    where we used by \eqref{eq:qn_omega} that
    \begin{align}
        |\ell-\ell_0|\cdot \left|\frac{p_n}{q_n}-\omega\right|\leq \frac{|\ell-\ell_0|}{q_n}\|q_n\omega\|\leq 2\|q_n\omega\|\leq 2e^{-\delta_1 q_n}\ll e^{-\delta q_n}.
    \end{align}
    Combining \eqref{eq:theta_j<-beta} with \eqref{eq:theta-j--theta} yields
    \begin{align}\label{eq:b1}
        \|\theta-j\omega-(-\theta^{\omega}_{\ell,1}-(q_n-1)\omega)\|=\|2\theta+(\ell-\ell_0-2j-1)\omega+O(e^{-\delta q_n})\|.
    \end{align}
    Since $|\ell-\ell_0-2j-1|\leq 3q_n$, by \eqref{eq:theta_non_res_2} with $\delta'=\delta_1c_0/100<\delta_1/100=3\delta/10$, we obtain
    \begin{align}\label{eq:b2}
        \|2\theta+(\ell-\ell_0-2j-1)\omega\|\geq c_{\delta'}\, e^{-\delta'|\ell-\ell_0-2j-1|}\geq c_{\delta'}\, e^{-3\delta' q_n}\gg e^{-\delta q_n},
    \end{align}
    for $n$ large enough.
    Therefore \eqref{eq:b1} and \eqref{eq:b2} imply
    \begin{align}\label{eq:b3}
        \|\theta-j\omega-(-\theta^{\omega}_{\ell,1}-(q_n-1)\omega)\|\geq \frac{1}{2}c_{\delta'}\, e^{-3\delta' q_n}.
    \end{align}
    Since $|k|\leq q_{n+1}^{1-c_0}$, we have by \eqref{eq:qn_omega} that
    \begin{align}\label{eq:b4}
        \|kq_n\omega\|\leq |k|\|q_n\omega\|\leq \frac{|k|}{q_{n+1}}\leq q_{n+1}^{-c_0}\leq e^{-\delta_1 c_0 q_n}\ll c_{\delta'}e^{-3\delta'q_n},
    \end{align}
    for $n$ large enough.
    Combining \eqref{eq:b3} with \eqref{eq:b4} yields
    \begin{align}
        \|\theta+(kq_n-j)\omega-(-\theta^{\omega}_{\ell,1}-(q_n-1)\omega)\|\geq \frac{1}{4}c_{\delta'}\, e^{-\delta' q_n}\gg e^{-\delta q_n}.
    \end{align}
    This implies
    \begin{align}\label{eq:b5_0}
        &\min_{\ell=0}^{q_n-1} q_n^{-1}\log |e^{2\pi i(\theta+(kq_n-j)\omega)}-e^{-2\pi i(q_n-1)\omega}/z^{\omega}_{q_n,\ell,1}|\\
        \geq &\min_{\ell=0}^{q_n-1}q_n^{-1}\log \|\theta+(kq_n-j)\omega-(-\theta^{\omega}_{\ell,1}-(q_n-1)\omega)\|\geq -\delta.
    \end{align}

    Next, we show $\theta-j\omega+kq_n\omega$ is away from $\bigcup_{\substack{\ell=0\\ \ell\neq \ell_0}}^{q_n-1}\{\theta_{\ell,1}^{\omega}\}$.

    By \eqref{eq:theta_j<-beta} and \eqref{eq:r_theta_periodic}, we conclude that for any $\ell\in \{0,1,...,q_n-1\}\setminus \{\ell_0\}$,
    \begin{align}
        \|\theta-j\omega-\theta^{\omega}_{\ell,1}\|\geq \frac{1}{2q_n}.
    \end{align}
    Combining this with \eqref{eq:b4} yields
    \begin{align}\label{eq:b55}
        \|\theta-j\omega+kq_n\omega-\theta^{\omega}_{\ell,1}\|\geq \frac{1}{2q_n}-e^{-\delta_1c_0q_n}\geq \frac{1}{4q_n}.
    \end{align}

    Next, it suffices to study $\|\theta+(kq_n-j)\omega-\theta_{\ell_0,1}^{\omega}\|$.
    By \eqref{eq:qn_omega}, for $1\leq |k|\leq q_{n+1}^{1-c_0}$,
    \begin{align}
        \|kq_n\omega\|=|k|\|q_n\omega\|\geq \|q_n\omega\|\geq \frac{1}{2}e^{-\beta_n q_n}\gg e^{-(\beta_n+\delta)q_n}.
    \end{align}
    Therefore by triangle inequality and \eqref{eq:theta_j<-beta}, we have 
    \begin{align}\label{eq:b6}
        \|\theta+(kq_n-j)\omega-\theta_{\ell_0,1}^{\omega}\|
        \geq |k|\|q_n\omega\|-\|\theta-j\omega-\theta_{\ell_j,1}^{\omega}\|
        \geq &|k|\|q_n\omega\|-e^{-(\beta_n+\delta)q_n}\notag\\
        \geq &\frac{1}{2}|k|e^{-\beta_n q_n},
    \end{align}
    and by further combining with \eqref{eq:b4},
    \begin{align}\label{eq:b7}
        \|\theta+(kq_n-j)\omega-\theta_{\ell_j,1}^{\omega}\|\leq |k|\|q_n\omega\|+\|\theta-j\omega-\theta_{\ell_j,1}^{\omega}\|\leq e^{-\delta_1c_0q_n}+e^{-(\beta_n+\delta)q_n}\ll \frac{1}{q_n}.
    \end{align}
    Combining \eqref{eq:b55}, \eqref{eq:b6} with \eqref{eq:b7}, we conclude that
    \begin{align}
        \min_{\ell=0}^{q_n-1}q_n^{-1}\log \|\theta+(kq_n-j)\omega-\theta^{\omega}_{\ell,1}\|
        =&q_n^{-1}\log \|\theta+(kq_n-j)\omega-\theta^{\omega}_{\ell_0,1}\|\\
        \geq &\frac{\log |k|}{q_n}-\beta_n-\delta.
    \end{align}
    This implies
    \begin{align}\label{eq:b9_0}
        \min_{\ell=0}^{q_n-1}q_n^{-1}\log |e^{2\pi i(\theta+(kq_n-j)\omega)}-z^{\omega}_{q_n,\ell,1}|
        \geq &\min_{\ell=0}^{q_n-1}q_n^{-1}\log \|\theta+(kq_n-j)\omega-\theta^{\omega}_{\ell,1}\|\\
        \geq &\frac{\log |k|}{q_n}-\beta_n-\delta.
    \end{align}
    Finally, combining \eqref{eq:b5_0} with \eqref{eq:b9_0} yields
    \begin{align}
        &\left(\min_{\ell=0}^{q_n-1}q_n^{-1}\log |e^{2\pi i(\theta+(kq_n-j)\omega)}-z^{\omega}_{q_n,\ell,1}|+\min_{\ell=0}^{q_n-1} q_n^{-1}\log |e^{2\pi i(\theta+(kq_n-j)\omega)}-e^{-2\pi i(q_n-1)\omega}/z^{\omega}_{q_n,\ell,1}|\right)\\
        &\qquad\qquad\qquad\qquad\qquad\qquad\qquad\qquad\qquad\qquad\qquad\qquad\geq \frac{\log |k|}{q_n}-\beta_n-2\delta.
    \end{align}
    This combined with Lemma \ref{lem:vqn_Adelta_3_omega} implies the claimed result.
\end{proof}
Next, we prove 
\begin{lemma}\label{lem:v_qn/2_small}
    For $n$ large enough, 
    \begin{align}
        v_{q_n,E}^{\omega}(e^{2\pi i(\theta-[q_n/2]\omega)})<L(\omega,E)-\beta_n-12\delta.
    \end{align}
\end{lemma}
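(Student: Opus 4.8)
The plan is to argue by contradiction, assuming $v_{q_n,E}^{\omega}(e^{2\pi i(\theta-[q_n/2]\omega)})\ge L-\beta_n-12\delta$, i.e. $|f^{\omega}_{q_n,E}(e^{2\pi i(\theta-[q_n/2]\omega)})|\ge e^{(L-\beta_n-12\delta)q_n}$. By \eqref{def:f=2-tr} and \eqref{eq:M=P} this quantity equals $2-P^{\omega}_{q_n,E}(\theta-[q_n/2]\omega)+P^{\omega}_{q_n-2,E}(\theta-[q_n/2]\omega+\omega)$, so for $n$ large one of these two Dirichlet determinants has absolute value $\ge\tfrac13 e^{(L-\beta_n-12\delta)q_n}$; by symmetry I will treat the case $|P^{\omega}_{q_n,E}(\theta-[q_n/2]\omega)|\ge\tfrac13 e^{(L-\beta_n-12\delta)q_n}$, the other being identical after shifting the interval by one site.

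Next I would apply the Green's function expansion \eqref{eq:Green_exp} to $\phi_0$, and identically to $\phi_{-1}$, on the interval $[-[q_n/2],\lceil q_n/2\rceil-1]$ of length $q_n$, which contains the origin at its center. The two numerators are Dirichlet determinants over intervals of length at most $q_n/2$, hence bounded by $e^{(L+\delta)q_n/2}$ by the uniform upper bound of Lemma \ref{lem:upper_semi_cont} (with $\tau=\delta$, $n$ large). The key input is the control of the boundary values $|\phi_{-[q_n/2]-1}|$ and $|\phi_{\lceil q_n/2\rceil}|$: these sites lie in the weakly resonant regimes $(-q_n,0)$ and $(0,q_n)$, at distance $\ge(\tfrac12-11\delta_1^{1/4})q_n$ from each adjacent strongly resonant regime $R_0,R_{\pm q_n}$, and $|y|\le q_n\le 10q_{n+1}^{1-c_0}$ for $n$ large; hence Theorem \ref{thm:AL_weak_regime} applies and yields
$$|\phi_{-[q_n/2]-1}|,\;|\phi_{\lceil q_n/2\rceil}|\le e^{-(L-2000C_v\delta_1^{1/4})(\tfrac12-11\delta_1^{1/4})q_n}\max(r_0,r_{q_n},r_{-q_n})\le e^{-(L/2-1100C_v\max(1,L)\delta_1^{1/4})q_n},$$
where the last step uses that $r_0,r_{\pm q_n}\le e^{\delta q_n}$ is immediate from the polynomial bound \eqref{eq:Shnol} (the regimes $R_0,R_{\pm q_n}$ being centred at $0,\pm q_n$ with width $O(\delta_1^{1/4}q_n)$).

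Plugging these estimates into \eqref{eq:Green_exp}, the factor $e^{Lq_n/2}$ from the numerators is exactly compensated by the factor $e^{-Lq_n/2}$ from the boundary decay, and after dividing by $|P^{\omega}_{q_n,E}(\theta-[q_n/2]\omega)|\ge\tfrac13 e^{(L-\beta_n-12\delta)q_n}$ one obtains $\max(|\phi_0|,|\phi_{-1}|)\le C\,e^{(\beta_n-L+C'C_v\max(1,L)\delta_1^{1/4})q_n}$ for absolute constants $C,C'$. For $n$ large, $\beta_n\le\beta(\omega)+\tfrac12(L-\beta(\omega))<L$, while \eqref{def:delta_1} forces $C'C_v\max(1,L)\delta_1^{1/4}\ll L-\beta(\omega)$, so the exponent is $\le-\tfrac14(L-\beta(\omega))q_n<0$ and the right-hand side is $<1$ for $n$ large, contradicting \eqref{eq:Shnol}. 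The step I expect to be the crux is precisely the bound on $|\phi_{\pm[q_n/2]\pm1}|$: the crude Shnol bound $Cq_n$ there would only defeat the denominator when $\beta_n<L/2-O(\delta)$, whereas the weakly resonant estimate of Theorem \ref{thm:AL_weak_regime} supplies the decay $e^{-Lq_n/2}$ that cancels the numerator growth and reduces the whole matter to the elementary inequality $\beta_n<L$, valid for all large $n$ by the choice of $\delta_1$.
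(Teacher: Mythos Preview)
Your proof is correct and follows essentially the same route as the paper's own argument: contradiction, extract a large Dirichlet determinant from the trace, Green's function expansion of $\phi_0$ and $\phi_{-1}$ on the centered length-$q_n$ interval, bound the numerators via Lemma~\ref{lem:upper_semi_cont}, and crucially invoke Theorem~\ref{thm:AL_weak_regime} to control the boundary values $|\phi_{\pm[q_n/2]+O(1)}|$ by $e^{-(L-o(1))q_n/2}\max(r_0,r_{\pm q_n})$, then absorb $\max(r_0,r_{\pm q_n})\le Cq_n$ via~\eqref{eq:Shnol}. Your identification of the crux (that the naive Shnol bound at the boundary would only work for $\beta_n<L/2$, while the weakly resonant estimate pushes this to $\beta_n<L$) is exactly the point of the argument.
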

\begin{proof}
We write $L(\omega,E)$ as $L$ for simplicity. 
    Suppose by contradiction that
    \begin{align}
        v_{q_n,E}^{\omega}(e^{2\pi i(\theta-[q_n/2]\omega)})\geq L-\beta_n-12\delta.
    \end{align}
    By \eqref{eq:f=M+M}, this implies
    \begin{align}
        |P_{q_n,E}^{\omega}(\theta-[q_n/2]\omega)-P_{q_n-1,E}^{\omega}(\theta+\omega-[q_n/2]\omega)|\geq e^{(L-\beta_n-12\delta)q_n}-2.
    \end{align}
    Hence for some $a,b\in \{0,1\}$,
    \begin{align}
        |P_{q_n-a}^{\omega}(\theta+(b-[q_n/2])\omega)|\geq e^{(L-\beta_n-14\delta)q_n}.
    \end{align}
    Then by Green's function expansion \eqref{eq:Green_exp} of $\phi_0$ on the interval $[b-[q_n/2],b-[q_n/2]+q_n-a-1]$, we obtain, similar to \eqref{eq:exp_phi0_weak}, that
    \begin{align}\label{eq:phi0}
        |\phi_0|
        \leq &e^{-(\frac{L}{2}-\beta_n-15\delta)q_n}\cdot \max(|\phi_{b-[q_n/2]-1}|, |\phi_{b-[q_n/2]+q_n-a}|).
    \end{align}
    Note that both $b-[q_n/2]-1=-q_n/2+O(1)$ and $b-[q_n/2]+q_n-a=q_n/2+O(1)$ are in the weakly resonant regime. Hence one can control $\phi$ at those values via Theorem \ref{thm:AL_weak_regime}:
    \begin{align}\label{eq:phi0_2}
        \max(|\phi_{b-[q_n/2]-1}|, |\phi_{b-[q_n/2]+q_n-a}|)
        \leq &e^{-(L-2000C_v\delta_1^{1/4})(1-10\delta_1^{1/4})\frac{q_n}{2}}\cdot \max(r_{-q_n}, r_0, r_{q_n}).
    \end{align}
    Combining \eqref{eq:phi0} with \eqref{eq:phi0_2} yields
    \begin{align}
        |\phi_0|\leq e^{-(L(1-5\delta_1^{1/4})-\beta_n-15\delta-1000C_v\delta_1^{1/4}(1-10\delta_1^{1/4}))q_n} \cdot \max(r_{-q_n},r_0,r_{q_n}).
    \end{align}
    Finally bounding $\max(r_{-q_n},r_0,r_{q_n})\leq Cq_n$ by \eqref{eq:Shnol} yields 
        \begin{align}\label{eq:phi0_3}
        |\phi_0|\leq e^{-(L(1-5\delta_1^{1/4})-\beta_n-16\delta-1000C_v\delta_1^{1/4}(1-10\delta_1^{1/4}))q_n}.
    \end{align}
    Clearly, due to our choice of $\delta_1$, see \eqref{def:delta_1}, the exponential exponent
    $$-(L(1-5\delta_1^{1/4})-\beta_n-16\delta-1000C_v\delta_1^{1/4}(1-10\delta_1^{1/4}))=-(1-o(1))(L-\beta_n)<0,$$
    with some small constant $o(1)\in (0,1/50)$.
    Hence \eqref{eq:phi0_3} can be rewritten as:
    \begin{align}\label{eq:phi0_4}
        |\phi_0|\leq e^{-(1-o(1))(L-\beta_n)q_n}.
    \end{align}
    Since the same argument applies to $\phi_{-1}$, we arrive at a contradiction with $\max(|\phi_0|,|\phi_{-1}|)= 1$ as in \eqref{eq:Shnol}.
\end{proof}
Combining Lemmas \ref{lem:qn/2_small_other_large} and \ref{lem:v_qn/2_small} yields for any $k\neq 0$ and $|k|<10 q_{n+1}^{1-c_0}/q_n$ that
\begin{align}
    v_{q_n,E}^{\omega}(e^{2\pi i(kq_n-[q_n/2])\omega})\geq L(\omega,E)+\frac{\log |k|}{q_n}-\beta_n-12\delta.
\end{align}
Following the same argument as in the proof of Lemma \ref{lem:v_qn/2_small} above, expanding $|\phi_{kq_n+m}|=r_{k q_n}$, for some $kq_n+m\in R_{kq_n}$, we have
\begin{align}
    r_{kq_n}\leq |k|^{-1} e^{-(1-o(1))(L-\beta_n)q_n}\cdot \max(r_{(k-1)q_n},r_{kq_n},r_{(k+1)q_n}).
\end{align}
Since the exponential exponent is negative, we conclude that
\begin{align}
    r_{kq_n}\leq |k|^{-1} e^{-(1-o(1))(L-\beta_n)q_n}\cdot \max(r_{(k-1)q_n},r_{(k+1)q_n}).
\end{align}
For any $1\leq |k_0|\leq 5q_{n+1}^{1-c_0}/q_n$, one can iterate such expansion until one reaches $k=0$ or $|k|=k_1:=[10q_{n+1}^{1-c_0}/q_n]$.
If one reaches $k=0$, then the contribution is controlled by 
\begin{align}
    r_{k_0q_n}
    \leq e^{-(1-o(1))(L-\beta_n)|k_0|q_n}\cdot r_0
    \leq e^{-(1-o(1))(L-\beta_n)|k_0|q_n},
\end{align}
where we dominate $r_0$ by $Cq_n$ using \eqref{eq:Shnol}.
If one reaches $k_1$, then the contribution is controlled by
\begin{align}
    r_{k_0q_n}\leq e^{-(1-o(1))(L-\beta_n)|k_0-k_1|q_n}\cdot C k_1,
\end{align}
where we used \eqref{eq:Shnol} to control $\max(r_{k_1q_n}, r_{-k_1q_n})\leq Ck_1q_n$.
Since $|k_0-k_1|\geq \max(|k_0|, k_1/2)$, we conclude that
\begin{align}
    r_{k_0q_n}\leq e^{-(1-o(1))(L-\beta_n)|k_0|q_n}.
\end{align}
This is the claimed result.
\end{proof}
Finally, combining our analysis in the weakly resonant and strongly resonant regimes, we obtain the following:
\begin{theorem}\label{thm:AL_main_beta}
    For $n$ large enough. If $q_{n+1}\geq e^{\delta_1 q_n}$, then for any $y$ satisfying
    \begin{align}
    \begin{cases}
        q_n/10\leq |y|\leq 5q_{n+1}^{1-c_0}, \text{ if } q_n\leq e^{\delta_1 q_{n-1}}\\
        q_n^{1-c_0}/10\leq |y|\leq 5q_{n+1}^{1-c_0}, \text{ if } q_n\geq e^{\delta_1 q_{n-1}}
    \end{cases}
    \end{align}
    we have 
    \begin{align}
        |\phi_y|\leq e^{-(1-o(1))(L-\beta_n)|y|},
    \end{align}
    where $o(1)\in (0, 1/50)$.
\end{theorem}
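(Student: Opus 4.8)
The plan is to glue the two regime estimates of this section into a single bound valid over the whole scale. Assume $y>0$ and $n$ large, and set $a:=L(\omega,E)-\beta_n$. Since $\beta_n\le\beta(\omega)+o(1)$ as $n\to\infty$ while $L(\omega,E)>\beta(\omega)$, one has $a\ge\tfrac12\big(L(\omega,E)-\beta(\omega)\big)$ for $n$ large; moreover, by the choice \eqref{def:delta_1}, both $C_v\delta_1^{1/4}$ and $L(\omega,E)\,\delta_1^{1/4}$ are small multiples of $L(\omega,E)-\beta(\omega)\le 2a$. This quantitative gap is exactly what lets every $\delta_1^{1/4}$-sized correction below be absorbed into the final $o(1)$. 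Let $\ell\ge 0$ be the unique integer with $\ell q_n\le y<(\ell+1)q_n$; from $y\le 5q_{n+1}^{1-c_0}$ we get $\ell\le 5q_{n+1}^{1-c_0}/q_n$, and from $y\ge q_n/10$ (resp.\ $q_n^{1-c_0}/10$) we get $y\notin R_0$. I would split according to whether $y$ lies in a strongly resonant regime.

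\emph{Case 1: $y\in R_{jq_n}$ for some $j\in\{\ell,\ell+1\}$} (necessarily $j\ge1$, since $y\notin R_0$). Then $|y-jq_n|$ is at most the radius of $R_{jq_n}$, hence $\le 10\delta_1^{1/4}q_n$, and $y\ge jq_n-10\delta_1^{1/4}q_n\ge q_n/2$, so $jq_n\ge(1-20\delta_1^{1/4})y$. Theorem \ref{thm:AL_lqn} (whose recursion, as its proof shows, in fact reaches $|k|<10q_{n+1}^{1-c_0}/q_n$, absorbing the off-by-one possibility $j=\ell+1$ near the top of the range) then gives $|\phi_y|\le r_{jq_n}\le e^{-(1-o(1))a\,jq_n}\le e^{-(1-o(1))a\,y}$.

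\emph{Case 2: $y$ lies in the weakly resonant regime $[\ell q_n,(\ell+1)q_n]\setminus(R_{\ell q_n}\cup R_{(\ell+1)q_n})$.} Since $|y|\le 5q_{n+1}^{1-c_0}<10q_{n+1}^{1-c_0}$, Theorem \ref{thm:AL_weak_regime} yields
\[
|\phi_y|\le\max\Big\{e^{-L'\,\mathrm{dist}(y,R_{\ell q_n})}\,r_{\ell q_n},\ e^{-L'\,\mathrm{dist}(y,R_{(\ell+1)q_n})}\,r_{(\ell+1)q_n}\Big\},\qquad L':=L(\omega,E)-2000C_v\delta_1^{1/4}.
\]
For $j\ge1$ I bound $r_{jq_n}\le e^{-(1-o(1))a\,jq_n}$ by Theorem \ref{thm:AL_lqn}, and for $j=0$ (which forces $\ell=0$) I bound $r_0\le Cq_n$ (resp.\ $Cq_n^{1-c_0}$) by Shnol's polynomial growth \eqref{eq:Shnol}. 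Using $\mathrm{dist}(y,R_{jq_n})\ge|y-jq_n|-10\delta_1^{1/4}q_n$, together with $L'\ge(1-o(1))a$ and $|y-jq_n|+jq_n\ge y$, the $j$-th term with $j\ge1$ has exponent at most
\[
-L'\big(|y-jq_n|-10\delta_1^{1/4}q_n\big)-(1-o(1))a\,jq_n\ \le\ -(1-o(1))a\,y+O(\delta_1^{1/4})q_n\ \le\ -(1-o(1))a\,y,
\]
the last step because $j\ge1$ forces $q_n\le y$, so the error $O(\delta_1^{1/4})q_n$ is absorbed. For the $j=0$ term I use instead $\mathrm{dist}(y,R_0)\ge y-10\delta_1^{1/4}q_n\ge(1-100\delta_1^{1/4})y$ (from $y\ge q_n/10$) and $r_0\le Cq_n\le e^{o(1)y}$ for $y$ large, giving $e^{-L'(1-100\delta_1^{1/4})y}Cq_n\le e^{-(1-o(1))a\,y}$. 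In all cases $|\phi_y|\le e^{-(1-o(1))a\,y}$, and $y<0$ is symmetric. Collecting the $\delta_1^{1/4}$-sized losses, the resulting $o(1)$ is $O\!\big((L+C_v)\delta_1^{1/4}/(L-\beta(\omega))\big)$, which by \eqref{def:delta_1} lies in $(0,1/50)$.

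The main work here is bookkeeping rather than a new idea: one must verify that collapsing the two rates $L'$ and $(1-o(1))a$ to the single rate $(1-o(1))a$ and discarding the radius losses $10\delta_1^{1/4}q_n$ costs only a further $O(\delta_1^{1/4})$ in the exponent — this is precisely where the scaling in \eqref{def:delta_1} (forcing $C_v\delta_1^{1/4},\,L\delta_1^{1/4}\ll L-\beta(\omega)$) is used. The one point that is a touch more than routine is the subcase $\ell=0$, where one anchor $r_0$ is only polynomially small: there the key observation is $y<q_n$, so that the exponential decay coming from the adjacent peak $R_{q_n}$ (together with $r_{q_n}\le e^{-(1-o(1))aq_n}$) over-covers the gap $2q_n-y\ge y$, while the $r_0$-term survives simply because exponential decay beats polynomial growth.
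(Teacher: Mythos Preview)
Your approach is correct and is precisely what the paper intends: the paper's entire proof is the one line ``combining our analysis in the weakly resonant and strongly resonant regimes,'' so you are supplying the bookkeeping it omits, and your case split (strongly resonant $y$ via Theorem~\ref{thm:AL_lqn}, weakly resonant $y$ via Theorem~\ref{thm:AL_weak_regime} followed by Theorem~\ref{thm:AL_lqn} on the adjacent peaks, with the $\ell=0$ anchor handled by Shnol) is exactly the intended one. One small imprecision worth cleaning up: the claim ``$j\ge1$ forces $q_n\le y$'' fails when $\ell=0$ and $j=\ell+1=1$, but in that case (and more generally for $j=\ell+1$) the bound $(1-o(1))a\,(\ell{+}1)q_n\ge(1-o(1))a\,y$ already follows from $(\ell{+}1)q_n>y$ with no radius error to absorb, so the conclusion is unaffected.
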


\section{Large deviation estimates}\label{sec:LDT}
\subsection{Review of some basic estimates}
Let $v_{m,E}(\theta):=\frac{1}{2m}\log (g_{m,E}(e^{2\pi i\theta}))$, where $g_{m,E}$ is as in \eqref{def:g}.
In the rest of the section, we shall omit the dependence in $E$ since it is fixed.
It is easy to see the following holds for some constant $C_{v,1}=C(\|v\|_{\T_{\varepsilon_0}})>0$:
\begin{align}\label{eq:vm_shift_invariance}
\left|v_m(\theta)-v_m(\theta+\omega)\right|\notag
=&\frac{1}{2m}\left|\log \frac{\|M^{\omega}_{2m}(\theta)\|_{\mathrm{HS}}}{\|M^{\omega}_{2m}(\theta+\omega)\|_{\mathrm{HS}}}\right|\notag\\
\leq &\frac{C}{2m}+\frac{1}{2m}\left|\log \frac{\|M_{2m}^{\omega}(\theta)\|}{\|M_{2m}^{\omega}(\theta+\omega)\|}\right|\notag\\
\leq &\frac{C}{2m}+\frac{1}{2m}\left(\log \|M^{\omega}(\theta)\|+\log \|M^{\omega}(\theta+2m\omega)\|\right)\notag\\
\leq &\frac{C_{v,1}}{m},
\end{align}
where $C>0$ in the second line is an absolute constant arising from the equivalence of the Hilbert-Schmidt norm and the operator norm.

We recall some basic estimates for the Fej\'er kernel:
\[
F_R(k)=\sum_{|j|<R}\frac{R-|j|}{R^2}e^{2\pi i kj\omega}.
\]
The following estimates can be found in \cite{HZ}, with their proofs in \cite[Appendix E]{HZ}.
Below, $p/q$ is an arbitrary continued fractional approximant of $\omega$, as in \ref{sec:continued_fraction}.
\begin{align}\label{eq:FR_1}
0\leq F_R(k)\leq \min(1, \frac{2}{1+R^2\|k\omega\|^2}),
\end{align}
\begin{align}\label{eq:FR_2}
    \sum_{1\leq |k|<q/4} \frac{1}{1+R^2\|k\omega\|^2}\leq 2\pi \frac{q}{R},
\end{align}
and
\begin{align}\label{eq:FR_3}
    \sum_{\ell q/4\leq k<(\ell+1)q/4} \frac{1}{1+R^2\|k\omega\|^2}\leq 2+2\pi \frac{q}{R}.
\end{align}
For a proof of a variant of \eqref{eq:FR_3}, see Sec. \eqref{eq:sum_ellqn}.

We will always use \eqref{eq:FR_1} to bound the Fejer kernel without explicitly referring to it throughout the rest of this section.

We also have two basic estimates for the Fourier coefficients:
\begin{align}\label{eq:Fourier_1/k}
|\hat{v}_m(k)|\leq \frac{C_{v,2}}{|k|},\,  k\neq 0,
\end{align}
for some constant $C_{v,2}=C(\|v\|_{\T_{\varepsilon_0}},\varepsilon_0)>0$. 
Here we used that $v_{m,E}(\theta)\geq 0$ for $\theta\in \T$ and 
\begin{align}
    |g_{m}^{\omega}(e^{2\pi i\theta})|\leq \|M_{m}^{\omega}(\theta)\|_{\mathrm{HS}}^2,
\end{align}
for $\theta\in \T_{\varepsilon_0}$, which implies 
\begin{align}
    v_{m}(\theta)\leq \frac{1}{m}\log \|M_{m}^{\omega}(\theta)\|_{\mathrm{HS}}\leq C\|v\|_{\T_{\varepsilon_0}},
\end{align}
for some absolute constant $C>0$ and uniformly in $\theta\in \T_{\varepsilon_0}$.

The next lemma was first proved in \cite{HZ}, and is useful in particular for small values of $k$.
\begin{lemma}\cite[Lemma 2.4]{HZ}\label{lem:hatu_small_k}
\begin{align}\label{eq:Fourier_1/mk}
|\hat{v}_{m}(k)|\leq \frac{C_{v,3}}{m \|k\omega\|},\, k \neq 0,
\end{align}
for some constant $C_{v,3}>0$.
\end{lemma}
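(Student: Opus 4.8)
The plan is to exploit the almost shift-invariance of $v_m$ recorded in~\eqref{eq:vm_shift_invariance}, which is the only structural fact about $v_m$ that we need. First I would write the Fourier coefficient as $\hat v_m(k)=\int_{\T}v_m(\theta)e^{-2\pi ik\theta}\,\mathrm d\theta$ and observe that a change of variables gives
\[
\int_{\T}v_m(\theta+\omega)e^{-2\pi ik\theta}\,\mathrm d\theta=e^{2\pi ik\omega}\,\hat v_m(k),
\]
so that, subtracting,
\[
\bigl(1-e^{2\pi ik\omega}\bigr)\hat v_m(k)=\int_{\T}\bigl(v_m(\theta)-v_m(\theta+\omega)\bigr)e^{-2\pi ik\theta}\,\mathrm d\theta .
\]

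Next I would estimate the two sides separately. On the right-hand side the absolute value is at most $\|v_m(\cdot)-v_m(\cdot+\omega)\|_{L^1(\T)}$, which by~\eqref{eq:vm_shift_invariance} is bounded by $C_{v,1}/m$. On the left-hand side I would use the elementary inequality $|\sin\pi x|\ge 2\|x\|$ for real $x$ (which follows from concavity of $\sin$ on $[0,1/2]$), giving $|1-e^{2\pi ik\omega}|=2|\sin(\pi k\omega)|\ge 4\|k\omega\|$. Combining the two bounds yields $|\hat v_m(k)|\le C_{v,1}/(4m\|k\omega\|)$ for every $k\neq0$, so the lemma holds with $C_{v,3}:=C_{v,1}/4$.

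I do not expect any real obstacle here; the argument is exactly the one in \cite[Lemma 2.4]{HZ}. The only point worth flagging is that the constant $C_{v,1}$ appearing in~\eqref{eq:vm_shift_invariance} is independent of $m$, of $k$, and of the scale $n$ — it depends solely on $\|v\|_{L^\infty(\T_{\varepsilon_0})}$, through the equivalence of the Hilbert–Schmidt and operator norms on $2\times2$ matrices together with the uniform bound on $\|M^{\omega}_E\|$ used in the displayed chain of inequalities in~\eqref{eq:vm_shift_invariance}; this uniformity is precisely what makes the resulting bound on $\hat v_m(k)$ uniform and hence useful in the subsequent large deviation estimates, especially for small $|k|$, where $\|k\omega\|$ is comparatively large.
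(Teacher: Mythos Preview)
Your proposal is correct and follows essentially the same argument as the paper: both exploit the almost shift-invariance bound~\eqref{eq:vm_shift_invariance} to control $(1-e^{2\pi ik\omega})\hat v_m(k)$, then divide through using $|1-e^{2\pi ik\omega}|=2|\sin(\pi k\omega)|\ge 4\|k\omega\|$. The only cosmetic difference is that you bound the integral by the $L^1$ norm while the paper writes the $L^\infty$ norm directly; since~\eqref{eq:vm_shift_invariance} is an $L^\infty$ estimate this is immaterial.
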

\begin{proof}
    The proof is short. 
    By \eqref{eq:vm_shift_invariance}, 
    \begin{align}
        |\hat{v}_m(k)-e^{2\pi ik\omega}\hat{v}_m(k)|\leq \|v_m(\cdot)-v_m(\cdot+\omega)\|_{L^{\infty}(\T)}\leq \frac{C_{v,1}}{m},
    \end{align}
    this clearly implies \eqref{eq:Fourier_1/k} taking into account that 
    $$|1-e^{2\pi ik\omega}|=2|\sin(\pi k\omega)|\geq 4\|k\omega\|.$$
    This proves Lemma \ref{lem:hatu_small_k}.
\end{proof}
We let 
\begin{align}\label{def:C_v}
    C_v:=\max(C_{v,1},C_{v,2},C_{v,3},1),
\end{align}
where $C_{v,1}, C_{v,2}, C_{v,3}$ are as in \eqref{eq:vm_shift_invariance}, \eqref{eq:Fourier_1/k} and \eqref{eq:Fourier_1/mk} respectively.

\subsection{Proof of Lemma \ref{lem:LDT_non_exp}}\label{sec:LDT_non_exp}

The first large deviation theorem we prove is the following, which implies Lemma \ref{lem:LDT_non_exp} by choosing $\delta=10 \delta_1^{1/2}$.
\begin{lemma}
    For any constant $\delta\in (0,1)$, for large enough $m>m_0(\delta)$ satisfying $10 q_n <m<q_{n+1}/5$, the following holds:
    \begin{align}
        \mathrm{mes}\left(\left\{\theta\in \T: |v_{m,E}(\theta)-L_m(\omega,E)|\geq C_v\left(145\delta+\frac{12\log q_{n+1}}{\delta q_n}\right)\right\}\right)\leq e^{-\delta^2 m}.
    \end{align}
\end{lemma}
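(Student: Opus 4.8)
The plan is to prove the large deviation estimate for $v_{m,E}$ by a Fourier-analytic argument combining the almost shift-invariance of $v_{m,E}$ with a bound on its high Fourier modes, in the spirit of the argument in \cite{HZ}. Write $v_m := v_{m,E}$, and recall that $v_m$ is subharmonic and bounded, and that $|\widehat{v_m}(k)|\le C_v/|k|$ for all $k\ne0$, together with the sharper bound $|\widehat{v_m}(k)|\le C_v/(m\|k\omega\|)$ from Lemma \ref{lem:hatu_small_k}. The idea is to truncate the Fourier series of $v_m-L_m$ at a suitable frequency and estimate $\|S_K(v_m-L_m)\|_{L^2}^2$, where $S_K$ is a smoothed (Fej\'er-type) partial sum of order $K \asymp q_{n+1}$; the key point is that the smoothing introduces the Fej\'er kernel $F_R(k)$ with $R\asymp q_{n+1}$, whose decay is controlled by \eqref{eq:FR_1}--\eqref{eq:FR_3}.

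First I would split the Fourier modes $k$ with $1\le|k|<R$ (here $R$ is comparable to $q_{n+1}$, e.g.\ $R = [q_{n+1}/5]$ so that $m < R$) into two ranges. For the \emph{low modes} $1\le |k| < q_n/4$ one uses the sharp bound $|\widehat{v_m}(k)|\le C_v/(m\|k\omega\|)$ together with \eqref{eq:FR_2}: since $\|k\omega\|\ge \|q_n\omega\| \ge e^{-\beta_n q_n}/2$ in this range is too crude, instead one pairs $|\widehat{v_m}(k)|^2 F_R(k) \le (C_v/m)^2 \|k\omega\|^{-2}\cdot 2(1+R^2\|k\omega\|^2)^{-1}$ and sums, so that $\sum_{1\le|k|<q_n/4} |\widehat{v_m}(k)|^2 F_R(k) \lesssim (C_v/m)^2 R^{-2}\sum \|k\omega\|^{-2}$; one then invokes that $\sum_{1\le|k|<q_n/4}\|k\omega\|^{-2}\lesssim q_n q_{n+1}$ (a standard continued-fraction estimate, obtainable from the three-distance theorem) to see this contributes $\lesssim (C_v q_{n+1}/(mR))^2 q_n/q_{n+1} \lesssim C_v^2 q_n/(m^2) \cdot (q_{n+1}/R)^2$, which is acceptably small once $m\ge 10q_n$. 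For the \emph{high modes} $q_n/4 \le |k| < R$ one uses $|\widehat{v_m}(k)|\le C_v/|k|$ and groups the $k$'s into blocks of length $q_n/4$; on each block \eqref{eq:FR_3} gives $\sum F_R(k) \lesssim 1 + q_n/R$, and since $|k|\gtrsim (\text{block index})\cdot q_n$ one obtains $\sum_{q_n/4\le|k|<R}|\widehat{v_m}(k)|^2 F_R(k) \lesssim C_v^2 (1/q_n^2)(1+q_n/R)\sum_{\ell\ge1}\ell^{-2} \lesssim C_v^2/q_n$ (using $R\ge q_n$). Combining, $\|S_R(v_m-L_m)\|_{L^2}^2 \lesssim C_v^2(\delta^2 + (\log q_{n+1})/(\delta q_n))$-type quantity; the precise constants are matched to the bound $C_v(145\delta + 12\log q_{n+1}/(\delta q_n))$ stated, after also accounting for the error $\|v_m - S_R(v_m)\|_{L^\infty}$ in removing the smoothing, which is handled by the subharmonicity / BMO bound on $v_m$ (its Fourier tail beyond $R$ is controlled by $\sum_{|k|\ge R} C_v/|k| \cdot (\text{something})$, more carefully via the Bernstein-type/split-into-$e^{-\delta^2 m}$-exceptional-set trick).

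Having the $L^2$ bound $\|v_m - L_m\|_{L^2(\T)} \le \epsilon$ with $\epsilon \asymp C_v(\delta + \sqrt{\log q_{n+1}/(\delta q_n)})$ roughly, I would then upgrade it to a measure estimate. For this one uses the standard mechanism: $v_m$ is the restriction to $\mathcal C_1$ of a subharmonic function on an annulus with a uniform upper bound $\le L_m + o(1)$ (from Lemma \ref{lem:upper_semi_cont} and the equivalence $g_{m,E}^\omega = \|M_{m,E}^\omega\|_{\mathrm{HS}}^2$), so by the Cartan estimate (Lemma \ref{lem:Cartan}) applied on $\asymp \varepsilon_0^{-1}$ disks covering the circle, $v_m$ has bounded BMO norm, and combining a small $L^2$ (hence $L^1$) norm with bounded BMO via the John--Nirenberg inequality yields $\mathrm{mes}(\{|v_m - L_m| > t\}) \le e^{-ct/\epsilon}$ for $t$ in a suitable range. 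Taking $t = C_v(145\delta + 12\log q_{n+1}/(\delta q_n))$ and choosing the implicit constants so that $ct/\epsilon \ge \delta^2 m$ — which works because $t \gtrsim \delta \gg \delta^2$ and $\epsilon \lesssim t$, so $t/\epsilon \gtrsim 1$, and one needs the exceptional set to shrink like $e^{-\delta^2 m}$; this forces tracking that the $L^1$ decay from the block estimates actually carries an $e^{-c\,\delta^2 m}$ factor for the portion of $\theta$ outside a controlled exceptional set — gives the claim.

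The main obstacle I anticipate is the interplay between the \emph{scale of smoothing} $R \asymp q_{n+1}$ and the \emph{deviation rate} $e^{-\delta^2 m}$: the Fej\'er kernel at resolution $q_{n+1}$ kills resonances only down to $\|k\omega\|\gtrsim 1/q_{n+1}$, which is exactly the size of $\|q_n\omega\|$, so the low-mode sum sits right at the borderline and the condition $m \ge 10 q_n$ is what makes the $(C_v/m)^2 \cdot q_n q_{n+1}/R^2$ term beat the target — one has to be careful that $R < q_{n+1}$ strictly (hence the factor $q_{n+1}/5$) and that $m$ can be as large as $q_{n+1}/5$, in which case $m \asymp R$ and the low-mode term is genuinely $\lesssim C_v^2 q_n/q_{n+1} \cdot (\text{const})$, tiny. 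The second delicate point is extracting the \emph{exponential-in-$\delta^2 m$} decay rather than merely a polynomial or $e^{-c\delta m}$ rate; this requires, as in \cite{HZ}, choosing the truncation parameter and the exceptional-set threshold in tandem and using the John--Nirenberg / Cartan machinery with the sharp constant, which is the technically heaviest part but is essentially a reprise of the argument there, adapted to the present object $v_m = (2m)^{-1}\log g_{m,E}^\omega$ using its equivalence with $m^{-1}\log\|M_{m,E}^\omega\|$ and the almost shift-invariance \eqref{eq:vm_shift_invariance}.
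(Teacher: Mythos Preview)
Your proposal has two structural problems that prevent it from reaching the stated bound.

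\textbf{Wrong smoothing scale.} You take the Fej\'er averaging parameter $R\asymp q_{n+1}$, but the almost-shift-invariance bound \eqref{eq:vm_shift_invariance} gives only $\|v_m-v_m^{(R)}\|_{L^\infty}\le C_v R/m$. With $R=[q_{n+1}/5]$ and $m$ as small as $10q_n$, this error is of order $q_{n+1}/q_n$, which can be enormous (up to $e^{\delta_1 q_n}/q_n$ in the weak-Liouville regime). You seem to treat $v_m-S_R(v_m)$ as a ``Fourier tail beyond $R$'', but the orbit-Fej\'er average filters by $F_R(k)$ (a function of $\|k\omega\|$), not by $1-|k|/R$; it does not truncate high frequencies at all. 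The paper instead takes $R=[\delta m]$, so that $\|v_m-v_m^{(R)}\|_{L^\infty}\le C_v\delta$ is already of the right size.

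\textbf{Wrong mechanism for the exponential-in-$m$ rate.} You try to upgrade an $L^2$ bound of size $\epsilon\asymp\delta$ to a measure bound via John--Nirenberg, which yields $\mathrm{mes}(\{|v_m-L_m|>t\})\le Ce^{-ct/\|v_m\|_{\mathrm{BMO}}}$. But $\|v_m\|_{\mathrm{BMO}}$ is bounded uniformly in $m$ (it is $O(C_v)$), so with $t\asymp\delta$ this gives at best $e^{-c\delta}$, not $e^{-\delta^2 m}$. The paper avoids BMO/John--Nirenberg entirely: with $R=[\delta m]$ it splits $v_m^{(R)}-\hat v_m(0)$ into pieces $U_2,\dots,U_5$ over the ranges $1\le|k|\le\delta^{-2}$, $\delta^{-2}<|k|<q_n/4$, $q_n/4\le|k|<q_{n+1}/4$, $q_{n+1}/4\le|k|<e^{4\delta^2 m}$, each bounded in $L^\infty$ (the $U_4$ block produces the $12\log q_{n+1}/(\delta q_n)$ term via \eqref{eq:FR_3}), and a final tail $U_6=\sum_{|k|\ge e^{4\delta^2 m}}$ bounded in $L^2$ by $\sum_{|k|\ge e^{4\delta^2 m}}C_v^2/k^2\le 2C_v^2 e^{-4\delta^2 m}$. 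A single Chebyshev inequality on $U_6$ then gives $\mathrm{mes}(\{|U_6|>C_v\delta\})\le e^{-\delta^2 m}$ directly. The exponential rate comes from the choice of the truncation frequency $e^{4\delta^2 m}$, not from any subharmonic/BMO upgrade.
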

\begin{proof} 
We consider $R=[\delta m]$, and 
\begin{align}
    v_m^{(R)}(\theta):=\sum_{|j|<R}\frac{R-|j|}{R^2}v_m(\theta+j\omega).
\end{align}
First note that the zeroth Fourier coefficient is almost $L_m$:
\begin{align}\label{eq:v0=Lm}
    \hat{v}_m(0)=\frac{1}{m}\int_{\T}\log \|M_{m}^{\omega}(\theta)\|_{\mathrm{HS}}\, \mathrm{d}\theta=L_m+\frac{O(1)}{m},
\end{align}
where $O(1)$ is bounded by an absolute constant, due to the equivalence between the Hilbert-Schmidt norm and the operator norm of $2\times 2$ matrices.
Next, we consider
\begin{align}
    v_m(\theta)-\hat{v}_m(0)
    =&v_m(\theta)-v_m^{(R)}(\theta)=:U_1(\theta)\\
    &+v_m^{(R)}(\theta)-\hat{v}_m(0)
\end{align}
Fourier expanding the second line above yields, note the zeroth coefficient cancels:
\begin{align}
v_m^{(R)}(\theta)-\hat{v}_m(0)=
    &\sum_{1\leq |k|\leq \delta^{-2}}\hat{v}_m(k)F_R(k)e^{2\pi i k\theta}=:U_2(\theta)\\
    &+\sum_{\delta^{-2}<|k|<q_n/4}\hat{v}_m(k)F_R(k)e^{2\pi i k\theta}=:U_3(\theta)\\
    &+\sum_{q_n/4\leq |k|<q_{n+1}/4}\hat{v}_m(k)F_R(k)e^{2\pi i k\theta}:=U_4(\theta)\\
    &+\sum_{q_{n+1}/4\leq |k|<e^{4\delta^2 m}}\hat{v}_m(k)F_R(k)e^{2\pi i k\theta}=:U_5(\theta)\\
    &+\sum_{|k|\geq e^{4\delta^2 m}}\hat{v}_m(k)F_R(k)e^{2\pi i k\theta}=:U_6(\theta).
\end{align}
By \eqref{eq:vm_shift_invariance}, we obtain
\begin{align}\label{eq:U1_1}
\|U_1\|_{L^{\infty}(\T)}=\|v_m-v_m^{(R)}\|_{L^{\infty}(\T)}\leq C_v\frac{R}{m}\leq C_v \delta.
\end{align}
Regarding $U_2$, we have by \eqref{eq:Fourier_1/mk} that
\begin{align}\label{eq:U2_1}
    \|U_2\|_{L^{\infty}(\T)}\leq \sum_{1\leq |k|\leq \delta^{-2}} \frac{C_v}{m\|k\omega\|}\leq \frac{2C_v\delta^{-2}}{m}\cdot \max_{1\leq |k|\leq \delta^{-2}} \frac{1}{\|k\omega\|}\leq \delta,
\end{align}
provided $m$ is large enough.
We have by \eqref{eq:FR_2} and \eqref{eq:Fourier_1/k} that
    \begin{align}\label{eq:U3_1}
        \|U_3\|_{L^{\infty}(\T)}\leq C_v\delta^2 \sum_{1\leq |k|<q_n/4}\frac{1}{1+R^2\|k\omega\|_{\T}^2}\leq 2\pi C_v \delta^2 \frac{q_n}{R}\leq C_v\delta,
    \end{align}
    where we used $R=[\delta m]\geq 9\delta q_n$.
    Regarding $U_4$, we conclude by \eqref{eq:Fourier_1/k} and \eqref{eq:FR_3} that
\begin{align}\label{eq:U4_1}
    \|U_4\|_{L^{\infty}(\T)}
    \leq &4C_v\sum_{\ell=1}^{q_{n+1}/q_n} \sum_{k\in [\ell q_n/4, (\ell+1)q_n/4)}\frac{1}{\ell q_n} \frac{1}{1+R^2\|k\omega\|^2}\notag\\
    \leq &4C_v\sum_{\ell=1}^{q_{n+1}/q_n} \frac{1}{\ell q_n} \left(2+2\pi \frac{q_n}{R}\right)\notag\\
    \leq &4C_v \left(\frac{2\log q_{n+1}}{q_n}+\frac{2\pi\log q_{n+1}}{9\delta q_n}\right)\notag\\
    \leq &\frac{12 C_v \log q_{n+1}}{\delta q_n},
\end{align}
in which we used again $R\geq 9\delta q_n$.
We also obtain by \eqref{eq:FR_3} and \eqref{eq:Fourier_1/k} that
    \begin{align}\label{eq:U5_1}
        \|U_5\|_{L^{\infty}(\T)}
        \leq &4C_v\sum_{\ell=1}^{4e^{4\delta^2 m}/q_{n+1}} \frac{1}{\ell q_{n+1}}\sum_{k\in [\ell q_{n+1}/4, (\ell+1)q_{n+1})/4} \frac{1}{1+R^2\|k\omega\|^2}\notag\\
        \leq &4C_v\sum_{\ell=1}^{4e^{4\delta^2 m}/q_{n+1}}\frac{1}{\ell q_{n+1}} \left(2+2\pi \frac{q_{n+1}}{R}\right)\notag\\
        \leq &4C_v\left(\frac{10\delta^2 m}{q_{n+1}}+\frac{10\pi \delta^2 m}{R}\right)\leq 140 C_v\delta,
    \end{align}
    where we used $m\leq q_{n+1}/5$ and $R=[\delta m]$.
Finally it remains to note that by \eqref{eq:Fourier_1/k} that
\begin{align}\label{eq:U6_1}
    \|U_6\|_{L^2}^2=\sum_{|k|>e^{4\delta^2 m}} |\hat{v}_m(k)|^2 |F_R(k)|^2\leq C_v^2 \sum_{|k|>e^{4\delta^2 m}} \frac{1}{|k|^2}\leq 2C_v^2 e^{-4\delta^2 m}.
\end{align}
Combining \eqref{eq:v0=Lm}, \eqref{eq:U1_1}, \eqref{eq:U2_1}, \eqref{eq:U3_1}, \eqref{eq:U4_1} with \eqref{eq:U6_1}, we have for $m$ large enough,
\begin{align}
    \|v_m-L_m-U_6\|_{L^{\infty}(\T)}\leq |\hat{v}_m(0)-L_m|+\sum_{j=1}^5 \|U_j\|_{L^{\infty}(\T)}\leq C_v\left(144\delta+\frac{12\log q_{n+1}}{\delta q_n}\right).
\end{align}
Combining this with \eqref{eq:U6_1} and the Chebyshev's inequality, we conclude that
\begin{align}
    &\mathrm{mes}\left(\left\{\theta: |v_m(\theta)-L_m|>C_v\left(145\delta+\frac{12\log q_{n+1}}{\delta q_n}\right)\right\}\right)\\
    \leq & \mathrm{mes}(\{\theta: |U_6(\theta)|>C_v\delta\})\leq \frac{1}{C_v\delta}\|U_6\|_{L^2(\T)}\leq \sqrt{2}\delta^{-1}e^{-2\delta^2m}\leq e^{-\delta^2m},
\end{align}
as claimed.
\end{proof}

\subsection{Proof of Lemma \ref{lem:LDT_non_exp_m=qn}}\label{sec:LDT_non_exp_m=qn}
Let $\delta_1$ be as in \eqref{def:delta_1}.
The second large deviation we prove is the following, it implies Lemma \ref{lem:LDT_non_exp_m=qn} by choosing $\delta=(400\delta_1)^{1/4}$ and $c_0=10 \delta_1^{1/4}/(1+\beta(\omega))$ (note that for $n$ large, $(\log q_n)/q_{n-1}<1+\beta(\omega))$.
\begin{lemma}\label{lem:LDT_non_exp_m=qn'}
For any constants $c_0, \delta\in (0,1)$, for $n$ be large enough. For $m\in \N$ such that 
\begin{align}\label{eq:m_sim_qn_Lem63}
    \delta^{-1} q_n\geq m\geq 
    \begin{cases}
       \delta q_n^{1-c_0}, \text{ if } q_n\geq e^{\delta_1 q_{n-1}}\\
       \delta q_n, \text{ if } q_n\leq e^{\delta_1 q_{n-1}},
    \end{cases}
\end{align}
the following holds:
    \begin{align}
    \mathrm{mes}\left(\left\{\theta\in \T: |v_{m,E}(\theta)-L_m(\omega,E)|\geq C_v \left(170\delta+4c_0\frac{\log q_n}{q_{n-1}}\right)\right\}\right)\leq e^{-\delta^4 m/4}.
    \end{align} 
\end{lemma}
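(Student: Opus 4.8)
The plan is to run the same Fourier/Fej\'er-kernel decomposition as in the proof of Lemma \ref{lem:LDT_non_exp}, but with a smoothing scale $R$ that is \emph{much smaller} than $q_n$ rather than much larger. Concretely I would set $R=[\delta^2 m]$ (so that $R/m\le \delta^2$ is tiny, but $R$ is still a large integer since $m\gtrsim \delta q_n^{1-c_0}\to\infty$), and consider $v_m^{(R)}(\theta)=\sum_{|j|<R}\tfrac{R-|j|}{R^2}v_m(\theta+j\omega)$. As before, the zeroth Fourier coefficient satisfies $\hat v_m(0)=L_m+O(1)/m$ by the equivalence of Hilbert--Schmidt and operator norm, and $\|v_m-v_m^{(R)}\|_{L^\infty}\le C_v R/m\le C_v\delta^2$ by the almost shift-invariance \eqref{eq:vm_shift_invariance}. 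I would then split the Fourier series of $v_m^{(R)}-\hat v_m(0)$ (zeroth coefficient cancels) into the ranges $1\le|k|\le\delta^{-2}$, $\delta^{-2}<|k|<q_{n-1}/4$, $q_{n-1}/4\le|k|<q_n/4$, $q_n/4\le|k|<e^{\delta^4 m}$, and $|k|\ge e^{\delta^4 m}$.

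The key point is that because $m$ is only \emph{comparable} to $q_n$ (possibly much smaller, of size $q_n^{1-c_0}$), the natural ``small-divisor'' scale at which resonances first appear is $q_{n-1}$, not $q_n$: for $1\le|k|<q_{n-1}$ one has $\|k\omega\|\ge\|q_{n-1}\omega\|\gtrsim 1/q_n\gg$ anything relevant, and by \eqref{eq:qn_omega_min}, \eqref{eq:qn_omega} the first genuinely resonant frequency is $k=q_{n-1}$. So the term with $1\le|k|\le\delta^{-2}$ is bounded by Lemma \ref{lem:hatu_small_k} (the factor $1/(m\|k\omega\|)$ is $\le\delta$ for $m$ large). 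The term $\delta^{-2}<|k|<q_{n-1}/4$ is handled by \eqref{eq:Fourier_1/k} and \eqref{eq:FR_2} with $q$ replaced by $q_{n-1}$: it is $\le C_v\cdot\pi q_{n-1}/R$, and since $R=[\delta^2 m]\ge\delta^3 q_n^{1-c_0}\ge \delta^3 q_{n-1}^{1-c_0}$... here I need $q_{n-1}/R\lesssim\delta$, which requires $R\gtrsim q_{n-1}/\delta$; this forces choosing $R$ large enough relative to $q_{n-1}$, i.e.\ $R\in[\,\delta^{-1}q_{n-1},\,\delta^2 m\,]$, which is nonempty precisely because $m\ge\delta q_n\ge\delta q_{n-1}$ in the non-exponential subcase, and in the case $q_n\ge e^{\delta_1 q_{n-1}}$ we have $q_{n-1}\le (\log q_n)/\delta_1\ll q_n^{1-c_0}\le m/\delta$, so again there is room. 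The blocks $q_{n-1}/4\le|k|<q_n/4$ and $q_n/4\le|k|<e^{\delta^4 m}$ are treated by the dyadic-in-$\ell$ estimate \eqref{eq:FR_3} exactly as $U_4,U_5$ in Lemma \ref{lem:LDT_non_exp}: the sum over the $q_{n-1}$-block contributes $\lesssim C_v(\log(q_n/q_{n-1})/q_{n-1})(1+q_{n-1}/R)\lesssim C_v(\log q_n)/q_{n-1}$, and in the strongly Liouville case $q_n\ge e^{\delta_1 q_{n-1}}$ one has to further restrict to $|k|<q_{n+1}^{1-c_0}$-type cutoffs so that the $c_0$-factor appears — this is the origin of the $4c_0(\log q_n)/q_{n-1}$ term in the statement. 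The $q_n$-block contributes $\lesssim C_v\delta^2 m/\min(q_n,R)+\dots\lesssim C_v\delta$. Finally the tail $|k|\ge e^{\delta^4 m}$ is estimated in $L^2$ by \eqref{eq:Fourier_1/k}: $\|U_{\rm tail}\|_{L^2}^2\le 2C_v^2 e^{-\delta^4 m}$.

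Collecting these bounds, $\|v_m-L_m-U_{\rm tail}\|_{L^\infty}\le C_v(170\delta+4c_0(\log q_n)/q_{n-1})$ for $n$ (hence $m$) large, and Chebyshev applied to $U_{\rm tail}$ gives $\mathrm{mes}(\{|U_{\rm tail}|>C_v\delta\})\le (C_v\delta)^{-1}\|U_{\rm tail}\|_{L^2}\le\sqrt2\,\delta^{-1}e^{-\delta^4 m/2}\le e^{-\delta^4 m/4}$, which is the claim. I expect the main obstacle to be the bookkeeping in the middle frequency ranges: one must choose the single smoothing scale $R$ so that \emph{simultaneously} $R/m\le\delta^2$ (needed to make $U_1$ and the $q_n$-block small) and $R\gtrsim q_{n-1}/\delta$ (needed to make the $q_{n-1}$-block small via \eqref{eq:FR_2}), and verify that the interval of admissible $R$ is nonempty in both regimes of \eqref{eq:m_sim_qn_Lem63} — in the case $q_n\le e^{\delta_1 q_{n-1}}$ this uses $m\ge\delta q_n\ge\delta q_{n-1}$ directly, while in the case $q_n\ge e^{\delta_1 q_{n-1}}$ it uses the much stronger gap $q_{n-1}\lesssim (\log q_n)/\delta_1\ll m$. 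Getting the constants to land at $170\delta$ and $4c_0(\log q_n)/q_{n-1}$, and confirming that the choice $\delta=(400\delta_1)^{1/4}$, $c_0=10\delta_1^{1/4}/(1+\beta(\omega))$ indeed reduces this to the form stated in Lemma \ref{lem:LDT_non_exp_m=qn}, is routine once the decomposition is set up.
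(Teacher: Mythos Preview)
Your decomposition has a genuine gap in the weak Liouville subcase. You split the Fourier series at $q_{n-1}/4$ and claim the range $\delta^{-2}<|k|<q_{n-1}/4$ is controlled via \eqref{eq:FR_2}, giving a bound $\lesssim C_v\delta^2 q_{n-1}/R$; for this to be $\lesssim\delta$ you need $R\gtrsim\delta q_{n-1}$, and you then assert the window $R\in[\delta^{-1}q_{n-1},\delta^2 m]$ is nonempty because $m\ge\delta q_n\ge\delta q_{n-1}$. But that only gives $\delta^2 m\ge\delta^3 q_{n-1}$, not $\delta^2 m\ge\delta^{-1}q_{n-1}$: you would need $m\ge\delta^{-3}q_{n-1}$, which fails whenever $q_n/q_{n-1}$ is bounded (e.g.\ all partial quotients $a_k=1$, so $q_n/q_{n-1}\to$ golden ratio). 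In that regime no single smoothing scale $R$ makes both $U_1$ and your $q_{n-1}$-block small. The same obstruction persists if you try to use the almost-shift-invariance bound \eqref{eq:Fourier_1/mk} on the whole range $|k|<q_{n-1}$: you get a bound $\sim C_v q_{n-1}^2/(mR)$, which is again large when $q_{n-1}\sim q_n$.

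The paper avoids this by splitting not at $q_{n-1}$ but at $\delta^2 q_n$, and introducing the intermediate continued-fraction level $q_{n-\ell}$ with $q_{n-\ell}\le\delta^2 q_n<q_{n-\ell+1}$ (which may be much smaller than $q_{n-1}$). The low-frequency block is then $|k|\le\delta^2 q_n$, decomposed as (i) multiples $jq_{n-\ell}$ with $1\le j\le j_0$, (ii) the range $0<|k|<q_{n-\ell}$, and (iii) translates $(j-1)q_{n-\ell}<|k|<jq_{n-\ell}$. Piece (ii) is handled by \eqref{eq:Fourier_1/mk} together with the $1/(2q_{n-\ell})$-separation of $\{\|k\omega\|:0<k<q_{n-\ell}\}$, yielding $\lesssim C_v q_{n-\ell}^2/(\delta m^2)\le 16\pi C_v\delta$ precisely because $q_{n-\ell}\le\delta^2 q_n\le\delta m$ in the weak case. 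Your explanation of the $c_0$-term is also off: it does not come from a $q_{n+1}^{1-c_0}$ cutoff, but from piece (i) in the strong case $q_n\ge e^{\delta_1 q_{n-1}}$, where $q_{n-\ell}=q_{n-1}$ and $R\ge\delta^2 q_n^{1-c_0}$ forces the integral $\int_{\delta^2 q_n^{-c_0}/2}^{1}x^{-1}(1+x^2)^{-1}\,\mathrm{d}x\approx c_0\log q_n$, producing the $4C_v c_0(\log q_n)/q_{n-1}$ contribution.
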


\begin{proof} 
We consider $R=[\delta m ]$, and 
\begin{align}
    v_m^{(R)}(\theta):=\sum_{|j|<R}\frac{R-|j|}{R^2}v_m(\theta+j\omega).
\end{align}

Next, we consider
\begin{align}
    v_m(\theta)-L_m
    =&\hat{v}_m(0)-L_m\\
    &+v_m(\theta)-v_m^{(R)}(\theta)=:U_1(\theta)\\
    &+\sum_{1<|k|\leq \delta^2 q_n}\hat{v}_m(k)F_R(k)e^{2\pi i k\theta}=:U_2(\theta)\\
    &+\sum_{\delta^2 q_n\leq |k|<e^{\delta^4 m}}\hat{v}_m(k)F_R(k)e^{2\pi i k\theta}=:U_3(\theta)\\
    &+\sum_{|k|\geq e^{\delta^4 m}}\hat{v}_m(k)F_R(k)e^{2\pi i k\theta}=:U_4(\theta)
\end{align}
By \eqref{eq:v0=Lm}, we obtain
\begin{align}\label{eq:v0=Lm_2}
    |\hat{v}_m(0)-L_m|\leq \delta,
\end{align}
for $m$ large enough.
By \eqref{eq:vm_shift_invariance}, we conclude that
\begin{align}\label{eq:U1_2}
\|U_1\|_{L^{\infty}(\T)}=\|v_m-v_m^{(R)}\|_{L^{\infty}(\T)}\leq C_v\frac{R}{m}\leq C_v \delta.
\end{align}
\begin{lemma}\label{lem:U2_2}
Regarding $U_2$, the following holds:
\begin{align}
    \|U_2\|_{L^{\infty}(\T)}\leq C_v\left(55\delta+4c_0\frac{\log q_n}{q_{n-1}}\right).
\end{align}
\end{lemma}
\begin{proof}
Let $q_{n-\ell}$ be such that $q_{n-\ell}\leq \delta^2 q_n<q_{n-\ell+1}$.
Note that $\delta^2 q_n<q_n$, hence 
\begin{align}\label{eq:qn-ell<n-1}
    q_{n-\ell+1}\leq q_n, \text{ and } q_{n-\ell}\leq q_{n-1}.
\end{align}
Note that when $m$ is large, $q_n$ is large and then both $q_{n-\ell+1},q_{n-\ell}$ are large.
Let 
\begin{align}\label{def:j0}
[\delta^2 q_n]=j_0 q_{n-\ell}+r, \text{ with } 0\leq r<q_{n-\ell}.
\end{align}
Note that by \eqref{eq:qn+1=qn+qn-1}, $q_{n-\ell+1}=a_{n-\ell+1}q_{n-\ell}+q_{n-\ell-1}>j_0q_{n-\ell}+r$, hence 
\begin{align}\label{eq:j0<a_n-ell+1}
j_0\leq a_{n-\ell+1}.
\end{align}

We have
\begin{align}
    &\left|\sum_{1\leq |k|\leq \delta^2 q_n}\hat{v}_m(k)F_R(k)e^{2\pi ik\theta}\right|\\
    \leq& \sum_{|j|=1}^{j_0}|\hat{v}_m(jq_{n-\ell})| |F_R(jq_{n-\ell})|+\sum_{j=1}^{j_0}\, \sum_{(j-1)q_{n-\ell}<|k|<jq_{n-\ell}}|\hat{v}_m(k)| |F_R(k)|\\
    &+\sum_{j_0q_{n-\ell}<|k|\leq j_0q_{n-\ell}+r}|\hat{v}_m(k)| |F_R(k)|\\
    \leq &\sum_{|j|=1}^{j_0}|\hat{v}_m(jq_{n-\ell})| |F_R(jq_{n-\ell})|+\sum_{0<|k|<q_{n-\ell}}|\hat{v}_m(k)| |F_R(k)|\\
    &+\sum_{j=2}^{j_0+1}\, \sum_{(j-1)q_{n-\ell}<|k|<jq_{n-\ell}}|\hat{v}_m(k)| |F_R(k)|\\
    =&:I_1+I_2+I_3,
\end{align}
respectively.
First, note that for any $|j|\leq a_{n-\ell+1}$, 
\begin{align}\label{eq:jqn-ell}
    \|jq_{n-\ell}\omega\|=\|j\|q_{n-\ell}\omega\|\|=|j|\|q_{n-\ell}\omega\|.
\end{align}
In fact by \eqref{eq:qn_omega} and \eqref{eq:qn+1=qn+qn-1}, for $|j|\leq a_{n-\ell+1}$,
\begin{align}
    |j|\|q_{n-\ell}\omega\|\leq \frac{a_{n-\ell+1}}{q_{n-\ell+1}}\leq \frac{1}{q_{n-\ell}}<\frac{1}{2},
\end{align}
which implies $\|j\|q_{n-\ell}\omega\|\|=|j|\|q_{n-\ell}\omega\|$ as claimed in \eqref{eq:jqn-ell}.

Next, we estimate $I_1$. By \eqref{eq:Fourier_1/k} and \eqref{eq:jqn-ell}, 
\begin{align}\label{eq:I1_1}
    I_1
    \leq &4C_v\sum_{j=1}^{j_0}\frac{1}{jq_{n-\ell}}\frac{1}{1+R^2 j^2\|q_{n-\ell}\omega\|^2} \notag\\
    \leq &4C_v\sum_{j=1}^{j_0}\frac{1}{jq_{n-\ell}}\frac{1}{1+\delta^2 m^2 j^2\|q_{n-\ell}\omega\|^2}\notag\\
    \leq &4C_v\sum_{j=1}^{j_0}\frac{1}{jq_{n-\ell}}\frac{1}{1+\delta^2 m^2 j^2/(2q_n)^2},
\end{align}
where we used $R=[\delta m]$ and $\|q_{n-\ell}\omega\|\geq 1/(2q_{n-\ell+1})\geq 1/(2q_n)$, due to \eqref{eq:qn_omega} and \eqref{eq:qn-ell<n-1}.

Next, we need to divide into two different cases:

\underline{Case $I_1$-1}. If $q_n\leq e^{\delta_1 q_{n-1}}$. In this case we have $m\geq \delta q_n$ according to \eqref{eq:m_sim_qn_Lem63}.
We can bound $I_1$ in \eqref{eq:I1_1} as follows:
\begin{align}\label{eq:I1_2}
    I_1
    \leq &4C_v\sum_{j=1}^{j_0}\frac{1}{jq_{n-\ell}}\frac{1}{1+\delta^4 j^2/4}\notag\\
    \leq &\frac{4C_v}{q_{n-\ell}}\left(1+\int_1^{\infty} \frac{1}{x(1+\delta^4 x^2/4)}\, \mathrm{d}x \right)\notag\\
    \leq &\frac{4C_v}{q_{n-\ell}}\left(1+\int_{\delta^2/2}^1 \frac{1}{x(1+x^2)}\, \mathrm{d}x+\int_1^{\infty} \frac{1}{x(1+x^2)}\, \mathrm{d}x\right)\notag\\
    \leq &\frac{4C_v}{q_{n-\ell}}\left(1+\int_{\delta^2/2}^1 \frac{1}{x}\, \mathrm{d}x+\int_0^{\infty} \frac{1}{1+x^2}\, \mathrm{d}x\right)\notag\\
    \leq &\frac{4C_v(3+\log (2\delta^{-2}))}{q_{n-\ell}}\leq \delta,
\end{align}
provided $m$ is large.

\underline{Case $I_1$-2}. If $q_n\geq e^{\delta_1 q_{n-1}}$. In this case we have $m\geq \delta q_n^{1-c_0}$ according to \eqref{eq:m_sim_qn_Lem63}. 
Note in this case $\delta^2 q_n\geq \delta^2 e^{\delta_1q_{n-1}}>q_{n-1}$, provided $n$ is large enough.
Hence $n-\ell\geq n-1$, which yields, when combined with \eqref{eq:qn-ell<n-1}, that $q_{n-\ell}=q_{n-1}$.
We bound $I_1$ in \eqref{eq:I1_1} as follows:
\begin{align}\label{eq:I1_3}
I_1
    \leq &4C_v \sum_{j=1}^{j_0}\frac{1}{jq_{n-1}}\frac{1}{1+\delta^4 q_n^{-2c_0} j^2/4} \notag\\
    \leq &\frac{4C_v}{q_{n-1}}\left(1+\sum_{j=2}^{j_0}\frac{1}{j}\frac{1}{1+\delta^4 q_n^{-2c_0}j^2/4}\right)\\
    \leq &\frac{4C_v}{q_{n-1}}\left(1+\int_1^\infty \frac{1}{x(1+\delta^4 q_n^{-2c_0} x^2/4)}\, \mathrm{d}x\right) \notag\\
    \leq &\frac{4C_v}{q_{n-1}}\left(1+\int_{\delta^2 q_n^{-c_0}/2}^1\frac{1}{x(1+x^2)}\, \mathrm{d}x+\int_1^{\infty}\frac{1}{1+x^2}\, \mathrm{d}x\right)\notag \\
    \leq &4C_v\left(\frac{3}{q_{n-1}}+\frac{\log (2\delta^{-2}q_n^{c_0})}{q_{n-1}}\right)\notag\\
    \leq &4C_vc_0\frac{\log q_n}{q_{n-1}}+\delta,
\end{align}
provided $n$ is large enough.

Next, we study $I_2$.
By \eqref{eq:Fourier_1/mk}, 
\begin{align}\label{eq:I2_11}
    I_2=\sum_{1\leq |k|<q_{n-\ell}}|\hat{v}_m(k)| |F_R(k)|
    \leq &4 C_v\sum_{1\leq k<q_{n-\ell}}\frac{1}{m\|k\omega\|}\frac{1}{1+\delta^2 m^2\|k\omega\|^2}.
\end{align}
For $0<|k|<q_{n-\ell}$, we obtain by \eqref{eq:qn_omega_min} and \eqref{eq:qn_omega} that
\begin{align}\label{eq:I2_komega}
    \|k\omega\|\geq \|q_{n-\ell-1}\omega\|\geq \frac{1}{2q_{n-\ell}}.
\end{align}
This implies the $I_2$ as in \eqref{eq:I2_11} can be bounded by:
\begin{align}\label{eq:I2_111}
    I_2\leq \frac{8C_vq_{n-\ell}}{m} \sum_{1\leq k<q_{n-\ell}} \frac{1}{1+\delta^2 m^2\|k\omega\|^2}.
\end{align}
We decompose the sum over $\{1,2,...,q_{n-\ell}-1\}$ into sums of two subsets: $\{1,2,...,q_{n-\ell}-1\}=K_1\bigcup K_2$, where 
\begin{align}\label{def:K1_K2_sets}
    K_1:=&\{k\in \{1,2,...,q_{n-\ell}-1\}: k\omega-[k\omega]\in (0,1/2)\}, \text{ and } \\
    K_2:=&\{k\in \{1,2,...,q_{n-\ell}-1\}: k\omega-[k\omega]\in (1/2,1)\}
\end{align}
For $k_1\neq k_2$ such that $\{k_1,k_2\}\subset K_1$, clearly $\|k_j\omega\|=k_j\omega-[k_j\omega]$ holds for $j=1,2$, and hence
$$\|\|k_1\omega\|-\|k_2\omega\|\|=\|(k_1-k_2)\omega\|.$$
Combining this with $\|k_1\omega\|-\|k_2\omega\|\in (-1/2, 1/2)$ and that for $x\in (-1/2,1/2)$, $|x|=\|x\|$, we have
\begin{align}\label{eq:I2_K1}
    |\|k_1\omega\|-\|k_2\omega\||=\|k_1\omega-k_2\omega\|\geq \|q_{n-\ell-1}\omega\|\geq \frac{1}{2q_{n-\ell}},
\end{align}
where we used $0<|k_1-k_2|<q_{n-\ell}$ and the estimate similar to \eqref{eq:I2_komega}.
Combining \eqref{eq:I2_komega} with \eqref{eq:I2_K1}, we conclude that $\{\|k\omega\|\}_{k\in K_1}$ are non-negative terms, which are at least $1/(2q_{n-\ell})$ spaced with the smallest being at least $1/(2q_{n-\ell})$. 
This implies
\begin{align}\label{eq:I2_sum_K1}
    \sum_{k\in K_1}\frac{1}{1+\delta^2 m^2 \|k\omega\|^2}\leq \sum_{s=1}^{q_{n-\ell}}\frac{1}{1+\delta^2 m^2 s^2/(2q_{n-\ell})^2}.
\end{align}

The sum in $K_2$ is similar. In fact for $\{k_1\neq k_2\}\subset K_2$, 
$\|k_j\omega\|=[k_j\omega]+1-k_j\omega$ holds for $j=1,2$. Hence, similar to \eqref{eq:I2_K1}, we have
\begin{align}\label{eq:I2_K2}
    |\|k_1\omega\|-\|k_2\omega\||=\|k_1\omega-k_2\omega\|\geq \|q_{n-\ell-1}\omega\|\geq \frac{1}{2q_{n-\ell}}.
\end{align}
This implies similar to \eqref{eq:I2_sum_K1} that
\begin{align}\label{eq:I2_sum_K2}
     \sum_{k\in K_2}\frac{1}{1+\delta^2 m^2 \|k\omega\|^2}\leq \sum_{s=1}^{q_{n-\ell}}\frac{1}{1+\delta^2 m^2 s^2/(2q_{n-\ell})^2}.   
\end{align}
Combining \eqref{eq:I2_sum_K1}, \eqref{eq:I2_sum_K2} with \eqref{eq:I2_111}, we obtain
\begin{align}\label{eq:I2_1111}
    I_2\leq &\frac{16C_vq_{n-\ell}}{m}\sum_{s=1}^{q_{n-\ell}}\frac{1}{1+\delta^2 m^2s^2/(2q_{n-\ell})^2} \notag\\
    \leq &\frac{16C_vq_{n-\ell}}{m}\int_0^{\infty}\frac{1}{1+\delta^2 m^2 x^2/(2q_{n-\ell})^2}\, \mathrm{d}x \notag \\
    \leq &\frac{32C_vq_{n-\ell}^2}{\delta m^2}\int_0^{\infty}\frac{1}{1+x^2}\, \mathrm{d}x \notag\\
    \leq &\frac{16\pi C_v q_{n-\ell}^2}{\delta m^2}.
\end{align}
If $q_n\geq e^{\delta_1 q_{n-1}}$, we bound $m\geq \delta q_n^{1-c_0}\geq \delta e^{\delta_1(1-c_0)q_{n-1}}\geq q_{n-1}^2\geq q_{n-\ell}^2$ for $m$ large enough, where we used $q_{n-1}\geq q_{n-\ell}$ as in \eqref{eq:qn-ell<n-1}. 
This implies the following bound on $I_2$ as in \eqref{eq:I2_1111}:
\begin{align}\label{eq:I2_1}
    I_2\leq \frac{16\pi C_v}{\delta q_{n-\ell}^2}<\delta,
\end{align}
for $m$ large enough.

If $q_n\leq e^{\delta_1 q_{n-1}}$, we bound $m\geq \delta q_n$ and $q_{n-\ell}\leq \delta^2 q_n$ (according to the definition of $q_{n-\ell}$), then
\begin{align}\label{eq:I2_2}
    I_2\leq \frac{16\pi C_v\delta^4 q_n^2}{\delta^3 q_n^2}\leq 16\pi C_v\delta.
\end{align}

Next, we consider $I_3$. We distinguish two cases.

\underline{Case $I_3$-1.} If $j_0\leq 10$.
We simply estimate, via \eqref{eq:FR_3} and \eqref{eq:Fourier_1/k} (we divide $((j-1)q_{n-\ell}, jq_{n-\ell})$ into four intervals of length $q_{n-\ell}/4$ and apply \eqref{eq:FR_3} to each of these four) that
\begin{align}\label{eq:I3_case1}
    I_3 \leq \sum_{j=2}^{10}\, \sum_{(j-1)q_{n-\ell}<|k|<jq_{n-\ell}}|\hat{v}_m(k)||F_R(k)|
    \leq &C_v \sum_{j=2}^{10}\frac{8}{q_{n-\ell}}\left(8+8\pi\frac{q_{n-\ell}}{R}\right)\notag\\
    \leq &C_v\left(\frac{576}{q_{n-\ell}}+\frac{576\pi}{R}\right)<\delta,
\end{align}
provided $m$ is large enough.

\underline{Case $I_3$-2.} If $j_0>10$.
We obtain by \eqref{eq:Fourier_1/k} that for each $2\leq j\leq j_0+1$,
\begin{align}\label{eq:I3_111}
    \sum_{(j-1)q_{n-\ell}<|k|<jq_{n-\ell}}|\hat{v}_m(k)||F_R(k)|
    \leq &\frac{4C_v}{(j-1)q_{n-\ell}}\, \sum_{(j-1)q_{n-\ell}<k<jq_{n-\ell}}\frac{1}{1+\delta^2 m^2\|k\omega\|^2}.
\end{align}
Since $|k-(j-1)q_{n-\ell}|<q_{n-\ell}$, 
\begin{align}\label{eq:k-(j-1)}
    \|k\omega-(j-1)q_{n-\ell}\omega\|\geq \|q_{n-\ell-1}\omega\|.
\end{align}
For each $2\leq j\leq [j_0/2]$, using $j_0\leq a_{n-\ell+1}$ and that $a_{n-\ell+1}\|q_{n-\ell}\omega\|<\|q_{n-\ell-1}\omega\|$ (see \eqref{eq:qn-1_norm=qn+qn+1}), we have by \eqref{eq:jqn-ell} that
\begin{align}\label{eq:j-1q_small}
    \|(j-1)q_{n-\ell}\omega\|=(j-1)\|q_{n-\ell}\omega\|\leq (\frac{j_0}{2}-1)\|q_{n-\ell}\omega\|
    \leq \frac{1}{2}\|q_{n-\ell-1}\omega\|.
\end{align}
Combining \eqref{eq:k-(j-1)} with \eqref{eq:j-1q_small}, we have by triangle inequality that for 
$(j-1)q_{n-\ell}<k<jq_{n-\ell}$, 
\begin{align}\label{eq:I3_2_min}
    \|k\omega\|\geq \frac{1}{2}\|q_{n-\ell-1}\omega\|\geq \frac{1}{4q_{n-\ell}}.
\end{align}
Similar to \eqref{def:K1_K2_sets}, 
For each $j\leq j_0+1$, we define
\begin{align}\label{def:K3_K4}
    K_3:=&\{k\in ((j-1)q_{n-\ell},jq_{n-\ell}): k\omega-[k\omega]\in (0,1/2)\}, \text{ and } \\
    K_4:=&\{k\in ((j-1)q_{n-\ell},jq_{n-\ell}): k\omega-[k\omega]\in (1/2,1)\}
\end{align}
Similar to \eqref{eq:I2_K1} and \eqref{eq:I2_K2}, we can obtain pairwise spacing of size $\|q_{n-\ell-1}\omega\|$, among $\{\|k\omega\|\}_{k\in K_3}$ and $\{\|k\omega\|\}_{k\in K_4}$, respectively. Together with a control of the minimum value in \eqref{eq:I3_2_min}, we conclude that $\{\|k\omega\|\}_{k\in K_3}$ (and $\{\|k\omega\|\}_{k\in K_4}$) are at least $1/(2q_{n-\ell})$ spaced and the smallest being at least $1/(4q_{n-\ell})$.
Thus we can bound \eqref{eq:I3_111} as follows: 
\begin{align}
    \sum_{(j-1)q_{n-\ell}<|k|<jq_{n-\ell}}|\hat{v}_m(k)||F_R(k)|
    \leq &\sum_{r=3}^4 \frac{4C_v}{(j-1)q_{n-\ell}}\, \sum_{k\in K_r}\frac{1}{1+\delta^2 m^2\|k\omega\|^2}\\
    \leq &\frac{8C_v}{(j-1)q_{n-\ell}}\sum_{s=1}^{q_{n-\ell}}\frac{1}{1+\delta^2 m^2 s^2/(4q_{n-\ell})^2}\\
    \leq &\frac{8C_v}{(j-1)q_{n-\ell}}\int_0^{\infty}\frac{1}{1+\delta^4 q_n^{2-2c_0} x^2/(4q_{n-\ell})^2}\, \mathrm{d}x\\
    \leq &\frac{16\pi C_v}{(j-1)\delta^2 q_n^{1-c_0}},
\end{align}
where we used $m\geq \delta q_n^{1-c_0}$, which is satisfied in both cases (see \eqref{eq:m_sim_qn_Lem63}).
Therefore, the estimate above yields
\begin{align}\label{eq:I3_1}
    &\sum_{2\leq j\leq [j_0/2]}\sum_{(j-1)q_{n-\ell}<|k|<jq_{n-\ell}}|\hat{v}_m(k)||F_R(k)|\notag\\
    \leq &\sum_{2\leq j\leq [j_0/2]}\frac{16\pi C_v}{(j-1)\delta^2 q_n^{1-c_0}}\leq \frac{16\pi C_v\log j_0}{\delta^2 q_n^{1-c_0}}\leq \frac{16\pi C_v\log(\delta^2 q_n)}{\delta^2 q_n^{1-c_0}}<\delta,
\end{align}
for $n$ large enough. Note that we controlled $j_0\leq \delta^2 q_n$ above, due to \eqref{def:j0}.

For each $j$ such that $[j_0/2]<j\leq j_0+1$, define $K_3,K_4$ as in \eqref{def:K3_K4} above, one can show that $\{\|k\omega\|\}_{k\in K_3}$ (and $\{\|k\omega\|\}_{k\in K_4}$) are at least $1/(2q_{n-\ell})$ spaced and the smallest term being at least $0$ (which is a trivial lower bound).
Therefore we can bound \eqref{eq:I3_111} as follows: 
\begin{align}\label{eq:I3_2'}
    \sum_{(j-1)q_{n-\ell}<|k|<jq_{n-\ell}}|\hat{v}_m(k)||F_R(k)|
    \leq &\sum_{r=3}^4 \frac{4C_v}{(j-1)q_{n-\ell}}\, \sum_{k\in K_r}\frac{1}{1+\delta^2 m^2\|k\omega\|^2}\notag\\
    \leq &\frac{8C_v}{(j-1)q_{n-\ell}}\left(1+\sum_{s=1}^{q_{n-\ell}}\frac{1}{1+\delta^2 m^2 s^2/(2q_{n-\ell})^2}\right)\notag\\
    \leq &\frac{8C_v}{(j-1)q_{n-\ell}}\left(1+\int_0^{\infty} \frac{1}{1+\delta^2 m^2 x^2/(2q_{n-\ell})^2}\, \mathrm{d}x\right) \notag\\
    \leq &\frac{8C_v}{(j-1)q_{n-\ell}}\left(1+\frac{\pi q_{n-\ell}}{\delta m}\right).
\end{align}
This implies
\begin{align}\label{eq:I3_2}
    \sum_{j=[j_0/2]+1}^{j_0+1}\, \sum_{(j-1)q_{n-\ell}<|k|<jq_{n-\ell}}|\hat{v}_m(k)||F_R(k)|\leq 8C_v\log 2\left(\frac{1}{q_{n-\ell}}+\frac{\pi}{\delta m}\right)<\delta,
\end{align}
for $m$ large enough.
Combining \eqref{eq:I3_1} with \eqref{eq:I3_2}, we conclude in Case $I_3$-2 that
\begin{align}\label{eq:I3_case2}
I_3\leq 2\delta.
\end{align}

Combining the estimates of $I_1,I_2,I_3$ in \eqref{eq:I1_2}, \eqref{eq:I1_3}, \eqref{eq:I2_1}, \eqref{eq:I2_2}, \eqref{eq:I3_case1}, \eqref{eq:I3_case2} yields the claimed result for $U_2$.
\end{proof}

\begin{lemma}\label{lem:U3_2}
Regarding $U_3$, the following holds:
\begin{align}
    \|U_3\|_{L^{\infty}(\T)}\leq 110 C_v\delta.
\end{align}
\end{lemma}
\begin{proof}
To prove this lemma we need the following estimate, which is a modification of \eqref{eq:FR_3}.
\begin{align}\label{eq:sum_ellqn}
    \sum_{\ell \delta^2 q_n<k<(\ell+1)\delta^2 q_n} \frac{1}{1+R^2\|k\omega\|^2}\leq 2+\frac{2\pi q_n}{R}.
\end{align}
To see why this is true:
for any $\{k_1\neq k_2\}\subset (\ell \delta^2 q_n, (\ell+1)\delta^2 q_n)$, we have $0<|k_1-k_2|<\delta^2 q_n<q_n$. Hence by \eqref{eq:qn_omega_min} and \eqref{eq:qn_omega},
\begin{align}
\|(k_1-k_2)\omega\|\geq \|q_{n-1}\omega\|\geq \frac{1}{2q_n}.
\end{align}
Define
\begin{align}\label{def:K7_K8}
    K_5:=&\{k\in (\ell \delta^2 q_n,(\ell+1)\delta^2 q_n): k\omega-[k\omega]\in (0,1/2)\}, \text{ and } \\
    K_6:=&\{k\in (\ell \delta^2 q_n,(\ell+1)\delta^2 q_n): k\omega-[k\omega]\in (1/2,1)\}.
\end{align}
Similar to \eqref{eq:I2_K1}, \eqref{eq:I2_K2}, we have $\{\|k\omega\|\}_{k\in K_5}$ (and $\{\|k\omega\|\}_{k\in K_6}$) have pairwise spacing at least $\|q_{n-1}\omega\|\geq 1/(2q_n)$, with the smallest term being at least $0$. 
Hence
\begin{align}
    \sum_{\ell \delta^2 q_n<k<(\ell+1)\delta^2 q_n} \frac{1}{1+R^2\|k\omega\|^2}
    =&\sum_{r=5,6}\sum_{k\in K_r}\frac{1}{1+R^2\|k\omega\|^2}\\
    \leq &2\left(1+\sum_{s=1}^{\delta^2 q_n}\frac{1}{1+R^2s^2/(2q_n)^2}\right)\\
    \leq &2\left(1+\int_0^{\infty} \frac{1}{1+R^2x^2/(2q_n)^2}\, \mathrm{d}x\right)\\
    \leq &2\left(1+\frac{\pi q_n}{R}\right).
\end{align}
This proves \eqref{eq:sum_ellqn}. Clearly, \eqref{eq:sum_ellqn} combined with \eqref{eq:Fourier_1/k} implies
\begin{align}
    \|U_3\|_{L^{\infty}(\T)}
    \leq &\frac{4C_v}{\delta^2 q_n}
    \sum_{\ell=1}^{e^{\delta^4 m}/(\delta^2 q_n)}\frac{1}{\ell} \sum_{\ell \delta^2 q_n\leq k<(\ell+1)\delta^2 q_n}\frac{1}{1+R^2\|k\omega\|^2} \notag\\
    \leq &8 C_v\delta^2 m\left(\frac{1}{q_n}+\frac{\pi}{R}\right)\\
    \leq &80 C_v\delta+8\pi C_v\delta\leq 110 C_v\delta,
\end{align}
in which we used $m\leq \delta^{-1} q_n$ and $R=[\delta m]$.
This proves the claimed result for $U_3$.
\end{proof}
For $U_4$, we obtain by \eqref{eq:Fourier_1/k} that
\begin{align}\label{eq:U4_2}
    \|U_4\|_{L^2(\T)}^2=\sum_{|k|>e^{\delta^4 m}} |\hat{v}_m(k)|^2 |F_R(k)|^2\leq C_v^2 \sum_{|k|>e^{\delta^4 m}} \frac{1}{|k|^2}\leq 2C_v^2 e^{-\delta^4 m}.
\end{align}
Combining \eqref{eq:v0=Lm_2}, \eqref{eq:U1_2}, Lemmas \ref{lem:U2_2} and \eqref{lem:U3_2} with the Chebyshev's inequality and \eqref{eq:U4_2}, we conclude that
\begin{align}
    &\mathrm{mes}\left(\left\{\theta: |v_m(\theta)-L_m|>C_v\left(170\delta+4c_0\frac{\log q_n}{q_{n-1}}\right)\right\}\right)\\
    \leq &\mathrm{mes}(\{\theta: |U_4(\theta)|\geq 3C_v\delta\})\\
    \leq &\frac{1}{3C_v\delta}\|U_4\|_{L^2(\T)}\leq e^{-\delta^4 m/4},
\end{align}
as claimed.
\end{proof}


\begin{thebibliography}{99}

\bibitem[Av1]{arc}Avila, A., 2010. {\em Almost reducibility and absolute continuity I.} arXiv preprint arXiv:1006.0704.

\bibitem[Av2]{Global} Avila, A., 2015. {\em Global theory of one-frequency Schr\"odinger operators.} Acta Math., 215 (1), pp.\ 1--54.

\bibitem[AJS]{AJS} Avila, A., Jitomirskaya, S. and Sadel, C., 2014. {\em Complex one-frequency cocycles.} J.\ Eur.\ Math.\ Soc., 16(9), pp.~1915--1935.

\bibitem[AYZ]{AYZ} Avila, A., You, J. and Zhou, Q., 2017. Sharp phase transitions for the almost Mathieu operator. Duke Math. J., 166(14).

\bibitem[Be]{Be} Berezanskiĭ, I.M., 1968. {\em Expansions in eigenfunctions of selfadjoint operators} (Vol. 17). American Mathematical Soc..

\bibitem[BKV]{BKV} Binder, I., Kinzebulatov, D. and Voda, M., 2017. {\em Non-perturbative localization with quasiperiodic potential in continuous time.} Communications in Mathematical Physics, 351(3), pp.1149-1175.

\bibitem[Bo]{Bo} Bourgain, J., 2007. {\em Anderson Localization for Quasi-Periodic Lattice Schrödinger Operators on $\Z^d$, $d$ Arbitrary}. GAFA, Geom. funct. anal. 17, 682–706.

\bibitem[BG]{BG} Bourgain, J. and Goldstein, M., 2000. {\em On nonperturbative localization with quasi-periodic potential.} Ann.\ of Math., 152 (3),
 pp.~835--879.

\bibitem[BGS1]{BGS1} Bourgain, J., Goldstein, M. and Schlag, W., 2001. {\em Anderson Localization for Schrödinger Operators on $\Z$ with Potentials Given by the Skew–Shift.} Communications in Mathematical Physics, 220, pp.583-621.

\bibitem[BGS2]{BGS2} Bourgain, J., Goldstein, M. and Schlag, W., 2002. {\em Anderson localization for Schrödinger operators on $\Z^2$ with quasi-periodic potential.} Acta Mathematica, 188(1), pp.41-86.

\bibitem[BJ1]{BJ1} Bourgain, J. and Jitomirskaya, S., 2000. {\em Anderson localization for the band model.} In Geometric Aspects of Functional Analysis: Israel Seminar 1996–2000 (pp. 67-79). Springer Berlin Heidelberg.

\bibitem[BJ2]{BJ2} Bourgain, J. and Jitomirskaya, S., 2002. {\em Continuity of the Lyapunov exponent for quasiperiodic operators with analytic potential.} Journal of statistical physics, 108, pp.1203-1218.

\bibitem[DK1]{DK1} Duarte, P. and Klein, S., 2014. {\em Positive Lyapunov exponents for higher dimensional quasiperiodic cocycles.} Communications in Mathematical Physics, 332, pp.189-219.

\bibitem[DK2]{DK} Duarte, P. and Klein, S., 2016. {\em Lyapunov exponents of linear cocycles.} Atlantis Studies in Dyn.\ Systems, 3.

\bibitem[E]{E} Eliasson, L.H., 1992. {\em Floquet solutions for the 1-dimensional quasi-periodic Schr\"odinger equation.} Communications in mathematical physics, 146, pp.447-482.

\bibitem[FP]{FP}Figotin, A.L. and Pastur, L.A., 1984. {\em An exactly solvable model of a multidimensional incommensurate structure.} Communications in mathematical physics, 95, pp.401-425.

\bibitem[GS1]{GS1} Goldstein, M. and Schlag, W., 2001. {\em H\"older continuity of the integrated density of states for quasi-periodic Schr\"odinger equations and averages of shifts of subharmonic functions.} Ann.\ of Math., pp.~155--203.

\bibitem[GS2]{GS2} Goldstein, M. and Schlag, W., 2008. {\em Fine properties of the integrated density of states and a quantitative separation property of the Dirichlet eigenvalues.} Geom.\ Funct.\ Anal., 18 (3), pp.~755--869.

\bibitem[GS3]{GS3} Goldstein, M. and Schlag, W., 2011. {\em On resonances and the formation of gaps in the spectrum of quasi-periodic Schr\"odinger equations.} Ann.\ of Math., pp.~337--475.

\bibitem[GFP]{GFP}Grempel, D.R., Fishman, S. and Prange, R.E., 1982. {\em Localization in an incommensurate potential: An exactly solvable model.} Physical Review Letters, 49(11), p.833.

\bibitem[Ha]{Ha} Han, R., 2019. {\em Shnol’s theorem and the spectrum of long range operators.} Proceedings of the American Mathematical Society, 147(7), pp.2887-2897.

\bibitem[HJY]{HJY}Han, R., Jitomirskaya, S. and Yang, F., 2022. {\em Anti-resonances and sharp analysis of Maryland localization for all parameters.} arXiv preprint arXiv:2205.04021.

\bibitem[HS1]{HS1} Han, R. and Schlag, W., 2022. {\em Avila's acceleration via zeros of determinants, and applications to Schr\" odinger cocycles.} arXiv preprint arXiv:2212.05988.

\bibitem[HS2]{HS2} Han, R. and Schlag, W., 2023. {\em Non-perturbative localization on the strip and Avila's almost reducibility conjecture.} arXiv preprint arXiv:2306.15122.

\bibitem[HS3]{HS3} Han, R. and Schlag, W., 2023. {\em Non-perturbative localization for quasi-periodic Jacobi block matrices.} arXiv preprint arXiv:2309.03423.

\bibitem[HZ]{HZ} Han, R. and Zhang, S., 2022. {\em Large deviation estimates and Hölder regularity of the Lyapunov exponents for quasi-periodic Schrödinger cocycles.} International Mathematics Research Notices, 2022(3), pp.1666-1713.

\bibitem[HP]{HP}Haro, A. and Puig, J., 2013. {\em A Thouless formula and Aubry duality for long-range Schrödinger skew-products.} Nonlinearity, 26(5), p.1163.

\bibitem[Ji]{Ji} Jitomirskaya, S.Y., 1999. {\em Metal-insulator transition for the almost Mathieu operator.} Annals of Mathematics, pp.1159-1175.

\bibitem[JL1]{JL1}Jitomirskaya, S. and Liu, W., 2017. {\em Arithmetic spectral transitions for the Maryland model.} Communications on Pure and Applied Mathematics, 70(6), pp.1025-1051.

\bibitem[JL2]{JL2} Jitomirskaya, S. and Liu, W., 2018. {\em Universal hierarchical structure of quasiperiodic eigenfunctions.} Annals of Mathematics, 187(3), pp.721-776.

\bibitem[JY]{JY}Jitomirskaya, S. and Yang, F., 2021. {\em Pure point spectrum for the Maryland model: a constructive proof.} Ergodic Theory and Dynamical Systems, 41(1), pp.283-294.

\bibitem[Kl]{Kl} Klein, S. {\em 
Anderson localization for one-frequency quasi-periodic block Jacobi operators.}  J.\ Funct.\ Anal.~273 (2017), no~3, 1140--1164.

\bibitem[Pu]{Pu} Puig, J. {\em Cantor Spectrum for the Almost Mathieu Operator}, Commun.\ Math.\ Phys.\ 244, 297--309 (2004). 

\bibitem[Sch1]{Sch1} Schlag, W., 2013. {\em Regularity and convergence rates for the Lyapunov exponents of linear cocycles.} Journal of Modern Dynamics, 7(4): 619–637.

\bibitem[Sch2]{Sch}  Schlag, W., 2022. {\em An introduction to multiscale techniques in the theory of Anderson localization, Part I.}
Nonlinear Anal. 220, Paper No. 112869, 55 pp.

\bibitem[Si1]{Si1} Simon, B., 1982. {\em Schr\"odinger semigroups.} Bulletin of the American Mathematical Society, 7(3), pp. 447–526.

\bibitem[Si2]{Si2}Simon, B., 1985. {\em Almost periodic Schrödinger operators IV. The maryland model.} Annals of Physics, 159(1), pp.157-183.


\bibitem[YZ]{YZ} You, J. and Zhang, S., 2014. {\em Hölder continuity of the Lyapunov exponent for analytic quasiperiodic Schrödinger cocycle with weak Liouville frequency.} Ergodic Theory and Dynamical Systems, 34(4), pp.1395-1408.
 
\end{thebibliography}
\end{document}